\numberwithin{equation}{section}
\renewcommand\a{\alpha}
\renewcommand\b{\beta}
\def\t{\tau}
\def\l{\lambda}
\def\eps{\varepsilon }
\renewcommand\a{\alpha}
\renewcommand\b{\beta}
\def\t1{{\widetilde{x}}}
\def\t{\tau}
\def\eps{\varepsilon}
\def\l{\lambda}
\newcommand\br{\begin{remark}}
\newcommand\er{\end{remark}}
\newcommand\bp{\begin{pmatrix}}
\newcommand\ep{\end{pmatrix}}
\newcommand\be{\begin{equation}}
\newcommand\ee{\end{equation}}
\newcommand\ba{\begin{equation}\begin{aligned}}
\newcommand\ea{\end{aligned}\end{equation}}
\newcommand\ds{\displaystyle}
\newcommand{\bap}{\begin{app}}
\newcommand{\eap}{\end{app}}
\newcommand{\begs}{\begin{exams}}
\newcommand{\eegs}{\end{exams}}
\newcommand{\beg}{\begin{example}}
\newcommand{\eeg}{\end{exaplem}}
\newcommand{\bpr}{\begin{proposition}}
\newcommand{\epr}{\end{proposition}}
\newcommand{\bt}{\begin{theorem}}
\newcommand{\et}{\end{theorem}}
\newcommand{\bc}{\begin{corollary}}
\newcommand{\ec}{\end{corollary}}
\newcommand{\bl}{\begin{lemma}}
\newcommand{\el}{\end{lemma}}
\newcommand{\bd}{\begin{definition}}
\newcommand{\ed}{\end{definition}}
\newcommand{\brs}{\begin{remarks}}
\newcommand{\ers}{\end{remarks}}
\newtheorem{theo}{Theorem}[section]
\newtheorem{exams}[theo]{Examples}
\numberwithin{equation}{section}
\newcommand{\mA}{{\mathbb A}}
\newcommand{\RR}{{\mathbb R}}
\newcommand{\ZZ}{{\mathbb Z}}
\newcommand{\CC}{{\mathbb C}}
\newtheorem{theorem}{Theorem}[section]
\newtheorem{proposition}[theorem]{Proposition}
\newtheorem{corollary}[theorem]{Corollary}
\newtheorem{lemma}[theorem]{Lemma}
\newtheorem{definition}[theorem]{Definition}
\newtheorem{example}[theorem]{Example}
\newtheorem{remark}[theorem]{Remark}
\newcommand{\G}{\,\mbox{\bf G}}
\newcommand{\beq}{\begin{equation}}
\newcommand{\eeq}{\end{equation}}
\newcommand{\bs}{\begin{split}}
\newcommand{\es}{\end{split}}
\title{Pointwise asymptotic behavior of modulated periodic reaction-diffusion waves}
\author{\sc \small
Soyeun Jung\thanks{Indiana University, Bloomington, IN 47405;
soyjung@indiana.edu: Research of S.J. was partially supported under
NSF grant no. DMS-0300487.
 }}
\begin{document}

\maketitle


\begin{center}
{\bf Keywords}: modulation, Bloch decomposition, pointwise bounds
\end{center}


\begin{abstract}
By working with the periodic resolvent kernel and Bloch-decomposition, we
establish pointwise bounds for the Green function of the linearized equation
associated with  spatially periodic traveling waves of a system of
reaction diffusion equations.
With our linearized estimates together with a nonlinear iteration scheme developed by Johnson-Zumbrun,  we obtain $L^p$- behavior($p \geq 1$) of a nonlinear solution to a perturbation equation of a reaction-diffusion equation with respect to initial data in $L^1 \cap H^1$
recovering and slightly sharpening
results obtained by Schneider using weighted energy and renormalization
techniques.
We obtain also pointwise nonlinear estimates with respect to two different initial perturbations $|u_0|\leq E_0e^{-|x|^2/M}$ and $|u_0| \leq E_0(1+|x|)^{-3/2}$, respectively, $E_0>0$ sufficiently small and $M>1$ sufficiently large,
showing that behavior is that of a heat kernel.
These pointwise bounds have not been obtained elsewhere, and do not appear to be accessible by previous techniques.
\end{abstract}


\section{Introduction} \label{introduction}


In this paper, we obtain pointwise bounds for the Green function of the linearized equations associated with a spatially periodic traveling wave of a system
of reaction diffusion equations, and use this to obtain pointwise bounds
on decay and asymptotic behavior,
sharping bounds of \cite{JZ2} and \cite{S1,S2},
of perturbations of a periodic traveling wave of a system of
reaction diffusion equations.
Suppose that
$u(x,t)=\bar{u}(x-at)$ is a spatially periodic wave of a system of
reaction diffusion equations of form $u_t=u_{xx}+f(u),$ where $(x,t)\in \RR \times \RR^+,$  $u\in \RR^n$, and $f:\RR^n\rightarrow \RR^n$ is sufficiently smooth: equivalently, $u(x,t)=\bar{u}(x)$ is a  spatially periodic standing-wave solution  of
\be \label{reaction diffusion}
u_t-au_x=u_{xx}+f(u).
\ee

Throughout our analysis, we assume the existence of an $X$-periodic solution $\bar u(x)$ of \eqref{reaction diffusion}. Without loss of generality, we assume that $\bar u$ is 1-periodic, that is, $\bar u(x+1)=\bar u(x)$ for all $x \in \RR$.
A different pointwise Green function approach was carried out in
\cite{OZ} in the context of parabolic conservation laws by direct inverse
Laplace transform computations not using the standard
Bloch decomposition into periodic waves.
In this paper we work from the Bloch representation and in the process we
develop an interesting new formula for the high-frequency description of
the resolvent of an operator with periodic boundary conditions on $[0,1]$.


Linearizing  \eqref{reaction diffusion} about a standing-wave solution
$\bar{u}(x)$ gives the 
eigenvalue equation
\be\label{sp}
\l v=Lv:=(\partial_x^2+a\partial_x+df(\bar{u}))v.
\ee
As coefficients of $L$ are $1$-periodic, Floquet theory implies that
the $L^2$ spectrum is purely continuous and corresponds to the union of 
$\l$ such that \eqref{sp} admits a bounded eigenfunction of the form
\be\label{spb}
v(x)=e^{i\xi x}w(x), \quad \xi \in \RR
\ee
where $w(x+1)=w(x)$, that is, the eigenvalues of the family
of associated Floquet, or Bloch, operators
\be\label{bloch operator}
L_\xi : = e^{-i\xi x} L e^{i \xi x}=(\partial_x+i\xi)^2+a(\partial_x+i\xi)+df(\bar{u}), \quad \text{for} \quad \xi \in [-\pi, \pi),
\ee
considered as acting on $L^2$ periodic functions on $[0,1]$.

Recall that any function $g \in L^2(\RR)$ admits an inverse Bloch-Fourier representation
\be
g(x)= \int_{-\pi}^{\pi } e^{i\xi x}\hat g(\xi,x) d\xi.
\ee
where $\hat g(\xi,x) = \sum_{j \in \ZZ}e^{j2\pi ix}\hat g(\xi+j2\pi i)$ is a $1$-periodic functions of $x$, and $\hat g(\cdot)$ denotes the Fourier transform of $g$ with respect to $x$. Indeed, using the Fourier transform we have
\be
 g(x)=\int_{-\infty}^\infty e^{i\xi x}\hat g(\xi) d\xi=\ds \sum_{j\in \ZZ} \int_{-\pi}^{\pi}e^{i(\xi+j2\pi i)x}\hat g(\xi+j2\pi i)d\xi = \int_{-\pi}^{\pi} e^{i\xi x}\hat g(\xi,x) d\xi.
\ee 
Since
$L(e^{i\xi x}f)=e^{i\xi x}(L_{\xi}f)$ 
for $f$ periodic, 
the Bloch-Fourier transform
diagonalizes the periodic-coefficient operator $L$, yielding the
inverse Bloch-Fourier transform representation 
\be\label{inverse BF}
e^{Lt}g(x)= \int_{-\pi }^{\pi } e^{i\xi x}e^{L_\xi t}\hat g(\xi,x)
d\xi. 
\ee 
By the translation invariance of \eqref{reaction diffusion}, the function $\bar u'(x)$ is a $1$-periodic
solution of the differential equation $L_0v=0$. Hence, it follows
that $\l=0$ is an eigenvalue of the Bloch operator $L_0$. Define
following \cite{S1,S2,JZ2} the {\it diffusive spectral stability}
 conditions:
\smallskip

(D1) $\l=0$ is a simple eigenvalue of $L_0$.

(D2) $Re\sigma(L_\xi) \leq -\theta|\xi|^2$, $\theta>0$, for all real $\xi$ with $|\xi|$ sufficiently small.
\smallskip

Assumption (D1) corresponds to transversality of $\bar u$ as a solution of the associated traveling-wave
ODE, while assumption (D2) corresponds to ``dissipativity'' of the large-time behavior of the linearized system;
see \cite{S1,S2,JZ2}.

\begin{remark}[\cite{JZ2}]
By standard spectral perturbation theory \cite{K}, (D1) implies that the eigenvalue $\l(\xi)$ bifurcating from $\l =0$ at $\xi=0$ is analytic at $\xi=0$, with $\l(\xi)=\l_1 \xi+\l_2 \xi^2+O(|\xi|^3)$, from which we find from the necessary stability condition $Re\l(\xi)\leq0$ that $Re\l_1=0$ and $Re\l_2 \leq 0$. Assumption (D2) thus amounts to the nondegeneracy condition $Re\l_2 \neq 0$ together with the strict stability condition $Re\sigma (L_\xi) < 0$ for $\xi \neq 0$.
\end{remark}

Rewriting the eigenvalue equation $\eqref{sp}$ as a first-order system
\be \label{firstorder}
V'=\mA(\l,x)V,
\ee
where
\be
V=\bp v\\v'\ep, \quad \mA =\bp 0 & I \\ \lambda I -df(\bar u)  & - aI \ep,
\notag
\ee
denote by $\mathcal F^{y \to x} \in \CC^{2n\times 2n}$ the solution operator of \eqref{firstorder}, defined by $\mathcal{F}^{y\to y}=I$, $\partial_x \mathcal{F}=\mA \mathcal{F}$. That is, $\mathcal{F}^{y\to x}= \Phi(x)\Phi(y)^{-1}$, for any fundamental matrix solution $\Phi$ of the \eqref{firstorder}.

By the definition of Bloch operators \eqref{bloch operator}, for each $\xi \in [-\pi, \pi)$, we have a second-order eigenvalue equation
\be\label{eig}
\l u = L_\xi u=u''-A_\xi u'-C_\xi u,
\ee
where $A_\xi = -(a+2i\xi)I \in \CC^{n\times n}$ a constant matrix and $C_\xi(x) = -df(\bar u)-(ia\xi -\xi^2)I \in \CC^{n\times n}$ a matrix depending on $x$, and $u\in \CC^n$ is a vector.

Rewriting \eqref{eig} as a first-order system
\be\label{firstorder system}
U'=\mA_\xi (x,\lambda)U,
\ee
where
\be\label{coeffs}
U=\bp u\\u'\ep, \quad \mA_\xi =\bp 0 & I \\ \lambda I + C_\xi & A_\xi \ep,
\ee
similarly, denote by $\mathcal{F}_\xi ^{y\to x}\in \CC^{2n\times 2n}$ the solution operator of \eqref{firstorder system}, defined by $\mathcal{F}_\xi ^{y\to y}=I$, $\partial_x
\mathcal{F}_\xi =\mA_\xi \mathcal{F}_\xi$. That is, $\mathcal{F|}_\xi^{y\to x}= \Phi_\xi(x)\Phi_\xi(y)^{-1}$, for any fundamental matrix solution $\Phi_\xi$ of the \eqref{firstorder system}.


\subsection{Main result} \label{Main result}


With these preparations, we now state our two main results.
\begin{theorem} \label{main1}
The Green function $G(x,t;y)$ for equation \eqref{sp} satisfies the estimates:
\be
\begin{split}
G(x,t;y) & =\ds \frac{1}{\sqrt{4\pi bt}}e^{- \frac{|x-y-at|^2}{4bt}}q(x,0)\tilde q(y,0) + O((1+t)^{-1}+t^{-\frac{1}{2}}e^{-\eta t})e^{-\frac{|x-y-at|^2}{Mt}}, \\
G_y(x,t;y) & =\ds \frac{1}{\sqrt{4\pi bt}}e^{- \frac{|x-y-at|^2}{4bt}}q(x,0)\tilde q(y,0) + O(t^{-1})e^{-\frac{|x-y-at|^2}{Mt}}, \\
\end{split}
\ee
uniformly on $t\geq 0$, for some sufficiently large constants $M>0$ and $\eta >0$, where $q$ and $\tilde q$ are the periodic right and left eigenfunctions of $L_0$, respectively, at $\l=0$. In particular $q(x,0)=\bar u'(x)$.
\end{theorem}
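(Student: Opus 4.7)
The plan is to start from the inverse Bloch--Fourier representation \eqref{inverse BF}, which identifies the Green function as
\[
G(x,t;y) = \int_{-\pi}^{\pi} e^{i\xi(x-y)} G_\xi(x,y,t)\, d\xi,
\]
where $G_\xi(x,y,t)$ is the kernel of $e^{L_\xi t}$ on $1$-periodic functions on $[0,1]$. I would insert a smooth cutoff $\chi(\xi)$ supported in $\{|\xi|\leq \xi_0\}$ for $\xi_0$ small and decompose $G = G^{\text{lf}} + G^{\text{hf}}$ into low- and high-frequency pieces, which require different techniques.

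For $G^{\text{lf}}$ I would appeal to standard spectral perturbation theory (Kato). By assumption (D1), $L_0$ has $\lambda=0$ as a simple eigenvalue with right/left eigenfunctions $q(\cdot,0)=\bar u'$ and $\tilde q(\cdot,0)$; for $|\xi|\leq\xi_0$ this perturbs to an analytic family of simple eigenvalues
\[
\lambda(\xi) = -ia\xi - b\xi^2 + O(|\xi|^3), \qquad b > 0
\]
(the linear coefficient fixed by translation invariance of the traveling wave, positivity of $b$ by (D2)), with analytic eigenfunctions $q(\cdot,\xi)$, $\tilde q(\cdot,\xi)$. Writing
\[
G_\xi(x,y,t) = e^{\lambda(\xi)t}\, q(x,\xi)\,\tilde q(y,\xi) + \tilde S_\xi(x,y,t),
\]
where the rank-one term captures the slow mode and $\tilde S_\xi$ decays like $e^{-\eta t}$ uniformly on $|\xi|\leq \xi_0$ (since the rest of $\sigma(L_\xi)$ stays in $\{\operatorname{Re}\lambda\leq -\eta\}$), the principal contribution to $G^{\text{lf}}$ becomes
\[
q(x,0)\,\tilde q(y,0)\int_{-\pi}^{\pi} \chi(\xi)\, e^{i\xi(x-y)+\lambda(\xi)t}\, d\xi \,+\,(\text{corrections}).
\]
Shifting the contour to the Gaussian saddle $\xi_* \propto i(x-y-at)/t$ and Taylor expanding the exponent and the eigenfunctions produce the leading term $(4\pi bt)^{-1/2}\,e^{-|x-y-at|^2/(4bt)}\,q(x,0)\tilde q(y,0)$, while cubic Taylor remainders in the exponent and the first-order variation of $q(x,\xi)\tilde q(y,\xi)$ in $\xi$ contribute an error bounded by $(1+t)^{-1}$ times the Gaussian envelope $e^{-|x-y-at|^2/(Mt)}$. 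Differentiation in $y$ brings down a factor $i\xi$, which at the saddle scales like $t^{-1/2}$; combined with the $t^{-1/2}$ prefactor this accounts for the $O(t^{-1})$ error on $G_y$.

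For $G^{\text{hf}}$ I would use the resolvent representation $e^{L_\xi t} = (2\pi i)^{-1}\int_\Gamma e^{\lambda t}(\lambda - L_\xi)^{-1}\, d\lambda$ on a parabolic contour $\Gamma$ lying in the resolvent set of $L_\xi$ uniformly for $|\xi|\in[\xi_0,\pi)$, which exists because (D2) and sectoriality of $L$ produce a uniform spectral gap on this regime. The key ingredient is the new high-frequency formula for the periodic resolvent kernel advertised in the introduction, expressing $(\lambda - L_\xi)^{-1}(x,y)$ in terms of the fundamental solution $\mathcal{F}_\xi^{y\to x}$ of \eqref{firstorder system} together with a boundary correction built from $\det(I - \mathcal{F}_\xi^{0\to 1})$; for $|\lambda|$ large this reduces to the free exponential kernel $\sim e^{-\sqrt{|\lambda|}|x-y|}$ plus exponentially smaller periodic corrections. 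Combining these pointwise bounds with a contour shift to $\{\operatorname{Re}\lambda = -\eta\}$ delivers the $t^{-1/2}e^{-\eta t}$ decay, while the free exponential kernel encodes the Gaussian factor $e^{-|x-y-at|^2/(Mt)}$ after integration in $\lambda$ and in $\xi$.

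The main obstacle, I expect, is precisely this pointwise high-frequency resolvent estimate: the standard Bloch decomposition directly gives only $L^p$-averaged bounds on $e^{Lt}$, and a naive $\sup_\xi$ estimate on $e^{L_\xi t}$ cannot recover the Gaussian localization in $(x,y,t)$. The formula tying the periodic resolvent kernel to the fundamental solution $\mathcal{F}_\xi^{y\to x}$ is exactly what converts the operator-level bound into a kernel-level bound capable of producing $e^{-|x-y-at|^2/(Mt)}$; once this step is in place, the low-frequency analysis above is a routine saddle-point computation and the two pieces combine to yield the stated pointwise estimates.
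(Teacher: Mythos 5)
Your proposal captures the paper's strategy for the ``bounded'' regime, where the decomposition $G = G^{\text{lf}} + G^{\text{hf}}$, the saddle-point/contour-shift for the low-frequency piece, and the Laplace-transform representation plus high-frequency resolvent kernel bounds for the remainder are essentially what the paper does. But there is a genuine gap: you never separate the two regimes $|x-y|/t$ bounded and $|x-y|/t$ large, and the Bloch-based argument you describe does not actually produce the Gaussian envelope $e^{-|x-y-at|^2/(Mt)}$ in the latter regime.

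Concretely, two steps in your plan require $|x-y|/t$ bounded. First, in the low-frequency analysis, the saddle sits at $\xi_* \propto i(x-y-at)/t$, so the contour shift into the complex plane has vertical extent $\sim |x-y-at|/t$; since the eigenvalue branch $\lambda(\xi)$ and the analytic eigenfunctions are only available for $|\xi|$ small, the shift must stay bounded, which fails when $|x-y|/t$ is large. Second, and more fundamentally, for the high-frequency piece the periodic resolvent kernel $G_{\xi,\lambda}(x,y)$ lives on $[0,1]\times[0,1]$, so its pointwise bound $\sim|\lambda^{-1/2}|e^{-\beta^{-1/2}|\lambda^{1/2}||x-y|}$ only controls the separation up to distance $1$. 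Integrating $\sup_\xi|[G_\xi(x,t;y)]|$ over $\xi$ then gives a bound that, for the Green function on the whole line, is the modulus of a sum over all aliased copies $\sum_j\mathcal{G}_\xi(x,t;y+j)$ (see \eqref{aliaseq}). In the regime $|x-y|/t$ large, the pointwise Gaussian decay in $|x-y|$ arises from near-cancellation among these aliased contributions under the $\xi$-integral, and a $\sup_\xi$ modulus bound is blind to this. What you actually obtain from the high-frequency piece is only $O(t^{-1/2}e^{-\eta t})$, without the factor $e^{-|x-y-at|^2/(Mt)}$; the paper is able to multiply in this factor for free only because it first restricts to $|x-y-at|/t \le S_1$, where $e^{-|x-y-at|^2/(Mt)}\ge e^{-\eta t/2}$.

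The paper therefore treats $|x-y|/t$ large by an entirely different method: it works directly with the whole-line resolvent kernel bound $|G_\lambda(x,y)|\le C|\lambda^{-1/2}|e^{-\beta^{-1/2}|\lambda^{1/2}||x-y|}$ and the inverse Laplace transform $G(x,t;y)=\frac{1}{2\pi i}\int_\Gamma e^{\lambda t}G_\lambda(x,y)\,d\lambda$, not the Bloch decomposition, following \cite{ZH}. This gives $|G(x,t;y)|\le t^{-1/2}e^{-\eta t}e^{-|x-y-at|^2/(Mt)}$ directly. Your proposal needs to be supplemented with this split of cases and this alternative argument for the large-$|x-y|/t$ regime; without it, the stated uniform pointwise Gaussian bound cannot be reached from the Bloch representation alone.
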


\begin{theorem} \label{main2}
Define the nonlinear perturbation $u:=\tilde u-\bar u$, where $\tilde u$ satisfies \eqref{reaction diffusion}. Then the asymptotic behavior of $u$ with respect to three kinds of initial data(denoted by $u_0$): \\
\indent (1) $|u_0(x)|_{L^1 \cap H^1}$, $|xu_0|_{L^1} <E_0$, sufficiently small\\
\indent (2) $|u_0(x)| < E_0e^{-\frac{|x|^2}{M}}$, $E_0>0$ sufficiently small and $M>1$ sufficiently large \\
\indent (3) $|u_0(x)| < E_0(1+|x|)^{-r}$, $E_0>0$ sufficiently small and $r>2$\\
converges to a heat kernel with the following estimates, respectively \\
\indent (a) $|u(x,t)-\bar U_* \bar u'\bar k(x,t)|_{L^p(x)} \leq CE_0(1+t)^{-\frac{1}{2}(1-\frac{1}{p})-\frac{1}{2}}(1+\ln (1+t))$,  for $1 \leq p \leq \infty$\\
\indent (b) $|u(x,t)-\bar U_* \bar u'\bar k(x,t)| \leq CE_0(1+t)^{-1}e^{-\frac{|x-at|^2}{M''(1+t)}}(1+\ln (1+t))$ \\
\indent (c) $|u(x,t)-\bar U_* \bar u'\bar k(x,t)| \\
 \indent \indent \indent \leq CE_0\Big[(1+t)^{-\frac{1}{2}}(1+|x-at|+\sqrt t)^{-r+1}+(1+t)^{-1}e^{-\frac{|x-at|^2}{M''(1+t)}}(1+\ln(1+t))\Big], $\\
for  $\bar k(x,t)=\frac{1}{\sqrt{4\pi bt}}e^{- \frac{|x-at|^2}{4bt}}$, $M'' >M$ and $C>0$ sufficiently large and some constant $\bar U_*$(defined in Section \ref{main behavior}).
\end{theorem}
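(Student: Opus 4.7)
The plan is to combine the sharp Green function decomposition from Theorem \ref{main1} with a Duhamel integral representation and a Johnson--Zumbrun style nonlinear iteration, tracking in each of the three cases the precise convolution estimates that propagate the initial decay through time. First I would derive the perturbation equation: substituting $\tilde u = \bar u + u$ into \eqref{reaction diffusion} and using that $\bar u$ is stationary yields
\[
u_t = Lu + N(u), \qquad N(u) = f(\bar u + u) - f(\bar u) - df(\bar u)u = O(|u|^2),
\]
so that the mild formulation is
\[
u(x,t) = \int G(x,t;y) u_0(y)\,dy + \int_0^t \!\!\int G(x,t-s;y) N(u(s,y))\,dy\,ds.
\]
By Theorem \ref{main1}, I would split $G = G_p + G_r$, where the principal part
$G_p(x,t;y) = \bar k(x-y,t)\, q(x,0)\,\tilde q(y,0)$
carries the slowly decaying Gaussian with $q(x,0)=\bar u'(x)$, and $G_r$ is bounded by $O((1+t)^{-1}+t^{-1/2}e^{-\eta t})e^{-|x-y-at|^2/(Mt)}$.

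Next I would identify the constant
\[
\bar U_* := \int \tilde q(y,0)\,u_0(y)\,dy + \int_0^\infty \!\!\int \tilde q(y,0)\,N(u(s,y))\,dy\,ds,
\]
so that $v(x,t):=u(x,t) - \bar U_*\,\bar u'(x)\,\bar k(x,t)$ is the natural residual to estimate. The linear contribution $\int G_p u_0\,dy - \bar u'(x)\bar k(x,t)\int\tilde q(y,0)u_0(y)dy$ is bounded via the Taylor expansion $\bar k(x-y,t)-\bar k(x,t) = -y\,\partial_x \bar k(x^\ast,t)$, which gains a factor $t^{-1/2}$ and consumes exactly the first-moment hypothesis in case~(a), the Gaussian tail in case~(b), or the algebraic tail in case~(c). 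The $G_r$ contribution is handled directly from its Gaussian bound by elementary convolution identities of the form
\[
\int e^{-|x-y-at|^2/(Mt)}\,|u_0(y)|\,dy \lesssim \text{(weight profile of case)} .
\]

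For the nonlinear Duhamel integral I would run an iteration scheme in the spirit of Johnson--Zumbrun: pick a template weight $\psi_a,\psi_b,\psi_c$ reflecting the predicted right-hand side of (a), (b), (c), define
\[
\zeta(t) := \sup_{0\leq s\leq t}\,\sup_x\, \psi(x,s)^{-1}\,|v(x,s)|
\]
(or its $L^p$ analogue in case (a)), and show $\zeta(t) \leq C E_0 + C\zeta(t)^2$, which combined with continuity and $E_0$ small gives $\zeta(t)\leq 2CE_0$ for all $t\ge 0$. Quadraticity of $N$ is what makes the iteration close: $|N(u)|\lesssim |u|^2$ decays one Gaussian factor faster than $u$, and the residual Duhamel convolution with $G$ reproduces the target profile with a logarithmically sharpened constant, the $(1+\ln(1+t))$ factor arising from the resonance integral
\[
\int_0^t (1+t-s)^{-1/2}(1+s)^{-1}\,ds \;\lesssim\; (1+t)^{-1/2}\ln(1+t),
\]
respectively its pointwise analogue in cases (b) and (c). For case (c), one additionally needs the algebraic/Gaussian convolution lemma
\[
\int (1+|x-y|+\sqrt{t-s})^{-r}(1+|y|+\sqrt s)^{-r}\,dy \;\lesssim\; (1+|x|+\sqrt t)^{-r+1}
\]
to propagate algebraic tails through the iteration.

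I expect the main obstacle to lie in the nonlinear Duhamel integral in the pointwise cases (b) and (c), where one must carry out a careful region decomposition in $(x,y,s)$ (near versus far from the characteristic line $x = y + a(t-s)$, and small versus large $s$) so that the Gaussian peaked at $x-y-a(t-s)$ combines with the peaked-at-$y-as$ bound on $|u(s,\cdot)|^2$ to reproduce a single Gaussian (or algebraic) profile centered at $x-at$; keeping the logarithmic loss sharp rather than polynomial is the delicate point. The $L^p$ case (a) is comparatively routine once the principal/residual split is in hand, following the template in \cite{JZ2}.
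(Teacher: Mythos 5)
The proposal misses the essential ingredient of the paper's argument: the \emph{modulation (shift) variable} $\psi$. You work with the raw perturbation $u = \tilde u - \bar u$ and propose to close a quadratic iteration using $N(u) = f(\bar u + u) - f(\bar u) - df(\bar u)u = O(|u|^2)$ directly. But $|u|_{L^\infty} \sim (1+t)^{-1/2}$ and $|u|_{L^1}$ is only bounded, so $|N(u)|_{L^1} \lesssim |u|_{L^\infty}|u|_{L^1} \sim (1+t)^{-1/2}$. This is not summable in time, so your definition
\[
\bar U_* = \int \tilde q(y,0)\,u_0(y)\,dy + \int_0^\infty\!\!\int \tilde q(y,0)\,N(u(s,y))\,dy\,ds
\]
diverges, the tail term $\int_t^\infty \bar U(s)\,ds$ diverges, and the iteration cannot close. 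This is exactly the obstruction the paper flags in Section~\ref{main behavior}: the model problem $u_t = u_{xx} + u^q$ requires $q \geq 4$ (i.e.\ $(q-1)/2 > 1$) for the nonlinearity to be asymptotically negligible, and the reaction--diffusion nonlinearity is merely $q = 2$.

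The paper's actual proof (Section 7) resolves this by introducing $v(x,t) := \tilde u(x+\psi(x,t),t) - \bar u(x)$ and defining $\psi$ implicitly via \eqref{definition of psi} so that the low-frequency, slowly decaying part of $G$ is absorbed into the definition of $\psi$, leaving $v$ governed by an integral equation \eqref{integral representation of v} involving only the residual kernel $\tilde G$. The nonlinearity $N = Q + R_x - (\partial_x^2+\partial_t)S + T$ is then quadratic in $(v, \psi_t, \psi_x, \psi_{xx})$, which by the a priori [JZ2] estimates \eqref{JZ2} decay like $(1+t)^{-3/4}$ in $H^1$, giving $|\bar U(s)| = |N(\cdot,s)|_{L^1} \lesssim (1+s)^{-3/2}$ --- summable. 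Only then does $\bar U_*$ make sense. Finally $u - \bar U_*\bar u'\bar k$ is decomposed as $v$ plus $\tilde u(x,t) - \tilde u(x+\psi,t) - \bar U_*\bar u'\bar k$; the latter reduces (by Taylor expansion in $\psi$) to comparing $\bar u'\psi$ with $\bar U_*\bar u'\bar k$, so that $\bar U_*\bar k$ is in fact an asymptotic description of the \emph{shift} $\psi$, not of $u$ itself (cf.\ the Remark closing Section 7.2). Your template of defining $\bar U_*$ directly from the unshifted Duhamel source, and iterating on $u$, cannot produce these bounds.

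A secondary issue: you propose $G_p(x,t;y) = \bar k(x-y,t)q(x,0)\tilde q(y,0)$ as the ``principal part'' globally in $t$, but the decomposition $G = E + \tilde G$ used in the paper carries a cutoff $\chi(t)$ vanishing for $t \leq 1$ (see \eqref{definition of E}), which is needed so that $\tilde G$ and the integrated-by-parts expression for $\psi$ in \eqref{derivative of psi} remain well-defined near $t = 0$. The rest of your outline (Taylor-expanding $\bar k$ to gain $t^{-1/2}$, the resonance integral producing the $\ln$ factor, the algebraic/Gaussian convolution lemmas for case (c)) accurately reflects what the paper does once the shift scheme is in place, but those pieces cannot be assembled without $\psi$.
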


\begin{remark}
The 3 parts of Theorem \ref{main2} is established in Theorem \ref{behavior1}, \ref{behavior2} and \ref{behavior3}, respectively. 
\end{remark}

\begin{remark}
The initial condition $|u_0|_{L^1 \cap H^1}$, $|xu_0|_{L^1}$
sufficiently small is compared with Schneider's \cite{S2} initial
assumption. By Fourier transform, we can roughly consider
$|(1+|x|^2)u_0|_{H^2}$  as  Schneider's initial condition with
weight  $(1+|x|^2)$(See Schneider \cite{S2}, pp690-691). This
implies that our initial data roughly satisfies $|u_0| \lesssim
|x|^{-2}$ whereas Schneider's initial data roughly satisfies $|u_0|
\lesssim |x|^{-\frac{5}{2}}$. Our $L^{p}$ bounds on asymptotic
behavior for all $p \geq 1$ are also compared with Schneider's
$L^\infty$ bound. In particular, our $L^\infty$ bound  $t^{-1}\ln
(1+t)$ is roughly equivalent to but slightly sharper than
Schneider's $L^\infty$ bound $t^{-1+\varepsilon}$ for $\varepsilon
>0$. Though Schneider does not state $L^p$ bounds, his renormalized
$H^2(2)$ bounds (see Thm. 15, \cite{S2}) by a simple scaling
argument yield $L^p$ bounds $\sim
t^{-\frac{1}{2}(1-\frac{1}{p})-\frac{1}{2}+ \eta}$ for any $\eta>0$,
for all $p \geq 1$, again roughly equivalent to but slightly less
sharp than ours.
\end{remark}


\subsection{Discussion and open problems}


Pointwise Green function bounds have been obtained by Oh and Zumbrun previously for systems of conservation laws, by somewhat different methods, without use of the Bloch representation. Those methods would work here as well; however, we find the present method proceeding from the Bloch transform both more direct and more connected to other literature in the area;  in particular, it makes a direct connection between the Oh-Zumbrun analysis and other works, filling in the previously missing link of pointwise Green function bounds for periodic-coefficient operators on a bounded periodic domain, a topic that seems of interest in its own right.  In addition, the analysis has a flavor of explicit, spatial domain computation that illuminates the arguments of Schneider, Johnson-Zumbrun, and others by weighted energy estimates, Hausdorff-Young inequality, and other frequency domain techniques.

A novel aspect of the present work is to obtain pointwise bounds also on the nonlinear solution, and thereby sharp $L^p$ bounds for all $1 \leq p \leq \infty$.  Schneider's weighted $H^2$ estimates, obtained by renormalization techniques, yield $L^p$ bounds for $1\le p\le \infty$ of $(1+t)^{-\frac12(1-\frac{1}{p})-\frac{1}{2} + \eta}$ for any $\eta>0$, just slightly weaker than ours;  however, the estimates of Johnson-Zumbrun, obtained by Hausdorff-Young's inequality appear limited to $2\le p\le \infty$. The more detailed pointwise bounds we obtain here do not seem to be accessible by either of these previous
two techniques.

An important advantage of our approach over the renormalization
techniques used by Schneider and others, is that, being based rather
on the nonlinear tracking scheme of Johnson-Zumbrun, it should apply
in principle also to situations, such as periodic solutions of
conservation laws like the Kuramoto-Sivashinsky equations and
others, for which the asymptotic behavior consists of multiple
signals convecting with distinct speeds; see for example the
analysis of \cite{JZ1,JZN,JNRZ}. By contrast, renormalization techniques
appear limited to situations of a single signal. The extension of
our results to the conservation law case is an interesting open
problem.

Finally, we mention that the techniques used here extend
to general quasilinear parabolic or even mixed, partially
parabolic problems, so that our analysis could in principle
extend to these more general settings; see, for example, the
related analyses in \cite{HZ,RZ,JZN}.
This would be another very interesting direction to carry out.


\section{The resolvent kernel} \label{resolvent kernel}


In this section, we develop an interesting formula for
the resolvent kernel on the whole line and for periodic boundary conditions
on $[0,1]$ using solution operators and projections. This formula is motivated by the constant-coefficient scalar case
(see Section \ref{example}).  We will use this formula to find
a high-frequency description of the resolvent for
periodic boundary condition $[0,1]$ in Section \ref{pointwise bounds of resolvent kernel}.

For $\l$  in the resolvent set of $L$, we denote by $G_\l(x,y)$ 
the resolvent kernel defined by
\be
(L-\l I)G_\l(\cdot,y):=\delta_y \cdot I,
\ee
$\delta_y$ denoting the Dirac delta distribution centered at $y$, or equivalently
\be
(L-\l I)^{-1}f(x)=\int G_\l (x,y)f(y)dy.
\ee
%
For each $\xi \in [-\pi,\pi)$ and for $\l$ in the resolvent set of $L_\xi$, we denote by $\mathcal G_{\xi,\l}(x,y)$ and $G_{\xi,\l}(x,y)$ the resolvent 
kernels of $L_\xi$ on the whole line and on
$[0,1]$ with periodic boundary conditions, respectively.

\begin{remark} \label{remark}
The spectrum of each $L_\xi$ may alternatively be characterized as the zero set for fixed $\xi$ of the periodic Evans function introduced by Gardner in 
\cite{G1} and \cite{G2},
\be
D(\l,\xi)=det(\Psi(\l)-e^{i\xi }I),
\notag
\ee
where $\Psi$ is the monodromy matrix of \eqref{firstorder}, and $D(\l,\xi)$ is analytic in each argument $\l$ and $\xi$;  likewise, the spectrum of $L$ may be
described as the set of all $\l$ such that $D(\l,\xi)$ vanished for some real $\xi$.
So  if $\l$ is in the resolvent set of $L$, then
\be\label{resol.set}
det(\Psi(\l)-e^{i\xi }I) \neq 0 \quad \text{for all} \quad \xi \in \RR,
\ee
that is, $\mathcal F^{y \to y+1}-e^{i\xi }I$ is invertible for all $\xi \in \RR$.  Using decomposition
\be
\mathcal F ^{y \to y+1}=e^{i\xi }\bp I & 0 \\ i\xi I & I \ep \mathcal F_\xi^{y \to y+1} \bp I & 0 \\ i\xi I & I \ep^{-1},
\ee
$I - \mathcal F_\xi ^{y \to y+1}$ is invertible for all $\xi \in \RR$.  Also \eqref{resol.set} implies the existence of $\Pi^{\pm}$ and $\Pi_\xi^{\pm}$ because $\Psi(\l)$ does not have eigenvalue of norm 1. 
\end{remark}

\subsection{The whole line case}

\bl\label{wholelem}
For all $\xi \in [-\pi,\pi)$, the whole line kernel(See the definition above) satisfies
\be\label{whole}
\bp \mathcal G_{\xi,\lambda}\\ \mathcal G_{\xi,\lambda}'\ep (x,y)=
\begin{cases}
\mathcal F_\xi^{y\to x} \Pi_\xi^+(y)\bp 0\\I\ep, & x>y,\\
-\mathcal F_\xi^{y\to x} \Pi_\xi^-(y)\bp 0\\I\ep, & x\le y,\\
\end{cases}
\ee
where $\Pi_\xi^\pm$ are projections onto the manifolds of solutions decaying as $x\to \pm\infty$.
\el

\begin{proof}
We must only check the jump condition $ \Big[ \bp
\mathcal G_{\xi,\lambda}\\ \mathcal G_{\xi,\lambda}'\ep \Big]|_y=\bp 0 \\ I\ep, $ which follows
from $\mathcal{F}_\xi^{y\to y}=I$ and $\Pi_\xi^++\Pi_\xi^-=I$, and the fact that
$\mathcal G_{\xi,\l}(x,y)\to 0$ as $x\to \pm\infty$, which is clear by inspection.
\end{proof}

\subsection{The periodic case}

\bl\label{perlem}
For $\lambda$ in the resolvent set of $L$
and all $\xi \in [-\pi,\pi)$, the periodic kernel satisfies
\be\label{per}
\bp G_{\xi,\lambda}\\G_{\xi,\lambda}'\ep(x,y)=
\begin{cases}
\mathcal{F}_\xi ^{y\to x} M_\xi ^+ (y)\bp 0\\I\ep, & x>y,\\
-\mathcal{F}_\xi ^{y\to x} M_\xi ^- (y)\bp 0\\I\ep, & x\le y,\\
\end{cases}
\ee
where $M_\xi ^+ (y)=(I-\mathcal{F}_\xi ^{y\to y+1})^{-1}$ and
$M_\xi ^- (y)=-(I-\mathcal{F}_\xi ^{y\to y+1})^{-1}\mathcal{F}_\xi ^{y\to y+1}$. \\
(Note: Remark \ref{remark} implies the existence of $M_\xi^+$ and $M_\xi^-$.)
\el

\begin{proof}
We must check the jump condition $ \Big[ \bp
G_{\xi,\lambda}\\G_{\xi,\lambda}'\ep \Big]|_y=\bp 0 \\ I\ep, $ which follows
from $\mathcal{F}_\xi ^{y\to y}=I$ and $M_\xi ^++M_\xi ^-=I$, and the periodicity, $\bp
G_{\xi,\lambda}\\G_{\xi,\lambda}'\ep(y,1)= \bp G_{\xi,\lambda}\\G_{\xi,\lambda}'\ep(y,0).$
By the periodicity of the solution operator, $\mathcal{F}_\xi ^{0\to
y}\mathcal{F}_\xi ^{y\to 1}=\mathcal{F}_\xi ^{1\to y+1}\mathcal{F}_\xi ^{y\to
1}=\mathcal{F}_\xi ^{y\to y+1}$.
By a direct computation, we obtain $\mathcal{F}_\xi ^{y\to 1}(I-\mathcal{F}_\xi ^{y\to y+1})^{-1}
 =\mathcal{F}_\xi ^{y\to 0}(I-\mathcal{F}_\xi ^{y\to y+X})^{-1}\mathcal{F}_\xi ^{y\to y+1}$
which gives us $\bp G_{\xi,\lambda}\\G_{\xi,\lambda}'\ep(y,1)= \bp
G_{\xi,\lambda}\\G_{\xi,\lambda}'\ep(y,0).$
\end{proof}







\section{Pointwise bounds on $G_{\xi,\l}$ for $|\l|>R$, $R$ sufficiently large } \label{pointwise bounds of resolvent kernel}


For the proof of lemma \ref{high frequency}, we follow the proof of high frequency bounds which come from Zumbrun-Howard(\cite{ZH}).
\bl \label{bdresolkernel}
For each $|\xi| \leq \pi$ and for sufficiently large $|\l|$,
\be \label{high frequency}
\begin{split}
\mathcal{F}_\xi ^{y \to x}\Pi_\xi^+(y)
=  e^{-\b^{-1/2}|\l^{1/2}|(x-y)} N_1 O(1) N_2,  \quad  \text{for} \quad x > y,\\
\mathcal{F}_\xi ^{y \to x}\Pi_\xi^-(y)
=  e^{-\b^{-1/2}|\l^{1/2}|(y-x)} N_1 O(1) N_2 ,  \quad  \text{for} \quad x \leq y,
\end{split}
\ee
where $N_1=\bp |\l^{-1/2}|I & 0 \\ 0 & I \ep$, $N_2=\bp |\l^{1/2}|I & 0 \\ 0 & I \ep$ and
$\Pi_\xi^\pm$  projections onto the manifolds of solutions decaying as $x\to \pm\infty$,
and here $\b^{-1/2} \sim min_{\{\l : Re\l \geq \eta_1 - \eta_2 |Im\l|\}}Re(\sqrt{\l / |\l|} ).$\footnote{Here and elsewhere in this section,
$O(1)$ is matrix-valued, denoting a matrix with bounded coefficients.}
\el

\begin{proof}
Setting $\bar{x}=|\lambda^{\frac{1}{2}}|x, \quad
\bar{\lambda}=\lambda/|\lambda|, \quad
\bar{u}(\bar{x})=u(\bar{x}/|\lambda^{\frac{1}{2}}|), \quad
\bar{C}(\bar{x})=C(\bar{x}/|\lambda^{\frac{1}{2}}|)$, in $\eqref{eig}$,
we obtain
\be\label{1.5.1}
\bar{u}''=\bar{\lambda}\bar{u}+|\lambda^{-\frac{1}{2}}|A_\xi \bar{u}'+|\lambda^{-1}|\bar{C}_\xi \bar{u},
\ee
or
\be\label{1.5.2}
\bar{U}'=\bar{\mA} \bar{U}+\Theta_\xi \bar{U},
\ee
where $\bar{U}=\bp \bar{u} \\ \bar{u}' \ep, \quad
\bar{\mA}=\bp 0 & I \\ \bar{\lambda}I & 0 \ep, \quad \Theta_\xi =\bp 0 & 0 \\
|\lambda^{-1}|\bar{C}_\xi  & |\lambda^{-\frac{1}{2}}|A_\xi \ep$ and $|\bar{\lambda}|=1$.
Denote by $\bar{\mathcal{F}}_\xi ^{\bar{y} \to \bar{x}}$ the solution operator of $\eqref{1.5.2}$
and by $\bar{\Pi}_\xi^\pm$  projections onto the manifolds of solutions decaying as $x\to \pm\infty$.

It is easily computed that the eigenvalues of $\bar{\mA}$ are $\mp\ds\sqrt{\bar{\lambda}}$ and
\be \label{1.5.3}
Re\ds\sqrt{\bar{\lambda}} > \beta^{-1/2}
\ee
for all $\lambda \in \{Re\l \geq \eta_1 - \eta_2 |Im\l|\}$ for some $\beta > 0$ and $\eta_1, \eta_2 >0$,
hence the stable and unstable subspaces of each $\bar{\mA}$ are both of
dimension n, and separated by a spectral gap of more than $2\beta$.
Let $P=\bp P_+ \\ P_- \ep$, where rows of $P_\pm$ are left eigenvectors corresponding
$\ds\mp\sqrt{\bar{\l}}$, respectively.

Introducing new coordinates
$w_\pm=P_\pm\bar{U}$
and using
$P\bar{\mA}P^{-1}=\bp -\ds\sqrt{\bar{\l}}I & 0 \\ 0 & \ds\sqrt{\bar{\l}}I\ep$,
we obtain a block diagonal system
\be \label{1.5.4}
\bp w_+ \\ w_- \ep'
= \bp -\ds\sqrt{\bar{\l}}I & 0 \\ 0 & \ds\sqrt{\bar{\l}}I\ep \bp w_+ \\ w_- \ep
+ \bar{\Theta}_\xi \bp w_+ \\ w_- \ep,
\ee
where
\be\label{1.5.5}
\begin{split} \bar{\Theta}_\xi  & =P\Theta_\xi P^{-1} \\
& = \ds \frac{1}{2} \bp I & -\ds\sqrt{\bar{\l}}^{-1} \\ I & \ds\sqrt{\bar{\l}}^{-1}\ep
      \bp 0 & 0 \\ |\lambda^{-1}|\bar{C}_\xi & |\lambda^{-\frac{1}{2}}|A_\xi \ep
      \bp I & I \\ -\ds \sqrt{\bar{\l}} & \ds \sqrt{\bar{\l}} \ep \\
& = \frac{1}{2}|\lambda^{-\frac{1}{2}}|
      \bp -{\l}^{-\frac{1}{2}}\bar{C}_\xi +A_\xi & -{\l}^{-\frac{1}{2}}\bar{C}_\xi - A_\xi\\
         {\l}^{-\frac{1}{2}}\bar{C}_\xi -A_\xi & {\l}^{-\frac{1}{2}}\bar{C}_\xi + A_\xi \ep \\
& = |\lambda^{-\frac{1}{2}}|
      \bp \theta_{\xi_{11}} & \theta_{\xi_{12}} \\ \theta_{\xi_{21}} & \theta_{\xi_{22}}\ep.
\end{split}
\ee

Since $|\l^{-\frac{1}{2}}|$ is sufficiently small for $|\l|$ sufficiently large,
by using the tracking lemma(see \cite{MaZ}, p20), there is a unique linear transformation
\be \label{1.5.6}
S=\bp I & \Phi_+ \\ \Phi_- & I \ep
\quad \text{with} \quad |\Phi_\pm|\leq |\l^{-\frac{1}{2}}|
\ee
so that new coordinates $w_{\pm}=Sz_{\pm}$
generate an exact block diagonal system
\be \label{1.5.8} \bp z_+ \\
z_-\ep' =  \bp A_+ & 0 \\ 0 & A_- \ep \bp z_+ \\ z_- \ep,
\ee
 where
$A_+=-\ds\sqrt{\bar{\l}}I + |\l^{-\frac{1}{2}}|(\theta_{\xi_{11}}+\theta_{\xi_{12}}\Phi_-)$,
and
$A_-=\ds\sqrt{\bar{\l}}I + |\l^{-\frac{1}{2}}|(\theta_{\xi{21}}\Phi_+ +\theta_{\xi_{22}})$.

For any $|\xi|\leq \pi$ and for $i,j=1,2$,
$|\theta_{\xi_{ij}}|= O(|\l^{-\frac{1}{2}}(C-(ia\xi +\xi^2)I)+(a-2i\xi)I)|)$,
and so $\theta_{\xi_{11}}+\theta_{\xi_{12}}\Phi_-=O(1)=\theta_{\xi{21}}\Phi_+ +\theta_{\xi_{22}}$
for sufficiently large $|\l|$.

Now we have $z_+'=(-\ds\sqrt{\bar{\l}}I +O(|\l^{-\frac{1}{2}}|))z_+$
and $z_-'=(\ds\sqrt{\bar{\l}}I + O(|\l^{-\frac{1}{2}}|))z_-$.
From this we obtain the energy estimate,
\be \label{1.5.9}
\begin{split}
\langle z_\pm,z_\pm \rangle' & =\langle z_\pm,\mp Re\sqrt{\bar{\l}}Iz_\pm \rangle + O(|\l^{-\frac{1}{2}}|)\langle z_\pm,z_\pm \rangle\\
& \lessgtr  (\mp \beta^{-1/2}+O(|\l^{-\frac{1}{2}}|))\langle z_\pm,z_\pm \rangle.
\end{split}
\notag
\ee
So we find that
\be (|z_\pm|^2)' \lessgtr (\mp \beta^{-1/2}+O(|\l^{-\frac{1}{2}}|))|z_\pm|^2,
\notag
\ee
hence
\be\label{1.5.10}
\begin{split}
\frac{|z_+(\bar{x})|}{|z_+(\bar{y})|}
& \leq \ds e^{-\b^{-1/2}(\bar{x}-\bar{y})}, \quad \text{for} \quad \bar{x}>\bar{y}, \\
\frac{|z_-(\bar{x})|}{|z_-(\bar{y})|}
& \leq e^{-\b^{-1/2}(\bar{y}-\bar{x})}, \quad \text{for} \quad \bar{x} \leq \bar{y},
\end{split}
\ee
provided $|\l|$ is sufficiently large.
Since $|S|=O(1+|\l^{-\frac{1}{2}}|)$ and $|P|=O(1)$, translating the
bound $\eqref{1.5.10}$ back to $\eqref{1.5.2}$, we obtain for any $|\xi|\leq \pi$,
\be \label{1.5.11}
\begin{split}
\bar{\mathcal{F}}_\xi ^{\bar{y}\to \bar{x}} \bar{\Pi}_\xi^+(\bar y)
= O(1) e^{-\b^{-1/2}(\bar{x}-\bar{y})},  \quad  \text{for} \quad \bar{x}>\bar{y},\\
\bar{\mathcal{F}}_\xi ^{\bar{y}\to \bar{x}} \bar{\Pi}_\xi^-(\bar y)
= O(1) e^{-\b^{-1/2}(\bar{y}-\bar{x})}, \quad  \text{for} \quad \bar{x} \leq \bar{y}.
\end{split}
\ee
provided $|\l|$ is sufficiently large.

The operators $\mathcal{F}_\xi ^{y \to x}\Pi_\xi^\pm(y)$ are evidently
related to the corresponding operators
$\bar{\mathcal{F}}_\xi ^{\bar{y}\to \bar{x}}  \bar{\Pi}_\xi^\pm(y)$ for the rescaled system by the scaling transformation
\be \label{1.5.12}
\mathcal{F}_\xi ^{y \to x}\Pi_\xi^\pm (y)
= \bp |\l^{-1/2}|I & 0 \\ 0 & I \ep
       \bar{\mathcal{F}}_\xi ^{|\l^{1/2}|y\to |\l^{1/2}|x}\bar{\Pi}_\xi^\pm(y)
       \bp |\l^{1/2}|I & 0 \\ 0 & I \ep.
\ee
From $\eqref{1.5.11}$ and  $\bar{\Pi}_\xi^\pm(y)=O(1)$, we thus have
\be \label{1.5.13}
\begin{split}
\mathcal{F}_\xi ^{y \to x}\Pi_\xi ^+(y)
=  e^{-\b^{-1/2}|\l^{1/2}|(x-y)}
    \bp |\l^{-1/2}|I & 0 \\ 0 & I \ep
    O(1)
    \bp |\l^{1/2}|I & 0 \\ 0 & I \ep,  \quad  \text{for} \quad x > y,\\
\mathcal{F}_\xi ^{y \to x}\Pi_{\xi_-}(y)
=  e^{-\b^{-1/2}|\l^{1/2}|(y-x)}
    \bp |\l^{-1/2}|I & 0 \\ 0 & I \ep
    O(1)
    \bp |\l^{1/2}|I & 0 \\ 0 & I \ep,  \quad  \text{for} \quad x \leq y,
\end{split}
\ee
provided $|\l|$ is sufficiently large.
\end{proof}


\begin{proposition}
For any $|\xi| \leq \pi$ and any $x \in [0,1]$,
\be \label{RKptbound}
\begin{split}
|G_{\xi,\l}(x,y)| & \leq C|\l^{-1/2}|(e^{-\beta^{-1/2}|\l^{1/2}||x-y|}+e^{-\beta^{-1/2}|\l^{1/2}|(1-|x-y|)}) \\
|(\partial/\partial_y)G_{\xi,\l}(x,y)| & \leq C(e^{-\beta^{-1/2}|\l^{1/2}||x-y|}+e^{-\beta^{-1/2}|\l^{1/2}|(1-|x-y|)})
\end{split}
\ee
provided $|\l|$ is sufficiently large and $C>0$, that is, $|G_{\xi,\l}|$ is uniformly bounded as $|\l| \to \infty$.
\end{proposition}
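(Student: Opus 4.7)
The plan is to combine the explicit formula of Lemma \ref{perlem} with the high-frequency bound of Lemma \ref{bdresolkernel}, after expanding $M_\xi^\pm(y)$ as a Neumann series adapted to the stable/unstable decomposition $I=\Pi_\xi^++\Pi_\xi^-$. The key observation is that $\Pi_\xi^\pm$, being spectral projections for the Floquet multipliers of $\mathcal{F}_\xi^{y\to y+1}$ (see Remark \ref{remark}), commute with $\mathcal{F}_\xi^{y\to y+1}$, and that for $|\l|$ large the restriction of $\mathcal{F}_\xi^{y\to y+1}$ to $\Range(\Pi_\xi^+)$ has norm $\lesssim e^{-\b^{-1/2}|\l^{1/2}|}$, while its inverse on $\Range(\Pi_\xi^-)$ has the same size. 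This gives the absolutely convergent expansion
\[
M_\xi^+(y)=\sum_{n\ge 0}(\mathcal{F}_\xi^{y\to y+1})^n\Pi_\xi^+(y)-\sum_{n\ge 1}(\mathcal{F}_\xi^{y\to y+1})^{-n}\Pi_\xi^-(y),
\]
with an analogous expansion for $M_\xi^-$.

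Next, by the $1$-periodicity of the coefficients of \eqref{firstorder system}, we have $(\mathcal{F}_\xi^{y\to y+1})^{\pm n}=\mathcal{F}_\xi^{y\mp n\to y}$ and $\Pi_\xi^\pm(y)=\Pi_\xi^\pm(y\pm n)$, so that
\[
\mathcal{F}_\xi^{y\to x}(\mathcal{F}_\xi^{y\to y+1})^n\Pi_\xi^+(y)=\mathcal{F}_\xi^{y-n\to x}\Pi_\xi^+(y-n),\quad \mathcal{F}_\xi^{y\to x}(\mathcal{F}_\xi^{y\to y+1})^{-n}\Pi_\xi^-(y)=\mathcal{F}_\xi^{y+n\to x}\Pi_\xi^-(y+n).
\]
For $x,y\in[0,1]$ with $x>y$, one has $x>y-n$ for all $n\ge 0$ and $y+n\ge x$ for all $n\ge 1$, so each term falls into the correct branch of Lemma \ref{bdresolkernel}. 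Summing the resulting geometric series in $n$ then gives
\[
\mathcal{F}_\xi^{y\to x}M_\xi^+(y)\bp 0\\I\ep=\Big[O\big(e^{-\b^{-1/2}|\l^{1/2}|(x-y)}\big)+O\big(e^{-\b^{-1/2}|\l^{1/2}|(1-(x-y))}\big)\Big]N_1\,O(1)\,N_2\bp 0\\I\ep,
\]
whose top entry, namely $G_{\xi,\l}(x,y)$, picks up the extra factor $|\l^{-1/2}|$ from $N_1$, proving the first bound in \eqref{RKptbound} for $x>y$; the case $x\le y$ is analogous starting from the formula for $M_\xi^-$.

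For the $\partial_y$ estimate, the plan is to differentiate the formula of Lemma \ref{perlem} directly. From the $1$-periodicity of $\mathcal{A}_\xi$ one obtains the commutator identity $\partial_y\mathcal{F}_\xi^{y\to y+1}=[\mathcal{A}_\xi(y),\mathcal{F}_\xi^{y\to y+1}]$; combining this with $\partial_y\mathcal{F}_\xi^{y\to x}=-\mathcal{F}_\xi^{y\to x}\mathcal{A}_\xi(y)$ and the relation $M_\xi^+\mathcal{F}_\xi^{y\to y+1}=M_\xi^+-I$ (so that $M_\xi^+$ and $\mathcal{F}_\xi^{y\to y+1}$ commute), a short computation yields $\partial_y M_\xi^\pm(y)=[\mathcal{A}_\xi(y),M_\xi^\pm(y)]$ and hence
\[
\partial_y\Big\{\mathcal{F}_\xi^{y\to x}M_\xi^\pm(y)\bp 0\\I\ep\Big\}=-\mathcal{F}_\xi^{y\to x}M_\xi^\pm(y)\bp I\\ A_\xi\ep.
\]
Replaying the preceding summation with $\bp I\\ A_\xi\ep$ in place of $\bp 0\\I\ep$, the top entry now absorbs a factor $|\l^{1/2}|$ through $N_2\bp I\\ A_\xi\ep=\bp |\l^{1/2}|I\\ A_\xi\ep$, exactly cancelling the $|\l^{-1/2}|$ from $N_1$ and producing the claimed $O(1)$ pointwise bound on $\partial_y G_{\xi,\l}$.

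The main obstacle is ensuring uniformity in $\xi$, which is secured by the $\xi$-uniform nature of the constants $\b,\eta_1,\eta_2$ in Lemma \ref{bdresolkernel} (the coefficients $A_\xi,C_\xi$ depend continuously on $\xi$ on a compact set, and the geometric-series arguments depend only on the size of the spectral gap). The other slightly delicate step is the bracket identity $\partial_y M_\xi^\pm=[\mathcal{A}_\xi,M_\xi^\pm]$, which can be verified by expanding $M_\xi^+[\mathcal{A}_\xi,\mathcal{F}_\xi^{y\to y+1}]M_\xi^+$ and repeatedly applying $M_\xi^+\mathcal{F}_\xi^{y\to y+1}=M_\xi^+-I$; once this is established the $\partial_y$ bound requires no further analytic input beyond what was already used for $G$.
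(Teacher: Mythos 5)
Your argument for the bound on $G_{\xi,\l}$ itself is essentially a made-explicit version of the paper's proof. Where the paper writes the factorization $\mathcal{F}_\xi^{y\to x}\Pi_\xi^+ M_\xi^+ = \mathcal{F}_\xi^{y\to x}\Pi_\xi^+(I-\mathcal{F}_\xi^{y\to y+1}\Pi_\xi^+)^{-1}\Pi_\xi^+$ and simply invokes boundedness of $(I-\mathcal{F}_\xi^{y\to y+1}\Pi_\xi^+)^{-1}$ (and for the $\Pi_\xi^-$ piece the approximation $(I-\mathcal{F}_\xi^{y\to y+1}\Pi_\xi^-)^{-1}\approx -(\mathcal{F}_\xi^{y\to y+1}\Pi_\xi^-)^{-1}$), you expand the same inverses into their Neumann/geometric series $\sum_{n\ge 0}(\mathcal{F}\Pi^+)^n$ and $-\sum_{n\ge 1}(\mathcal{F}\Pi^-)^{-n}$, shift each term to $\mathcal{F}_\xi^{y\mp n\to x}\Pi_\xi^\pm(y\mp n)$ via periodicity, apply Lemma~\ref{bdresolkernel} term by term, and sum the resulting geometric series in $n$. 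The two are the same estimate: your series sums to exactly the $O(1)$ and $O(e^{-\b^{-1/2}|\l^{1/2}|})$ factors the paper asserts. Your version is a bit more transparent about why each $x>y$ term lands in the correct branch of Lemma~\ref{bdresolkernel}, but it buys no new generality.

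Where you genuinely go beyond the paper is the $\partial_y$ bound. The paper's proof computes the vector $\bp G_{\xi,\l}\\ G_{\xi,\l}'\ep(x,y)$ of Lemma~\ref{perlem}, whose second component is $\partial_x G_{\xi,\l}$, not $\partial_y G_{\xi,\l}$, and then records an $O(1)$ bound for that second component; it never explicitly derives the $\partial_y$ estimate that the proposition actually states. Your commutator identity $\partial_y M_\xi^\pm=[\mathcal{A}_\xi(y),M_\xi^\pm(y)]$ (which follows cleanly from $\partial_y\mathcal{F}_\xi^{y\to y+1}=[\mathcal{A}_\xi(y),\mathcal{F}_\xi^{y\to y+1}]$ by periodicity, together with $M_\xi^+\mathcal{F}_\xi^{y\to y+1}=M_\xi^+-I$), combined with $\partial_y\mathcal{F}_\xi^{y\to x}=-\mathcal{F}_\xi^{y\to x}\mathcal{A}_\xi(y)$, collapses $\partial_y\{\mathcal{F}_\xi^{y\to x}M_\xi^\pm\bp 0\\ I\ep\}$ to $-\mathcal{F}_\xi^{y\to x}M_\xi^\pm\bp I\\ A_\xi\ep$, and the extra $N_2\bp I\\ A_\xi\ep=O(|\l^{1/2}|)$ indeed cancels the $|\l^{-1/2}|$ from $N_1$. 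This is a correct and useful filling-in of a step the paper leaves implicit.

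One cosmetic remark: in displaying the final bound for $x>y$ you write $O(e^{-\b^{-1/2}|\l^{1/2}|(1-(x-y))})$; the paper writes the equivalent $e^{-\b^{-1/2}|\l^{1/2}|(y+1-x)}$, so the statements match. And you are right that the constants $\b,\eta_1,\eta_2$ in Lemma~\ref{bdresolkernel} are uniform in $\xi\in[-\pi,\pi]$ by continuity and compactness; this is needed and worth stating, as you did.
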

\begin{proof}

We note that, by the periodicity of the resolvent kernel,
\be
\mathcal{F}_\xi^{y \to y+1}\Pi_\xi^\pm(y)=\Pi_\xi ^\pm (y+1)\mathcal{F}_\xi ^{y \to y+1}
=\Pi_\xi^\pm(y)\mathcal{F}_\xi^{y \to y+1},
\ee
which implies
\be \label{comm}
\Pi_\xi^\pm(y)(I-\mathcal{F}_\xi^{y \to y+1})(I-\Pi_\xi^\pm(y)\mathcal{F}_\xi ^{y \to y+1})
= (I-\Pi_\xi^\pm(y)\mathcal{F}_\xi ^{y \to y+1})\Pi_\xi^\pm(y)(I-\mathcal{F}_\xi ^{y \to y+1}).
\ee

Now, recall the resolvent kernel for the periodic case as
\be
\bp G_{\xi,\lambda}\\G_{\xi,\lambda}'\ep(x,y)=
\begin{cases}
\mathcal{F}_\xi ^{y\to x} M_\xi ^+ (y)\bp 0\\I\ep, & x>y,\\
-\mathcal{F}_\xi ^{y\to x} M_\xi ^- (y)\bp 0\\I\ep, & x\le y,\\
\end{cases}
\notag
\ee
where $M_\xi ^+ (y)=(I-\mathcal{F}_\xi ^{y\to y+1})^{-1}$ and
$M_\xi ^- (y)=-(I-\mathcal{F}_\xi ^{y\to y+1})^{-1}\mathcal{F}_\xi ^{y\to y+1}$.

Let's consider the case of $x>y$ first. Since $\Pi_\xi^+  +  \Pi_\xi^- = I$,
\be
\mathcal{F}_\xi ^{y\to x}M_\xi^+(y)
=\mathcal{F}_\xi ^{y \to x}\Pi_\xi^+(y)M_\xi^+(y) + \mathcal{F}_\xi ^{y \to x}\Pi_\xi^-(y)M_\xi^+(y).
\notag
\ee
From  $\eqref{1.5.13}$ and $\eqref{comm}$ and recalling that
 $N_1=\bp |\l^{-1/2}|I & 0 \\ 0 & I \ep$, $N_2=\bp |\l^{1/2}|I & 0 \\ 0 & I \ep$, we have  for $x>y$,
\be \label{1.5.14}
\begin{split}
& \mathcal{F}_\xi ^{y \to x}\Pi_\xi ^+(y)M_\xi ^+(y)\\
& = \mathcal{F}_\xi^{y \to x}\Pi_\xi ^+(y)(I-\mathcal{F}_\xi ^{y \to y+1}\Pi_\xi ^+(y))
      (I-\mathcal{F}_\xi ^{y \to y+1}\Pi_\xi ^+(y))^{-1}(I-\mathcal{F}_\xi ^{y \to y+1})^{-1} \\
& = \mathcal{F}_\xi ^{y \to x}\Pi_\xi ^+(y)\Pi_\xi ^+(y)(I-\mathcal{F}_\xi ^{y \to y+1})
      (I-\mathcal{F}_\xi ^{y \to y+1}\Pi_\xi ^+(y))^{-1}(I-\mathcal{F}_\xi ^{y \to y+1})^{-1} \\
& =\mathcal{F}_\xi ^{y \to x}\Pi_\xi ^+(y)(I-\mathcal{F}_\xi ^{y \to y+1}\Pi_\xi ^+(y))^{-1}\Pi_\xi ^+(y)
      (I-\mathcal{F}_\xi ^{y \to y+1})(I-\mathcal{F}_\xi ^{y \to y+1})^{-1} \\
& = \mathcal{F}_\xi ^{y \to x}\Pi_\xi ^+(y)(I-\mathcal{F}_\xi ^{y \to y+1}\Pi_\xi ^+(y))^{-1}\Pi_\xi ^+(y)\\
& =  e^{-\b^{-1/2}|\l^{1/2}|(x-y)}N_1 O(1) N_2,
\end{split}
\ee
where we have used the fact that $\mathcal{F}_\xi ^{y \to y+1}\Pi_\xi ^+(y)$ is decaying for $|\l|$ sufficiently large.
Similarly, we have
\be
\begin{split}
\mathcal{F}_\xi ^{y \to x}\Pi_\xi^-(y)M_\xi ^+(y)
& = \mathcal{F}_\xi ^{y \to x}\Pi_\xi^-(y)(I-\mathcal{F}_\xi ^{y \to y+1}\Pi_\xi^-(y))^{-1}\Pi_\xi^-(y) \\
& \approx \mathcal{F}_\xi ^{y \to x}\Pi_\xi^-(y)(\mathcal{F}_\xi ^{y \to y+1}\Pi_\xi^-(y))^{-1} \Pi_\xi^-(y) \\
& = \mathcal{F}_\xi ^{y \to x}\Pi_\xi^-(y)\Pi_\xi^-(y)\mathcal{F}_\xi ^{y+1 \to y} \\
& = \mathcal{F}_\xi ^{y+1 \to x}\Pi_\xi^-(y) \\
& =  e^{-\b^{-1/2}|\l^{1/2}|(y+1-x)} N_1 O(1) N_2,
\end{split}
\ee
here, the above approximation is from the fact that $\mathcal{F}_\xi ^{y \to y+1}\Pi_\xi^-(y)$ is growing for $|\l|$ sufficiently large.

So, for $x>y$,
\be\begin{split}
\bp G_{\xi,\l} \\ G_{\xi,\l}' \ep (x,y)
& =  (e^{-\b^{-1/2}|\l^{1/2}|(x-y)}N_1 O(1) N_2 + e^{-\b^{-1/2}|\l^{1/2}|(y+1-x)} N_1 O(1) N_2)\bp 0 \\ I \ep \\
& = (e^{-\b^{-1/2}|\l^{1/2}|(x-y)}+e^{-\b^{-1/2}|\l^{1/2}|(y+1-x)}) \bp O(|\l^{-1/2}|)I \\ O(1)I \ep.
\end{split}
\ee

Now, we consider the case of $x \leq y$.
From $\eqref{comm}$ and the calculation of $\eqref{1.5.14}$ , we have for $x \leq y$,
\be
\begin{split}
\mathcal{F}_\xi ^{y \to x}\Pi_\xi ^+(y)M_\xi ^-(y)
& = \mathcal{F}_\xi ^{y \to x}\Pi_\xi ^+(y)(I-\mathcal{F}_\xi ^{y \to y+1})^{-1}\mathcal{F}_\xi ^{y \to y+1} \\
& = \mathcal{F}_\xi ^{y \to x}\Pi_\xi ^+(y)(I-\mathcal{F}_\xi ^{y \to y+1}\Pi_\xi ^+(y))^{-1}\Pi_\xi ^+
        (y)\mathcal{F}_\xi ^{y \to y+1} \\
& = \mathcal{F}_\xi ^{y \to x}\Pi_\xi ^+(y)\mathcal{F}_\xi ^{y \to y+1}(I-\mathcal{F}_\xi ^{y \to y+1}\Pi_\xi ^+
        (y))^{-1} \\
& = \mathcal{F}_\xi ^{y\to x}\mathcal{F}_\xi ^{y \to y+1}\Pi_\xi ^+(y)(I-\mathcal{F}_\xi ^{y \to y+1}\Pi_\xi ^+
        (y))^{-1} \\
& = \mathcal{F}_\xi ^{y+1 \to x+1}\mathcal{F}_\xi ^{y \to y+1}\Pi_\xi ^+(y)(I-\Pi_\xi ^+(y)\mathcal{F}_\xi ^{y \to
        y+1})^{-1} \\
& =\mathcal{F}_\xi ^{y \to x+1}\Pi_\xi ^+(y)(I-\Pi_\xi ^+(y)\mathcal{F}_\xi ^{y \to y+1})^{-1}\\
& = e^{-\b^{-1/2}|\l^{1/2}|(x+1-y)} N_1 O(1) N_2 .
\end{split}
\ee

Similarly, we have
\be
\begin{split}
\mathcal{F}_\xi ^{y \to x}\Pi_\xi^-(y)M_\xi ^-(y)
& =\mathcal{F}_\xi ^{y \to x}\Pi_\xi^-(y)(I-\mathcal{F}_\xi ^{y \to y+1})^{-1}\mathcal{F}_\xi ^{y \to y+1} \\
& =\mathcal{F}_\xi ^{y \to x}\Pi_\xi^-(y)(I-\mathcal{F}_\xi ^{y \to y+1}\Pi_\xi^-(y))^{-1}\Pi_\xi^-
      (y)\mathcal{F}_\xi ^{y \to y+1} \\
& \approx \mathcal{F}_\xi ^{y \to x} \Pi_\xi^-(y)(\mathcal{F}_\xi ^{y \to y+1} \Pi_\xi^-(y))^{-1}\Pi_\xi^-
      (y)\mathcal{F}_\xi ^{y \to y+1} \\
& = \Pi_\xi^-(x)\mathcal{F}_\xi ^{y \to x} \\
& =  e^{-\b^{-1/2}|\l^{1/2}|(y-x)} N_1 O(1) N_2.
\end{split}
\ee

So, for $x \leq y$,
\be\begin{split}
\bp G_{\xi,\l} \\ G_{\xi,\l}' \ep (x,y)
& =  (e^{-\b^{-1/2}|\l^{1/2}|(x+1-y)}N_1 O(1) N_2 + e^{-\b^{-1/2}|\l^{1/2}|(y-x)} N_1 O(1) N_2)\bp 0 \\ I \ep \\
& = (e^{-\b^{-1/2}|\l^{1/2}|(x+1-y)}+e^{-\b^{-1/2}|\l^{1/2}|(y-x)}) \bp O(|\l^{-1/2}|)I \\ O(1)I \ep.
\end{split}
\ee

This completes the proof of the proposition
\end{proof}

\begin{remark}
We can express \eqref{RKptbound} as
\be
G_{\xi,\l}(x,y)=O(|\l^{-1/2}|)(e^{-\b^{-1/2}|\l^{1/2}|  min|x-y_i|}),
\ee
where $y_j=y+j$.
\end{remark}

\begin{remark}
The aliasing between $y$, $y-1$ and $y+1$ indicates why the periodic resolvent formula possesses always a $`` y < x "$ type piece even when $ y > x$. This 
comes from the influence of $y-1$.
\end{remark}

\begin{remark}
The periodic resolvent kernel $G_{\xi,\l}$ may also be obtained in indirect fashion from the whole-line version
$\mathcal G_{\xi, \l}$ by the {\it method of images}
\be
[G_{\xi,\l}(x,y)]=\sum_{j \in \ZZ} \mathcal G_{\xi, \l}(x, y+j),
\ee
which is readily seen to converge (by exponential decay in $|x-y|$) for $\l$ in the resolvent set, and clearly is periodic and satisfies the resolvent equation on $[0,1]$. Likewise, the periodic Green function $G_\xi$ may be expressed in terms of the whole-line version $\mathcal G_\xi$, as
\be\label{aliaseq}
[G_{\xi}(x,t;y)]=\sum_{j \in \ZZ} \mathcal G_\xi (x,t;y+j).
\ee
See \eqref{canc1}--\eqref{canc2} for an illustrative computation in
the scalar constant-coefficient case.
This clarifies the results obtained above by a direct computation, and the relation between the periodic and whole-line kernels. Here, by the `` whole-line " version, we mean the kernel of periodic-coefficient operator considered as acting on $L^2(\mathbb R)$.
\end{remark}


\section{Pointwise bounds on $G$} \label{pointwise bounds of green function}


Now we start the pointwise bounds on $G$. Let's first define the sector
\be
\Omega : = \{ \l : Re(\l) \leq \theta_1 - \theta_2|Im(\l)| \},
\notag
\ee
where $\theta_1$ and  $\theta_2 >0$ are small constants.

\begin{proposition} [\cite{ZH}]
The parabolic operator $\partial_t - L$ has a Green function $G(x,t;y)$ for each fixed $y$ and $(x,t) \neq (y,0)$ given by
\be \label{proposition}
G(x,t;y)=\frac{1}{2\pi i}\int_{\Gamma:=\partial(\Omega \backslash B(0,R))}e^{\l t}G_\l(x,y) d\l
\ee
for $R >0$ sufficiently large and $\theta_1$, $\theta_2 > 0$ sufficiently small. This is the standard spectral resolution(inverse Laplace transform) formula.
\end{proposition}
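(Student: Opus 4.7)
The plan is to follow the classical Zumbrun--Howard argument \cite{ZH} for parabolic Green functions via sectorial spectral resolution, plugging in the resolvent bounds just established in Sections~\ref{resolvent kernel}--\ref{pointwise bounds of resolvent kernel}.

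First I would verify that $\Gamma\subset\rho(L)$. The preceding proposition gives $|G_{\xi,\l}(x,y)|\lesssim|\l|^{-1/2}$ uniformly in $\xi\in[-\pi,\pi)$ for $|\l|\geq R$, which via the Bloch representation \eqref{inverse BF} transfers to a uniform bound on $(L-\l)^{-1}\colon L^2(\RR)\to L^2(\RR)$ in the same range; together with the sectoriality of the second-order elliptic operator $L$ with smooth $1$-periodic coefficients, this allows one to choose $\theta_1,\theta_2>0$ small and $R>0$ large so that $\Gamma$ lies in the resolvent set with $\sigma(L)$ strictly to its left.

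Second I would check absolute convergence of the contour integral and justify differentiation under the integral sign. On the two rays of $\Gamma$, $|e^{\l t}|=e^{\theta_1 t}e^{-\theta_2 t|Im\,\l|}$ decays exponentially in $|Im\,\l|$, and combined with the $|\l|^{-1/2}$ resolvent bound the integrand is dominated by a function integrable in $\l$, uniformly in $x$, $y$ and in $t$ bounded below. Consequently the formula defines a function smooth in $(x,t)$ for $t>0$, and $\partial_t$ and $L$ may be brought inside the integral.

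Third I would verify the defining properties. Differentiating and using the defining resolvent identity $(L-\l)G_\l(\cdot,y)=\delta_y$ yields
\begin{equation}
(\partial_t-L)G(x,t;y)=-\frac{\delta_y(x)}{2\pi i}\int_\Gamma e^{\l t}\,d\l,
\notag
\end{equation}
and the right-hand integral vanishes for $t>0$ by deforming $\Gamma$ leftward through $\Omega$ (where $e^{\l t}\to 0$ as $Re\,\l\to -\infty$ and the integrand $e^{\l t}$ is entire); hence $G$ solves the homogeneous parabolic equation off $(y,0)$. The delta-function initial condition $G(\cdot,0^+;y)=\delta_y$ is the main obstacle: it is obtained by testing against a dense class of $f$ and showing $\int G(x,t;y)f(y)\,dy\to f(x)$ as $t\to 0^+$ via the standard inverse-Laplace-transform / sectorial semigroup construction of Kato \cite{K}. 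This is the one step where the resolvent bound is used \emph{uniformly} along all of $\Gamma$ (not only at high frequency), and it parallels the whole-line treatment of \cite{ZH} exactly, so no essentially new ideas arise in the periodic framework.
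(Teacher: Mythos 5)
Your proposal is correct and follows the standard sectorial/analytic-semigroup construction of \cite{ZH}, which is exactly what the paper cites (the paper itself gives no proof here, simply attributing the formula to \cite{ZH}). The three steps you outline --- locating $\Gamma$ in the resolvent set via the uniform high-frequency resolvent bound, absolute convergence and differentiation under the integral sign, and verification of the PDE off $(y,0)$ together with the delta initial trace --- reproduce the \cite{ZH} argument faithfully; the only cosmetic caveat is that the pointwise kernel bound $|G_{\xi,\l}|\lesssim|\l|^{-1/2}$ yields, after integration in $y$ and Schur's test, an even better $O(|\l|^{-1})$ operator bound, but your weaker statement already suffices for the contour-integral convergence and the identification of the semigroup.
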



\noindent
\begin{proof}[\textbf{Proof of Theorem \ref{main1}.}]
\textbf{Case(i). $\ds\frac{|x-y|}{t}$ large.} We first consider the case that $|x-y|/t \geq S$, $S$ sufficiently large. For this case, it is 
hard to estimate $G$ through $|[G_\xi(x,t;y)]|$, directly, because of the problem of aliasing; see Remark \ref{alias}.
Instead we estimate $|G_\l(x,y)|$ first and we estimate $|G(x,t;y)|$ by \eqref{proposition}. This is treated by exactly the same argument as in \cite{ZH}. By \cite{ZH}, notice that
\be
| G_\l (x,y) | \leq C |\l^{-1/2}|e^{-\b^{-1/2}|\l^{1/2}||x-y|},
\notag
\ee
for all $\l \in \Omega \backslash B(0,R)$ and $R>0$ sufficiently large,
and here, $\b^{-1/2} \sim \ds \min_{\l \in \Omega \cap \{ |\l| >R\}} Re \sqrt{\l/|\l|}$.

Finally we have
\be
|G(x,t;y)| \leq  C\Big| \int_{\Gamma}e^{\l t}G_\l(x,y)d\l \Big| \leq  t^{-\frac{1}{2}}e^{-\eta t}e^{-\frac{|x-y-at|^2}{Mt}},
\notag
\ee
for some $\eta > 0$ and $M>0$ sufficiently large. (See \cite{ZH} for a detail proof)


\bigskip

\textbf{Case (ii)}.\textbf{ $\ds\frac{|x-y|}{t} < S$ bounded.} 
To begin, notice that by standard spectral perturbation theory [K], the total eigenprojection $P(\xi)$ onto the eigenspace of $L_\xi$ associated with the eigenvalues $\l(\xi)$  bifurcating from the $(\xi, \l(\xi))=(0,0)$ state is well defined and analytic in $\xi$ for $\xi$ sufficiently small, since the discreteness of the spectrum of $L_\xi$ implies that the eigenvalue $\l(\xi)$ is separated at $\xi=0$ from the remainder of the spectrum of $L_0$. By (D2), there exists an $\eps > 0$ such that $Re \sigma(L_\xi) \leq -\theta |\xi|^2$ for $0<|\xi|<2 \eps$. With this choice of $\eps$, we first introduce a smooth cut off function $\phi(\xi)$ such that
\be
\phi(\xi)=
\begin{cases}
1, & \text{if} \quad |\xi| \leq \varepsilon \\
0, & \text{if} \quad |\xi| \geq 2\varepsilon, \\
\end{cases}
\notag
\ee
where $\eps > 0$ is a sufficiently small parameter. Now from the inverse Bloch-Fourier transform representation, we split the Green function
\be
G(x,t;y)= \int_{-\pi}^{\pi}e^{i \xi x}e^{L_\xi t} \hat{\delta_y}(\xi,x) d\xi
\notag
\ee
into its low-frequency part
\be
I =  \int_{-\pi}^{\pi}e^{i \xi x}\phi(\xi) P(\xi)e^{L_\xi t} \hat{\delta_y}(\xi,x) d\xi
\notag
\ee
and high frequency part
\be
II = \int_{-\pi}^{\pi}e^{i \xi x}(1-\phi(\xi) P(\xi))e^{L_\xi t} \hat{\delta_y}(\xi,x) d\xi.
\notag
\ee

Let's start by considering the first part $I$.
\be \label{4.14}
\begin{split}
I
& =  \int_{|\xi| \leq 2\varepsilon}e^{i \xi x}\phi(\xi) P(\xi)e^{L_\xi t} \hat{\delta_y}(\xi,x) d\xi \\
& =\int_{|\xi| \leq 2\varepsilon}e^{i \xi x}\phi(\xi)e^{\l(\xi)t}q(x,\xi)\tilde q(y,\xi) d\xi \\
& =\int_{-\infty}^{\infty}e^{i\xi(x-y)}e^{(-ia\xi-b\xi^2)t}q(x,0)\tilde q(y,0)d\xi -  \int_{|\xi|\geq 2\varepsilon}e^{i\xi(x-y)}e^{(-ia\xi-b\xi^2)t}q(x,0)\tilde q(y,0)d\xi \\
& \qquad + \int_{|\xi|\leq 2\varepsilon}e^{i\xi(x-y)}e^{(-ia\xi-b\xi^2)t}
                 (e^{O(|\xi^3|)t}\phi(\xi)q(x,\xi)\tilde q(y,\xi)-q(x,0)\tilde q(y,0))d\xi \\
& = \ds \frac{1}{\sqrt{4\pi bt}}e^{- \frac{|x-y-at|^2}{4bt}}q(x,0)\tilde q(y,0) + II' + III'.
\end{split}
\ee

View $II'$ and $III'$ as complex contour integrals in the variable $\xi$ and define
\be \label{4.15}
\bar{\alpha} : =\ds \Big | \frac{x-y-at}{2bt}  \Big |
\ee
which is bounded because $|x-y|/t$ is bounded.
Using the Cauchy's Theorem and writing  $\xi_1 = \xi +i \bar{\a}$ and $\xi_2 = \varepsilon + iz$,
we have the estimate
\be \label{4.16}
\begin{split}
|II'|
& \leq C\Big| \int_{\varepsilon}^{\infty}e^{i\xi_1(x-y-at)}e^{-b\xi_1^2t}d\xi_1 \Big|
          +C\Big| \int_{0}^{\bar{\a}}e^{i\xi_2(x-y-at)}e^{-b\xi_2^2t}d\xi_2 \Big | \\
& = C\int_{\varepsilon}^{\infty} \Big | e^{i(\xi+i\bar{\a})2bt \bar{\a}}e^{-b(\xi+i\bar{\a})^2t}\Big |  d\xi
       + C\int_{0}^{\bar{\a}}\Big | e^{i(\varepsilon+zi)2bt \bar{\a}}e^{-b(\varepsilon+zi)^2t} \Big | dz \\
& = Ce^{-bt\bar{\a}^2} \int_{\varepsilon}^{\infty}e^{-b\xi^2 t} d\xi
       +C e^{-b\varepsilon^2 t} \int_{0}^{\bar{\a}} e^{btz^2-2bt\bar{\a}z} dz \\
&  \leq Ce^{-\frac{|x-y-at|^2}{4bt}} t^{-\frac{1}{2}}e^{-\eta t}
       + Ce^{-b\varepsilon^2 t} \int_{0}^{\bar{\a}} e^{-btz^2} dz \\
& \leq Ce^{-\frac{|x-y-at|^2}{4bt}} t^{-\frac{1}{2}}e^{-\eta t}
       + Ce^{-b\varepsilon^2 t}  t^{-\frac{1}{2}}e^{-\eta t}\\
& \leq Ct^{-\frac{1}{2}}e^{-\eta t}e^{-\frac{|x-y-at|^2}{Mt}},
\notag
\end{split}
\ee
for some positive $\eta$ and $M >0$ sufficiently  large.

Similarly, setting
\be
\tilde \a = \min \{\varepsilon, \bar \a \},
\notag
\ee
we can estimate $|III'|$ which is

\be \label{4.17}
\begin{split}
& \quad |III'| \\
& = C \Big| \int_{|\xi|\leq \varepsilon}e^{i\xi(x-y)}e^{(-ia\xi-b\xi^2)t}
                 \Big(e^{O(|\xi|^3)t}q(x,\xi)\tilde q(y,\xi)-q(x,0)\tilde q(y,0)\Big)d\xi \Big| \\
& \leq  C \Big| \int_{|\xi|\leq \varepsilon}e^{i\xi(x-y)}e^{(-ia\xi-b\xi^2)t}
                 \Big(e^{O(|\xi|^3)t}-1+O(|\xi|)\Big)d\xi \Big| \\
& \leq C \int_{-\varepsilon}^{\varepsilon} \Big | e^{i(\xi+i\tilde{\a})(x-y-at)}e^{-b(\xi+i\tilde {\a})^2t}
           \Big(e^{O(|\xi|^3)t+O(|\tilde{\a}|^3)t} -1 + O(|\xi|) + O(|\tilde{\a}|)\Big) \Big| d\xi  \\
      & \qquad  + C\int_{0}^{\bar{\a}}\Big | e^{i(\varepsilon+iz)(x-y-at)}e^{-b(\varepsilon+iz)^2t}
             \Big(e^{O(|\varepsilon|^3)t+O(|z|^3)t} -1 + O(|\varepsilon|) + O(|z|)\Big) \Big| dz \\
& \leq Ce^{-bt\tilde{\a}^2} \int_{-\varepsilon}^{\varepsilon} e^{-b\xi^2 t}
            \Big(e^{O(|\xi|^3)t+O(|\tilde{\a}|^3)t}+O(|\xi|)+1\Big) d\xi \\
      & \qquad + C e^{-b\varepsilon^2 t} \int_{0}^{\tilde{\a}} e^{bz^2t-2bt\tilde{\a}z}
            \Big(e^{O(|\varepsilon|^3)t+O(|z|^3)t}+O(|z|)+1\Big) dz \\
& \leq Ce^{-\frac{bt\tilde{\a}^2}{2}} \int_{-\varepsilon}^{\varepsilon} e^{-\frac{b\xi^2 t}{2}}\Big(O(|\xi|)+1\Big) d\xi
       + Ce^{-\frac{b\varepsilon^2 t}{2}} \int_{0}^{\tilde{\a}} e^{-\frac{bz^2t}{2}}\Big(O(|z|)+1\Big) dz \\
& \leq  Ce^{-\frac{bt\tilde{\a}^2}{2}} \Big ( \int_{-\varepsilon}^{\varepsilon}e^{-\frac{b\xi^2 t}{2}}|\xi|d\xi
       +  \int_{-\varepsilon}^{\varepsilon}e^{-\frac{b\xi^2 t}{2}} d\xi \Big )
       + Ce^{-\frac{b\varepsilon^2 t}{2}} \Big( \int_{0}^{\tilde{\a}}e^{-\frac{bz^2 t}{2}} |z|dz
          +  \int_{0}^{\tilde{\a}}e^{-bz^2 t} dz \Big ) \\
& \leq Ce^{-\frac{|x-y-at|^2}{M_2t}}\Big ((t+1)^{-1} + t^{-\frac{1}{2}}e^{-\eta t} \Big ),
\notag
\end{split}
\ee
for some $\eta >0$ and $M > 0$ sufficiently large.

\bigskip

Next, we consider the second part $II$.  
Noting first that
\be
\hat{\delta_y}(\xi,x)=\ds \sum_{j\in \ZZ}e^{j2\pi ix}\hat{\delta_y}(\xi +j2\pi)=\ds \sum_{j\in \ZZ}e^{j2\pi ix}e^{-(\xi+j2\pi)y}= e^{-i\xi y}\ds \sum_{j\in \ZZ}e^{j2\pi i(x-y)}=e^{-i \xi y}[\delta_y(x)],
\notag
\ee
we have for $|\xi| \geq 2\eps$, $\phi(\xi)=0$ and
\be
\begin{split}
& \int_{2\eps \leq |\xi| \leq \pi}e^{i\xi x}(1-\phi(\xi) P(\xi))e^{L_\xi t} \hat{\delta_y}(\xi,x) d\xi \\
& = \int_{2\varepsilon \leq |\xi| \leq \pi }e^{i\xi x}e^{L_\xi t} \hat{\delta_y}(\xi,x) d\xi \\
& = \int_{2\varepsilon \leq |\xi| \leq \pi }e^{i\xi (x-y)}e^{L_\xi t}[\delta_y(x)]d\xi \\
& = \int_{2\varepsilon \leq |\xi| \leq \pi }e^{i\xi (x-y)}[G_\xi(x,t;y)] d\xi,
\notag
\end{split}
\ee
where the brackets $[\cdot]$ denote the periodic extensions of the given function onto the whole line.
Assuming that $Re\sigma(L_\xi) \leq -\eta <0$ for $|\xi| \geq 2\eps$, we have
\be \label{4.12}
[G_\xi(x,t;y)]=\frac{1}{2\pi i}\int_{\Gamma_1} e^{\l t}[G_{\xi,\l}(x,y)] d\l,
\notag
\ee
here, we fix $\Gamma_1 =\partial(\Omega \cap \{ Re\l \leq -\eta\})$ independent of $\xi$. Parameterizing $\Gamma_1$ by $Im\l : = k$, and applying the bounds of $\ds \sup_{|\xi| \leq \pi} |[ G_{\xi,\l}(x,y)] | < O(|\l^{-\frac{1}{2}}|)$ for large $|\l|$ in Section \ref{pointwise bounds of resolvent kernel}, we have
\be
\begin{split}
|[G_\xi(x,t;y)]|
& \leq C \int_{\Gamma_1} e^{Re \l t} |[G_{\xi,\l}(x,y)]| d\l \\
& \leq C e^{-\eta t } \int_{0}^{\infty} k^{-\frac{1}{2}}e^{- \theta_2 k t} dk \\
& \leq C t^{-\frac{1}{2}}e^{-\eta t} \\
& \leq C t^{-\frac{1}{2}}e^{-\frac{\eta }{2} t}e^{-\frac{|x-y-at|^2}{Mt}},
\notag
\end{split}
\ee
here, the last inequality is from $\frac{|x-y-at|}{t} \leq S_1$ bounded. Indeed, for large $M>0$,
\be
e^{-\frac{|x-y-at|^2}{Mt}} =e^{-(\frac{|x-y-at|}{t})^2 \frac{t}{M}}  \geq e^{-\frac{S_1}{M}t} \geq e^{-\frac{\eta}{2}t},
\notag
\ee
and so,
\be\label{high}
\begin{split}
& \Big| \int_{2\eps \leq |\xi| \leq \pi}e^{i\xi x}(1-\phi(\xi) P(\xi))e^{L_\xi t} \hat{\delta_y}(\xi,x) d\xi \Big| \\
& \leq C \ds \sup_{2\varepsilon \leq |\xi| \leq \pi}|[G_\xi(x,t;y)]| \\
& \leq Ct^{-\frac{1}{2}}e^{-\frac{\eta }{2} t}e^{-\frac{|x-y-at|^2}{Mt}}.
\end{split}
\ee
For $|\xi|$ sufficiently small, on the other hand, $\phi(\xi)=1$, and $I - \phi(\xi)P=I-P=Q$, where $Q$ is the eigenprojection of $L_\xi$ associated with eigenvalues complementary to $\l(\xi)$ bifurcating from $(\xi,\l(\xi))=(0,0)$, which have real parts strictly less than zero. 
So we can estimate for $|\xi| \leq \eps$ in
the same way as in $\eqref{high}$. Combining these observations, we have 
the estimate
\be
|II| \leq Ct^{-\frac{1}{2}}e^{-\frac{\eta }{2} t}e^{-\frac{|x-y-at|^2}{Mt}},
\notag
\ee
for some $\eta > 0$ and sufficiently large $M>0$.

This completes the proof of the theorem.
\end{proof}

\br\label{alias}
From \eqref{aliaseq}, we see that estimating $G$ using
$|[G_\xi]|$ would result rather in the sum of aliased versions
of the Green functions on the
whole line, centered at all $y+j$, which for small $|x-y|/t$ would
lead to non-negligible errors.
That is, in the ``small-time'' regime 
$|x-y|/t$ large there is considerable cancellation in the
inverse Bloch transform involving the
integration with respect to $\xi$, that cannot be detected by
modulus bounds alone.
It is for this reason that we compute in this regime using direct
inverse Laplace transform estimates as in \cite{ZH}.
That is, this part of our analysis has a very different flavor from
the rest of the estimates using Bloch decomposition.  For short time,
these estimates may be obtained from standard parametrix estimates
as in \cite{F}; indeed, we conjecture that with further effort one
might recover by parametrix methods the same bounds for all
$|x-y|/t$ sufficiently large.
\er


\section{ Example (constant-coefficient scalar case)} \label{example}


In this section, we illustrate the previous analysis by a simple example. Consider the constant-coefficient scalar case
\be
u_t+au_x=u_{xx}, \quad a > 0 \quad \text{constant}
\ee
This gives a eigenvalue equation for each $\xi \in [-\pi, \pi)$,
\be\label{5.2}
u'' - (a- i2\xi)u' - (\xi^2+ia\xi)u = \l u
\ee
Rewriting as a first-order system
\be\label{constant ODE}
U' = \mA_\xi (x,\l)U,
\ee
where
\be
U=\bp u \\ u' \ep, \quad \mA_\xi = \bp 0 & 1 \\ \l+\xi^2 + ia\xi  &  a-i2\xi \ep.
\ee

By a direct calculation we can find two eigenvalues of $\mA_\xi$,
\be
\mu_\pm = \ds \frac{a-i2\xi \pm \sqrt{a^2+4 \l}}{2},
\ee
which are solutions of the characteristic equation
\be
\mu^2-(a-i2\xi)\mu-\l-\xi^2-ia\xi = 0.
\ee
Without of loss generality we assume $Re\mu_- < 0$ and $Re\mu_+ > 0$.

Let's construct $G_{\xi,\l}(x,y)$ and $\mathcal G_{\xi,\l}(x,y)$.
To find $\mathcal{G}_{\xi,\l}(x,y)$, set
\be
\mathcal{G}_{\xi,\l}(x,y) =
\begin{cases}
A(y)e^{\mu_-x}, & x > y, \\
B(y)e^{\mu_+x}, & x \leq y ,\\
\end{cases}
\ee
which satisfies the jump condition $ \Big [ \bp G_{\xi,\l} \\  G_{\xi,\l}' \ep \Big ] \Big |_y = \bp 0 \\ 1 \ep. $ By a direct calculation, we have
\be \label{constant whole}
\mathcal{G}_{\xi,\l}(x,y) =
\begin{cases}
\ds\frac{e^{\mu_-(x-y)}}{\mu_- - \mu_+},& x > y, \\\\
\ds\frac{e^{\mu_+(x-y)}}{\mu_- - \mu_+}, & x \leq y ,\\
\end{cases}
\ee
\\
In this case, the projections are
\be
\Pi_\xi^+
= \bp
-\ds\frac{\mu_+}{\mu_- - \mu_+}  &  \ds\frac{1}{\mu_- - \mu_+} \\\\
-\ds\frac{\mu_-\mu_+}{\mu_- - \mu_+}  &  \ds\frac{\mu_-}{\mu_- - \mu_+}
\ep,
\quad
\Pi_\xi^-
= \bp
\ds\frac{\mu_-}{\mu_- - \mu_+}  &  \ds-\frac{1}{\mu_- - \mu_+} \\\\
\ds\frac{\mu_-\mu_+}{\mu_- - \mu_+}  &  -\ds\frac{\mu_-}{\mu_- - \mu_+}
\ep,
\ee
and the solution operator of \eqref{constant ODE} is
\be
\mathcal F_\xi^{y \to x}=e^{\mA_\xi(x-y)}=e^{\mu_-(x-y)}\Pi_\xi^+ +e^{\mu_+(x-y)}\Pi_\xi^-,
\ee
and hence the formula \eqref{whole} is exactly the the same as \eqref{constant whole}.

Similarly, we find  $G_{\xi,\l}(x,y)$ by setting
\be
G_{\xi,\l}(x,y)=
\begin{cases}
A(y)e^{\mu_- x}+B(y)e^{\mu_+ x}, & x > y, \\
C(y)e^{\mu_- x}+D(y)e^{\mu_+ x}, & x \leq y. \\
\end{cases}
\ee
We need to find A(y), B(y), C(y) and D(y) which satisfy the periodicity
$\bp G_{\xi,\l}\\ G_{\xi,\l}' \ep (0,y) = \bp G_{\xi,\l} \\ G_{\xi,\l}'  \ep (1,y)$
and the jump condition $ \Big [ \bp G_{\xi,\l} \\  G_{\xi,\l}' \ep \Big ] \Big |_y = \bp 0 \\ 1 \ep. $
By a direct calculation, we find for each $\xi \in [-\pi,\pi)$,
\be \label{constant per}
G_{\xi,\l}(x,y)=
\begin{cases}
\ds\frac{e^{\mu_- (x-y)}}{(\mu_- - \mu_+)(1-e^{\mu_-})}
- \ds\frac{e^{\mu_+ (x-y)}}{(\mu_- - \mu_+)(1-e^{\mu_+})}, & x > y, \\\\
\ds\frac{e^{\mu_- (x-y+1)}}{(\mu_- - \mu_+)(1-e^{\mu_-})}
- \ds\frac{e^{\mu_+ (x-y+1)}}{(\mu_- - \mu_+)(1-e^{\mu_+})}, & x \leq y. \\
\end{cases}
\ee
To verify \eqref{per}, we first check
\be
(I-e^{\mA_\xi })(\ds\frac{1}{1-e^{\mu_- }}\Pi_\xi^+  + \frac{1}{1-e^{\mu_+ }}\Pi_\xi^-)=I.
\notag
\ee
So
\be
M_\xi^+
=(I-\mathcal F_\xi^{y \to y+1})^{-1}
=(I-e^{\mA_\xi })^{-1}
= \ds\frac{1}{1-e^{\mu_- 1}} \Pi_\xi^+  + \frac{1}{1-e^{\mu_+ 1}}\Pi_\xi^-,
\notag
\ee
and
\be
M_\xi^-
=-(I-\mathcal F_\xi^{y \to y+1})^{-1} \mathcal F_\xi^{y \to y+1}
= -\ds\frac{e^{\mu_- }}{1 -e^{\mu_- }}\Pi_\xi^+  - \frac{e^{\mu_+ }}{1-e^{\mu_+ }}\Pi_\xi^-.
\notag
\ee
This implies \eqref{per} is exactly the same as \eqref{constant per}.
\\

Now let's show that
\be
G_{\xi,\l}(x,y)=\ds \sum_{j \in \ZZ} \mathcal{G}_{\xi,\l} (x,y+j).
\ee

We first consider the case of $0 \leq y \leq x \leq 1$.  For $j \leq 0$, $x > y+j$, and for $ j \geq 1$, $x< y+j$. Thus we have, by the  geometric series,
\be\label{canc1}
\begin{split}
\ds \sum_{j \in \ZZ} \mathcal{G}_{\xi,\l} (x,y+j)
& = \sum_{j \leq 0} \mathcal{G}_{\xi,\l}(x,y+j) + \sum_{j \geq 1} \mathcal{G}_{\xi,\l}(x,y+j)\\
& = \frac{1}{\mu_--\mu_+}\sum_{j \leq 0} e^{\mu_-(x-y-j)}
     + \frac{1}{\mu_--\mu_+}\sum_{j \geq 1}e^{\mu_+(x-y-j)} \\
& = \frac{e^{\mu_-(x-y)}}{\mu_--\mu_+} \sum_{j \geq 0}(e^{\mu_-})^j
     + \frac{e^{\mu_+(x-y)}}{\mu_--\mu_+}\sum_{j \geq 1}(e^{-\mu_+})^j\\
& = \frac{e^{\mu_-(x-y)}}{(\mu_--\mu_+)(1-e^{\mu_-})}
     + \frac{e^{\mu_+(x-y-1)}}{(\mu_--\mu_+)(1-e^{-\mu_+})} \\
& = \frac{e^{\mu_-(x-y)}}{(\mu_--\mu_+)(1-e^{\mu_-})}
     -  \frac{e^{\mu_+(x-y)}}{(\mu_--\mu_+)(1-e^{\mu_+})} \\
& = G_{\xi,\l}(x,y).
\end{split}
\ee
 Similarly, we consider the case of $0 \leq x \leq y \leq 1$.  For $j \leq -1$, $x > y+j$, and for $ j \geq 0$, $x \leq  y+j$.
\be\label{canc2}
\begin{split}
\ds \sum_{j \in \ZZ} \mathcal{G}_{\xi,\l} (x,y+j)
& = \sum_{j \leq -1} \mathcal{G}_{\xi,\l}(x,y+j) + \sum_{j \geq 0} \mathcal{G}_{\xi,\l}(x,y+j)\\
& = \frac{1}{\mu_--\mu_+}\sum_{j \leq -1} e^{\mu_-(x-y-j)}
     + \frac{1}{\mu_--\mu_+}\sum_{j \geq 0}e^{\mu_+(x-y-j)} \\
& = \frac{e^{\mu_-(x-y)}}{\mu_--\mu_+} \sum_{j \geq 1}(e^{\mu_-})^j
     + \frac{e^{\mu_+(x-y)}}{\mu_--\mu_+}\sum_{j \geq 0}(e^{-\mu_+})^j\\
& = \frac{e^{\mu_-(x-y+1)}}{(\mu_--\mu_+)(1-e^{\mu_-})}
     + \frac{e^{\mu_+(x-y)}}{(\mu_--\mu_+)(1-e^{-\mu_+})} \\
& = \frac{e^{\mu_-(x-y+1)}}{(\mu_--\mu_+)(1-e^{\mu_-})}
     -  \frac{e^{\mu_+(x-y+1)}}{(\mu_--\mu_+)(1-e^{\mu_+1})} \\
& = G_{\xi,\l}(x,y).
\end{split}
\ee
Thus,  $[G_{\xi,\l}(x,y)]=\ds \sum_{j \in \ZZ} \mathcal{G}_{\xi,\l} (x,y+j)$, and so
$[G_\xi(x,t;y)]=\ds \sum_{j \in \ZZ} \mathcal G_\xi(x,t;y+j)$ for all $x,y \in \RR$.


\section{Behavior of $u$ for $u_t=u_{xx}+u^q$, $q \geq 4$} \label{behavior of heat equation} \label{main behavior}


In this section,  we start with the nonlinear analysis of a perturbed
heat equation as practice for our later analysis of $u_t=Lu+ O(|u|^2)$ 
for the linear operator L of \eqref{sp}. 
We show the behavior of $u$ satisfying $u_t=u_{xx}+u^q$, $q\geq 4$  for three cases of initial data ($u_0(x)=u(x,0)$): \\
\indent (1) $|u_0|_{L^1 \cap L^\infty}$, $|xu_0|_{L^1} \leq E_0$,  \\
\indent(2) $|u_0(x)| \leq E_0e^{-\frac{|x|^2}{Mt}}$, \\
\indent(3) $|u_0(x)| \leq E_0(1+|x|)^{-r}$, $r>2$, \\
where $E_0>0$ is sufficiently small and $M>0$ sufficiently large. It is very natural to consider only $q \geq 4$ because for heat kernel $k$, $u^q \sim k^q \sim t^{-\frac{(q-1)}{2}}k$ and $u_t $, $u_{xx} \sim t^{-1}k$ implies that $\frac{(q-1)}{2} > 1$ is the criterion that the nonlinear part be asymptotically
negligible; see \cite{S1,S2} for further discussion.

\subsection{Behavior for initial data $|u_0|_{L^1 \cap L^\infty}$, $|xu_0|_{L^1} \leq E_0$}
In this section, we take $E_0>0$ sufficiently small and $q \geq 4$.


\begin{lemma}
Suppose that $u(x,t)$ satisfies $u_t=u_{xx}+u^q$ and $|u_0|_{L^1 \cap L^\infty} \leq E_0$, 
for $E_0>0$ sufficiently small and $q\ge 4$.
Define
\be
\zeta(t) := \sup_{0 \leq s \leq t, 1\leq p \leq \infty} |u|_{L^p}(s)(1+s)^{\frac{1}{2}(1-\frac{1}{p})}.
\notag
\ee
Then, for all $t \geq 0$ for which  $\zeta(t)$ is finite, some $C>0$,
\be \label{6.1.1}
\zeta(t) \leq C(E_0+\zeta^4(t)).
\ee
\end{lemma}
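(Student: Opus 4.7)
The plan is to run a standard bootstrap/Duhamel argument, writing
\[
 u(x,t) = (k(\cdot,t) * u_0)(x) + \int_0^t \big(k(\cdot,t-s) * u^q(\cdot,s)\big)(x)\, ds,
\]
with heat kernel $k(x,t) = (4\pi t)^{-1/2} e^{-x^2/(4t)}$, and estimating each term in $L^p$ uniformly in $p \in [1,\infty]$. The linear term will be controlled purely by $E_0$, while the nonlinear term must be shown to contribute a factor of $\zeta^q(t) \le C \zeta^4(t)$; the inequality $\zeta^q \le \zeta^4$ is free under the bootstrap assumption that $\zeta \le 1$, valid once $E_0$ is small.

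For the linear piece I would split into two regimes of $t$ and apply Young's convolution inequality differently in each. For small $t$, use $\|k(\cdot,t)*u_0\|_{L^p} \le \|k(\cdot,t)\|_{L^1}\|u_0\|_{L^p} \le C\|u_0\|_{L^1 \cap L^\infty}$, where $L^p \hookrightarrow L^1 \cap L^\infty$ by interpolation; for large $t$, use $\|k(\cdot,t)*u_0\|_{L^p} \le \|k(\cdot,t)\|_{L^p}\|u_0\|_{L^1} \le C t^{-\frac12(1-1/p)} E_0$. Combined this gives the linear contribution $\le C E_0 (1+t)^{-\frac12(1-1/p)}$ uniformly in $p$, which is exactly the decay rate appearing in the definition of $\zeta$.

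For the nonlinear piece, the key identity is $\|u^q(\cdot,s)\|_{L^1} = \|u(\cdot,s)\|_{L^q}^q \le \zeta^q(t)(1+s)^{-(q-1)/2}$, obtained by choosing $p=q$ in the definition of $\zeta$. Young's inequality then yields
\[
 \|k(\cdot,t-s)*u^q(\cdot,s)\|_{L^p} \le C(t-s)^{-\frac12(1-1/p)} \zeta^q(t)(1+s)^{-(q-1)/2}.
\]
I would split the time integral as $\int_0^{t/2}+\int_{t/2}^t$: on the first interval the heat-kernel factor is bounded by $C(1+t)^{-\frac12(1-1/p)}$ and the remaining integral $\int_0^{t/2}(1+s)^{-(q-1)/2}\,ds$ converges since $q \ge 4 > 3$; on the second interval $(1+s)^{-(q-1)/2} \le C(1+t)^{-(q-1)/2}$ and $\int_{t/2}^t (t-s)^{-\frac12(1-1/p)}\,ds \le C(1+t)^{1-\frac12(1-1/p)}$, giving an aggregate decay $(1+t)^{-(q-2)/2 + 1/(2p)}$, which beats $(1+t)^{-\frac12(1-1/p)}$ precisely when $q \ge 3$.

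The main technical point is less an obstacle than a bookkeeping exercise: making sure the exponent arithmetic in the split integral gives \emph{exactly} $(1+t)^{-\frac12(1-1/p)}$ uniformly across all $p \in [1,\infty]$, including the endpoints $p=1$ and $p=\infty$ where either the heat-kernel singularity at $s=t$ or the slow time-decay of $(1+s)^{-(q-1)/2}$ becomes marginal. Once this is checked, dividing by $(1+s)^{-\frac12(1-1/p)}$ and taking the supremum over $s \in [0,t]$ and $p \in [1,\infty]$ yields $\zeta(t) \le C(E_0 + \zeta^q(t)) \le C(E_0+\zeta^4(t))$, using $\zeta^{q-4} \le 1$ at the last step. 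Note that the extra hypothesis $\|x u_0\|_{L^1} \le E_0$ in Theorem~\ref{main2}(1) is not needed here; it enters only in the later stage of extracting the sharp heat-kernel asymptotic profile.
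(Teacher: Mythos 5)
Your proposal is correct and takes essentially the same route as the paper: Duhamel, Young's inequality, $\|u^q\|_{L^1}\lesssim\zeta^q(1+s)^{-(q-1)/2}$, and the standard split of the time integral. The only cosmetic difference is that you use $\|u^q\|_{L^1}=\|u\|_{L^q}^q$ where the paper uses $\|u^q\|_{L^1}\le\|u\|_{L^\infty}^{q-1}\|u\|_{L^1}$ (which give the same exponent), and you are more explicit about needing $\zeta\le 1$ to reduce $\zeta^q$ to $\zeta^4$ for $q>4$ — a point the paper glosses over but that is indeed harmless in the subsequent continuous-induction corollary since $\zeta(0)\le E_0$.
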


\begin{proof}

Noting, because of $q \geq 4$, that
\be
|u|_{L^\infty}(s) \leq \zeta(t)(1+s)^{-\frac{1}{2}} \quad \text{and} \quad |u^q|_{L^1(x)}(s) \leq |u^{q-1}|_{L^\infty}|u|_{L^1} \leq \zeta^4(t)(1+s)^{-\frac{3}{2}},
\notag
\ee
we obtain
\be
\begin{split}
|u(\cdot,t)|_{L^p(x)}
& \leq \Big| \int_{-\infty}^\infty k(x-y,t)u_0(y)dy \Big|_{L^p(x)} + \Big| \int_0^t \int_{-\infty}^\infty k(x-y,t-s)u^q(y,s)dyds \Big|_{L^p(x)} \\
& \leq CE_0 (1+t)^{-\frac{1}{2}(1-\frac{1}{p})}+ C\zeta^4(t)\int_0^{t} (1+t-s)^{-\frac{1}{2}(1-\frac{1}{p})}(1+s)^{-\frac{3}{2}}ds \\
& \leq C(E_0 +\zeta^4(t))(1+t)^{-\frac{1}{2}(1-\frac{1}{p})}.
\end{split}
\notag
\ee
Rearranging, we obtain \eqref{6.1.1}.
\end{proof}

\begin{corollary} \label{continuous induction}
Suppose that $u(x,t)$ satisfies $u_t=u_{xx}+u^q$ and $|u_0|_{L^1 \cap L^\infty} \leq  E_0$,
for $E_0>0$ sufficiently small and $q\ge 4$.
Then
\be\label{usual bounds}
|u(x,t)|_{L^p(x)} \leq CE_0(1+t)^{-\frac{1}{2}(1-\frac{1}{p})}.
\ee
\end{corollary}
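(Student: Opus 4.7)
The plan is a standard continuous-induction (bootstrap) argument built directly on the a priori estimate $\zeta(t) \le C(E_0 + \zeta^4(t))$ established in the preceding lemma.

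First I would verify the base case and continuity. At $t=0$ we have $\zeta(0) = |u_0|_{L^\infty} \le |u_0|_{L^1\cap L^\infty} \le E_0$, so certainly $\zeta(0) \le 2CE_0$. By local well-posedness of the semilinear heat equation in $L^1 \cap L^\infty$ (with $q\ge 4$, so the nonlinearity is locally Lipschitz), the function $t \mapsto |u(\cdot,t)|_{L^p}$ is continuous on its interval of existence for each $p$, and a short interpolation argument shows the sup-defined quantity $\zeta(t)$ is also continuous in $t$.

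Next comes the bootstrap. Fix a constant $\widetilde C \ge 2C$ and suppose, toward a contradiction, that the set $\{t \ge 0 : \zeta(t) > \widetilde C E_0\}$ is nonempty; by continuity and $\zeta(0) \le E_0 < \widetilde C E_0$, there is a first time $t^*>0$ with $\zeta(t^*) = \widetilde C E_0$. Applying \eqref{6.1.1} at $t^*$ gives
\[
\widetilde C E_0 \;=\; \zeta(t^*) \;\le\; C\bigl(E_0 + \zeta(t^*)^4\bigr) \;=\; C E_0 + C \widetilde C^{\,4} E_0^4.
\]
Choosing $E_0$ so small that $C\widetilde C^{\,4} E_0^3 < \tfrac12 \widetilde C - C$ (possible since $\widetilde C \ge 2C$), the right-hand side is strictly less than $\widetilde C E_0$, which is the desired contradiction. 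Hence $\zeta(t) \le \widetilde C E_0$ for all $t \ge 0$ on the interval of existence, and the a priori bound itself then precludes blowup, so the solution is global.

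Unwrapping the definition of $\zeta$ yields exactly $|u(\cdot,t)|_{L^p(x)} \le \widetilde C E_0 (1+t)^{-\frac12(1-1/p)}$ uniformly in $1\le p\le \infty$, proving \eqref{usual bounds}. The only real subtlety is the smallness threshold on $E_0$, which must be chosen depending on the constant $C$ from the lemma so that the cubic correction $C\widetilde C^{\,4} E_0^3$ is dominated; everything else is bookkeeping. I do not expect any genuine obstacle here — the lemma has already done the analytic work of controlling the Duhamel integral, and the corollary is essentially an algebraic consequence of the inequality $x \le C(E_0 + x^4)$ together with continuity of $\zeta$ and the initial bound.
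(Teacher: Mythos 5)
Your proof follows exactly the same continuous-induction argument the paper uses: it invokes the a priori inequality $\zeta(t)\le C(E_0+\zeta^4(t))$ from the preceding lemma, checks $\zeta(0)\le E_0$, argues by continuity and a first-crossing contradiction, and then chooses $E_0$ small. One small arithmetic slip: you want the contradiction condition $C + C\widetilde C^{4}E_0^{3} < \widetilde C$, i.e.\ $C\widetilde C^{4}E_0^{3} < \widetilde C - C$, not $C\widetilde C^{4}E_0^{3} < \tfrac12\widetilde C - C$; the latter is vacuous when $\widetilde C = 2C$, whereas the correct bound gives a positive threshold (and with $\widetilde C=2C$ it reduces to exactly the paper's $E_0 < (1/2C)^{4/3}$).
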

\begin{proof}
Recalling that $\zeta(t)$ is continuous so long as it remains finite, it follows by continuous induction that $\zeta(t)\leq 2CE_0$ for all $t\geq 0$ provided $E_0 <\left(\frac{1}{2c}\right)^{\frac{4}{3}}$ and (as holds without loss of generality) $C\geq 1$, and hence \eqref{6.1.1} implies \eqref{usual bounds}.
\end{proof}


\begin{lemma}
Let $u(x,t)$ satisfy $u_t=u_{xx}+u^q$ and  $|u_0|_{L^1 \cap L^\infty}$, $|xu_0|_{L^1} \leq E_0$. Define
\be
\zeta(t) : =  \ds \sup_{0 \leq s \leq t}|xu(x,s)|_{L^1(x)}(1+s)^{-\frac{1}{2}}.
\notag
\ee
Then, for all $t \geq 0$ for which  $\zeta(t)$ is finite, some $C>0$,
\be \label{6.1.2}
\zeta(t) \leq C(E_0+\zeta^2(t)).
\ee
\end{lemma}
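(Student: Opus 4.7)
The plan is to apply Duhamel's formula,
\[
u(x,t) = (k(\cdot,t) * u_0)(x) + \int_0^t (k(\cdot,t-s) * u^q(\cdot,s))(x)\,ds,
\]
where $k(x,t)=(4\pi t)^{-1/2}e^{-x^2/(4t)}$ is the heat kernel, and to estimate the weighted norm $|xu(\cdot,t)|_{L^1(x)}$ directly. The key algebraic identity is the splitting $x=(x-y)+y$ inside the convolution,
\[
x(k*g)(x) = ((xk)*g)(x) + (k*(yg))(x),
\]
which gives $|x(k*g)|_{L^1} \le |xk|_{L^1}|g|_{L^1} + |k|_{L^1}|yg|_{L^1}$. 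Since $|xk(\cdot,t)|_{L^1}\le Ct^{1/2}$ and $|k(\cdot,t)|_{L^1}=1$ by a Gaussian computation, the contribution of the linear (initial-data) part is controlled immediately by
\[
|x(k(\cdot,t)*u_0)|_{L^1(x)} \le Ct^{1/2}|u_0|_{L^1} + |xu_0|_{L^1} \le CE_0(1+t)^{1/2}.
\]

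For the nonlinear part I would invoke the a priori bounds already supplied by Corollary \ref{continuous induction}, namely $|u(\cdot,s)|_{L^\infty}\le CE_0(1+s)^{-1/2}$ and $|u(\cdot,s)|_{L^1}\le CE_0$. Writing $u^q=u\cdot u^{q-1}$ and $yu^q=(yu)\cdot u^{q-1}$, these bounds combined with the very definition of $\zeta$ yield
\begin{align*}
|u^q(\cdot,s)|_{L^1} &\le |u|_{L^\infty}^{q-1}|u|_{L^1} \le CE_0^q(1+s)^{-(q-1)/2},\\
|yu^q(\cdot,s)|_{L^1} &\le |yu|_{L^1}|u|_{L^\infty}^{q-1} \le C\zeta(t)E_0^{q-1}(1+s)^{-(q-2)/2}.
\end{align*}
The Duhamel integral then contributes to $|xu(\cdot,t)|_{L^1(x)}$ the term
\[
\int_0^t \bigl(C(t-s)^{1/2}E_0^q(1+s)^{-(q-1)/2} + C\zeta(t)E_0^{q-1}(1+s)^{-(q-2)/2}\bigr)\,ds.
\]

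The hypothesis $q\ge 4$ is exactly what is needed for convergence: it makes $(q-1)/2\ge 3/2$ and $(q-2)/2\ge 1$, so splitting the first $s$-integral into $[0,t/2]$ and $[t/2,t]$ bounds it by $C(1+t)^{1/2}$, while the second integral yields at worst a factor $\log(1+t)$ in the borderline case $q=4$, still absorbed by $(1+t)^{1/2}$. Dividing the resulting bound on $|xu(\cdot,t)|_{L^1}$ by $(1+t)^{1/2}$ and taking the supremum over $s\in[0,t]$ gives
\[
\zeta(t) \le CE_0 + CE_0^q + CE_0^{q-1}\zeta(t).
\]
Absorbing the small cross terms (using $E_0^{q-1}\zeta \le \zeta^2$ in the regime $E_0\le\zeta$, and treating the complementary regime $\zeta\le E_0$ trivially) produces the claimed $\zeta(t)\le C(E_0+\zeta^2(t))$, which then closes the bootstrap by continuous induction in exact analogy with Corollary \ref{continuous induction}. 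The main obstacle is the $yu^q$ contribution, since it is the only term in which $\zeta$ reappears on the right; for the borderline exponent $q=4$ the time integral is only logarithmically better than the target growth rate $(1+t)^{1/2}$, so the $|xk|_{L^1}\le Ct^{1/2}$ bookkeeping and the splitting of the $s$-integral must be carried out carefully to avoid a stray logarithmic loss that would destroy the quadratic structure on the right-hand side.
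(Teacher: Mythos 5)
Your proof is correct and takes essentially the same route as the paper: Duhamel's formula, the splitting $x=(x-y)+y$ inside the convolution, the moment bounds $|xk(\cdot,t)|_{L^1}\le Ct^{1/2}$ and $|k|_{L^1}=1$, and the two pointwise nonlinear bounds $|u^q|_{L^1}$ and $|yu^q|_{L^1}$ obtained from Corollary \ref{continuous induction} and the definition of $\zeta$. The only difference is cosmetic: you track the explicit powers $E_0^q$ and $E_0^{q-1}$ and absorb them into $C(E_0+\zeta^2)$ at the end, whereas the paper absorbs the extra powers of the small parameter $E_0$ immediately, writing the intermediate bound as $CE_0\zeta(t)(1+t)^{-1}$; both absorption arguments are valid and yield the same conclusion.
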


\begin{proof}
Noting, by \eqref{usual bounds} and $q \geq 4$, that
\be
|xu^q(x,t)|_{L^1(x)} \leq |u^{q-1}(x,t)|_{L^\infty}|xu(x,t)|_{L^1} \leq CE_0\zeta(t)(1+t)^{-\frac{q-1}{2}+\frac{1}{2}}\leq CE_0\zeta(t)(1+t)^{-1},
\notag
\ee
we obtain the estimate
\be
\begin{split}
& \quad |xu(x,t)|_{L^1(x)} \\
& \leq \Big | \int_{-\infty}^\infty \frac{x}{\sqrt{t}}e^{-\frac{|x-y|^2}{t}}u_0(y)dy \Big|_{L^1(x)}
 + \Big |\int_0^t \int_{-\infty}^{\infty}\frac{x}{\sqrt{t-s}}e^{-\frac{|x-y|^2}{t-s}}u^q(y,s)dyds \Big|_{L^1(x)}  \\
& \leq  \Big | \int_{-\infty}^\infty \Big(\frac{x-y}{\sqrt{t}}e^{-\frac{|x-y|^2}{t}}u_0(y)
       +  \frac{y}{\sqrt{t}}e^{-\frac{|x-y|^2}{t}}u_0(y) \Big)dy \Big|_{L^1(x)}  \\
& \quad + \Big |\int_0^t \int_{-\infty}^{\infty}\frac{x-y}{\sqrt{t-s}}e^{-\frac{|x-y|^2}{t-s}}u^q(y,s)
       + \frac{y}{\sqrt{t-s}}e^{-\frac{|x-y|^2}{t-s}}u^q(y,s)dyds \Big|_{L^1(x)}  \\
& \leq C\left((1+t)^{\frac{1}{2}}|u_0|_{L^1}+|xu_0|_{L^1}\right)+ C\int_0^t \Big( (1+t-s)^{\frac{1}{2}}|u^q(x,s)|_{L^1}
          +|xu^q(x,s)|_{L^1}\Big ) ds \\
& \leq CE_0(1+t)^{\frac{1}{2}}+CE_0 \int_0^t (1+t-s)^{\frac{1}{2}}(1+s)^{-\frac{3}{2}} ds + CE_0\zeta(t)\int_0^t (1+s)^{-1} ds \\
& \leq C(E_0+\zeta^2(t))(1+t)^{\frac{1}{2}}.
\end{split}
\notag
\ee
Rearranging, we obtain \eqref{6.1.2}.

\end{proof}

\begin{corollary}
Let $u(x,t)$ satisfy $u_t=u_{xx}+u^q$ and  $|u_0|_{L^1 \cap L^\infty}$, $|xu_0|_{L^1} \leq E_0$, 
for $E_0>0$ sufficiently small, and $q\ge 4$.
Then
\be \label{claim}
|xu(x,t)|_{L^1} \leq CE_0(1+t)^{\frac{1}{2}}, \quad \text{for all} \quad t \geq 0.
\ee
\end{corollary}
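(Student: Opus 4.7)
The plan is to mimic the continuous induction used for the previous corollary, since the lemma just proved gives exactly the quadratic bootstrap inequality $\zeta(t)\le C(E_0+\zeta(t)^2)$, which is the same shape as \eqref{6.1.1} (with exponent $2$ instead of $4$). I would first record the initial value $\zeta(0)=|xu_0|_{L^1}\le E_0$, which without loss of generality (taking $C\ge 1$) satisfies $\zeta(0)\le 2CE_0$. Second, I would note that $\zeta(t)$ is continuous as long as it is finite: the mild formulation combined with the a~priori bounds of Corollary \ref{continuous induction} implies $xu(\cdot,t)\in C([0,\infty);L^1(x))$, whence the supremum defining $\zeta$ depends continuously on~$t$.

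Next I would run the continuous induction. Set the threshold $2CE_0$ and suppose, at some $t_0\ge 0$, that $\zeta(t_0)\le 2CE_0$. Then \eqref{6.1.2} yields
\[
\zeta(t_0)\;\le\; C\bigl(E_0+\zeta(t_0)^2\bigr)\;\le\; CE_0+C(2CE_0)^2 \;=\; CE_0+4C^3E_0^2.
\]
Choose $E_0$ so small that $4C^2 E_0\le 1$; then the right-hand side is bounded by $2CE_0$, in fact strictly less than $2CE_0$ whenever $E_0>0$ is strictly smaller than this threshold. Combined with continuity of $\zeta$ and the initial inequality $\zeta(0)\le 2CE_0$, a standard continuity/open-closed argument shows $\zeta(t)\le 2CE_0$ for all $t\ge 0$. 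Unwinding the definition of $\zeta$ gives $|xu(\cdot,t)|_{L^1}\le 2CE_0(1+t)^{1/2}$, which is \eqref{claim} after renaming the constant.

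The only non-routine point I foresee is the continuity of $t\mapsto |xu(\cdot,t)|_{L^1(x)}$; the quadratic inequality itself is already proved in the preceding lemma, and the bootstrap is algebraic. Continuity, however, requires that the weighted $L^1$ norm not jump, which should follow from the Duhamel representation used in the lemma together with dominated convergence, using the global $L^p$ bounds from Corollary~\ref{continuous induction} to control the forcing term $u^q$ uniformly on compact time intervals. Once continuity is in hand, the proof amounts to a one-line bootstrap and is otherwise entirely parallel to the proof of Corollary~\ref{continuous induction}.
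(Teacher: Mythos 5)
Your proof is correct and follows essentially the same route as the paper: continuous induction from the quadratic inequality \eqref{6.1.2}, threshold $2CE_0$, smallness condition roughly $E_0\lesssim 1/(4C^2)$, with continuity of $\zeta$ asserted. The paper compresses this into one sentence; you spell out the bootstrap arithmetic and the continuity point explicitly, but the content and the smallness condition coincide.
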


\begin{proof}
Recalling that $\zeta(t)$ is continuous so long as it remains finite, it follows by continuous induction that $\zeta(t)\leq 2CE_0$ for all $t\geq 0$ provided $E_0 < \frac{1}{4C^2}$ and (as holds without loss of generality) $C\geq 1$, and hence \eqref{6.1.2} implies \eqref{claim}.
\end{proof}


\begin{lemma}
Suppose that $u(x,t)$ solves $u_t=u_{xx}$ and $|u_0|_{L^1\cap L^\infty}$, $|xu_0|_{L^1} \leq E_0$. Then
\be \label{first lemma}
|u(x,t)-U_0k(x,t)|_{L^p(x)} \leq CE_0(1+t)^{-\frac{1}{2}(1-\frac{1}{p})-\frac{1}{2}},
\ee
where $U_0 := \ds \int_{-\infty}^{\infty} u_0(x)  dx$ and $k(x,t)$=$ \ds\frac{1}{\sqrt{4\pi t}}e^{-\frac{|x|^2}{4t}}$.
\end{lemma}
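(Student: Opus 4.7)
The plan is to exploit the cancellation coming from the definition of $U_0$. Since $U_0 = \int u_0(y)\,dy$, we have $U_0 k(x,t) = \int k(x,t) u_0(y)\,dy$, and hence
\begin{equation}
u(x,t) - U_0 k(x,t) = \int \bigl[k(x-y,t) - k(x,t)\bigr]\, u_0(y)\,dy. \notag
\end{equation}
The extra $t^{-1/2}$ decay relative to $|k|_{L^p} \sim t^{-(1-1/p)/2}$ comes from the first-order Taylor expansion of $k$ in the second slot: by the fundamental theorem of calculus,
\begin{equation}
k(x-y,t) - k(x,t) = -y \int_0^1 k_x(x - \sigma y, t)\,d\sigma. \notag
\end{equation}
I would substitute this, apply Minkowski's integral inequality in $x$, and use translation invariance of $L^p(x)$ to pull out a $y$-independent factor:
\begin{equation}
\bigl| u(\cdot,t) - U_0 k(\cdot,t) \bigr|_{L^p} \leq \int |y u_0(y)|\,dy \cdot |k_x(\cdot,t)|_{L^p} = |xu_0|_{L^1}\cdot|k_x(\cdot,t)|_{L^p}. \notag
\end{equation}

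The standard scaling identity $|k_x(\cdot,t)|_{L^p} \leq C\, t^{-\frac{1}{2}(1-\frac{1}{p}) - \frac{1}{2}}$, together with $|xu_0|_{L^1} \leq E_0$, then gives
\begin{equation}
\bigl| u(\cdot,t) - U_0 k(\cdot,t)\bigr|_{L^p} \leq C E_0\, t^{-\frac{1}{2}(1-\frac{1}{p}) - \frac{1}{2}}, \notag
\end{equation}
which for $t \geq 1$ is comparable to the claimed $(1+t)^{-\frac{1}{2}(1-\frac{1}{p}) - \frac{1}{2}}$ bound since $t^{-\alpha} \leq 2^\alpha (1+t)^{-\alpha}$ there.

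The main obstacle is that the Taylor bound degenerates as $t \to 0^+$, while the target bound $(1+t)^{-\frac{1}{2}(1-\frac{1}{p}) - \frac{1}{2}}$ is merely an $O(1)$ constant on $0 \leq t \leq 1$. To close the argument on this interval it suffices to show $|u - U_0 k|_{L^p} \leq CE_0$ uniformly for $t \in [0,1]$ and all $p \in [1,\infty]$. For $p = 1$ this is immediate from the triangle inequality together with $|u|_{L^1} \leq |u_0|_{L^1} \leq E_0$ and $|U_0k|_{L^1} = |U_0| \leq E_0$. For $p = 2$, working on the Fourier side, $\widehat{u - U_0 k}(\xi,t) = e^{-\xi^2 t}(\widehat{u_0}(\xi) - \widehat{u_0}(0))$, and the hypotheses $|u_0|_{L^1}, |xu_0|_{L^1} \leq E_0$ give $|\widehat{u_0}(\xi) - \widehat{u_0}(0)| \leq E_0 \min(2, |\xi|)$, whose product with $e^{-\xi^2 t}$ is uniformly in $L^2(\xi)$ for bounded $t$; Plancherel then gives $|u - U_0 k|_{L^2} \leq CE_0$. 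Interpolating $L^p$ between the $L^1$ and $L^2$ bounds for $1 \leq p \leq 2$, and combining the Taylor bound with the Fourier estimate (Hausdorff–Young) for $2 \leq p \leq \infty$, closes the estimate for $t \in [0,1]$ and patches to the Taylor regime at $t = 1$.
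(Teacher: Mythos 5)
Your main step for $t\ge 1$---writing $u-U_0k=\int[k(x-y,t)-k(x,t)]u_0(y)\,dy$, applying the fundamental theorem of calculus to get $k(x-y,t)-k(x,t)=-y\int_0^1 k_x(x-\sigma y,t)\,d\sigma$, then Minkowski plus $|k_x(\cdot,t)|_{L^p}\lesssim t^{-\frac12(1-\frac1p)-\frac12}$ and $|xu_0|_{L^1}\le E_0$---is exactly the paper's Mean Value Theorem argument, so the large-time part matches.

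The small-time patch does not close as written. The claimed uniformity of the $L^2$ estimate is false: $\min(2,|\xi|)\notin L^2(\RR)$, so
\[
\big|e^{-\xi^2t}\min(2,|\xi|)\big|_{L^2(\xi)}\sim t^{-1/4}\quad\text{as }t\to 0^+,
\]
which is not $O(1)$ on $[0,1]$. This is not a bookkeeping issue that can be repaired: with the kernel exactly as stated, $k(x,t)=(4\pi t)^{-1/2}e^{-|x|^2/(4t)}$, one has $|U_0 k(\cdot,t)|_{L^2}\sim |U_0|\,t^{-1/4}$ while $|u(\cdot,t)|_{L^2}\le |u_0|_{L^2}$ stays bounded, so $|u-U_0k|_{L^2}$ genuinely blows up as $t\to0^+$ unless $U_0=0$; the same happens for every $p>1$ since $|k(\cdot,t)|_{L^p}\sim t^{-\frac12(1-1/p)}$. (The paper's own $t\le 1$ display suffers the identical defect, implicitly using $|k(\cdot,t)|_{L^p}\le(1+t)^{-\frac12(1-1/p)}$, which is false for small $t$ and $p>1$.) The reading that makes the lemma true---and matches the later sections, where $k(x,t)=(1+t)^{-1/2}e^{-|x|^2/(1+t)}$---is to take $k$ to be a $(1+t)$-normalized kernel, e.g.\ $(4\pi(1+t))^{-1/2}e^{-|x|^2/(4(1+t))}$. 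With that normalization the $t\le1$ case is just the triangle inequality $|u|_{L^p}+|U_0||k|_{L^p}\lesssim E_0$, no Fourier analysis needed, and your Taylor argument handles $t\ge1$. As written, both your Plancherel argument and the Hausdorff--Young suggestion for $2\le p\le\infty$ fail for the same reason: the multiplier $\min(2,|\xi|)$ is integrable nowhere near enough at infinity.
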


\begin{proof}
Setting $e(x,t) : =u(x,t)-U_0k(x,t)$, we have
\be
e_t(x,t)=e_{xx}(x,t) \quad \text{and} \quad \ds \int_{-\infty}^{\infty} e_0(x)  dx=0,
\notag
\ee
so that, for any $t \geq 0$,
\be\label{6.3}
\begin{split}
|e(x,t)|_{L^p(x)}
&=\Big| \int_{-\infty}^{\infty} k(x-y,t) e_0(y) dy \Big|_{L^p(x)} \\
 & \leq \int_{-\infty}^{\infty}| k(x-y,t)|_{L^p(x)}|u_0(y)| dy + |U_0||k(x,t)|_{L^p(x)}\\
& \leq 2(1+t)^{-\frac{1}{2}(1-\frac{1}{p})}|u_0|_{L^1}.
\end{split}
\ee
For $ t \leq 1 $, $\sqrt{2}(1+t)^{-\frac{1}{2}} > 1$, and hence, \eqref{6.3} implies
\be
|u(x,t)-U_0k(x,t)|_{L^p(x)} \leq 2(1+t)^{-\frac{1}{2}(1-\frac{1}{p})}|u_0|_{L^1}\leq 2\sqrt 2 E_0(1+t)^{-\frac{1}{2}(1-\frac{1}{p})-\frac{1}{2}}.
\notag
\ee

Now we consider the case of $t >1$. Noting, by the Mean Value Theorem, that
\be
|k(x-y,t)-k(x,t)|_{L^p(x)} \leq |y|\int_0^1 |k_x(x-wy,t)|_{L^p(x)} dw \leq Ct^{-\frac{1}{2}(1-\frac{1}{p})-\frac{1}{2}}|y|,
\notag
\ee
we obtain
\be
\begin{split}
|u(x,t)-U_0k(x,t)|_{L^p(x)}
& \leq \int_{-\infty}^\infty |k(x-y)-k(x,t)|_{L^p(x)}|u_0(y)|dy \\
& \leq Ct^{-\frac{1}{2}(1-\frac{1}{p})-\frac{1}{2}}\int_{-\infty}^\infty |y||u_0(y)|dy\\
& \leq CE_0(1+t)^{-\frac{1}{2}(1-\frac{1}{p})-\frac{1}{2}}.
\end{split}
\notag
\ee

\end{proof}



\begin{lemma}
Suppose $u(x,t)$ satisfies $u_t=u_{xx}+u^q$ and $|u_0|_{L^1 \cap L^\infty}$, $|xu_0|_{L^1}\leq E_0$,
for $E_0>0$ sufficiently small and $q\ge 4$.
Then
\be \label{duhamel part for HE}
\begin{split}
\Big | \int_{-\infty}^{\infty} k(x-y, t-s)u^q(y,s) dy - U(s)k(x,t-s) \Big |_{L^p(x)} \leq CE_0(1+t-s)^{-\frac{1}{2}(1-\frac{1}{p})-\frac{1}{2}}(1+s)^{-1},
\end{split}
\ee
where $U(s) = \ds \int_{-\infty}^{\infty} u^q(y,s) dy$.
\end{lemma}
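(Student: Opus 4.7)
The plan is to mimic the proof of \eqref{first lemma}, treating $u^q(\cdot,s)$ as a ``source datum'' that plays the role of $u_0$, and using the moment bound on $u$ already established in \eqref{claim} to get the analogue of the hypothesis $|xu_0|_{L^1}\leq E_0$. Set
\be
e(x,t;s) := \int_{-\infty}^{\infty} k(x-y,t-s)\,u^q(y,s)\,dy - U(s)\,k(x,t-s),
\notag
\ee
and note that $\int_{-\infty}^{\infty}\bigl(u^q(y,s) - U(s)\delta(y)\bigr)dy = 0$, exactly the cancellation used before.

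The key preliminary estimate is a weighted bound on the source: by \eqref{usual bounds} and \eqref{claim}, together with the assumption $q\geq 4$,
\be
\int_{-\infty}^{\infty} |y|\,|u^q(y,s)|\,dy
\leq |u^{q-1}(\cdot,s)|_{L^\infty}\,|y\,u(\cdot,s)|_{L^1}
\leq CE_0^{q-1}(1+s)^{-\frac{q-1}{2}}\cdot CE_0(1+s)^{\frac{1}{2}}
\leq CE_0(1+s)^{-1},
\notag
\ee
since $-\tfrac{q-1}{2}+\tfrac12\leq -1$ for $q\geq 4$. Similarly $|u^q(\cdot,s)|_{L^1}\leq CE_0(1+s)^{-\frac{q-1}{2}}\leq CE_0(1+s)^{-1}$.

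I would then split into two regimes, exactly as in \eqref{first lemma}. For $t-s>1$, apply the Mean Value Theorem to write $k(x-y,t-s)-k(x,t-s)=-y\int_0^1 k_x(x-wy,t-s)\,dw$, so that
\be
|e(x,t;s)|_{L^p(x)}
\leq \int_{-\infty}^{\infty} |k(x-y,t-s)-k(x,t-s)|_{L^p(x)}\,|u^q(y,s)|\,dy
\leq C(t-s)^{-\frac{1}{2}(1-\frac{1}{p})-\frac{1}{2}}\int |y|\,|u^q(y,s)|\,dy,
\notag
\ee
using $|k_x(\cdot,\tau)|_{L^p}\leq C\tau^{-\frac{1}{2}(1-\frac{1}{p})-\frac{1}{2}}$. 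Combined with the weighted bound above, this gives the claimed estimate for $t-s>1$.

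For $t-s\leq 1$, I would not use the MVT; instead, bound directly
\be
|e(x,t;s)|_{L^p(x)}
\leq \int |k(x-y,t-s)|_{L^p(x)}|u^q(y,s)|\,dy + |U(s)|\,|k(x,t-s)|_{L^p(x)}
\leq C(t-s)^{-\frac{1}{2}(1-\frac{1}{p})}|u^q(\cdot,s)|_{L^1},
\notag
\ee
then use $(1+t-s)^{-\frac{1}{2}}\geq 1/\sqrt{2}$ to absorb an extra half power of $(1+t-s)^{-1}$, and the $L^1$ source bound $|u^q|_{L^1}\leq CE_0(1+s)^{-1}$. The main obstacle, such as it is, is just being careful that $q\ge4$ is sharp enough to give $(1+s)^{-1}$ (not a worse power) in the moment estimate --- this is where the ``$q\ge 4$'' hypothesis enters in an essential way, as anticipated in the opening discussion of Section \ref{main behavior}.
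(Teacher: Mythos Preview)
Your proposal is correct and follows essentially the same approach as the paper: both use the moment bound $|yu^q(\cdot,s)|_{L^1}\leq |u^{q-1}|_{L^\infty}|yu|_{L^1}\leq CE_0(1+s)^{-1}$ (from \eqref{usual bounds} and \eqref{claim}) together with the Mean Value Theorem estimate $|k(x-y,\tau)-k(x,\tau)|_{L^p(x)}\lesssim |y|\,\tau^{-\frac12(1-\frac1p)-\frac12}$. Your explicit $t-s\le 1$ / $t-s>1$ split is a slight refinement over the paper, which jumps directly to the $(1+t-s)$ bound; your version makes transparent why the singularity of $k_x$ at $t-s=0$ causes no trouble.
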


\begin{proof}
Noting first, by \eqref{usual bounds} and \eqref{claim}, that
\be
|xu^q(x,t)|_{L^1(x)} \leq |u^{q-1}|_{L^\infty(x)}|xu(x,t)|_{L^1(x)}\leq CE_0(1+t)^{-\frac{3}{2}}(1+t)^{\frac{1}{2}}\leq CE_0(1+t)^{-1},
\notag
\ee
we have
\be
\begin{split}
& \Big | \int_{-\infty}^{\infty} k(x-y, t-s)u^q(y,s) dy - U(s)k(x,t-s) \Big |_{L^p(x)} \\
&  \leq  \int_{-\infty}^{\infty} |k(x-y, t-s)-k(x,t-s)|_{L^p(x)}|u^q(y,s)| dy \\
& \leq |k_x(x-y^*,t-s)|_{L^p(x)}|yu^q(y,s)|_{L^1(y)} \\
& \leq CE_0(1+t-s)^{-\frac{1}{2}(1-\frac{1}{p})-\frac{1}{2}}(1+s)^{-1}.
\end{split}
\ee
\end{proof}


\begin{theorem}[Behavior]\label{behavior1} 
Suppose $u(x,t)$ satisfies $u_t=u_{xx}+u^q$ and $|u_0|_{L^1 \cap L^\infty}$, $|xu_0|_{L^1} \leq  E_0$,
for $E_0>0$ sufficiently small and $q\geq 4$.
Set
\be
U_*= \int_0^\infty U(s) ds+U_0=\int_0^\infty \int_{-\infty}^{\infty} u^q(y,s) dyds + \int_{-\infty}^{\infty} u_0(y) dy .
\notag
\ee
Then $|U_*| < \infty$ and
\be
|u(x,t) - U_* k(x,t)|_{L^p(x)} \leq C(1+t)^{-\frac{1}{2}(1-\frac{1}{p})-\frac{1}{2}} (1+\ln(1+t)).
\ee
\end{theorem}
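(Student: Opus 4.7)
The plan is to express $u-U_*k$ via Duhamel's formula and decompose it into four pieces, each of which we bound using lemmas already proved in this section.

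First, I would show $|U_*|<\infty$. By \eqref{usual bounds} and $q\ge 4$,
\be
|U(s)|=\Big|\int u^q(y,s)dy\Big|\le |u^{q-1}|_{L^\infty}|u|_{L^1}\le CE_0^q(1+s)^{-(q-1)/2}\le CE_0^q(1+s)^{-3/2},
\notag
\ee
so $\int_0^\infty U(s)\,ds$ converges absolutely and $|U_*|\le|U_0|+CE_0^q\le CE_0$. Next I would write Duhamel,
\be
u(x,t)=\int k(x-y,t)u_0(y)\,dy+\int_0^t\!\!\int k(x-y,t-s)u^q(y,s)\,dy\,ds,
\notag
\ee
and subtract $U_*k(x,t)=U_0k(x,t)+\int_0^\infty U(s)k(x,t)\,ds$ to get $u-U_*k=T_1+T_2+T_3+T_4$, where
\begin{align*}
T_1&=\int k(x-y,t)u_0(y)\,dy-U_0k(x,t),\\
T_2&=\int_0^t\!\Big[\int k(x-y,t-s)u^q(y,s)\,dy-U(s)k(x,t-s)\Big]ds,\\
T_3&=\int_0^t U(s)\bigl(k(x,t-s)-k(x,t)\bigr)\,ds,\\
T_4&=-\int_t^\infty U(s)k(x,t)\,ds.
\end{align*}

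Next, I would estimate each $T_j$ in $L^p(x)$. For $T_1$, the bound $|T_1|_{L^p}\le CE_0(1+t)^{-\frac{1}{2}(1-\frac{1}{p})-\frac{1}{2}}$ is exactly \eqref{first lemma}. For $T_2$, I would apply \eqref{duhamel part for HE} and split at $s=t/2$: on $[0,t/2]$, $(1+t-s)\sim(1+t)$ so the integral is bounded by $(1+t)^{-\frac{1}{2}(1-\frac{1}{p})-\frac{1}{2}}\int_0^{t/2}(1+s)^{-1}\,ds\lesssim(1+t)^{-\frac{1}{2}(1-\frac{1}{p})-\frac{1}{2}}\ln(1+t)$; on $[t/2,t]$, $(1+s)\sim(1+t)$, yielding $(1+t)^{-1}\int_0^{t/2}u^{-\frac{1}{2}(1-\frac{1}{p})-\frac{1}{2}}du$, which contributes the same order (with a log when $p=\infty$). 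This is the source of the $\ln(1+t)$ factor in the final bound. For $T_3$, since $|k_\tau(\cdot,\tau)|_{L^p}\le C\tau^{-\frac{1}{2}(1-\frac{1}{p})-1}$, the mean value theorem gives $|k(\cdot,t-s)-k(\cdot,t)|_{L^p}\le Cs(1+t)^{-\frac{1}{2}(1-\frac{1}{p})-1}$ on $[0,t/2]$; combined with $|U(s)|\le CE_0^q(1+s)^{-3/2}$ this yields a contribution $\lesssim E_0^q(1+t)^{-\frac{1}{2}(1-\frac{1}{p})-\frac{1}{2}}$. On $[t/2,t]$ I bound the two kernels separately in $L^p$ and use $|U(s)|\le CE_0^q(1+t)^{-3/2}$, which gives the same order after integrating the kernel bounds over $[t/2,t]$. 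Finally, $T_4$ is handled directly: $|T_4|_{L^p}\le Ct^{-\frac{1}{2}(1-\frac{1}{p})}\int_t^\infty(1+s)^{-3/2}\,ds\lesssim E_0^q(1+t)^{-\frac{1}{2}(1-\frac{1}{p})-\frac{1}{2}}$.

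Summing the four bounds produces the stated estimate. The main obstacle is the balancing in $T_2$: one must split the time integral and track the behavior of $-\frac{1}{2}(1-\frac{1}{p})-\frac{1}{2}$ across the whole range $p\in[1,\infty]$ (so that the $p=\infty$ endpoint, where the exponent hits $-1$, produces only a logarithmic loss and not an algebraic one). The $T_3$ estimate is the next most delicate point, since one cannot simply pull $k_\tau$ out in $L^p$ on the near-singular interval $[t/2,t]$; splitting at $t/2$ and using $L^p$ bounds on $k$ itself (rather than on $k_\tau$) there resolves this.
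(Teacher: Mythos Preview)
Your proof is correct and follows essentially the same route as the paper: the four pieces $T_1,T_2,T_3,T_4$ are exactly the paper's terms $I,III,IV,II$ (just relabeled), and you invoke the same lemmas \eqref{first lemma} and \eqref{duhamel part for HE}, the same splitting at $s=t/2$, and the same mean-value argument for the $k(\cdot,t-s)-k(\cdot,t)$ term. Your observation that the $p=\infty$ endpoint of the $[t/2,t]$ integral in $T_2$ also contributes a logarithm is correct and is implicitly absorbed in the paper's final $(1+\ln(1+t))$ factor.
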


\begin{proof}
Noting first,  by \eqref{usual bounds} and $q \geq 4$, that
\be \label{6.1.3}
|U(s)|= |u^q|_{L^1} = |u|^q_{L^q} \leq CE_0(1+s)^{-\frac{3}{2}},
\ee
we obtain
\be
|U_*|  \leq  CE_0\int_0^\infty (1+s)^{-\frac{3}{2}}+ |u_0|_{L^1}  < \infty.
\notag
\ee
Now we break $|u(x,t) - U_* k(x,t)|_{L^p(x)}$ into four parts.
\be \label{4 parts}
\begin{split}
|u(x,t) - U_* k(x,t)|_{L^p(x)}
& \leq  \Big| \int_{-\infty}^{\infty} k(x-y,t)u_0(y)dy- \int_{-\infty}^{\infty}  k(x,t)u_0(y) dy \Big|_{L^p(x)}  \\
& \quad + \int_t^\infty |U(s)||k(x,t)|_{L^p(x)}ds \\
& \quad + \int_0^t \Big| \int_{-\infty}^\infty k(x-y,t-s)u^q(y,s)dy - k(x,t-s)U(s) \Big|_{L^p(x)}ds \\
& \quad + \int_0^t |U(s)||k(x,t-s)-k(x,t)|_{L^p(x)} ds \\
& =  I + II + III + IV.
\end{split}
\notag
\ee
By \eqref{first lemma}, we already have $I \leq CE_0(1+t)^{-\frac{1}{2}(1-\frac{1}{p})-\frac{1}{2}}$.
By \eqref{6.1.3},
\be
II \leq  CE_0(1+t)^{-\frac{1}{2}(1-\frac{1}{p})}\int_t^{\infty} (1+s)^{-\frac{3}{2}}ds \leq CE_0(1+t)^{-\frac{1}{2}(1-\frac{1}{p})-\frac{1}{2}},
\notag
\ee
By \eqref{duhamel part for HE}, we have
\be
\begin{split}
III
& \leq CE_0 \int_0^t (1+s)^{-1}(1+t-s)^{-\frac{1}{2}(1-\frac{1}{p})-\frac{1}{2}} ds \\
& \leq CE_0(1+t)^{-\frac{1}{2}(1-\frac{1}{p})-\frac{1}{2}}\int_0^{t/2} (1+s)^{-1} ds + CE_0 (1+t)^{-1}\int_{t/2}^t(1+t-s)^{-\frac{1}{2}(1-\frac{1}{p})-\frac{1}{2}} ds \\
& \leq CE_0(1+t)^{-\frac{1}{2}(1-\frac{1}{p})-\frac{1}{2}}(1+ \ln(1+t)).
\end{split}
\notag
\ee
By \eqref{6.1.3} and by the Mean Value Theorem, for some $s^* \in (0,t/2)$,  we have
\be
\begin{split}
 IV
& \leq  CE_0\int_{t/2}^t (1+s)^{-\frac{3}{2}}|k(x,t-s)-k(x,t)|_{L^p(x)} ds \\
& \qquad + CE_0\int_0^{t/2} (1+s)^{-\frac{3}{2}}s|k_t(x,t-s^*)|_{L^p(xi)} ds\\
& \leq CE_0(1+t)^{-\frac{1}{2}(1-\frac{1}{p})}\int_{t/2}^t (1+s)^{-\frac{3}{2}}ds + CE_0(1+t)^{-\frac{1}{2}(1-\frac{1}{p})-1}\int_0^{t/2}(1+s)^{-\frac{1}{2}}ds \\
& \leq CE_0(1+t)^{-\frac{1}{2}(1-\frac{1}{p})-\frac{1}{2}}.
\end{split}
\notag
\ee

\end{proof}


\subsection{Behavior for initial data $|v_0(x)| \leq E_0e^{-\frac{|x|^2}{M}}$}


In this section, we take $E_0>0$ sufficiently small, $M>1$ sufficiently large and $q \geq 4$.
We start with the following lemma which is a very useful calculation for following sections.
\begin{lemma}
For all $0 < s < t$,
\be \label{semigroup}
\ds\int_{-\infty}^\infty (t-s)^{-\frac{1}{2}}e^{-\frac{|x-y|^2}{(t-s)}}s^{-\frac{1}{2}}e^{-\frac{|y|^2}{s}}dy \leq t^{-\frac{1}{2}}e^{-\frac{|x|^2}{t}}.
\ee
\end{lemma}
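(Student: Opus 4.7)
The plan is the standard completion of the square followed by a single one-dimensional Gaussian integral. The two factors $(t-s)^{-1/2}e^{-(x-y)^2/(t-s)}$ and $s^{-1/2}e^{-y^2/s}$ are (up to normalization) heat kernels in $t-s$ and $s$ respectively, so heuristically one expects a Chapman--Kolmogorov identity yielding a heat kernel in $t$. I would therefore combine the two exponents into a single quadratic in $y$ and integrate explicitly.

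The key manipulation is the algebraic identity
\be
\frac{(x-y)^2}{t-s}+\frac{y^2}{s}
=\frac{t}{s(t-s)}\left(y-\frac{sx}{t}\right)^{2}+\frac{x^{2}}{t},
\notag
\ee
obtained by expanding, collecting the coefficient $a:=\frac{1}{t-s}+\frac{1}{s}=\frac{t}{s(t-s)}$ of $y^{2}$, and completing the square; the constant tail simplifies to $\frac{x^{2}}{t-s}\cdot\frac{a(t-s)-1}{a(t-s)}=\frac{x^{2}}{t}$. The factor $e^{-x^{2}/t}$ pulls out of the $y$-integral, leaving the Gaussian
\be
\int_{-\infty}^{\infty}e^{-a(y-sx/t)^{2}}\,dy=\sqrt{\pi/a}=\sqrt{\pi s(t-s)/t}.
\notag
\ee
Multiplying by the prefactor $(t-s)^{-1/2}s^{-1/2}$ collapses all $s$- and $(t-s)$-dependence and produces the clean expression $\sqrt{\pi}\,t^{-1/2}e^{-x^{2}/t}$.

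The only delicate point is that what this calculation actually establishes is the \emph{identity}
\be
\int_{-\infty}^{\infty}(t-s)^{-1/2}e^{-|x-y|^{2}/(t-s)}\,s^{-1/2}e^{-|y|^{2}/s}\,dy
=\sqrt{\pi}\,t^{-1/2}e^{-|x|^{2}/t},
\notag
\ee
so the asserted bound \eqref{semigroup} is valid up to the harmless absolute constant $\sqrt{\pi}$. This constant is inert in every place \eqref{semigroup} is invoked in the sequel: each such invocation controls a Duhamel convolution and is followed by a further crude bound in which $\sqrt{\pi}$ is absorbed into the generic $C$ (or, equivalently, into the large constant $M$ in the Gaussian exponent $e^{-|x|^{2}/(Mt)}$ appearing in the nonlinear estimates). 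Accordingly, the only ``obstacle'' is bookkeeping: one could either insert a constant $C$ on the right of \eqref{semigroup}, or observe that the same completion-of-the-square bound together with any $M>1$ gives the strict inequality $\mathrm{LHS}\leq t^{-1/2}e^{-|x|^{2}/(Mt)}$ at the cost of slightly widening the Gaussian on the right. Beyond this cosmetic point the proof is one line of algebra plus one Gaussian integral.
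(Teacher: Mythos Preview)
Your approach is exactly the paper's: complete the square to obtain the identity
\[
\frac{(x-y)^2}{t-s}+\frac{y^2}{s}=\frac{t}{s(t-s)}\Big(y-\frac{sx}{t}\Big)^2+\frac{x^2}{t},
\]
pull out $e^{-x^2/t}$, and evaluate the remaining Gaussian in $y$. You have in fact been more careful than the paper: the paper writes two inequalities where the first is actually an equality and the second tacitly drops the factor $\sqrt{\pi}$ arising from $\int e^{-au^2}\,du=\sqrt{\pi/a}$, exactly the cosmetic point you flag. Your observation that this constant is harmless for every subsequent application (always absorbed into a generic $C$ or into $M$) is correct.
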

\begin{proof}
Noting first that
\be
\frac{|x-y|^2}{t-s}+\frac{|y|^2}{s}
 = \frac{s(x^2-2xy+y^2)+(t-s)y^2}{s(t-s)}
 = \frac{t(y-\frac{sx}{t})^2+sx^2\frac{(t-s)}{t}}{s(t-s)},
\notag
\ee
we obtain
\be
\begin{split}
\int_{-\infty}^\infty (t-s)^{-\frac{1}{2}}e^{-\frac{|x-y|^2}{(t-s)}}s^{-\frac{1}{2}}e^{-\frac{|y|^2}{s}}dy
\leq e^{-\frac{|x|^2}{t}}\int_{-\infty}^\infty (t-s)^{-\frac{1}{2}}s^{-\frac{1}{2}}e^{-\frac{t(y-sx/t)^2}{s(t-s)}}dy
\leq t^{-\frac{1}{2}}e^{-\frac{|x|^2}{t}}.
\end{split}
\notag
\ee
\end{proof}


\begin{lemma}
Suppose $u(x,t)$ satisfies $u_t=u_{xx}+u^q$ and $|u_0(x)| \leq E_0e^{-\frac{|x|^2}{M}}$,
for $E_0>0$ sufficiently small and $q\ge 4$.
Define
\be
\zeta(t):= \sup_{0 \leq s \leq t, x\in \RR}|u(x,s)|(1+s)^{\frac{1}{2}}e^{\frac{|x|^2}{M(1+s)}},
\notag
\ee
with $M>0$ sufficiently large. 
Then, for all $t \geq 0$ for which $\zeta(t)$ is finite,
\be \label{6.2.1}
\zeta(t) \leq C(E_0+\zeta^2(t)).
\ee
\end{lemma}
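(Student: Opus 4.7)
The approach is Duhamel plus a direct Gaussian-convolution estimate, controlled in the weighted norm defining $\zeta$.

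Duhamel's formula represents the solution as
$$u(x,t)=\int_{\RR} k(x-y,t)\,u_0(y)\,dy+\int_0^t\!\!\int_{\RR} k(x-y,t-s)\,u^q(y,s)\,dy\,ds,$$
with $k(x,t)=(4\pi t)^{-1/2}e^{-|x|^2/(4t)}$. The hypothesis $|u_0(y)|\leq E_0 e^{-|y|^2/M}$ makes the linear convolution an exact Gaussian: completing the square yields
$$\Big|\int_{\RR} k(x-y,t)u_0(y)\,dy\Big|\leq E_0\sqrt{\tfrac{M}{M+4t}}\,e^{-|x|^2/(M+4t)}.$$
For $M\geq 4$ one has $M+4t\leq M(1+t)$, so $e^{-|x|^2/(M+4t)}\leq e^{-|x|^2/(M(1+t))}$ and the prefactor is bounded by $C(1+t)^{-1/2}$. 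This gives an $O(E_0)$ contribution to $\zeta(t)$.

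For the nonlinear term, the definition of $\zeta$ gives the pointwise bound $|u(y,s)|\leq \zeta(t)(1+s)^{-1/2}e^{-|y|^2/(M(1+s))}$. Taking the $q$-th power, while retaining only a single Gaussian factor and bounding the remaining $q-1$ factors by the plain $L^\infty$-type decay, yields $|u^q(y,s)|\leq \zeta(t)^q(1+s)^{-q/2}e^{-|y|^2/(M(1+s))}$. The core estimate is then a rescaled version of the semigroup bound \eqref{semigroup}: for $M\geq 4$, completing the square shows
$$\int_{\RR} k(x-y,t-s)\,e^{-|y|^2/(M(1+s))}\,dy\;\leq\; C\sqrt{\tfrac{1+s}{1+t}}\;e^{-|x|^2/(M(1+t))},$$
the exponent comparison reducing exactly to $4(t-s)+M(1+s)\leq M(1+t)$, which holds iff $M\geq 4$. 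Substituting,
$$\int_0^t\!\!\int_{\RR} k(x-y,t-s)|u^q(y,s)|\,dy\,ds\;\leq\; C\zeta(t)^q(1+t)^{-1/2}e^{-|x|^2/(M(1+t))}\int_0^t(1+s)^{(1-q)/2}\,ds,$$
and since $q\geq 4$ the $s$-integral converges uniformly in $t$.

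Combining the two pieces gives $|u(x,t)|\leq C(E_0+\zeta(t)^q)(1+t)^{-1/2}e^{-|x|^2/(M(1+t))}$; multiplying by the weight $(1+t)^{1/2}e^{|x|^2/(M(1+t))}$ and taking the supremum produces $\zeta(t)\leq C(E_0+\zeta(t)^q)$. Since the continuous induction argument (as in Corollary \ref{continuous induction}) keeps $\zeta\leq 1$, one has $\zeta(t)^q\leq \zeta(t)^2$ for $q\geq 2$, giving \eqref{6.2.1}. The main obstacle is the calibration of $M$: the weight $e^{-|y|^2/(M(1+s))}$ must be quantitatively preserved by the heat semigroup, which fails unless $M$ dominates the spreading coefficient $4$ of $k$. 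Choosing $M$ sufficiently large (and somewhat past the bare threshold $4$ so the constants from both the linear and nonlinear estimates remain under control) is precisely what makes the exponent comparisons $M+4t\leq M(1+t)$ and $4(t-s)+M(1+s)\leq M(1+t)$ hold simultaneously and closes the bound.
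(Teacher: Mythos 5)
Your overall structure — Duhamel plus a weighted Gaussian-convolution estimate, with the key exponent reductions $M+4t\le M(1+t)$ and $4(t-s)+M(1+s)\le M(1+t)$ for $M\ge 4$ — matches the paper's proof, and the linear-term and semigroup estimates are correct. The genuine gap is in how you pass from $\zeta(t)\le C(E_0+\zeta(t)^q)$ to the stated bound $\zeta(t)\le C(E_0+\zeta(t)^2)$. You appeal to ``the continuous induction argument keeps $\zeta\le 1$'' in order to absorb $\zeta^q$ into $\zeta^2$, but that smallness of $\zeta$ is precisely what the \emph{downstream} corollary deduces \emph{from} this lemma; invoking it here is circular. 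As written, your argument proves $\zeta(t)\le C(E_0+\zeta(t)^q)$ for all $t$ where $\zeta$ is finite, which is a different (weaker, for $\zeta>1$) statement from the lemma.

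The paper avoids this by splitting the nonlinear factor asymmetrically: write $|u^q| = |u^{q-2}|_{L^\infty}\,|u|^2$, bound $|u|^2$ by $\zeta(t)^2(1+s)^{-1}e^{-2|x|^2/(M(1+s))}\le \zeta(t)^2(1+s)^{-1}e^{-|x|^2/(M(1+s))}$, and bound the remaining $q-2$ factors by the \emph{already-established, $\zeta$-free} $L^\infty$ decay $|u(\cdot,s)|_{L^\infty}\le CE_0(1+s)^{-1/2}$ from Corollary~\ref{continuous induction} — which is applicable here because $|u_0|\le E_0 e^{-|x|^2/M}$ implies $|u_0|_{L^1\cap L^\infty}\le C(M)E_0$. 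This decouples the $\zeta$ dependence at exactly quadratic order and yields
\be
|u^q(y,s)|\le C\,\zeta(t)^2\,(1+s)^{-1-\frac{q-2}{2}}e^{-\frac{|y|^2}{M(1+s)}}\le C\,\zeta(t)^2\,(1+s)^{-\frac{3}{2}}e^{-\frac{|y|^2}{M(1+s)}},\notag
\ee
after which your semigroup estimate gives a convergent $s$-integral and the stated $\zeta(t)\le C(E_0+\zeta(t)^2)$ directly, with no need for any a priori smallness of $\zeta$. The rest of your proposal goes through once this factor-splitting is inserted.
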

\begin{proof}
By $|u^q|=|u^{q-2}|_{L^\infty}|u^2| \leq \zeta^2(t)(1+s)^{-1}(1+s)^{-\frac{1}{2}}e^{-\frac{|x|^2}{M(1+s)}}$ and \eqref{semigroup},  we obtain
\be
\begin{split}
& \quad |u(x,t)| \\
& \leq \int_{-\infty}^\infty k(x-y,t)|u_0(y)|dy + \int_0^t \int_{-\infty}^\infty k(x-y,t-s)|u^q(y,s)|dyds \\
& \leq  CE_0\int_{-\infty}^\infty t^{-\frac{1}{2}}e^{-\frac{|x-y|^2}{t}}e^{-\frac{|y|^2}{M}} dy + C\zeta^2(t)\int_0^t \int_{-\infty}^\infty (t-s)^{-\frac{1}{2}}e^{-\frac{|x-y|^2 }{ (t-s)}}(1+s)^{-\frac{3}{2}}e^{-\frac{|x|^2}{M(1+s)}}dyds \\
& \leq CE_0 e^{-\frac{|x|^2}{(M+t)}}(M+t)^{-\frac{1}{2}} + C\zeta^2(t)(1+ t)^{-\frac{1}{2}}e^{-\frac{|x|^2}{M(1+t)}}\int_0^t (1+s)^{-\frac{3}{2}}ds \\
& \leq C(E_0+\zeta^2(t))(1+t)^{-\frac{1}{2}}e^{-\frac{|x|^2}{M(1+t)}}.
\end{split}
\notag
\ee
Rearranging, we have \eqref{6.2.1}.
\end{proof}

\begin{corollary}
Suppose $u(x,t)$ satisfies $u_t=u_{xx}+u^q$ and $|u_0(x)| \leq E_0e^{-\frac{|x|^2}{M}}$, 
for $E_0>0$ sufficiently small, $M>0$ sufficiently large, and $q\ge 4$.
Then
\be \label{6.2.2}
|u(x,t)| \leq CE_0(1+t)^{-\frac{1}{2}}e^{-\frac{|x|^2}{M(1+t)}}
\ee
\end{corollary}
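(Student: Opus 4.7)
The plan is to run a continuous induction argument on $\zeta(t)$, exactly in the style of Corollary 1.7 in the preceding subsection, using the quadratic inequality \eqref{6.2.1} established in the lemma immediately above.

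First I would check the initial value: at $t=0$, the hypothesis $|u_0(x)|\le E_0 e^{-|x|^2/M}$ gives directly
\[
\zeta(0)=\sup_{x\in\RR}|u_0(x)|e^{|x|^2/M}\le E_0,
\]
so $\zeta(0)\le CE_0$ for any $C\ge 1$. Next I would invoke the standard fact that $t\mapsto \zeta(t)$ is nondecreasing and continuous so long as it remains finite (a consequence of local existence/regularity for the semilinear heat equation with Gaussian-decaying data, which is a standard input here).

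With these two ingredients, the bootstrap is routine. Suppose for contradiction that $\zeta(t)\le 2CE_0$ fails; by continuity there exists a first time $t_*$ at which $\zeta(t_*)=2CE_0$. Plugging into \eqref{6.2.1} yields
\[
2CE_0=\zeta(t_*)\le C\bigl(E_0+\zeta^2(t_*)\bigr)=CE_0+4C^3E_0^2.
\]
Rearranging gives $CE_0\le 4C^3E_0^2$, i.e.\ $E_0\ge 1/(4C^2)$, which is false provided we choose $E_0<1/(4C^2)$ (with $C\ge 1$ without loss of generality). Hence $\zeta(t)\le 2CE_0$ for all $t\ge 0$, and unpacking the definition of $\zeta$ produces exactly \eqref{6.2.2}.

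The only nontrivial point is justifying continuity of $\zeta(t)$; everything else is the quadratic-bootstrap trick already used in Corollary 1.7. Since the lemma above has already done the real analytic work (bounding the Duhamel convolution against the Gaussian template via \eqref{semigroup}), the corollary itself is essentially a one-line consequence once continuous induction is invoked, so I do not anticipate any genuine obstacle.
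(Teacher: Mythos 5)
Your proposal is correct and follows essentially the same continuous-induction bootstrap that the paper invokes (the paper literally says ``Same proof as Corollary \ref{continuous induction},'' which is the argument you have written out). Your threshold $E_0<1/(4C^2)$ for the quadratic inequality matches what the paper uses in the analogous quadratic corollary, and the structure—initial bound $\zeta(0)\le E_0<2CE_0$, continuity of $\zeta$ while finite, first-time-of-equality contradiction—is exactly the intended argument.
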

\begin{proof}
Same proof as Corollary \ref{continuous induction}.
\end{proof}


\begin{lemma}
Suppose $u(x,t)$ satisfies $u_t=u_{xx}+u^q$ and $|u_0(x)| \leq E_0e^{-\frac{|x|^2}{M}}$,
for $E_0>0$ sufficiently small, $M>0$ sufficiently large, and $q\ge 4$.
Then for $M' >M$,
\be \label{pwb of xu(x,t)}
|xu(x,t)| \leq CE_0e^{-\frac{|x|^2}{M'(1+t)}}.
\ee
\end{lemma}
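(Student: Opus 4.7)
The cleanest route is to deduce \eqref{pwb of xu(x,t)} directly from \eqref{6.2.2} by exploiting the slack between the Gaussian rates $1/(M(1+t))$ and $1/(M'(1+t))$ permitted by $M' > M$. The elementary calibration
\[
|x|\,e^{-|x|^2/(M(1+t))} \;\leq\; \sqrt{\frac{MM'(1+t)}{2e(M'-M)}}\;e^{-|x|^2/(M'(1+t))},
\]
obtained by maximizing the scalar function $r \mapsto r\,\exp(-(1/M - 1/M')r^2/(1+t))$ over $r \geq 0$, trades the unwanted factor $|x|$ for a factor $\sqrt{1+t}$ multiplying a slightly weaker Gaussian. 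Multiplying \eqref{6.2.2} by $|x|$ and substituting, the $\sqrt{1+t}$ picked up here cancels exactly against the $(1+t)^{-1/2}$ prefactor in \eqref{6.2.2}, yielding
\[
|xu(x,t)| \;\leq\; CE_0(1+t)^{-1/2}\,|x|\,e^{-|x|^2/(M(1+t))} \;\leq\; C' E_0\,e^{-|x|^2/(M'(1+t))},
\]
with $C' = C\sqrt{MM'/(2e(M'-M))}$, which is the claimed bound.

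Heuristically this cancellation is inevitable: the pointwise maximum of $|x|\cdot k(x,t)$ is of order $1$ independent of $t$, precisely because the peak of a Gaussian of width $\sqrt{t}$ is of order $1/\sqrt{t}$ while multiplication by $|x|$ shifts the mass outward by a factor $\sqrt{t}$; the lemma is a sharpening of this scaling observation to include an exponentially decaying tail, which is exactly what the $M' > M$ slack accommodates. In this route no bootstrap argument, no Duhamel formula, and no application of \eqref{semigroup} is required, and none of the nonlinear structure of $u_t = u_{xx} + u^q$ enters the proof beyond what has already been captured in \eqref{6.2.2}.

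If one instead wished to prove \eqref{pwb of xu(x,t)} in the Duhamel style of the earlier lemmas, the plan would be to write $x = (x-y)+y$ inside both the free-term integral $\int k(x-y,t) u_0(y)\,dy$ and the Duhamel integral $\int_0^t\!\int k(x-y,t-s) u^q(y,s)\,dy\,ds$, apply the pointwise bound $|z|k(z,\tau) \leq C e^{-z^2/(8\tau)}$ to absorb the $|x-y|$ factors without introducing any $\tau$-dependent growth, apply $|y| e^{-|y|^2/\mu} \leq C\sqrt{\mu}\,e^{-|y|^2/(2\mu)}$ to absorb the $|y|$ factors, and evaluate the resulting Gaussian-times-Gaussian convolutions using \eqref{semigroup}. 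The $s$-integrals would converge because $q \geq 4$ forces $(1+s)^{-q/2}$ to be integrable. The main obstacle on this longer path is the bookkeeping of Gaussian exponents when convolving a kernel of rate $1/(8(t-s))$ against a Gaussian of rate $q/(M(1+s))$ and verifying that the aggregate rate in $x$ remains at least $1/(M'(1+t))$ uniformly in $s \in [0,t]$, typically by splitting into the regimes $s \in [0,t/2]$ and $s \in [t/2,t]$; this complication is entirely avoided by the one-line argument above.
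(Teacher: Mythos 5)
Your proof is correct and uses exactly the same approach as the paper: multiply the already-established bound \eqref{6.2.2} by $|x|$ and absorb the factor $|x|$ into the Gaussian by sacrificing some of the decay rate ($M \to M'$), with the resulting $\sqrt{1+t}$ canceling the $(1+t)^{-1/2}$ prefactor. You have simply made the constant in the paper's one-line observation ``$|x|e^{-|x|^2}\le Ce^{-|x|^2/r}$ for $r>1$'' explicit.
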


\begin{proof}
Notice first that $|x|e^{-|x|^2} \leq Ce^{-|x|^2/r}$ for $r>1$.  Then by \eqref{6.2.2}, we have
\be
\begin{split}
|xu(x,t)|
& \leq CE_0|x|(1+t)^{-\frac{1}{2}}e^{-\frac{|x|^2}{M(1+t)}} \leq CE_0 e^{-\frac{|x|^2}{M'(1+t)}}.
\end{split}
\notag
\ee
\end{proof}


\begin{lemma}
Suppose $u(x,t)$ satisfies $u_t=u_{xx}$ and $|u_0(x)| \leq E_0e^{-\frac{|x|^2}{M}}$,
for $E_0>0$ sufficiently small, $M>0$ sufficiently large, and $q\ge 4$.
Then  for some sufficiently large $M''>M' >M$,
\be \label{pwb of u(x,t)-U_0k 1}
|u(x,t)-U_0k(x,t)| \leq CE_0 (1+t)^{-1}e^{-\frac{|x|^2}{M''(1+t)}},
\ee
where $U_0=\ds\int_{-\infty}^\infty u_0(y)dy$ and $k(x,t)=(1+t)^{-\frac{1}{2}}e^{-\frac{|x|^2}{(1+t)}}$. (Note: $|U_0| \leq E_0\sqrt M$)
\end{lemma}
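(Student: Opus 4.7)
Since the equation is linear, $u(x,t)=\int_{-\infty}^\infty k_0(x-y,t)u_0(y)\,dy$ where $k_0(x,t)=(4\pi t)^{-1/2}e^{-|x|^2/(4t)}$ denotes the heat kernel. Using $U_0=\int u_0(y)\,dy$ gives the convenient identity
\be
u(x,t)-U_0 k(x,t)=\int_{-\infty}^\infty\bigl[k_0(x-y,t)-k(x,t)\bigr]u_0(y)\,dy,
\notag
\ee
which I will handle by a first-order Taylor expansion of $k_0$ in the spatial variable, using the Gaussian decay of $u_0$ to control the induced first moment in $y$. As with the treatment of \eqref{first lemma}, the natural split is into a short-time regime, handled by a crude Gaussian bound on both sides, and a long-time regime, where the rate $(1+t)^{-1}$ is extracted from cancellation in the Taylor expansion.

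\textbf{Small time $0\le t\le 1$.} A direct Gaussian convolution estimate analogous to the one that produced \eqref{6.2.2} gives $|u(x,t)|\le CE_0 e^{-|x|^2/(M'(1+t))}$, while the note $|U_0|\le E_0\sqrt{M}$ yields $|U_0|\,|k(x,t)|\le CE_0\sqrt{M}\,e^{-|x|^2/(1+t)}$. Since $(1+t)^{-1}$ is comparable to a positive constant on $[0,1]$, the triangle inequality yields the claimed bound for any $M''\ge M'$ sufficiently large.

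\textbf{Large time $t\ge 1$.} By the Mean Value Theorem,
\be
k_0(x-y,t)-k_0(x,t)=-y\int_0^1 k_{0,x}(x-\theta y,t)\,d\theta,
\notag
\ee
and the standard pointwise bound $|k_{0,x}(z,t)|\le Ct^{-1}e^{-|z|^2/(5t)}$ combined with the elementary inequality $|x-\theta y|^2\ge\tfrac12|x|^2-|y|^2$ gives
\be
|k_{0,x}(x-\theta y,t)|\le Ct^{-1}e^{-|x|^2/(10t)}e^{|y|^2/(5t)}.
\notag
\ee
Choosing $M$ large enough that $1/(5t)\le 1/(2M)$ for $t\ge 1$, the hypothesis $|u_0(y)|\le E_0 e^{-|y|^2/M}$ implies $e^{|y|^2/(5t)}|u_0(y)|\le E_0 e^{-|y|^2/(2M)}$, so that the weighted first moment $\int_{-\infty}^\infty|y|\,e^{|y|^2/(5t)}|u_0(y)|\,dy\le CE_0 M$ is finite and uniform in $t$. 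Combining this with the observation that $|k(x,t)-k_0(x,t)|$ is itself dominated by a Gaussian of the required form, one obtains
\be
|u(x,t)-U_0 k(x,t)|\le CE_0(1+t)^{-1}e^{-|x|^2/(M''(1+t))}
\notag
\ee
provided $M''$ is chosen sufficiently large that $e^{-|x|^2/(10t)}\le e^{-|x|^2/(M''(1+t))}$ uniformly in $t\ge 1$.

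\textbf{Main obstacle.} The only non-bookkeeping step is arranging the constants $M<M'<M''$ so that the Gaussian decay of $u_0$ absorbs the Gaussian \emph{growth} $e^{|y|^2/(5t)}$ produced by displacing the argument of $k_{0,x}$. This is precisely what forces $M$ to be taken sufficiently large and what dictates the split into short-time and long-time regimes; the remainder of the argument reduces to routine pointwise Gaussian convolution estimates mirroring those already carried out in the preceding lemmas.
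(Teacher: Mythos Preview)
Your overall strategy---Mean Value Theorem plus a Gaussian moment bound---is the same as the paper's, but the way you extract the $x$-Gaussian is different, and there is a genuine slip in the large-time step.

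\medskip
\textbf{The gap.} You write ``Choosing $M$ large enough that $1/(5t)\le 1/(2M)$ for $t\ge 1$.'' The inequality goes the wrong way: at $t=1$ it reads $1/5\le 1/(2M)$, which forces $M\le 5/2$, directly contradicting the hypothesis that $M$ is large. What you actually need is $t\ge 2M/5$, so the split point must move with $M$. The fix is routine---take the short-time regime to be $0\le t\le T_0$ with $T_0:=2M/5$ (your crude estimate there still works, at the cost of a constant $C=C(M)$)---but as written the argument fails precisely where the hypothesis ``$M$ sufficiently large'' is invoked.

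A smaller point: the claim that $|k(x,t)-k_0(x,t)|$ is already $O((1+t)^{-1})$ times a Gaussian is not true at $x=0$, where the difference is of order $t^{-1/2}$. The paper simply identifies $k$ with the heat kernel throughout (constants absorbed), so this term never appears; you should do the same rather than try to bound it separately.

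\medskip
\textbf{Comparison with the paper.} The paper avoids both the time split and the growing factor $e^{|y|^2/(5t)}$ altogether. Instead of separating variables via $|x-\theta y|^2\ge \tfrac12|x|^2-|y|^2$, it absorbs the polynomial prefactors $|x-wy|$ and $|y|$ into slightly wider Gaussians (passing from $M$ to $M'$) and then applies the Gaussian semigroup convolution identity \eqref{semigroup} directly to
\[
\int_{-\infty}^\infty (1+t)^{-1}e^{-\frac{|x-wy|^2}{M'(1+t)}}\,e^{-\frac{|y|^2}{M'}}\,dy
\;\le\; C(1+t)^{-1}e^{-\frac{|x|^2}{M''(1+t)}}.
\]
This works uniformly for all $t\ge 0$ with no case distinction and no constraint linking $M$ to the time threshold. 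Your variable-separation route is more elementary (it does not need the convolution lemma), but it buys that simplicity at the price of the $M$-dependent split and the bookkeeping you flagged in your ``Main obstacle'' paragraph.
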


\begin{proof}
Noting, by the Mean Value Theorem, that
\be
|k(x-y,t)-k(x,t)| \leq |y|\int_0^1 |k_x(x-wy,t)| dw
\notag
\ee
we obtain
\be
\begin{split}
|u(x,t)-U_0k(x,t)|
& \leq \int_{-\infty}^\infty |k(x-y,t)-k(x,t)||u_0(y)|dy \\
& \leq E_0\int_{-\infty}^\infty \int_0^1 (1+t)^{-\frac{3}{2}}|x-wy|e^{-\frac{|x-wy|^2}{(1+t)}}|y|e^{-\frac{|y|^2}{M}}dwdy \\
& \leq E_0\int_0^1 \int_{-\infty}^\infty (1+t)^{-1}e^{-\frac{|x-wy|^2}{M'(1+t)}}e^{-\frac{|y|^2}{M'}}dydw \\
& \leq CE_0(1+t)^{-1}e^{-\frac{|x|^2}{M''(1+t)}}.
\end{split}
\notag
\ee
\end{proof}



\begin{lemma}
Suppose $u(x,t)$ satisfies $u_t=u_{xx}+u^q$ and $|u_0(x)| \leq E_0e^{-\frac{|x|^2}{M}}$,
for $E_0>0$ sufficiently small, $M>0$ sufficiently large, and $q\ge 4$.
Then for some sufficiently large $M''>M'>M$,
\be \label{pwb of u(x,t)-U(s) k 2 }
\big| \int_{-\infty}^\infty k(x-y,t-s)u^q(y,s)dy - U(s)k(x,t-s) \Big | \leq E_0(1+t-s)^{-1}(1+s)^{-1}e^{-\frac{|x|^2}{M''(1+t)}},
\ee
where $U(s)=\ds\int_{-\infty}^{\infty}u^q(y,s)dy$.
\end{lemma}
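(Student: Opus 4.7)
The plan is to imitate the proof of the preceding lemma (inequality \eqref{pwb of u(x,t)-U_0k 1}) almost line for line, using the Mean Value Theorem to replace $k(x-y,t-s)$ by $k(x,t-s)$ and absorbing all polynomial prefactors into slightly wider Gaussians. First I would note that by the definition of $U(s)$,
\[
\int k(x-y,t-s)u^q(y,s)\,dy-U(s)k(x,t-s)=\int \bigl[k(x-y,t-s)-k(x,t-s)\bigr]u^q(y,s)\,dy,
\]
and then apply the MVT to write $k(x-y,t-s)-k(x,t-s)=-y\int_0^1 k_x(x-wy,t-s)\,dw$. This reduces the problem to bounding
\[
\int_0^1\int |k_x(x-wy,t-s)|\,|y\,u^q(y,s)|\,dy\,dw.
\]

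Next I would estimate each factor separately, absorbing the polynomial prefactors $|x-wy|$ and $|y|$ into slightly wider Gaussians in the standard way. For the kernel one has $|k_x(x-wy,t-s)|\le C(1+t-s)^{-1}e^{-|x-wy|^2/(M'(1+t-s))}$ for some $M'>M$. For the source term, the pointwise bound \eqref{6.2.2} together with $q\ge 4$ gives
\[
|y\,u^q(y,s)|\le CE_0^q(1+s)^{-q/2}|y|\,e^{-q|y|^2/(M(1+s))}\le CE_0(1+s)^{-3/2}e^{-|y|^2/(M'(1+s))},
\]
after enlarging $M$ slightly (and using smallness of $E_0$ to write $E_0^q\le E_0$). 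Plugging both bounds into the integral yields
\[
CE_0(1+t-s)^{-1}(1+s)^{-3/2}\int_0^1\!\!\int e^{-|x-wy|^2/(M'(1+t-s))}e^{-|y|^2/(M'(1+s))}\,dy\,dw.
\]

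The one step that requires a short calculation is the inner Gaussian integral, a mild generalization of the semigroup identity \eqref{semigroup} with a rescaling $y\mapsto wy$. Completing the square in $y$ one finds that for every fixed $w\in[0,1]$ and $0\le s\le t$,
\[
\int e^{-|x-wy|^2/(M'(1+t-s))}e^{-|y|^2/(M'(1+s))}\,dy\le C(1+s)^{1/2}e^{-|x|^2/(M''(1+t))}
\]
for some $M''>M'$, where the prefactor comes from $(Aw^2+B)^{-1/2}\le\sqrt{M'(1+s)}$ and the exponent from the bound $w^2(1+s)+(1+t-s)\le 2(1+t)$. Combining this with the preceding display cancels the $(1+s)^{-3/2}$ down to $(1+s)^{-1}$ and produces exactly the claimed bound $CE_0(1+t-s)^{-1}(1+s)^{-1}e^{-|x|^2/(M''(1+t))}$. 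The only real obstacle is bookkeeping the hierarchy of Gaussian widths $M<M'<M''$ so that all absorption steps $|z|\,e^{-|z|^2/M}\lesssim e^{-|z|^2/M'}$ remain valid; otherwise each step is standard.
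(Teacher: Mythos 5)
Your proposal is correct and follows essentially the same route as the paper: write the difference as $\int[k(x-y,t-s)-k(x,t-s)]u^q\,dy$, apply the Mean Value Theorem, absorb the polynomial prefactors $|x-wy|$ and $|y|$ into slightly modified Gaussians, and evaluate the resulting Gaussian convolution by completing the square (the paper invokes its semigroup inequality \eqref{semigroup}; your direct computation is equivalent). The only cosmetic difference is the source-term bound: the paper uses the previously-established estimate $|xu(x,s)|\le CE_0 e^{-|x|^2/(M'(1+s))}$ together with $|u^{q-1}|_{L^\infty}\le CE_0(1+s)^{-3/2}$, whereas you raise \eqref{6.2.2} to the $q$-th power and absorb $|y|$ directly; and there your phrase ``after enlarging $M$'' is a small misstatement (absorbing $|y|$ into $e^{-q|y|^2/(M(1+s))}$ actually \emph{narrows} the effective Gaussian to width $2M/q\le M$), but this is harmless since a narrower Gaussian is still bounded by $e^{-|y|^2/(M'(1+s))}$ for any $M'\ge 2M/q$, and the final width $M''$ is governed by the kernel factor, which genuinely widens.
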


\begin{proof}
Noting first that by $q \geq 4$,
\be
|xu^q(x,s)| \leq |u^{p-1}|_{L^\infty}|xu(x,s)| \leq CE_0(1+s)^{-\frac{3}{2}}e^{-\frac{|x|^2}{M'(1+s)}},
\notag
\ee
we have, by \eqref{semigroup}
\be
\begin{split}
& \Big| \int_{-\infty}^\infty k(x-y,t-s)u^q(y,s)dy - U(s)k(x,t-s) \Big |  \\
& \leq \int_{-\infty}^\infty |k(x-y,t-s)-k(x,t-s)||u^q(y,s)|dy\\
& \leq E_0\int_{-\infty}^\infty \int_0^1 (1+t-s)^{-\frac{3}{2}}|x-wy|e^{-\frac{|x-wy|^2}{(1+t-s)}}|yu^q(y,s)|dwdy \\
& \leq E_0\int_0^1 \int_{-\infty}^\infty (1+t-s)^{-1}(1+s)^{-\frac{3}{2}}e^{-\frac{|x-wy|^2}{M'(1+t-s)}}e^{-\frac{|y|^2}{M'(1+s)}}dydw \\
& \leq CE_0(1+t-s)^{-1}(1+s)^{-1}e^{-\frac{|x|^2}{M''(1+t)}}.
\end{split}
\notag
\ee
\end{proof}



\begin{theorem}[Behavior]
Suppose $u(x,t)$ satisfies $u_t=u_{xx}+u^q$ and  $|u_0(x)| \leq E_0e^{-\frac{|x|^2}{M}}$,
for $E_0>0$ sufficiently small, $M>0$ sufficiently large, and $q\ge 4$.
Set
\be
U_*=\int_0^\infty U(s)ds+U_0 = \int_0^\infty \int_{-\infty}^\infty u^q(y,s)dyds+\int_{-\infty}^\infty u_0(y)dy.
\notag
\ee
Then  $|U_*| < \infty$ and for some sufficiently large $M''>M'>M$,
\be \label{6.2.7}
|u(x,t)-U_* k(x,t)| \leq E_0(1+t)^{-1}e^{-\frac{|x|^2}{M''(1+t)}}(1+\ln(1+t)).
\ee
\end{theorem}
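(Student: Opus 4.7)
The plan is to mirror the proof of Theorem~\ref{behavior1} exactly, replacing the $L^p$ lemmas \eqref{first lemma} and \eqref{duhamel part for HE} by their pointwise Gaussian analogues \eqref{pwb of u(x,t)-U_0k 1} and \eqref{pwb of u(x,t)-U(s) k 2 }. First I would verify $|U_*|<\infty$ by substituting the pointwise bound \eqref{6.2.2} into $U(s)=\int u^q(y,s)\,dy$: since $|u(y,s)|^q \le (CE_0)^q (1+s)^{-q/2} e^{-q|y|^2/(M(1+s))}$, integrating in $y$ picks up a factor $\sqrt{1+s}$ and gives $|U(s)|\le C E_0 (1+s)^{-(q-1)/2}\le C E_0 (1+s)^{-3/2}$ for $q\ge 4$. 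Combined with $|U_0|\le C E_0\sqrt{M}$ from the Gaussian initial data, this integrates to a finite value.

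Next I would apply Duhamel's formula and decompose
\[
u(x,t)-U_*k(x,t)=I+II+III+IV,
\]
where $I:=\int k(x-y,t)u_0(y)\,dy-U_0 k(x,t)$, $II:=-\int_t^\infty U(s)k(x,t)\,ds$, $III:=\int_0^t \bigl[\int k(x-y,t-s)u^q(y,s)\,dy-U(s)k(x,t-s)\bigr]\,ds$, and $IV:=\int_0^t U(s)\bigl[k(x,t-s)-k(x,t)\bigr]\,ds$. Term $I$ is estimated directly by \eqref{pwb of u(x,t)-U_0k 1}. Term $II$ is controlled by pulling $k(x,t)$ out of the $s$-integral and using $\int_t^\infty (1+s)^{-3/2}\,ds \le C(1+t)^{-1/2}$ together with the pointwise Gaussian bound on $k(x,t)$. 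Term $III$ invokes \eqref{pwb of u(x,t)-U(s) k 2 } and produces the logarithm via the standard time-convolution estimate $\int_0^t (1+t-s)^{-1}(1+s)^{-1}\,ds \le C(1+t)^{-1}(1+\ln(1+t))$, obtained by splitting at $s=t/2$; crucially, the Gaussian factor $e^{-|x|^2/(M''(1+t))}$ in the integrand depends only on $1+t$, so it pulls out of the $s$-integral.

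The main obstacle will be term $IV$, which requires both a sharp bound on $k(x,t-s)-k(x,t)$ and a careful splitting of the $s$-integral. For $s\in[0,t/2]$ I would apply the Mean Value Theorem in $t$ together with the heat-kernel derivative bound $|k_t(x,\tau)|\le C(1+\tau)^{-3/2}e^{-|x|^2/(M'(1+\tau))}$, exploiting that any intermediate $\tau\in(t-s,t)$ satisfies $\tau\ge t/2$ so $(1+\tau)^{-1}\sim (1+t)^{-1}$; paired with $|U(s)|\le C E_0 (1+s)^{-3/2}$ this contributes $C E_0 (1+t)^{-3/2}e^{-|x|^2/(M''(1+t))}\int_0^{t/2} s(1+s)^{-3/2}\,ds \le C E_0 (1+t)^{-1}e^{-|x|^2/(M''(1+t))}$. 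For $s\in[t/2,t]$ I would use the crude bound $|k(x,t-s)-k(x,t)|\le |k(x,t-s)|+|k(x,t)|$, absorbing the singular $(1+t-s)^{-1/2}$ by using the monotonicity $e^{-|x|^2/(1+u)}\le e^{-|x|^2/(1+t)}$ for $u\le t$, together with the favorable factor $|U(s)|\le C E_0 (1+t)^{-3/2}$ on this regime. Summing the four pieces then yields the claimed bound, with the logarithmic loss appearing solely through $III$.
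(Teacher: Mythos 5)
Your proposal is correct and follows essentially the same approach as the paper: the same four-part decomposition of $u-U_*k$, the same invocation of the pointwise lemmas \eqref{pwb of u(x,t)-U_0k 1} and \eqref{pwb of u(x,t)-U(s) k 2 } for terms $I$ and $III$, the same split at $s=t/2$ with a Mean Value Theorem argument on $[0,t/2]$ and a crude triangle-inequality bound on $[t/2,t]$ for term $IV$, and the same convolution estimate producing the logarithm in $III$.
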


\begin{proof}
Recalling \eqref{usual bounds} and $q \geq 4$,  $|U(s)| \leq CE_0(1+s)^{-\frac{3}{2}}$ and so
\be
|U_*| \leq CE_0\int_0^\infty (1+s)^{-\frac{3}{2}}ds +|u_0|_{L^1} < \infty.
\notag
\ee

Now we break  $|u(x,t)-U_* k(x,t)|$ into four  parts like  \eqref{4 parts}.
Then
\be \label{6.2.4}
II \leq CE_0(1+t)^{-\frac{1}{2}}e^{-\frac{|x|^2}{(1+t)}}\int_t^\infty (1+s)^{-\frac{3}{2}}ds  \leq CE_0(1+t)^{-1}e^{-\frac{|x|^2}{M''(1+t)}}.
\ee
By \eqref{pwb of u(x,t)-U(s) k 2 }, we have
\be \label{6.2.5}
\begin{split}
III
& \leq CE_0e^{-\frac{|x|^2}{2M'(1+t)}}\int_0^t(1+t-s)^{-1}(1+s)^{-1}ds \\
& \leq CE_0(1+t)^{-1}e^{-\frac{|x|^2}{M''(1+t)}}\ln(1+t).
\end{split}
\ee
By $|U(s)| \leq CE_0(1+s)^{-\frac{3}{2}}$ and by the Mean Value Theorem, we have, for some $s^* \in (0,t/2)$,
\be \label{6.2.6}
\begin{split}
IV
& \leq \int_0^t|U(s)||k(x,t-s)-k(x,t)|ds \\
& \leq CE_0\int_{t/2}^t (1+s)^{-\frac{3}{2}}\Big[(1+t-s)^{-\frac{1}{2}}e^{-\frac{|x|^2}{(1+t-s)}}+(1+t)^{-\frac{1}{2}}e^{-\frac{|x|^2}{(1+t)}}\Big]ds\\
& \qquad + CE_0\int_0^{t/2} (1+s)^{-\frac{3}{2}}|s||k_t(x,t-s^*)|ds  \\
& \leq E_0(1+t)^{-\frac{3}{2}}e^{-\frac{|x|^2}{(1+t)}}\int_{t/2}^t (1+t-s)^{-\frac{1}{2}}ds + E_0(1+t)^{-\frac{1}{2}}e^{-\frac{|x|^2}{(1+t)}}\int_{t/2}^t(1+s)^{-\frac{3}{2}}ds\\
& \qquad +E_0e^{-\frac{|x|^2}{(1+t)}} \int_0^{t/2} (1+s)^{-\frac{1}{2}}(1+t-s^*)^{-\frac{3}{2}}ds\\
& \leq E_0(1+t)^{-1}e^{-\frac{|x|^2}{M''(1+t)}}.
\end{split}
\ee
By\eqref{pwb of u(x,t)-U_0k 1}  and \eqref{6.2.4}--\eqref{6.2.6}, we have \eqref{6.2.7}.
\end{proof}


\subsection{Behavior for initial data $|v_0(x)|\leq E_0(1+|x|)^{-r}$, $r>2$}


In this section, we take $E_0 >0$ sufficiently small, $M>1$ sufficiently large and $q \geq 4$. We start with the following two lemmas.

\begin{lemma}
For all $t\geq0$,  $x \in \RR$ and  $r>1$,
\be \label{Linear 3}
\int_{-\infty}^\infty t^{-\frac{1}{2}}e^{-\frac{|x-y|^2}{t}}(1+|y|)^{-r}dy \leq C\Big[ t^{-\frac{1}{2}}\wedge (1+|y|)^{-r} + (1+\sqrt t)^{-1}e^{-\frac{|x|^2}{Mt}}\Big].
\ee
\end{lemma}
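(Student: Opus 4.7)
The plan is to derive two independent bounds on $I(x,t):=\int_{\RR} t^{-1/2}e^{-|x-y|^2/t}(1+|y|)^{-r}\,dy$ and combine them via an elementary minimum argument. First, pulling the sup of the heat kernel out of the integral gives the universal bound $I(x,t)\le t^{-1/2}\int_\RR (1+|y|)^{-r}\,dy \le C\,t^{-1/2}$, where $r>1$ is used to ensure integrability of $(1+|y|)^{-r}$ at infinity. (Note: the right-hand side of the statement presumably should read $(1+|x|)^{-r}$, not $(1+|y|)^{-r}$, since $y$ is integrated out.)

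Second, I split the $y$-integration into the regions $A=\{|y|\le |x|/2\}$ and $B=\{|y|>|x|/2\}$. On $B$, $1+|y|\ge (1+|x|)/2$ gives $(1+|y|)^{-r}\le 2^r(1+|x|)^{-r}$, and the $L^1(dy)$ norm of the heat kernel bounds the $B$-contribution by $C(1+|x|)^{-r}$. On $A$, $|x-y|\ge |x|/2$, so splitting the Gaussian exponent as $|x-y|^2/t\ge |x-y|^2/(2t)+|x|^2/(8t)$ gives an $A$-contribution of $Ce^{-|x|^2/(8t)}$. To fit this into the form $(1+\sqrt t)^{-1}e^{-|x|^2/(Mt)}$ I treat small and large $t$ separately: for $t\le 1$, $(1+\sqrt t)^{-1}\ge 1/2$, so $Ce^{-|x|^2/(8t)}\le C'(1+\sqrt t)^{-1}e^{-|x|^2/(Mt)}$ for any $M\ge 8$; for $t\ge 1$, the cruder estimate $e^{-|x-y|^2/t}\le e^{-|x|^2/(4t)}$ on $A$ gives $Ct^{-1/2}e^{-|x|^2/(4t)}\le C'(1+\sqrt t)^{-1}e^{-|x|^2/(Mt)}$ for any $M\ge 4$. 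Combining $A$ and $B$ yields the second bound
\[
I(x,t)\le C(1+|x|)^{-r}+C(1+\sqrt t)^{-1}e^{-|x|^2/(Mt)}.
\]

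Since both bounds hold simultaneously, so does their minimum; applying the elementary inequality $\min(a,b+c)\le \min(a,b)+c$ valid for $a,b,c\ge 0$, I conclude
\[
I(x,t)\le C\bigl[t^{-1/2}\wedge (1+|x|)^{-r}\bigr]+C(1+\sqrt t)^{-1}e^{-|x|^2/(Mt)},
\]
which is the claimed estimate. The main obstacle is calibrating the Gaussian factor from region $A$ so that the single expression $(1+\sqrt t)^{-1}e^{-|x|^2/(Mt)}$ absorbs it uniformly in $t$; this is what forces the split into small- and large-$t$ regimes and the requirement that $M$ be sufficiently large (which the statement permits). Everything else is a routine convolution estimate combined with the observation that two simultaneously valid upper bounds can be replaced by their pointwise minimum.
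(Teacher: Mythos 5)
Your proof is correct and follows essentially the same strategy as the paper: the same $|y|\le|x|/2$ versus $|y|>|x|/2$ decomposition, with the Gaussian term coming from the inner region (handled by a small-$t$/large-$t$ split) and the $t^{-1/2}\wedge(1+|x|)^{-r}$ factor coming from the outer region. The only cosmetic difference is that you derive the global $t^{-1/2}$ bound separately and merge it via $\min(a,b+c)\le\min(a,b)+c$, whereas the paper takes the minimum directly on the outer piece; you also correctly flag the typo $(1+|y|)^{-r}$ in the statement, which should read $(1+|x|)^{-r}$.
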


\begin{proof}
We need only consider $\ds\int_{0}^\infty t^{-\frac{1}{2}}e^{-\frac{|x-y|^2}{t}}(1+|y|)^{-r}dy$ by symmetry.

Notice first that
\be
\int_0^\infty (1+|y|)^{-r}dy \leq \frac{1}{r-1} < \infty.
\notag
\ee
If $x=0$, it is trivial, from
\be
\int_{0}^\infty t^{-\frac{1}{2}}e^{-\frac{|y|^2}{t}}(1+|y|)^{-r}dy \leq C(1+\sqrt t)^{-1}.
\notag
\ee
For $x\neq0$, we break the integration into two parts
\be
\int_{0}^\infty t^{-\frac{1}{2}}e^{-\frac{|x-y|^2}{t}}(1+|y|)^{-r}dy = \int_{0}^{|x|/2} +\int_{|x|/2}^\infty = I+II
\notag
\ee
For the first integral $I$, if $t\leq 1$, we have
\be
\int_{0}^{|x|/2} t^{-\frac{1}{2}}e^{-\frac{|x-y|^2}{t}}(1+|y|)^{-r}dy \leq C\frac{|x|}{\sqrt{t}}e^{-\frac{|x|^2}{t}}\leq Ce^{-\frac{|x|^2}{Mt}}\leq C(1+\sqrt t)^{-1}e^{-\frac{|x|^2}{Mt}},
\notag
\ee
and if $t \geq 1$, we have
\be
\begin{split}
\int_{0}^{|x|/2} t^{-\frac{1}{2}}e^{-\frac{|x-y|^2}{t}}(1+|y|)^{-r}dy
& \leq C(1+\sqrt t)^{-1}e^{-\frac{|x|^2}{Mt}}\int_0^{|x|/2}(1+|y|)^{-r}dy \\
& \leq C(1+\sqrt t)^{-1}e^{-\frac{|x|^2}{Mt}}.
\end{split}
\notag
\ee
For the second integral $II$, we have
\be
\int_{|x|/2}^\infty t^{-\frac{1}{2}}e^{-\frac{|x-y|^2}{t}}(1+|y|)^{-r}dy \leq t^{-\frac{1}{2}}\int_{|x|/2}^\infty (1+|y|)^{-r}dy \leq Ct^{-\frac{1}{2}},
\notag
\ee
or
\be
\int_{|x|/2}^\infty t^{-\frac{1}{2}}e^{-\frac{|x-y|^2}{t}}(1+|y|)^{-r}dy \leq (1+|x|)^{-r}\int_{|x|/2}^\infty t^{-\frac{1}{2}}e^{-\frac{|x-y|^2}{t}}dy \leq C (1+|x|)^{-r}.
\notag
\ee
\end{proof}


\begin{corollary}
For all $t\geq0$, $x \in \RR$ and $r>1$,
\be \label{Linear 4}
\int_{-\infty}^\infty t^{-\frac{1}{2}}e^{-\frac{|x-y|^2}{t}}(1+|y|)^{-r}dy \leq C\Big[ (1+|x|+\sqrt t)^{-r} + (1+\sqrt t)^{-1}e^{-\frac{|x|^2}{Mt}}\Big],
\ee
\end{corollary}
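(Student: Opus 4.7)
The plan is to deduce this from Lemma~\ref{Linear 3} (where the minimum term on the right-hand side should be read as $t^{-1/2}\wedge(1+|x|)^{-r}$, since the proof given there bounds the integral pointwise in $x$). Once that lemma is in hand, the only work left is to compare $t^{-1/2}\wedge(1+|x|)^{-r}$ with $(1+|x|+\sqrt t)^{-r}$, absorbing any shortfall into the Gaussian term $(1+\sqrt t)^{-1}e^{-|x|^2/(Mt)}$ that is already present.

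The comparison splits naturally into two regimes. In the regime $\sqrt t \le 1+|x|$, we have $1+|x|+\sqrt t \le 2(1+|x|)$, hence $(1+|x|)^{-r}\le 2^r(1+|x|+\sqrt t)^{-r}$, which already dominates $t^{-1/2}\wedge(1+|x|)^{-r}$. In the opposite regime $\sqrt t > 1+|x|$, the quantity $|x|^2/(Mt)$ is bounded above by $1/M$, so $e^{-|x|^2/(Mt)}\ge e^{-1/M}$; moreover $\sqrt t>1$ gives $1+\sqrt t\le 2\sqrt t$, and therefore
\be
(1+\sqrt t)^{-1}e^{-|x|^2/(Mt)}\;\ge\;\tfrac12 e^{-1/M}\,t^{-1/2}\;\ge\;c\bigl(t^{-1/2}\wedge(1+|x|)^{-r}\bigr),
\notag
\ee
so the first term in Lemma~\ref{Linear 3} is absorbed into the Gaussian term of the corollary.

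Combining the two cases, we obtain
\be
t^{-1/2}\wedge(1+|x|)^{-r}\;\le\;C\Bigl[(1+|x|+\sqrt t)^{-r}+(1+\sqrt t)^{-1}e^{-|x|^2/(Mt)}\Bigr],
\notag
\ee
and inserting this into the bound of Lemma~\ref{Linear 3} yields the stated estimate. I do not expect a real obstacle here: the only subtlety is recognizing that for $\sqrt t$ much larger than $1+|x|$ the polynomial decay rate $(1+|x|+\sqrt t)^{-r}\sim t^{-r/2}$ is strictly faster than $t^{-1/2}$ (since $r>1$), and hence cannot by itself control $t^{-1/2}$; this is precisely why the Gaussian term in the corollary is needed, and checking the inequality above is a one-line calculation in each of the two regimes.
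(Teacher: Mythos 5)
Your proposal is correct and takes essentially the same approach as the paper: reduce to the pointwise inequality $t^{-1/2}\wedge(1+|x|)^{-r}\le C\bigl[(1+|x|+\sqrt t)^{-r}+(1+\sqrt t)^{-1}e^{-|x|^2/(Mt)}\bigr]$ and verify it by case analysis comparing $\sqrt t$ with $|x|$, with the polynomial term absorbing the regime $\sqrt t\lesssim 1+|x|$ and the Gaussian term absorbing the opposite regime. The only cosmetic difference is that you use two cases (split at $\sqrt t=1+|x|$) where the paper uses three (split at $t=1$ and then $|x|=\sqrt t$), and you correctly flag the typo in the statement of Lemma~\ref{Linear 3}, where the minimum should read $t^{-1/2}\wedge(1+|x|)^{-r}$ rather than $t^{-1/2}\wedge(1+|y|)^{-r}$.
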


\begin{proof}
By \eqref{Linear 3}, it is enough to show that for all $x\geq 0$ and $t\geq 0$, and any $r>1$,
\be
t^{-\frac{1}{2}}\wedge(1+|x|)^{-r}  \leq C\Big[ (1+|x|+\sqrt t)^{-r} + (1+\sqrt t)^{-1}e^{-\frac{|x|^2}{Mt}}\Big].
\notag
\ee
For $t\leq1$, we have
\be
t^{-\frac{1}{2}} \wedge (1+|x|)^{-1} = (1+|x|)^{-1}\leq C(1+|x|+1)^{-1} \leq C(1+|x|+\sqrt t)^{-1}.
\notag
\ee
For $t>1$ and $|x| \leq \sqrt t$,  we have $e^{-\frac{|x|^2}{Mt}}\geq e^{-\frac{1}{M}}>0$, and so
\be
t^{-\frac{1}{2}} \wedge (1+|x|)^{-r} \leq C(1+t)^{-\frac{1}{2}}e^{-\frac{x^2}{Mt}}.
\notag
\ee
For $t>1$ and $|x|\geq \sqrt t$,
\be
(1+|x|)^{-r} \leq |x|^{-r} \leq |x|^{-1} \leq t^{-\frac{1}{2}}
\notag
\ee
and so
\be
t^{-\frac{1}{2}} \wedge (1+|x|)^{-r} = (1+|x|)^{-r} \leq C(1+|x|+|x|)^{-r} \leq C(1+|x|+\sqrt t)^{-r}.
\notag
\ee
\end{proof}


\begin{lemma}
Suppose $u(x,t)$ satisfies $u_t=u_{xx}+u^q$ and $|u_0(x)| \leq E_0(1+|x|)^{-r}$, $r>1$,
for $E_0 >0$ sufficiently small, $M>1$ sufficiently large, and $q \geq 4$.
Define
\be
\zeta(t) :=\ds \sup_{0 \leq s \leq t, x\in \RR}|u(x,s)|\Big[(1+|x|+\sqrt{s})^{-r}+(1+\sqrt s)^{-1}e^{-\frac{|x|^2}{M(1+s)}} \Big]^{-1}.
\notag
\ee
Then for all $t\leq 0$ for which $\zeta(t)$ is finite, some $C > 0$,
\be \label{6.3.1}
 \zeta(t) \leq C(E_0+\zeta^2(t)).
\ee
\end{lemma}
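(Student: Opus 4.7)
The plan is to mirror the structure of the two preceding subsections via Duhamel's formula
$$
u(x,t) = \int_{-\infty}^\infty k(x-y,t)u_0(y)\,dy + \int_0^t\!\!\int_{-\infty}^\infty k(x-y,t-s)u^q(y,s)\,dy\,ds.
$$
The linear contribution is bounded immediately by applying Corollary \eqref{Linear 4} to the hypothesis $|u_0(y)|\leq E_0(1+|y|)^{-r}$, producing exactly $CE_0$ times the template bracket appearing in the definition of $\zeta(t)$.

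For the nonlinear Duhamel integral, the definition of $\zeta$ supplies the uniform bound $|u(y,s)|\leq C\zeta(t)(1+\sqrt s)^{-1}$ together with the sharper templated bound. I would factor $|u|^q = |u|^{q-2}\cdot |u|^2$ and estimate $|u^{q-2}(y,s)|_{L^\infty(y)} \leq C\zeta^{q-2}(t)(1+s)^{-(q-2)/2} \leq C\zeta^{q-2}(t)(1+s)^{-1}$, which is valid since $q\geq 4$. The residual $|u|^2$ is kept in templated form, and the square is expanded via $(a+b)^2 \leq 2(a^2+b^2)$, reducing the integrand to two prototypical terms: a polynomial piece $(1+|y|+\sqrt s)^{-2r}$ and a Gaussian piece $(1+\sqrt s)^{-2}e^{-2|y|^2/(M(1+s))}$, each carrying an overall factor $C\zeta^q(t)(1+s)^{-1}$.

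The spatial convolution of the Gaussian piece against $k(x-y,t-s)$ is handled by a constant-adjusted version of the semigroup identity \eqref{semigroup}, returning a bound of the form $(1+t-s)^{-1/2}(1+s)^{-1}e^{-|x|^2/(M''(1+t))}$. The polynomial piece is handled by Corollary \eqref{Linear 4} at time $t-s$, after which the elementary comparisons $\sqrt{t-s}\asymp\sqrt t$ on $s\in[0,t/2]$ and the residual $(1+s)^{-1}$-weight on $s\in[t/2,t]$ allow one to re-express everything in terms of $\sqrt t$. Integrating in $s\in[0,t]$ leaves time integrals of the form $\int_0^t (1+s)^{-1}(1+s)^{-\min(3/2,\, r-1/2)}\,ds$, which are uniformly bounded in $t$ precisely because $r>2$. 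Assembling the outputs, the nonlinear contribution is controlled by $C\zeta^q(t)$ times the template at $(x,t)$, and absorbing $\zeta^{q-2}(t)$ into $C$ (legitimate in the continuous-induction regime where $\zeta$ is a priori bounded, closed off by the subsequent corollary) yields the stated inequality $\zeta(t)\leq C(E_0 + \zeta^2(t))$.

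The hard part will be the polynomial convolution bookkeeping: one must show that after convolving $(1+|y|+\sqrt s)^{-2r}$ with $k(x-y,t-s)$ and integrating in $s\in[0,t]$, the output genuinely reassembles into a constant times the single-variable template $(1+|x|+\sqrt t)^{-r}+(1+\sqrt t)^{-1}e^{-|x|^2/(M(1+t))}$ rather than deteriorating to a slower spatial rate or a non-integrable time factor. This is where the threshold $r>2$ (strictly stronger than the $r>1$ required for \eqref{Linear 3}--\eqref{Linear 4}) enters in an essential way, and where careful splitting of the $s$-integral at $t/2$ and comparison of $\sqrt{t-s}$ with $\sqrt t$ must be executed cleanly.
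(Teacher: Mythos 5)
Your overall strategy matches the paper's: Duhamel's formula, estimate the linear term via Corollary~\eqref{Linear 4}, factor the nonlinearity as $|u|^{q-2}\cdot|u|^2$, and handle the two resulting templated pieces via \eqref{semigroup} and \eqref{Linear 4} with a split of the time integral at $t/2$. However, there are two substantive problems with the way you close the argument.

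First, your treatment of the factor $|u^{q-2}|_{L^\infty}$ is circular. You bound $|u^{q-2}(\cdot,s)|_{L^\infty} \leq C\zeta^{q-2}(t)(1+s)^{-1}$ using the template, which produces an overall factor $\zeta^q(t)$, and then propose to ``absorb $\zeta^{q-2}(t)$ into $C$'' on the grounds that $\zeta$ is a priori bounded in the continuous-induction regime. But the inequality $\zeta(t)\leq C(E_0+\zeta^2(t))$ is precisely what is needed as input \emph{before} one can run continuous induction to conclude $\zeta$ is bounded, so you cannot invoke that boundedness while proving it. The paper avoids this entirely by using the already-established bound \eqref{usual bounds}, which applies here because $r>1$ gives $|u_0|\in L^1\cap L^\infty$: this supplies $|u^{q-2}|_{L^\infty}(s)\leq C(1+s)^{-1}$ with a $\zeta$-free constant (the $E_0$-factors absorbed into $C$), so that only the residual $|u|^2$ contributes $\zeta^2(t)$. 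If you insist on using $\zeta^{q-2}$, what you actually prove is $\zeta(t)\leq C(E_0+\zeta^q(t))$, which would still support a continuous-induction corollary but is not the inequality the lemma asserts.

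Second, your claim that ``$r>2$ enters in an essential way'' in this lemma is incorrect. The lemma's hypothesis is $r>1$, and the time integrals that arise after convolving the polynomial piece --- e.g.\ $\int_0^\infty(1+\sqrt s)^{-r-2}\,ds$ --- converge for all $r>0$. The integral you display, $\int_0^t(1+s)^{-1-\min(3/2,\,r-1/2)}\,ds$, is uniformly bounded as soon as $r>1/2$, not $r>2$. The threshold $r>2$ appears only in the later lemmas of this subsection (the comparisons of $u$ with $U_0k$ and with $U(s)k$), where an extra spatial derivative of the heat kernel reduces the effective decay rate to $r-1$ and one needs $r-1>1$.

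The convolution bookkeeping you outline (expanding the square, invoking \eqref{semigroup} for the Gaussian piece and \eqref{Linear 4} for the polynomial piece, comparing $\sqrt{t-s}$ with $\sqrt t$ across the split at $t/2$) is correct in spirit and mirrors the paper's handling of $I'$, $II'$, $III'$, so once the two issues above are fixed the argument goes through.
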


\begin{proof}

By Duhamel's formula, we have \be |u(x,t)| \leq
\int_{-\infty}^\infty k(x-y,t)|u_0(y)|dy+\int_0^t
\int_{-\infty}^\infty k(x-y,t-s)|u^q(y,s)|dyds = I +II. \notag \ee
By \eqref{Linear 4}, we already have $I \leq
CE_0\Big[(1+|x|+\sqrt{t})^{-r}+(1+\sqrt
t)^{-1}e^{-\frac{|x|^2}{M(1+t)}} \Big].$ Now we break $II$ into
three parts. Recalling \eqref{usual bounds} and $q \geq 4$,
$|u^{p-2}|_{L^\infty}\leq  (1+s)^{-1}$, we have \be
\begin{split}
II
& \leq \int_0^t \int_{-\infty}^\infty (t-s)^{-\frac{1}{2}}e^{-\frac{|x-y|^2}{(t-s)}}|u^{p-2}|_{L^\infty}|u^2(y,s)|dyds \\
& \leq \zeta^2(t)\int_0^t \int_{-\infty}^\infty(1+s)^{-1}(t-s)^{-\frac{1}{2}}e^{-\frac{|x-y|^2}{(t-s)}}(1+|y|+\sqrt{s})^{-2r} dyds\\
& \quad +\zeta^2(t)\int_0^t \int_{-\infty}^\infty(1+s)^{-1}(t-s)^{-\frac{1}{2}}e^{-\frac{|x-y|^2}{(t-s)}}(1+\sqrt s)^{-2}e^{-\frac{|y|^2}{M(1+s)}} dyds \\
& \quad + \zeta^2(t)\int_0^t \int_{-\infty}^\infty (1+s)^{-1}(t-s)^{-\frac{1}{2}}e^{-\frac{|x-y|^2}{(t-s)}}(1+|y|+\sqrt{s})^{-r}(1+\sqrt s)^{-1}e^{-\frac{|y|^2}{M(1+s)}} dyds\\
&=I'+II'+III'
\end{split}
\notag
\ee
Sine $III'\leq CII'$, we need only estimate the two parts $I'$ and $II'$. Recalling \eqref{semigroup}, we have
\be
\begin{split}
II'
& \leq  \zeta^2(t)\int_0^t(1+s)^{-2} \int_{-\infty}^\infty(t-s)^{-\frac{1}{2}}e^{-\frac{|x-y|^2}{(t-s)}}e^{-\frac{|y|^2}{M(1+s)}} dyds \\
& \leq  \zeta^2(t)(1+\sqrt t)^{-1}e^{-\frac{|x|^2}{M(1+t)}}\int_0^t(1+s)^{-2}(1+s)^{\frac{1}{2}}ds \\
& \leq \zeta^2(t)(1+\sqrt t)^{-1}e^{-\frac{|x|^2}{M(1+t)}}.
\end{split}
\notag
\ee
By \eqref{Linear 4}, we break $I'$ into two parts,
\be
\begin{split}
I'
& =  \zeta^2(t)\int_0^t \int_{-\infty}^\infty(1+s)^{-1}(t-s)^{-\frac{1}{2}}e^{-\frac{|x-y|^2}{(t-s)}}(1+|y|+\sqrt{s})^{-r} (1+|y|+\sqrt{s})^{-r} dyds\\
& \leq  \zeta^2(t)\int_0^t (1+s)^{-1}(1+\sqrt{s})^{-r}\int_{-\infty}^\infty (t-s)^{-\frac{1}{2}}e^{-\frac{|x-y|^2}{(t-s)}}(1+|y|)^{-r}dyds\\
& \leq  \zeta^2(t)\int_0^t (1+\sqrt s)^{-r-2}\Big[(1+|x|+\sqrt{t-s})^{-r}+(1+\sqrt{t-s})^{-1}e^{-\frac{|x|^2}{M(t-s)}}\Big]ds \\
& = I''+II''
\end{split}
\notag
\ee
Now we estimate $I''$ and $II''$,
\be
\begin{split}
I''
& \leq C\zeta^2(t)\Big[(1+|x|+\sqrt t)^{-r}\int_0^{t/2} (1+ \sqrt s)^{-r-2}ds +(1+|x|)^{-r}\int_{t/2}^{t}(1+ \sqrt s)^{-r-2}ds\Big] \\
& \leq  C\zeta^2(t)(1+|x|+\sqrt t)^{-r}+ C\zeta^2(t)\Big[(1+|x|)(1+\sqrt t)\Big]^{-r}\\
& \leq  C\zeta^2(t)(1+|x|+\sqrt t)^{-r},
\end{split}
\notag
\ee
and
\be
\begin{split}
II''
& \leq C\zeta^2(t)e^{-\frac{|x|^2}{Mt}}\int_0^t (1+ \sqrt s)^{-r-2}(1+\sqrt{t-s})^{-1}ds \\
& \leq C\zeta^2(t)e^{-\frac{|x|^2}{Mt}}\Big[(1+\sqrt t)^{-1}\int_0^{t/2}(1+\sqrt s)^{-r-2}ds + (1+\sqrt t)^{-r-2}\int_{t/2}^t (1+\sqrt{t-s})^{-1}ds \Big] \\
& \leq C\zeta^2(t)(1+\sqrt t)^{-1}e^{-\frac{|x|^2}{M(1+t)}}.
\end{split}
\notag
\ee

\end{proof}


\begin{corollary}
Suppose $u(x,t)$ satisfies $u_t=u_{xx}+u^q$ and $|u_0(x)| \leq E_0(1+|x|)^{-r}$, $r>1$,
for $E_0 >0$ sufficiently small, $M>1$ sufficiently large, and $q \geq 4$.
Then for all $t\geq 0$ and $x\in \RR$
\be \label{|u(x,t)|}
|u(x,t)| \leq CE_0\Big[(1+|x|+\sqrt{t})^{-r}+(1+\sqrt t)^{-1}e^{-\frac{|x|^2}{M(1+t)}} \Big].
\ee
\end{corollary}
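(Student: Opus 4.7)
The plan is to prove this corollary by the same continuous induction (bootstrap) argument used in Corollary \ref{continuous induction} and in the analogous corollary of the previous subsection. The entire work has already been done by the preceding lemma, which established the quadratic inequality $\zeta(t) \leq C(E_0 + \zeta^2(t))$ for the template function
\[
\zeta(t) := \sup_{0 \leq s \leq t,\; x \in \RR} |u(x,s)|\Big[(1+|x|+\sqrt{s})^{-r}+(1+\sqrt s)^{-1}e^{-\frac{|x|^2}{M(1+s)}} \Big]^{-1}.
\]
The desired pointwise bound \eqref{|u(x,t)|} is exactly the statement $\zeta(t) \leq 2CE_0$ for all $t \geq 0$, so it suffices to convert the quadratic inequality into this uniform bound.

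First I would check the base case: at $t = 0$, the hypothesis $|u_0(x)| \leq E_0(1+|x|)^{-r}$ together with the definition of $\zeta$ gives $\zeta(0) \leq E_0 \leq 2CE_0$ (taking $C \geq 1$ without loss of generality). Second, by standard parabolic regularity and the local existence theory for $u_t = u_{xx} + u^q$, the function $\zeta(t)$ is continuous in $t$ as long as it remains finite. Third, if one fixes $E_0 < 1/(4C^2)$ and supposes for contradiction that $\zeta(t_*) = 2CE_0$ at some first time $t_*$, then the inequality \eqref{6.3.1} yields
\[
2CE_0 = \zeta(t_*) \leq C\bigl(E_0 + (2CE_0)^2\bigr) = CE_0 + 4C^3 E_0^2 < CE_0 + CE_0 = 2CE_0,
\]
a strict contradiction. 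Hence the open-closed (continuous induction) argument shows $\zeta(t) \leq 2CE_0$ holds for all $t \geq 0$, which is exactly \eqref{|u(x,t)|} with the constant $C$ replaced by $2C$.

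There is essentially no obstacle — the hard analytic work (the semigroup estimate \eqref{semigroup}, the heat-kernel convolution bound \eqref{Linear 4}, the splitting of the Duhamel integral into pieces $I'$, $II'$, $III'$, and the requirement $q \geq 4$ ensuring $|u^{q-2}|_{L^\infty} \leq (1+s)^{-1}$) has already been absorbed into the preceding lemma. The only delicate point to mention explicitly is that the bootstrap requires $\zeta$ to be initially small and to vary continuously; both are guaranteed by the decay hypothesis on $u_0$ and standard local well-posedness for the semilinear heat equation. For this reason I would simply write "Same proof as Corollary \ref{continuous induction}," mirroring the convention already used in the analogous corollary of the previous subsection.
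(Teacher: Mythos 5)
Your proposal is correct and takes exactly the route the paper does: the paper's proof for this corollary is just the line ``Same proof as for Corollary \ref{continuous induction},'' and what you have written is a faithful unfolding of that continuous-induction argument, including the right smallness threshold $E_0 < 1/(4C^2)$ appropriate to the quadratic inequality $\zeta \le C(E_0 + \zeta^2)$.
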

\begin{proof}
Same proof as for Corollary \ref{continuous induction}.
\end{proof}


\begin{lemma}
For all $t\geq 0$, $x\in \RR$, $r>1$ and all $0<w<1$,
\be \label{|x-wy|}
\int_{-\infty}^\infty (1+t)^{-\frac{1}{2}}e^{-\frac{|x-wy|^2}{M(1+t)}}(1+|y|)^{-r}dy
\leq C\Big[(1+|x|+\sqrt t)^{-r} +(1+t)^{-\frac{1}{2}}e^{-\frac{|x|^2}{M'(1+t)}} \Big],
\ee
for some sufficiently large $M'>M$.
\end{lemma}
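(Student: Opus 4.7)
The plan is to reduce \eqref{|x-wy|} directly to the already-proved bound \eqref{Linear 4} via a rescaling in $y$. Observe that
\[
\frac{|x-wy|^2}{M(1+t)} \;=\; \frac{|x/w-y|^2}{\tilde t}, \qquad \tilde t:=\frac{M(1+t)}{w^2},
\]
so, using $(1+t)^{-1/2}=(\sqrt M/w)\,\tilde t^{-1/2}$, the left-hand side of \eqref{|x-wy|} can be rewritten as $(\sqrt M/w)\int_{-\infty}^{\infty}\tilde t^{-1/2}e^{-|x/w-y|^2/\tilde t}(1+|y|)^{-r}\,dy$.

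Next I would apply \eqref{Linear 4} with $x$ replaced by $x/w$ and $t$ replaced by $\tilde t$. Noting that $\sqrt{\tilde t}=\sqrt{M(1+t)}/w$ and $|x/w|^2/(M\tilde t)=|x|^2/(M^2(1+t))$, the two factors $(1+|x|/w+\sqrt{\tilde t})^{-r}$ and $(1+\sqrt{\tilde t})^{-1}$ simplify to $w^r(w+|x|+\sqrt{M(1+t)})^{-r}$ and $w(w+\sqrt{M(1+t)})^{-1}$ respectively. Multiplying through by the overall prefactor $\sqrt M/w$ then produces the bound
\[
C\sqrt M\,w^{r-1}\bigl(w+|x|+\sqrt{M(1+t)}\bigr)^{-r}+C\sqrt M\bigl(w+\sqrt{M(1+t)}\bigr)^{-1}e^{-|x|^2/(M^2(1+t))}
\]
for the left-hand side of \eqref{|x-wy|}.

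To finish, I would simplify each term separately. Since $0<w<1$ and $r>1$ we have $w^{r-1}\leq1$; and since for $M\geq2$ one checks (via $(1-\sqrt t)^2\geq0$) that $\sqrt{M(1+t)}\geq\sqrt2\,\sqrt{1+t}\geq 1+\sqrt t$, the first term is controlled by $C(1+|x|+\sqrt t)^{-r}$. For the second, $\sqrt M/(w+\sqrt{M(1+t)})\leq\sqrt M/\sqrt{M(1+t)}=(1+t)^{-1/2}$, so it is bounded by $C(1+t)^{-1/2}e^{-|x|^2/(M'(1+t))}$ with $M'=M^2>M$, yielding \eqref{|x-wy|}. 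The only real obstacle is bookkeeping: the rescaling introduces a spurious factor $w^{r-1}$, which is benign precisely because $r>1$ and $0<w<1$, and the enlargement of the Gaussian constant from $M$ to $M^2$ is harmless since $M>1$. No analytic ingredient is required beyond \eqref{Linear 4}.
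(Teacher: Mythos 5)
Your proof is correct, and it takes a genuinely different and noticeably cleaner route than the paper's own argument. The paper redoes a case analysis from scratch: it splits on $|x|\le\sqrt{1+t}$ versus $|x|>\sqrt{1+t}$, substitutes $y\mapsto y/w$ to turn $(1+|y|)^{-r}$ into $(1+|y|/w)^{-r}w^{-1}$, then splits the integral at $|x|/2$ and, to absorb the $w$-dependence in the resulting algebraic tail, introduces the auxiliary function $f(w)=(1+|x|/((r-1)w))^{-r}/w$ and checks it is increasing on $(0,1)$ when $|x|>1$. Your argument instead exploits the Gaussian's self-similar structure: the parameter $w$ and the constant $M$ can be absorbed into a single rescaling of the time variable $\tilde t=M(1+t)/w^2$, after which the integral is literally an instance of the already-proved Corollary \eqref{Linear 4}. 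The spurious factors $w^{r-1}\le 1$ and $\sqrt{M}\,(w+\sqrt{M(1+t)})^{-1}\le(1+t)^{-1/2}$ are dealt with uniformly in $w\in(0,1)$, and the slight elementary observation $\sqrt{M(1+t)}\ge 1+\sqrt t$ for $M\ge 2$ replaces the paper's $f(w)$-monotonicity argument. The payoff is a shorter, more systematic proof that makes the logical dependence on \eqref{Linear 4} explicit rather than reproving it in disguise. One minor bookkeeping remark: your identification $M'=M^2$ implicitly assumes the constant appearing in the conclusion of \eqref{Linear 4} equals the $M$ of the present lemma. In the paper's notation they are indeed the same, but even if one writes the former as some $M_0$, the rescaling yields $M'=M_0M>M$ (since $M_0>1$), so the conclusion is unaffected.
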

\begin{proof}
We first consider the case of  $|x| \leq \sqrt{1+t}$ which implies $e^{-\frac{|x|^2}{M(1+t)}} > e^{-\frac{1}{M}}$. Then
\be
\begin{split}
\int_{-\infty}^\infty  (1+t)^{-\frac{1}{2}}e^{-\frac{|x-wy|^2}{M(1+t)}}(1+|y|)^{1-r}dy
& \leq  (1+t)^{-\frac{1}{2}}\int_{-\infty}^\infty (1+|y|)^{1-r}dy \\
& \leq C (1+t)^{-\frac{1}{2}}\\
& \leq  C(1+t)^{-\frac{1}{2}}e^{-\frac{|x|^2}{M(1+t)}}.
\end{split}
\notag
\ee

For the case of $|x|>\sqrt{1+t}$, we break the integration into two parts.
\be
\begin{split}
\int_{-\infty}^\infty  (1+t)^{-\frac{1}{2}}e^{-\frac{|x-wy|^2}{M(1+t)}}(1+|y|)^{-r}dy
& =\int_{-\infty}^\infty (1+t)^{-\frac{1}{2}}e^{-\frac{|x-y|^2}{M(1+t)}}\left(1+\frac{|y|}{w}\right)^{-r}\frac{1}{w} dy\\
& = \int_{0}^{|x|/2} + \int_{|x|/2}^\infty = I + II.
\end{split}
\notag
\ee
For part $I$, we have
\be
I \leq  (1+t)^{-\frac{1}{2}}e^{-\frac{|x-y|^2}{4M(1+t)}} \int_{0}^{|x|/2} \left(1+\frac{|y|}{w}\right)^{-r}\frac{1}{w} dy  \leq C  (1+t)^{-\frac{1}{2}}e^{-\frac{|x-y|^2}{4M(1+t)}}.
\notag
\ee
For part $II$, we have
\be
II
\leq C(1+\frac{|x|}{w})^{-r}\frac{1}{w}\int_{|x|/2}^\infty (1+t)^{-\frac{1}{2}}e^{-\frac{|x-y|^2}{M(1+t)}}dy \leq C(1+\frac{|x|}{w})^{-r}\frac{1}{w}.
\notag
\ee
Define a function
\be
f(w)=\left ( 1+\frac{|x|}{(r-1)w} \right )^{-r}\frac{1}{w}.
\notag
\ee
We easily show that $\ds f(1)=\left (1+\frac{|x|}{r-1} \right )^{-r}$and $f(w)$ is increasing for $|x|>1$ which implies that if $|x|>\sqrt{1+t}>1$, for all $0<w<1$, we have
\be
II \leq Cf(w) \leq Cf(1) \leq C(1+|x|)^{-r}.
\notag
\ee
\end{proof}


\begin{lemma}
For all $t > s > 0$, $x\in \RR$, $r > 1$ and all $0<w<1$,
\be \label{|x-wy| for nonlinear}
\begin{split}
&\int_{-\infty}^\infty (1+t-s)^{-\frac{1}{2}}e^{-\frac{|x-wy|^2}{M(1+t-s)}}(1+|y|+\sqrt s)^{-r}dy \\
& \qquad \leq C\Big[(1+|x|+\sqrt{t-s}+\sqrt s)^{-r} +(1+t-s)^{-\frac{1}{2}}(1+s)^{-\frac{(r-1)}{2}}e^{-\frac{|x|^2}{M'(1+t)}} \Big]
\end{split}
\ee
for some sufficiently large $M'>M$.
\end{lemma}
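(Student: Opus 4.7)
The plan is to adapt the proof of the preceding lemma \eqref{|x-wy|} to accommodate the $\sqrt{s}$ shift inside the polynomial factor. The key new ingredient is the one-dimensional bound
\[
\int_{-\infty}^{\infty}(1+|y|+\sqrt{s})^{-r}\,dy = \frac{2}{r-1}(1+\sqrt{s})^{-(r-1)} \leq C(1+s)^{-(r-1)/2},
\]
which will produce the $(1+s)^{-(r-1)/2}$ factor on the right-hand side of the claimed bound. After the change of variables $y'=wy$ (as was done in the proof of \eqref{|x-wy|}), the integral takes the form
\[
\frac{1}{w}\int_{-\infty}^{\infty}(1+t-s)^{-1/2}e^{-|x-y'|^{2}/(M(1+t-s))}\Bigl(1+\tfrac{|y'|}{w}+\sqrt{s}\Bigr)^{-r}\,dy',
\]
so that the Gaussian kernel no longer sees the scaling $w$.

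In the regime $|x|\leq\sqrt{1+t}$, the Gaussian factor $e^{-|x|^{2}/(M'(1+t))}$ is bounded below by a positive constant, so it suffices to bound the integral by $C(1+t-s)^{-1/2}(1+s)^{-(r-1)/2}$. This follows by dropping the exponential, undoing the substitution, and invoking the one-dimensional bound above; the result is absorbed into the second term on the right-hand side of the claimed inequality.

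In the regime $|x|>\sqrt{1+t}$, I would split the $y'$-integral into the pieces $|y'|\leq |x|/2$ and $|y'|\geq |x|/2$, mirroring the strategy in the proof of \eqref{|x-wy|}. On the first piece the Gaussian provides $e^{-|x|^{2}/(4M(1+t-s))}$, which, since $1+t-s\leq 1+t$, is dominated by $e^{-|x|^{2}/(M'(1+t))}$ for $M'\geq 4M$; the remaining one-dimensional integral contributes $C(1+\sqrt{s})^{-(r-1)}\leq C(1+s)^{-(r-1)/2}$, giving the second term on the right. On the second piece the polynomial factor is pointwise bounded by $\bigl(1+\tfrac{|x|}{2w}+\sqrt{s}\bigr)^{-r}$, and the Gaussian integrates to a constant, leaving a bound of the form $\frac{C}{w}\bigl(1+\tfrac{|x|}{2w}+\sqrt{s}\bigr)^{-r}$. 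As in the proof of \eqref{|x-wy|}, direct differentiation shows that this function of $w\in(0,1)$ has derivative with the sign of $(r-1)|x|/(2w)-(1+\sqrt{s})$, hence is monotone increasing on $(0,1)$ once $|x|$ is sufficiently large relative to $1+\sqrt{s}$, and is therefore maximized at $w=1$; evaluating gives $\leq C(1+|x|+\sqrt{s})^{-r}\leq C(1+|x|+\sqrt{t-s}+\sqrt{s})^{-r}$, since $|x|>\sqrt{1+t}$ dominates $\sqrt{t-s}$.

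The main technical obstacle is the monotonicity of the function $g(w)=w^{-1}\bigl(1+\tfrac{|x|}{2w}+\sqrt{s}\bigr)^{-r}$ in the transitional regime where $|x|$ is only slightly larger than $\sqrt{1+t}$ while $\sqrt{s}$ is of comparable size; there the critical point $w^{*}=(r-1)|x|/(2(1+\sqrt{s}))$ may fall inside $(0,1)$ and $g$ fails to be monotone. I expect to resolve this by refining the dichotomy, using the threshold $|x|=c(1+\sqrt{s}+\sqrt{t-s})$ for an appropriate $r$-dependent constant $c$, and checking that the ``small-$|x|$'' argument continues to deliver the second (exponential) term of the claimed bound in the enlarged small-$|x|$ regime. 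The remaining steps are routine adaptations of the computations already carried out for Lemma \eqref{|x-wy|}.
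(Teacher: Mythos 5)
You correctly identified a real gap: with your $g(w)=w^{-1}\bigl(1+\tfrac{|x|}{2w}+\sqrt s\bigr)^{-r}$, the critical point $w^{*}=\tfrac{(r-1)|x|}{2(1+\sqrt s)}$ can land inside $(0,1)$ when $r$ is close to $1$, so $g$ need not be increasing on all of $(0,1)$ and $g(w)\le g(1)$ can fail. The paper avoids this by a device already used in its proof of the preceding Lemma \eqref{|x-wy|}, which you dropped while adapting: instead of $g$, compare $II$ (up to an $r$-dependent constant, harmless since $r$ is a fixed parameter) with
\[
f(w)=\Bigl(1+\frac{2(|x|+\sqrt{t-s})}{(r-1)w}+\sqrt s\Bigr)^{-r}\frac1w .
\]
Differentiating, $f'(w)=w^{-3}\bigl(\cdots\bigr)^{-r-1}\bigl[\,2(|x|+\sqrt{t-s})-w(1+\sqrt s)\,\bigr]$; the $(r-1)$ factor cancels, and since $|x|>\sqrt{1+t}$ forces $|x|>1$ and $|x|>\sqrt s$, the bracket is positive for every $0<w<1$ with no dependence on how close $r$ is to $1$. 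Hence $f$ is increasing and $II\le Cf(w)\le Cf(1)\le C(1+|x|+\sqrt{t-s}+\sqrt s)^{-r}$ without any further case-splitting. Your proposed alternative — moving the threshold to $|x|>c(1+\sqrt s+\sqrt{t-s})$ with $c$ depending on $r$ — would also close the gap, since the enlarged small-$|x|$ region still has $|x|^2\lesssim(1+t)$ and the Gaussian factor remains bounded below; but you should write that verification out rather than declare the remaining steps routine, and the paper's $(r-1)$ rescaling is both shorter and exactly the step from Lemma \eqref{|x-wy|} that your adaptation was supposed to follow.
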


\begin{proof}
We consider first the case of $|x|\leq \sqrt {1+t}$ which implies $e^{-\frac{|x|^2}{M(1+t)}} \geq e^{-\frac{1}{M}}$, and so
\be
\begin{split}
\int_{-\infty}^\infty (1+t-s)^{-\frac{1}{2}}e^{-\frac{|x-wy|^2}{M(1+t-s)}}(1+|y|+\sqrt s)^{-r}dy
& \leq (1+t-s)^{-\frac{1}{2}} \int_{-\infty}^\infty (1+|y|+\sqrt s)^{-r} dy\\
& \leq C(1+t-s)^{-\frac{1}{2}}(1+\sqrt s)^{-r+1}\\
& \leq C(1+t-s)^{-\frac{1}{2}}(1+\sqrt s)^{-r+1}e^{-\frac{|x|^2}{M(1+t)}}.
\end{split}
\notag
\ee
For the case of $|x| > \sqrt{1+t}$, we separate the integration into two parts.
\be
\begin{split}
&\int_{-\infty}^\infty (1+t-s)^{-\frac{1}{2}}e^{-\frac{|x-wy|^2}{M(1+t-s)}}(1+|y|+\sqrt s)^{-r}dy \\
&=\int_{-\infty}^\infty (1+t-s)^{-\frac{1}{2}}e^{-\frac{|x-y|^2}{M(1+t-s)}}\left(1+\frac{|y|}{w}+\sqrt s \right)^{-r}\frac{1}{w}dy \\
&=\int_0^{|x|/2}+ \int_{|x|/2}^\infty= I+II.
\end{split}
\notag
\ee
For $I$, we have
\be
\begin{split}
I
&\leq (1+t-s)^{-\frac{1}{2}}e^{-\frac{|x|^2}{4M(1+t-s)}} \int_0^{|x|/2}\left(1+\frac{|y|}{w}+\sqrt s \right)^{-r}\frac{1}{w}dy \\
& \leq C(1+t-s)^{-\frac{1}{2}}(1+\sqrt s)^{-r+1}e^{-\frac{|x|^2}{M'(1+t)}}.
\end{split}
\notag
\ee
For $II$, we have
\be
II
\leq C(1+\frac{|x|}{w}+\sqrt s)^{-r}\frac{1}{w}\int_{|x|/2}^\infty (1+t-s)^{-\frac{1}{2}}e^{-\frac{|x-y|^2}{M(1+t-s)}}dy \leq C(1+\frac{|x|}{w}+\sqrt s)^{-r}\frac{1}{w}.
\notag
\ee
Since $|x|>\sqrt {1+t}>\sqrt{t-s}$,
\be
II \leq C(1+\frac{2|x|}{w}+\sqrt s)^{-r}\frac{1}{w} \leq C(1+\frac{|x|+\sqrt{t-s}}{w}+\sqrt s)^{-r}\frac{1}{w}.
\notag
\ee
Define a function
\be
f(w)=\left ( 1+\frac{2(|x|+\sqrt{t-s})}{(r-1)w}+\sqrt s \right )^{-r}\frac{1}{w}.
\notag
\ee
Then $f(1)=\ds \left ( 1+\frac{2(|x|+\sqrt{t-s})}{(r-1)}+\sqrt s \right )^{-r}$ and $f$ is increasing. Indeed,
\be
f'(w) =\left ( 1+\frac{2(|x|+\sqrt{t-s})}{(r-1)w}+\sqrt s \right )^{-r-1}\frac{1}{w^3}\big[( 2(|x|+\sqrt{t-s})-w(1+\sqrt s)\big]
\notag
\ee
Since $|x|>\sqrt {1+t}$, $|x|>1$ and  $|x|> \sqrt s$, that is, $f'(w)>0$.Thus if $|x|>\sqrt {1+t}$, for all $0<w<1$, we have
\be
II \leq Cf(w) \leq Cf(1) \leq C(1+|x|+\sqrt{t-s}+\sqrt s)^{-r}.
\notag
\ee

\end{proof}


\begin{lemma}
Suppose $u(x,t)$ satisfies that $u_t=u_{xx}$ and $|u_0(x)| \leq E_0(1+|x|)^{-r}$, $r>2$,
for $E_0 >0$ sufficiently small, $M>1$ sufficiently large, and $q \geq 4$.
Then for some sufficiently large $M'>M$,
\be \label{pwb of |u(x,t)-U_0k| 2}
|u(x,t)-U_0k(x,t)|\leq CE_0\Big[ (1+t)^{-\frac{1}{2}}(1+|x|+\sqrt t)^{-r+1} +(1+t)^{-1}e^{-\frac{|x|^2}{M'(1+t)}}\Big]  .
\ee
where $U_0=\ds\int_{-\infty}^\infty u_0(y)dy$ and $k(x,t)=(1+t)^{-\frac{1}{2}}e^{-\frac{|x|^2}{(1+t)}}$.
\end{lemma}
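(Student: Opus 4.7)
The proof proposal parallels the Gaussian-initial-data analogue (Lemma \eqref{pwb of u(x,t)-U_0k 1}), replacing the Gaussian weight in $u_0$ by the polynomial weight $(1+|y|)^{-r}$ and leveraging the convolution estimate in Lemma \eqref{|x-wy|}. Since $u$ solves the heat equation, Duhamel gives $u(x,t)=\int k(x-y,t)u_0(y)\,dy$, and subtracting the total-mass contribution $U_0k(x,t)=\int k(x,t)u_0(y)\,dy$ yields
\begin{equation}
u(x,t)-U_0k(x,t)=\int_{-\infty}^{\infty}\bigl[k(x-y,t)-k(x,t)\bigr]u_0(y)\,dy.\notag
\end{equation}

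First I would apply the Mean Value Theorem in the $y$-variable to write $|k(x-y,t)-k(x,t)|\le |y|\int_{0}^{1}|k_{x}(x-wy,t)|\,dw$, and then use the elementary bound $|\alpha|e^{-|\alpha|^{2}}\le Ce^{-|\alpha|^{2}/M}$ (for any $M>1$) applied to $\alpha=(x-wy)/\sqrt{1+t}$ to get
\begin{equation}
|k_{x}(x-wy,t)|\ \le\ C(1+t)^{-1}e^{-|x-wy|^{2}/(M(1+t))}.\notag
\end{equation}
Combined with the hypothesis $|y||u_{0}(y)|\le E_{0}(1+|y|)^{-(r-1)}$, this produces
\begin{equation}
|u(x,t)-U_{0}k(x,t)|\ \le\ CE_{0}(1+t)^{-1/2}\int_{0}^{1}\int_{-\infty}^{\infty}(1+t)^{-1/2}e^{-|x-wy|^{2}/(M(1+t))}(1+|y|)^{-(r-1)}\,dy\,dw.\notag
\end{equation}

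The inner integral is in precisely the form handled by Lemma \eqref{|x-wy|}, applied with the polynomial exponent $r-1$ in place of $r$. Since $r>2$, we have $r-1>1$, so the lemma is applicable uniformly in $w\in(0,1)$ and delivers
\begin{equation}
\int_{-\infty}^{\infty}(1+t)^{-1/2}e^{-|x-wy|^{2}/(M(1+t))}(1+|y|)^{-(r-1)}\,dy\ \le\ C\bigl[(1+|x|+\sqrt{t})^{-r+1}+(1+t)^{-1/2}e^{-|x|^{2}/(M'(1+t))}\bigr].\notag
\end{equation}
Multiplying the residual $(1+t)^{-1/2}$ factor through produces exactly the two advertised terms: a polynomial term $(1+t)^{-1/2}(1+|x|+\sqrt{t})^{-r+1}$ and a Gaussian-core term $(1+t)^{-1}e^{-|x|^{2}/(M'(1+t))}$.

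The main subtlety (rather than obstacle) is that the role of the hypothesis $r>2$ is not in producing the decay $(1+|x|+\sqrt t)^{-r+1}$ but in guaranteeing that the polynomial exponent surviving after pulling out one factor of $|y|$ is still $>1$, so that Lemma \eqref{|x-wy|} applies; under the weaker hypothesis $r>1$ alone, the inner integrand would be non-integrable at infinity and a separate argument (e.g.\ splitting the $y$-integration around $y=x/(2w)$ as in the proof of Lemma \eqref{|x-wy|}) would be needed. One minor technical point worth double-checking is the uniformity in $w\in(0,1)$ in the constant of Lemma \eqref{|x-wy|}, which however is built into its statement.
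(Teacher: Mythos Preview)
Your proposal is correct and follows essentially the same route as the paper: apply the Mean Value Theorem to write $k(x-y,t)-k(x,t)$ as an integral of $k_x(x-wy,t)$, absorb the extra $|x-wy|/\sqrt{1+t}$ into the Gaussian at the cost of enlarging $M$, and then invoke Lemma~\eqref{|x-wy|} with exponent $r-1>1$. Your remark about why the hypothesis $r>2$ is needed (to keep the surviving polynomial weight integrable so that Lemma~\eqref{|x-wy|} applies) is exactly the point.
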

\begin{proof}
By the Mean Value Theorem, \eqref{|x-wy|} and $r-1>1$, we have
\be
\begin{split}
& \quad |u(x,t)-U_0k(x,t)| \\
& \leq \int_{-\infty}^\infty \int_0^1 |k_x(x-wy,t)| |y|(1+|y|)^{-r} dwdy \\
& \leq CE_0(1+t)^{-\frac{1}{2}}\int_0^1 \int_{-\infty}^\infty (1+t)^{-\frac{1}{2}}e^{-\frac{|x-wy|^2}{M(1+t)}}(1+|y|)^{-r+1}dydw \\
& \leq CE_0\Big[ (1+t)^{-\frac{1}{2}}(1+|x|+\sqrt t)^{-r+1} +(1+t)^{-1}e^{-\frac{|x|^2}{M'(1+t)}}\Big]  .
\end{split}
\notag
\ee

\end{proof}


\begin{lemma}
Suppose $u(x,t)$ satisfies that $u_t=u_{xx}+u^q$ and $|u_0(x)| \leq E_0(1+|x|)^{-r}$, $r>2$,
for $E_0 >0$ sufficiently small, $M>1$ sufficiently large, and $q \geq 4$.
Then for some sufficiently large $M''>M'>M$,
\be \label{pwb of |u(x,t)-U(s)k| 2}
\begin{split}
& \Big|\int_{-\infty}^\infty k(x-y,t-s)u^q(y,s)ds-U(s)k(x,t-s)\Big|  \\
&\leq CE_0(1+s)^{-1} \Big[(1+t-s)^{-\frac{1}{2}}(1+|y|+\sqrt{t-s}+\sqrt s)^{-2r+1}+(1+t-s)^{-1}e^{-\frac{|x|^2}{M''(1+t)}}\Big],
\end{split}
\ee
where $U(s)=\ds\int_{-\infty}^\infty u^q(y,s)dy$ and $k(x,t)=(1+t)^{-\frac{1}{2}}e^{-\frac{|x|^2}{(1+t)}}$.
\end{lemma}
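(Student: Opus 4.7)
The plan is to follow exactly the template of the analogous lemma proved for Gaussian initial data (estimate \eqref{pwb of u(x,t)-U(s) k 2 }), adapted to the polynomial-plus-Gaussian pointwise envelope \eqref{|u(x,t)|}. First I would apply the Mean Value Theorem to write
$k(x-y,t-s)-k(x,t-s) = -y\int_0^1 k_x(x-wy,t-s)\,dw$,
so that
$$\Big|\!\int k(x-y,t-s)u^q(y,s)\,dy-U(s)k(x,t-s)\Big|\le \int_0^1\!\!\int |k_x(x-wy,t-s)|\,|y u^q(y,s)|\,dy\,dw,$$
and use the standard bound $|k_x(x,t)|\le C(1+t)^{-1}e^{-|x|^2/M(1+t)}$, absorbing the loose factor $|x|/\sqrt{1+t}$ into the exponent.

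Next I would bound $|u^q(y,s)|$ by splitting it as $|u^{q-1}|_{L^\infty}\,|u(y,s)|$. From \eqref{|u(x,t)|} at $x=0$ we have $|u(\cdot,s)|_{L^\infty}\le CE_0(1+s)^{-1/2}$, so for $q\ge 4$, $|u^{q-1}|_{L^\infty}\le CE_0(1+s)^{-(q-1)/2}\le CE_0(1+s)^{-3/2}$. Combining with \eqref{|u(x,t)|} and the elementary estimates $|y|(1+|y|+\sqrt s)^{-r}\le (1+|y|+\sqrt s)^{-r+1}$ and $|y|e^{-|y|^2/M(1+s)}\le C\sqrt{1+s}\,e^{-|y|^2/M'(1+s)}$, we obtain
$$|y u^q(y,s)|\le CE_0(1+s)^{-3/2}\Big[(1+|y|+\sqrt s)^{-r+1}+e^{-|y|^2/M'(1+s)}\Big].$$

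The remaining work is the convolution against $(1+t-s)^{-1}e^{-|x-wy|^2/M(1+t-s)}$, which splits into two pieces. For the polynomial piece, since $r>2$ implies $r-1>1$, I can invoke \eqref{|x-wy| for nonlinear} directly (with $r$ replaced by $r-1$), producing the term $(1+t-s)^{-1/2}(1+|x|+\sqrt{t-s}+\sqrt s)^{-r+1}$ together with a subordinate Gaussian contribution of the required form. For the Gaussian piece, I would reduce the resulting convolution to the semigroup estimate \eqref{semigroup} (adapted to the $(1+\cdot)$ normalization, by noting $1+t\le (1+t-s)+(1+s)$), producing a factor $(1+t)^{-1/2}e^{-|x|^2/M''(1+t)}$. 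The extra $(1+s)^{-3/2}$ prefactor is stronger than the desired $(1+s)^{-1}$, which absorbs both the $\sqrt{1+s}$ pulled out of the Gaussian piece and the $(1+s)^{-(r-2)/2}$ appearing in \eqref{|x-wy| for nonlinear}, enlarging $M'$ to $M''$ as needed.

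The main obstacle is the Gaussian piece: the bound \eqref{|x-wy| for nonlinear} is tailored to polynomial-decay integrands and does not directly handle $e^{-|y|^2/M'(1+s)}$ against $(1+t-s)^{-1/2}e^{-|x-wy|^2/M(1+t-s)}$. I would handle this by changing variables $y\mapsto wy$ (using $0<w\le 1$ to enlarge the Gaussian constant), applying \eqref{semigroup}, and then using $(1+t-s)^{-1/2}(1+s)^{-1/2}\lesssim (1+t)^{-1/2}$ to produce the target prefactor $(1+t-s)^{-1}e^{-|x|^2/M''(1+t)}$. Everything else is routine bookkeeping that mirrors the previous subsection verbatim.
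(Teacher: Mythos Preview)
Your overall strategy---Mean Value Theorem, then split the convolution into a polynomial piece handled by \eqref{|x-wy| for nonlinear} and a Gaussian piece handled by \eqref{semigroup}---matches the paper exactly. The gap is in your splitting of $u^q$. You write $|u^q|\le |u^{q-1}|_{L^\infty}|u|$, which keeps only \emph{one} copy of the pointwise envelope \eqref{|u(x,t)|} and therefore produces a polynomial term $(1+|y|+\sqrt s)^{-r+1}$ after absorbing the factor $|y|$. Feeding this into \eqref{|x-wy| for nonlinear} yields $(1+|x|+\sqrt{t-s}+\sqrt s)^{-r+1}$, not the $(1+|x|+\sqrt{t-s}+\sqrt s)^{-2r+1}$ claimed in the lemma. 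The extra $(1+s)^{-1/2}$ you gain in the prefactor does not compensate: since $(1+|x|+\sqrt{t-s}+\sqrt s)^{-r}\le (1+\sqrt s)^{-r}\le (1+s)^{-1/2}$ fails in general (e.g.\ for $|x|$ large and $s$ bounded), your bound is genuinely weaker than the stated one.

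The paper instead splits $|yu^q|=|u^{q-2}|\,|yu^2|$, so that $|u^{q-2}|_{L^\infty}\le CE_0(1+s)^{-1}$ supplies the prefactor while $|yu^2|$ retains the \emph{square} of the envelope \eqref{|u(x,t)|}, giving
\[
|yu^q(y,s)|\le CE_0(1+s)^{-1}\Big[(1+|y|+\sqrt s)^{-2r+1}+(1+s)^{-1/2}e^{-|y|^2/M(1+s)}\Big].
\]
Now \eqref{|x-wy| for nonlinear} applies with exponent $2r-1>1$ and delivers the stated $-2r+1$ decay. Your argument is otherwise fine and would prove a lemma sufficient for the downstream Theorem~\ref{behavior2} (which only uses $-r+1$ in the end), but it does not establish the bound as written.
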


\begin{proof}
Noting, by \eqref{|u(x,t)|} and $q \geq 4$, that
\be
|yu^q(y,s)|
= |u^{p-2}||yu^2(y,s)|
\leq CE_0(1+s)^{-1}\Big[(1+|y|+\sqrt s)^{-2r+1}+(1+s)^{-\frac{1}{2}}e^{-\frac{|y|^2}{M(1+s)}} \Big].
\notag
\ee
we obtain, by Mean Value Theorem again and by \eqref{|x-wy| for nonlinear},
\be
\begin{split}
& \quad \Big|\int_{-\infty}^\infty k(x-y,t-s)u^q(y,s)ds-U(s)k(x,t-s)\Big| \\
& \leq \int_0^1 \int_{-\infty}^\infty (1+t-s)^{-1}e^{-\frac{|x-wy|^2}{M(1+t-s)}}|yu^q(y,s)|dydw \\
& \leq CE_0\int_0^1 \int_{-\infty}^\infty (1+t-s)^{-1}e^{-\frac{|x-wy|^2}{M(1+t-s)}}(1+s)^{-1}(1+|y|+\sqrt s)^{-2r+1} dydw \\
& \qquad  +CE_0\int_0^1 \int_{-\infty}^\infty (1+t-s)^{-1}e^{-\frac{|x-wy|^2}{M(1+t-s)}}(1+s)^{-\frac{3}{2}}e^{-\frac{|x|^2}{M'(1+s)}} dydw \\
& \leq CE_0 (1+t-s)^{-\frac{1}{2}}(1+s)^{-1}(1+|x|+\sqrt{t-s}+\sqrt s)^{-2r+1} \\
& \qquad +CE_0\Big[(1+t-s)^{-1}(1+s)^{-r}e^{-\frac{|x|^2}{M'(1+t)}}+(1+t-s)^{-1}(1+s)^{-1}e^{-\frac{|x|^2}{M''(1+t)}}\Big]\\
&  \leq CE_0(1+s)^{-1} \Big[(1+t-s)^{-\frac{1}{2}}(1+|y|+\sqrt{t-s}+\sqrt s)^{-2r+1}+(1+t-s)^{-1}e^{-\frac{|x|^2}{M''(1+t)}}\Big] \\
\end{split}
\notag
\ee
\end{proof}


\begin{theorem}[Behavior] \label{behavior2}
Suppose $u(x,t)$ satisfies $u_t=u_{xx}+u^q$ and $|u_0(y)| \leq E_0(1+|x|)^{-r}$, $r>2$,
for $E_0 >0$ sufficiently small, $M>1$ sufficiently large, and $q \geq 4$.
Set
\be
U_* = \int_0^\infty U(s)ds + U_0=\int_0^\infty \int_{-\infty}^\infty u^q(y,s)dyds+\int_{-\infty}^\infty u_0(y)dy.
\notag
\ee
Then, $|U_*| < \infty$ and for some sufficiently large $M''>M'>M$,
\be\label{6.3.6}
\begin{split}
& |u(x,t)-U_*k(x,t)| \\
& \qquad \leq CE_0\Big[ (1+t)^{-\frac{1}{2}}(1+|x|+\sqrt t)^{-r+1}+(1+t)^{-1}e^{-\frac{|x|^2}{M''(1+t)}}(1+\ln(1+t))\Big].
\end{split}
\ee
\end{theorem}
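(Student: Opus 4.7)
The plan is to mirror the proof of Theorem \ref{behavior1}, splitting
\[
|u(x,t)-U_*k(x,t)|\le I+II+III+IV,
\]
where (exactly as in the decomposition \eqref{4 parts})
$I=\big|\int k(x-y,t)u_0(y)\,dy - U_0 k(x,t)\big|$,
$II=\int_t^\infty |U(s)||k(x,t)|\,ds$,
$III=\int_0^t\big|\int k(x-y,t-s)u^q(y,s)\,dy - U(s)k(x,t-s)\big|\,ds$, and
$IV=\int_0^t |U(s)||k(x,t-s)-k(x,t)|\,ds$. First I would verify $|U_*|<\infty$: by \eqref{|u(x,t)|} and $q\geq 4$, the pointwise bound gives $|u(x,s)|^q\lesssim E_0^q[(1+|x|+\sqrt s)^{-r}+(1+\sqrt s)^{-1}e^{-|x|^2/M(1+s)}]^q$, which after integration in $x$ yields $|U(s)|\lesssim E_0(1+s)^{-3/2}$, so $\int_0^\infty |U(s)|\,ds<\infty$; combined with $|U_0|\leq \int|u_0|<\infty$ (using $r>2>1$), this gives $|U_*|<\infty$.

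The term $I$ is exactly \eqref{pwb of |u(x,t)-U_0k| 2}, giving a contribution of the desired form. For $II$, I use $|U(s)|\lesssim E_0(1+s)^{-3/2}$ so that $II\lesssim E_0 (1+t)^{-1/2}e^{-|x|^2/M'(1+t)}\int_t^\infty (1+s)^{-3/2}ds \lesssim E_0(1+t)^{-1}e^{-|x|^2/M''(1+t)}$, and absorb the algebraic $(1+|x|+\sqrt t)^{-r+1}$ piece trivially. For $IV$, I split $\int_0^{t/2}+\int_{t/2}^t$: on $[t/2,t]$ I use $|k(x,t-s)-k(x,t)|\le |k(x,t-s)|+|k(x,t)|$ and $(1+s)^{-3/2}\lesssim (1+t)^{-3/2}$; on $[0,t/2]$ the Mean Value Theorem in $t$ produces a factor $s\,|k_t(x,t-s^*)|\lesssim s(1+t)^{-3/2}e^{-|x|^2/M''(1+t)}$, and the $s$-integral then converges. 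This is exactly the computation carried out in \eqref{6.2.6} above, and yields $IV\lesssim E_0(1+t)^{-1}e^{-|x|^2/M''(1+t)}$.

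The main term is $III$, and the key obstacle (and source of the $\ln(1+t)$ factor) is the time convolution. By \eqref{pwb of |u(x,t)-U(s)k| 2},
\[
III \lesssim E_0\int_0^t (1+s)^{-1}\Big[(1+t-s)^{-\tfrac12}(1+|x|+\sqrt{t-s}+\sqrt s)^{-2r+1}+(1+t-s)^{-1}e^{-|x|^2/M''(1+t)}\Big]ds.
\]
For the Gaussian-type second piece, I split $[0,t/2]\cup[t/2,t]$: on $[0,t/2]$, $(1+t-s)^{-1}\lesssim (1+t)^{-1}$ and $\int_0^{t/2}(1+s)^{-1}ds\lesssim \ln(1+t)$; on $[t/2,t]$, $(1+s)^{-1}\lesssim (1+t)^{-1}$ and $\int_{t/2}^t(1+t-s)^{-1}ds\lesssim \ln(1+t)$. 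This yields $\lesssim E_0(1+t)^{-1}e^{-|x|^2/M''(1+t)}(1+\ln(1+t))$. For the algebraic first piece, using $2r-1>r+1>r-1$ and splitting in the same way, on $[0,t/2]$ bound $(1+|x|+\sqrt{t-s}+\sqrt s)^{-2r+1}\lesssim (1+|x|+\sqrt t)^{-r+1}(1+\sqrt s)^{-r}$ and integrate against $(1+s)^{-1}$, while on $[t/2,t]$ bound $(1+s)^{-1}\lesssim (1+t)^{-1}$ and integrate $(1+t-s)^{-1/2}$. This produces $\lesssim E_0(1+t)^{-1/2}(1+|x|+\sqrt t)^{-r+1}$, without any logarithm since $r-1>1$. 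Combining the four pieces proves \eqref{6.3.6}.

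The main obstacle is bookkeeping the competition between the two regimes (algebraic far-field versus Gaussian near-field) in estimating $III$; the convolution of $(1+s)^{-1}$ with $(1+t-s)^{-1}$ generates the unavoidable $\ln(1+t)$ attached to the Gaussian piece, while the algebraic piece uses the faster decay $(1+|y|+\sqrt s)^{-2r+1}$ arising from $q\geq 4$ to suppress the logarithm.
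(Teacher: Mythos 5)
Your proof follows essentially the same route as the paper: the finiteness of $U_*$ via $|U(s)|\lesssim E_0(1+s)^{-3/2}$, the four-part decomposition borrowed from \eqref{4 parts}, the identification of $I$ with \eqref{pwb of |u(x,t)-U_0k| 2}, the estimate of $III$ via \eqref{pwb of |u(x,t)-U(s)k| 2}, and the observation that $IV$ is identical to the computation \eqref{6.2.6}. Your treatment of the algebraic piece of $III$ is slightly more explicit than the paper's (which simply bounds $(1+|x|+\sqrt{t-s}+\sqrt s)^{-2r+1}\le(1+|x|+\sqrt t)^{-2r+1}$ outright and then evaluates the remaining time convolution), but both arguments land on the same bound, with the log attached only to the Gaussian piece. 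The two are the same proof in substance.
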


\begin{proof}

Recalling \eqref{usual bounds} and $q \geq 4$,  $|U(s)| \leq CE_0(1+s)^{-\frac{3}{2}}$ and so
\be
|U_*| \leq CE_0\int_0^\infty (1+s)^{-\frac{3}{2}}ds +|u_0|_{L^1} < \infty.
\notag
\ee
Now we break $|u(x,t)-U_*k(x,t)|$   into four parts like \eqref{4 parts}.
Then we have
\be \label{6.3.3}
II \leq CE_0(1+t)^{-\frac{1}{2}}e^{-\frac{|x|^2}{(1+t)}} \int_t^\infty (1+s)^{-\frac{3}{2}}ds \leq CE_0(1+t)^{-1}e^{-\frac{|x|^2}{(1+t)}}.
\ee
By \eqref{pwb of |u(x,t)-U(s)k| 2}, we have
\be \label{6.3.4}
\begin{split}
III
& \leq CE_0 \int_0^t (1+t-s)^{-\frac{1}{2}}(1+s)^{-1}(1+|x|+\sqrt{t-s}+\sqrt s)^{-2r+1}ds \\
& \quad + CE_0\int_0^t (1+t-s)^{-1}(1+s)^{-1}e^{-\frac{|x|^2}{M''(1+t)}}ds \\
& \leq CE_0(1+|x|+\sqrt t)^{-2r+1} \int_0^t (1+t-s)^{-\frac{1}{2}}(1+s)^{-1}ds \\
& \quad + CE_0e^{-\frac{|x|^2}{M''(1+t)}}\int_0^t (1+t-s)^{-1}(1+s)^{-1}ds \\
& \leq CE_0\Big[ (1+t)^{-\frac{1}{2}}(1+|x|+\sqrt t)^{-r+1}+(1+t)^{-1}e^{-\frac{|x|^2}{M''(1+t)}}\ln(1+t)\Big].
\end{split}
\notag
\ee
Since $|U(s)| \leq CE_0(1+s)^{-\frac{3}{2}}$, $IV$ is exactly the same as \eqref{6.2.6} which is
\be\label{6.3.5}
IV \leq CE_0(1+t)^{-1}e^{-\frac{|x|^2}{M''(1+t)}}.
\ee
By \eqref{pwb of |u(x,t)-U_0k| 2} and \eqref{6.3.3}--\eqref{6.3.5}, we obtain \eqref{6.3.6}.

\end{proof}


\section{Behavior of perturbations of \eqref{reaction diffusion}}


Let $\tilde u(x,t)$ be a solution of the system of reaction-diffusion equations
\be \label{reaction-diffusion eq.}
u_t=u_{xx}+f(u)+cu_x
\ee
and let $\bar u(x)$ be a stationary solution and define perturbations
\be \label{definition of v}
\begin{split}
u(x,t)  =\tilde u(x,t)-\bar u(x) \quad \text{and} \quad v(x,t)  =\tilde u(x+\psi(x,t),t)-\bar u(x),
\end{split}
\ee
for some unknown functions $\psi(x,t):\RR^2 \longrightarrow \RR$ to be determined later.

Plugging $\bar u(x,t)=u(x,t)-\bar u(x)$ in \eqref{reaction-diffusion eq.},  we have
\be \label{nonhomo}
u_t=Lu+O(|u|^2),
\ee
where $L$  is the linear operator of \eqref{sp}.

In this section, using $v(x,t)$ and the
linearized estimates of $L$ we have done in Section \ref{pointwise bounds of green function}, we show the behavior of $u$ satisfying \eqref{nonhomo} similarly as  in Section 6  for three cases of initial conditions: \\
\indent (1) $|u_0|_{L^1 \cap H^1}$, $|xu_0|_{L^1} \leq E_0$,  \\
\indent(2) $|u_0(x)| \leq E_0e^{-\frac{|x|^2}{M}}$, \\
\indent(3) $|u_0(x)| \leq E_0(1+|x|)^{-r}$, $r>1$, \\
where$ E_0>0$ sufficiently small and $M>0$ sufficiently large. \\

By Theorem \ref{main1}, the Green function $G(x,t;y)$ for the linear equation $u_t=Lu$ satisfies the estimates:
\be \label{pointwise bounds}
\begin{split}
G(x,t;y) & =\ds \frac{1}{\sqrt {4\pi bt}}e^{- \frac{|x-y-at|^2}{4bt}}\bar u'(x)\tilde q(y,0) + O((1+t)^{-1}+t^{-\frac{1}{2}}e^{-\eta t})e^{-\frac{|x-y-at|^2}{Mt}}, \\
G_y(x,t;y) & =\ds \frac{1}{\sqrt{4\pi bt}}e^{- \frac{|x-y-at|^2}{4bt}}\bar u'(x)\tilde q(y,0) + O(t^{-1})e^{-\frac{|x-y-at|^2}{Mt}},
\notag
\end{split}
\ee
for some sufficiently large constant $M>0$ and $\eta >0$.
First off, let $\chi(t)$ be a smooth cut off function defined for $t \geq 0$ such that $\chi(t)=0$ for $0\leq t\leq 1$ and $\chi(t)=1$ for $t \geq 2$ and define
\be \label{definition of E}
E(x,t;y):=\bar u'(x)e(x,t;y),
\ee
where
\be
e(x,t;y)=\frac{1}{2\pi\sqrt{4\pi bt}}e^{- \frac{|x-y-at|^2}{4bt}}\tilde q(y,0)\chi(t).
\notag
\ee
Now we set
\be
\begin{split}
G(x,t;y) = E(x,t;y)+\tilde G(x,t;y)\quad   \text{and}  \quad  G_y(x,t;y) = E(x,t;y)+ \tilde G_y(x,t;y),
\notag
\end{split}
\ee
where
\be
|\tilde G(x,t;y)| \leq C(1+t)^{-\frac{1}{2}}t^{-\frac{1}{2}}e^{-\frac{|x-y-at|^2}{Mt}} \quad \text{and} \quad  |\tilde G_y(x,t;y)| \leq Ct^{-1}e^{-\frac{|x-y-at|^2}{Mt}}.
\notag
\ee



\begin{lemma}[Nonlinear perturbation equations, \cite{JZ2}]
For $v$ defined in \eqref{definition of v}, we have
\be \label{nonlinear perturbation equation}
(\partial_t-L)v=(\partial_t-L)\bar u'(x)\psi + Q+R_x-(\partial_x^2+\partial_t)S+T,
\ee
where
\be \label{Q}
Q:=f(v(x,t)+\bar{u}(x))-f(\bar{u}(x))-df(\bar{u}(x))v=\mathcal{O}(|v|^2),
\ee
\be \label{R}
R:= v\psi_t - v\psi_{xx}+  (\bar u_x +v_x)\frac{\psi_x^2}{1+\psi_x},
\ee
\be \label{S}
S:= v\psi_x =O(|v| |\psi_x|),
\ee
and
\be \label{T}
T:=\left(f(v+\bar{u})-f(\bar{u})\right)\psi_x=O(|v||\psi_x|).
\ee
\end{lemma}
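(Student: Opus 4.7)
The lemma is a pure algebraic identity, and the proof is a careful bookkeeping exercise; the real content is the specific decomposition, which is dictated by the downstream Duhamel iteration (divergence terms allow an integration by parts against the Green function bounds of Theorem \ref{main1}, picking up an extra derivative's worth of decay).

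First, I would derive the raw PDE for $v$ by chain rule. With $y=x+\psi(x,t)$, one obtains
\be
v_t = \tilde u_y\psi_t + \tilde u_t, \qquad v_x = \tilde u_y(1+\psi_x) - \bar u', \qquad v_{xx} = \tilde u_{yy}(1+\psi_x)^2 + \tilde u_y\psi_{xx} - \bar u'',
\ee
with all $\tilde u$-derivatives evaluated at $(x+\psi,t)$. Substituting $\tilde u_t=\tilde u_{yy}+c\tilde u_y+f(\tilde u)$ from \eqref{reaction-diffusion eq.}, using the profile ODE $\bar u''+c\bar u'+f(\bar u)=0$ to cancel the pure-$\bar u$ contributions, and recognizing $f(\tilde u)-f(\bar u)-df(\bar u)v$ as the Taylor remainder $Q$ defined in \eqref{Q}, one arrives at the compact expression
\be\label{rawpde}
(\partial_t-L)v \;=\; \tilde u_y\,(\psi_t-c\psi_x-\psi_{xx}) \;-\; \tilde u_{yy}\,\psi_x(2+\psi_x) \;+\; Q.
\ee

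Next, I would compute $(\partial_t-L)(\bar u'\psi)$. Because $L\bar u'=0$ (the translation-invariance identity obtained by differentiating the profile ODE, which is the source of the embedded eigenvalue at $\lambda=0$ in (D1)), the product rule gives
\be
(\partial_t-L)(\bar u'\psi) \;=\; \bar u'(\psi_t-c\psi_x-\psi_{xx}) \;-\; 2\bar u''\psi_x.
\ee
Subtracting from \eqref{rawpde} yields
\be\label{diffpde}
(\partial_t-L)(v-\bar u'\psi)-Q \;=\; (\tilde u_y-\bar u')(\psi_t-c\psi_x-\psi_{xx}) \;-\; \tilde u_{yy}\,\psi_x(2+\psi_x) \;+\; 2\bar u''\psi_x.
\ee
The algebraic relations $\tilde u_y(1+\psi_x)=v_x+\bar u'$ and $\tilde u_{yy}(1+\psi_x)^2+\tilde u_y\psi_{xx}=v_{xx}+\bar u''$ (both immediate from the expansions above) then allow one to trade every $\tilde u_y,\tilde u_{yy}$ on the right of \eqref{diffpde} for $v,v_x,v_{xx},\bar u',\bar u''$, at the cost of introducing $1/(1+\psi_x)$ factors.

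Finally, I would verify by direct expansion that the right-hand side of \eqref{diffpde} equals $R_x-(\partial_x^2+\partial_t)S+T$ for the stated choices of $R,S,T$. This amounts to expanding $R_x$ and $(\partial_x^2+\partial_t)(v\psi_x)$ by the product rule, exploiting the cancellation $v\psi_{xt}-v\psi_{tx}=0$, and matching coefficient by coefficient against the expression from \eqref{diffpde}. The main obstacle is precisely this last matching step: in particular, explaining why the quotient nonlinearity $(\bar u_x+v_x)\psi_x^2/(1+\psi_x)$ must sit inside $R$ (and nowhere else) requires tracking the $\psi_x/(1+\psi_x)$ pieces generated when $\tilde u_y-\bar u'$ is rewritten in terms of $v_x$; only the $x$-derivative of this particular quantity cancels against the $\tilde u_{yy}\psi_x(2+\psi_x)$ correction so as to leave a clean divergence form. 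The $T$ contribution absorbs the $c\psi_x(\tilde u_y-\bar u')$ piece together with the mismatch between $df(\bar u)v$ and $f(v+\bar u)-f(\bar u)$ that appears when one tries to factor $\psi_x$ through the nonlinearity. The calculation is tedious but routine; the true content of the lemma is the choice of decomposition itself, which is engineered so that all nonlinear contributions are either quadratic in $v$ (the $Q$ term), in divergence form (the $R_x$ and $(\partial_x^2+\partial_t)S$ terms), or carry an explicit factor of $\psi_x$ (the $T$ term), enabling the closed iteration in the sequel.
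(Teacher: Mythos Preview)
Your proposal is correct and follows precisely the approach the paper indicates: the paper's own proof reads in its entirety ``Direct computation; see \cite{JZ2},'' and your outline supplies exactly the chain-rule derivation and term-by-term matching that such a computation entails. One minor quibble: your description of what $T$ absorbs is slightly off (the $c\psi_x$ transport piece actually lands in the $R_x-(\partial_x^2+\partial_t)S$ block rather than in $T$, which is purely the $f$-nonlinearity contribution), but this does not affect the validity of the overall argument.
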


\begin{proof}
Direct computation; see \cite{JZ2}.
\end{proof}

\subsection{Integral representation and $\psi$-evolution scheme }



We now recall the nonlinear iteration scheme of \cite{JZ2}.
Using \eqref{nonlinear perturbation equation} and applying Duhamel's principle and setting
\be \label{setting N}
N(x,t)=(Q+R_x-(\partial_x^2+\partial_t)S+T)(x,t),
\ee
we obtain the integral representation
\be
\begin{split}
v(x,t)
& =\bar u'(x)\psi(x,t) + \int_{-\infty}^\infty G(x,t;y) v_0(y)dy + \int_0^t \int_{-\infty}^\infty G(x,t-s;y)N(y,s)dyds.
\notag
\end{split}
\ee
for the nonlinear perturbation $v$. 
Defining $\psi$ implicitly by
\be \label{definition of psi}
\psi(x,t):=-\int_{-\infty}^\infty e(x,t;y)v_0(y)dy -\int_0^t \int_{-\infty}^\infty e(x,t-s;y)N(y,s)dyds,
\ee
we obtain the integral representation
\be \label{integral representation of v}
v(x,t) =\int_{-\infty}^\infty \tilde G(x,t;y) v_0(y)dy +\int_0^t \int_{-\infty}^\infty \tilde G(x,t-s;y)N(y,s)dyds.
\ee
Differentiating and using $e(x,t;y)=0$ for $0<t \leq 1$ we obtain
\be \label{derivative of psi}
\partial_t^k \partial_x^m \psi(x,t) = -\int_{-\infty}^\infty \partial_t^k \partial_x^m e(x,t;y)v_0 dy - \int_0^t \int_{-\infty}^\infty \partial_t^k \partial_x^m e(x,t-s;y)N(y,s)dyds.
\ee
Together, \eqref{integral representation of v} and \eqref{derivative of psi}
 form a  complete system in $(v,\partial_t^k\psi, \partial_x^m\psi)$, $0\leq  k \leq 1$, $0\leq m \leq 2$, that is, $v$ and derivatives of
$\psi$,
from solutions of which we may afterward recover the shift function 
$\psi$ by integration in $x$, completing the description of $\tilde u$.


\subsection{Behavior for initial perturbation $|u_0|_{L^1\cap H^1}, |xu_0|_{L^1}$ sufficiently small}


\begin{theorem}[Nonlinear stability, \cite{JZ2}] Let $v(x,t)$ and $u(x,t)$ be defined as in \eqref{definition of v} and $|u_0(x)|=|v_0(x)|_{L^1 \cap H^1(\RR)} < E_0$ sufficiently small. Then for all $t\geq 0$ and $p\geq 1$ we have the estimates
\be \label{JZ2}
\begin{split}
&|v(\cdot,t)|_{L^p(\RR)}(t) \leq CE_0(1+t)^{-\frac{1}{2}(1-\frac{1}{p})-\frac{1}{2}}\\
&|u(\cdot,t)|_{L^p(\RR)}(t), \quad |\psi(\cdot,t)|_{L^p(\RR)}(t) \leq CE_0(1+t)^{-\frac{1}{2}(1-\frac{1}{p})} \\
&|v(\cdot,t)|_{H^K(\RR)}(t), \quad |(\psi_t, \psi_x)(\cdot,t)|_{H^K(\RR)}(t) \leq CE_0(1+t)^{-\frac{3}{4}}.
\end{split}
\ee
(Note: This is proved in \cite{JZ2} for $p\geq 2$. For $p=1$, we use  the integration by part of \eqref{definition of psi}
and  \eqref{integral representation of v} and use $|(Q, R, S, T)|_{L^1} \leq |(v,\psi_x, \psi_t)|_{H^1}^2\leq CE_0(1+t)^{-\frac{3}{2}}$ to prove $|v(\cdot,t)|_{L^1}\leq CE_0(1+t)^{-\frac{1}{2}}$ and $|\psi(\cdot,t)|_{L^1} \leq CE_0$.)
\end{theorem}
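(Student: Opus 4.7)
The $L^p$ estimates for $p\ge 2$ and the $H^K$ estimates for $(v,\psi_t,\psi_x)$ are exactly what is proved in \cite{JZ2} using the tracking scheme of Section 7.1 together with Hausdorff--Young frequency bounds, so I would quote those directly. The only genuinely new content is the endpoint $p=1$ case. Once the bounds $|v(\cdot,t)|_{L^1}\le CE_0(1+t)^{-1/2}$ and $|\psi(\cdot,t)|_{L^1}\le CE_0$ are in hand, the corresponding estimate on $u$ follows from the relation between $u$, $v$, and $\psi$ implicit in \eqref{definition of v} (Taylor expanding $u(y,t)=v(x,t)-\bar u'(x)\psi(x,t)+O(|\psi|^2)$ at $x=y-\psi$) together with the triangle inequality.

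To close the $p=1$ case I would start from the integral representations \eqref{integral representation of v} and \eqref{definition of psi}, in which the source $N=Q+R_x-(\partial_x^2+\partial_t)S+T$ contains derivatives of the unknowns. To move these derivatives off the nonlinearity I would integrate by parts once in $y$ on the $R_x$ term (producing a pairing with $\tilde G_y$) and once in $y$ on the $S_{xx}$ term (producing a pairing of $\tilde G_y$ with $S_y$, keeping the kernel at one derivative). The $\partial_t S$ term I would leave as is and bound directly in $L^1$, rewriting $v_t$ via the PDE as $v_t=Lv+O(|v|^2)$ so that $|v_t|_{L^2}$ is controlled by the $|v|_{H^K}$ bound already quoted, and using the $H^K$ bound on $\psi_t$ to control $\psi_{xt}$ similarly.

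The numerical inputs are the kernel bounds (read off from Theorem \ref{main1} by integrating the Gaussian factor in $x$)
\[
|\tilde G(x,t-s;\cdot)|_{L^1(x)}\le C(1+t-s)^{-1/2},\quad |\tilde G_y(x,t-s;\cdot)|_{L^1(x)}\le C(t-s)^{-1/2},\quad |e(\cdot,t;y)|_{L^1(x)}\le C,
\]
and the Cauchy--Schwarz source bounds
\[
|(Q,R,S,T,S_y,S_t)(\cdot,s)|_{L^1}\le C|(v,\psi_x,\psi_t)(\cdot,s)|_{H^K}^2\le CE_0^2(1+s)^{-3/2}
\]
coming from the $H^K$ estimates of \cite{JZ2}. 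Plugging into the integral representations and applying the elementary convolution estimate $\int_0^t(1+t-s)^{-1/2}(1+s)^{-3/2}ds\le C(1+t)^{-1/2}$ (and similarly with $(t-s)^{-1/2}$ in place of $(1+t-s)^{-1/2}$, after splitting $[0,t]=[0,t/2]\cup[t/2,t]$) yields $|v(\cdot,t)|_{L^1}\le C(E_0+E_0^2)(1+t)^{-1/2}\le CE_0(1+t)^{-1/2}$, and, using the absolute convergence $\int_0^\infty(1+s)^{-3/2}ds<\infty$, also $|\psi(\cdot,t)|_{L^1}\le CE_0$.

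\emph{Main obstacle.} The subtle step is the control of the $\partial_t S$ term, because the direct route requires a clean $L^2$ bound on $v_t$ that is not stated explicitly in \cite{JZ2}; reading it off the equation is straightforward provided $K\ge 2$ in the $H^K$ bounds, which is mild, but it must be verified for the reaction-diffusion system at hand. A fallback would be to integrate by parts in $s$ on the $\partial_t S$ term, using $\tilde G(x,0;y)=\delta(x-y)$ (from $E(x,0;y)=0$ since $\chi(0)=0$) and $S(\cdot,0)\equiv 0$ (from $\psi(\cdot,0)\equiv 0$ via \eqref{derivative of psi}) to eliminate the boundary terms, and bounding $|\tilde G_s|_{L^1(x)}$ for $t-s\ge 1$ by one more $\lambda$-differentiation of the contour integral \eqref{proposition}; the resulting short-time singularity at $s=t$ would be absorbed by splitting the $s$-integral at $s=t-1$ and applying the direct $L^1$ bound on $\partial_t S$ on $[t-1,t]$.
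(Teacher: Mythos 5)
Your overall architecture matches the paper's: quote the $p\ge2$ and $H^K$ bounds from \cite{JZ2}, then close the $p=1$ endpoint from the integral representations \eqref{definition of psi}, \eqref{integral representation of v} using Cauchy--Schwarz source bounds $|(Q,R,S,T)|_{L^1}\le|(v,\psi_x,\psi_t)|_{H^1}^2\lesssim E_0(1+t)^{-3/2}$ and the elementary convolution estimate. That is exactly the route the paper's parenthetical note indicates, and the reduction of the $u$-bound to the $v$- and $\psi$-bounds via Taylor expansion is also what the paper does later (cf.\ the decomposition in \eqref{3 parts}).

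Where you genuinely diverge is in handling the $(\partial_x^2+\partial_t)S$ term. The paper (implicitly in the note, explicitly in the analogous computations of Sections 7.2--7.3) moves the \emph{entire} operator $\partial_y^2+\partial_s$ onto the kernel and then exploits the identity $|\tilde G_{yy}+\tilde G_t|\sim|\tilde G_y|$ coming from the fact that $\tilde G$ satisfies the (adjoint) parabolic equation up to lower-order terms; this is stated in the proof right after \eqref{2}. With that identity the $S$ term is bounded exactly as the $R_x$ term, by $|\tilde G_y|_{L^1(x)}\lesssim (t-s)^{-1/2}$ paired with $|S|_{L^1}\lesssim(1+s)^{-3/2}$, and one never needs a bound on $S_t$ or $v_t$ at all. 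Your primary route instead bounds $|S_t|_{L^1}$ directly and so requires an a priori $L^2$ control of $v_t$; as you note, this must be extracted from the PDE $v_t=Lv+\cdots+N$, and since $N$ itself contains $\partial_t S=\partial_t(v\psi_x)$, one must first solve for $v_t$ out of the near-identity relation $(1+\psi_x)v_t = Lv+\cdots-v\psi_{xt}$ before bounding---this is doable for $\psi_x$ small but is extra machinery the paper avoids. Your fallback (integrating by parts in $s$ alone) has two further complications the paper's route sidesteps: the kernel derivative $\tilde G_s$ has an $L^1(x)$-singularity of order $(t-s)^{-3/2}$ near $s=t$ (one half-power worse than $\tilde G_y$), forcing the $s=t-1$ split you propose; and there is a nontrivial boundary term at $s=t$, namely $\int \delta(x-y)S(y,t)\,dy=S(x,t)$ (since $\tilde G(x,0;y)=\delta(x-y)$ as $\chi(0)=0$), which must be checked to be absorbable, while the boundary term at $s=0$ vanishes because $\psi\equiv0$ near $t=0$.

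So your proposal is correct in outline and would, with the extra work you flag, close. But the paper's treatment of the $S$ term is appreciably cleaner and is worth adopting: pair $S$ with $(\partial_y^2+\partial_s)\tilde G$ and use $|\tilde G_{yy}+\tilde G_t|\sim|\tilde G_y|$, which removes the $v_t$ estimate, the worse short-time singularity, and the boundary-term bookkeeping in one stroke, and is moreover the device used uniformly in the pointwise estimates of Sections 7.2--7.3, so you get consistency across the paper for free.
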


\begin{lemma}
For $E$ defined as in \eqref{definition of E} and $|u_0|_{L^1\cap H^1}$, $|xu_0|_{L^1} < E_0$, we have
\be \label{0}
\Big| \int_{-\infty}^\infty E(x,t;y)u_0(y)dy - \bar U_0\bar u'(x)\bar k(x,t) \Big|_{L^p(x)} \leq  CE_0(1+t)^{-\frac{1}{2}(1-\frac{1}{p})-\frac{1}{2}},
\ee
where $\bar U_0=\ds \int_{-\infty}^\infty u_0(y)\tilde q(y,0)dy$ and $\bar k(x,t) = \ds\frac{1}{\sqrt{4\pi bt}}e^{-\frac{|x-at|^2}{(4bt)}}$.
\end{lemma}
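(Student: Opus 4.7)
The plan is to imitate the argument for Lemma \eqref{first lemma} in Section~\ref{behavior of heat equation}, exploiting that $E(x,t;y)=\bar u'(x)e(x,t;y)$ where the $x$-dependence of $e(x,t;y)$ is, modulo the cutoff $\chi(t)$, exactly a translate of the Gaussian appearing in $\bar k$. Concretely, since $e(x,t;y)$ is proportional to $\bar k(x-y,t)\,\tilde q(y,0)\,\chi(t)$ (with $\bar k(x-y,t)=\frac{1}{\sqrt{4\pi bt}}e^{-|x-y-at|^2/(4bt)}$), one may pull $\bar u'(x)$ outside and rewrite the quantity to be estimated, up to the normalization constant in the definition of $e$, as
\[
\bar u'(x)\Big[\chi(t)\!\int_{-\infty}^\infty [\bar k(x-y,t)-\bar k(x,t)]\,\tilde q(y,0)u_0(y)\,dy\;+\;(\chi(t)-1)\,\bar k(x,t)\,\bar U_0\Big].
\]
The second piece is supported on $t\le 2$ where all relevant heat-kernel quantities are uniformly bounded in $(1+t)$; it is handled by the trivial estimate $|\bar u'\,\bar k(\cdot,t)\,\bar U_0|_{L^p(x)}\le C|\tilde q|_{L^\infty}|u_0|_{L^1}$ combined with the fact that $(1+t)^{-\frac{1}{2}(1-\frac{1}{p})-\frac{1}{2}}$ is bounded below on $[0,2]$.

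For the main term, apply the Mean Value Theorem exactly as in \eqref{first lemma}:
\[
\bar k(x-y,t)-\bar k(x,t)=-y\int_0^1 \partial_x \bar k(x-wy,t)\,dw,
\]
take the $L^p(x)$ norm inside the $y$-integral and use the standard bound $|\partial_x \bar k(\cdot,t)|_{L^p(x)}\le C(1+t)^{-\frac{1}{2}(1-\frac{1}{p})-\frac{1}{2}}$ (valid for $t\ge 1$; for $0\le t\le 2$ combine with the trivial estimate above). This yields, using $|\tilde q(\cdot,0)|_{L^\infty}\le C$ and the hypothesis $|xu_0|_{L^1}\le E_0$,
\[
\Big|\!\int [\bar k(x-y,t)-\bar k(x,t)]\tilde q(y,0)u_0(y)\,dy\Big|_{L^p(x)}\le C(1+t)^{-\frac{1}{2}(1-\frac{1}{p})-\frac{1}{2}}\int |y|\,|u_0(y)|\,dy\le CE_0(1+t)^{-\frac{1}{2}(1-\frac{1}{p})-\frac{1}{2}}.
\]

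Summing the two contributions and using $|\bar u'|_{L^\infty}\le C$ gives the claimed bound. The only genuinely delicate point is coordinating the cutoff $\chi(t)$ with the small-time behavior of $\bar k$, which is done exactly by the $t\le 1$ vs.\ $t\ge 2$ split (with the interpolation region $1<t<2$ handled by uniform boundedness of all factors); this is the same mechanism as in Lemma~\eqref{first lemma} and poses no new difficulty. Thus this lemma is essentially a weighted version of \eqref{first lemma} in which $\tilde q(y,0)$ plays the role of a bounded $L^\infty$ weight and $\bar U_0$ replaces $U_0=\int u_0$.
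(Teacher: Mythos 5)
Your proof is correct and follows essentially the same route as the paper: apply the Mean Value Theorem to the difference $\bar k(x-y,t)-\bar k(x,t)$, take the $L^p(x)$ norm inside, and use $|\partial_x\bar k(\cdot,t)|_{L^p(x)}\le C(1+t)^{-\frac{1}{2}(1-\frac{1}{p})-\frac{1}{2}}$ together with $|y u_0|_{L^1}\le E_0$ and the boundedness of $\tilde q$ and $\bar u'$. You are in fact somewhat more careful than the paper's one-line proof in treating the cutoff $\chi(t)$ (and in flagging the normalization constant in the definition of $e$), but the mechanism is identical.
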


\begin{proof}
By the Mean Value Theorem,
\be
\begin{split}
 &\Big| \int_{-\infty}^\infty E(x,t;y)u_0(y)dy - \bar U_0\bar u'(x)\bar k(x,t) \Big|_{L^p(x)} \\
& \leq C\int_{-\infty}^\infty \int_0^1 |\bar k_x(x-wy,t)|_{L^p(x)} |yu_0(y)|dwdy \\
& \leq CE_0(1+t)^{-\frac{1}{2}(1-\frac{1}{p})-\frac{1}{2}}.
\end{split}
\notag
\ee

\end{proof}

\bl
Associated with the solution $(u,\psi_t, \psi_x, \psi_{xx})$ of integral system \eqref{integral representation of v}--\eqref{derivative of psi}, we define
\be
\zeta(t): = \sup_{0 \leq s\leq t} |(x-as)(v,\psi_t,\psi_x,\psi_{xx})|_{L^1(x)}(s)
\ee
Then for all $t \geq 0$ for which $\zeta(t)$ is sufficiently small, we have the estimate
\be
\zeta(t) \leq C(E_0+\zeta^2(t))
\ee
for some constant $C>0$, as long as $|v_0|_{L^1 \cap H^1}$, $|xv_0|_{L^1} < E_0$, for $E_0>0$ sufficiently small.
\el
\begin{proof}
To begin, notice first that
\be
\begin{split}
& |(y-as)(Q+T+R+S)(y,s)|_{L^1(y)} \\
& \leq |(y-as)(v^2+\psi_t^2+\psi_y^2+\psi_{yy}^2)|_{L^1(y)} \\
& \leq (|v|_{L^\infty}+|\psi_t|_{L^\infty}+|\psi_x|_{L^\infty}+|\psi_{xx}|_{L^\infty})|(x-as)(v,\psi_t,\psi_x,\psi_{xx})|_{L^1(x)} \\
& \leq CE_0(1+t)^{-1}\zeta(t),
\end{split}
\notag
\ee
and
\be \label{ |(Q+T+R+S)(y,s)|}
 |(Q+T+R+S)(y,s)|_{L^1(y)} \leq |(v^2+\psi_t^2+\psi_y^2+\psi_{yy}^2)|_{L^1(y)}  \leq (1+s)^{-\frac{3}{2}}.
\ee
By the integration by part, we have
\be
\begin{split}
& |(x-at)v(x,t)|_{L^1(x)} \\
& = \int_{-\infty}^\infty \Big| (x-at-y)(1+t)^{-\frac{1}{2}}t^{-\frac{1}{2}}e^{-\frac{|x-at-y|^2}{Mt}}\Big|_{L^1(x)} |v_0(y)|dy \\
& \quad +  \int_{-\infty}^\infty \Big| y(1+t)^{-\frac{1}{2}}t^{-\frac{1}{2}}e^{-\frac{|x-at-y|^2}{Mt}}\Big|_{L^1(x)}|v_0(y)|dy \\
& \quad + \int_0^t \int_{-\infty}^\infty \Big| (x-at-(y-as))(1+t-s)^{-\frac{1}{2}}(t-s)^{-\frac{1}{2}}e^{-\frac{|x-a(t-s)-y|^2}{M(t-s)}}\Big|_{L^1(x)}|Q+T|dyds \\
& \quad + \int_0^t\int_{-\infty}^\infty  \Big|(1+t-s)^{-\frac{1}{2}}(t-s)^{-\frac{1}{2}}e^{-\frac{|x-a(t-s)-y|^2}{M(t-s)}}\Big|_{L^1(x)}| (y-as)(Q+T)|dyds  \\
& \quad + \int_0^t \int_{-\infty}^\infty \Big| (x-at-(y-as))(t-s)^{-1}e^{-\frac{|x-a(t-s)-y|^2}{M(t-s)}}\Big|_{L^1(x)}|R+S|dyds \\
& \quad + \int_0^t\int_{-\infty}^\infty  \Big|(t-s)^{-1}e^{-\frac{|x-a(t-s)-y|^2}{M(t-s)}}\Big|_{L^1(x)}| (y-as)(R+S)|dyds  \\
& \leq |v_0|_{L^1}+(1+t)^{-\frac{1}{2}}|yv_0|_{L^1} \\
& \quad + \int_0^t |(Q+R+S+T)|_{L^1} ds+ \int_0^t (1+t-s)^{-\frac{1}{2}}|(y-as)(Q+R+S+T)|_{L^1}ds \\
& \leq CE_0+C(1+t)^{-\frac{1}{2}}E_0+CE_0\int_0^t (1+s)^{-\frac{3}{2}}ds+CE_0\zeta(t)\int_0^t (t-s)^{-\frac{1}{2}}(1+s)^{-1}ds \\
& \leq CE_0+C(1+t)^{-\frac{1}{2}}E_0+CE_0\zeta(t)\int_0^t (1+t-s)^{-\frac{1}{2}}(1+s)^{-1}ds \\
& \leq CE_0+CE_0(1+t)^{-\frac{1}{2}}+CE_0\zeta(t)(1+t)^{-\frac{1}{2}} \\
& \leq C(E_0+\zeta^2(t)).
\end{split}
\notag
\ee
Similarly,  we have
\be
|(x-at)(\psi_t, \psi_x, \psi_{xx})|_{L^1(x)} \leq C(E_0+\zeta^2(t)).
\notag
\ee
\end{proof}


\begin{corollary}
For $|v_0|_{L^1 \cap H^1}$, $|xv_0|_{L^1} < E_0$, 
and $E_0>0$ sufficiently small, 
\be \label{|(y-as)(Q+T+R+S)(y,s)|}
 |(y-as)(Q+T+R+S)(y,s)|_{L^1(y)} \leq CE_0(1+s)^{-1}.
\ee

\end{corollary}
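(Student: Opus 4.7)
The plan is to apply continuous induction to the inequality $\zeta(t) \leq C(E_0 + \zeta^2(t))$ established in the preceding lemma, and then substitute the resulting uniform bound on $\zeta$ back into the intermediate estimate that appears inside the proof of that lemma. Concretely, the preceding lemma's derivation already contains the weighted bound
\begin{equation*}
|(y-as)(Q+T+R+S)(y,s)|_{L^1(y)} \leq CE_0(1+s)^{-1}\zeta(s),
\end{equation*}
obtained by factoring out $|v|_{L^\infty}, |\psi_t|_{L^\infty}, |\psi_x|_{L^\infty}, |\psi_{xx}|_{L^\infty}$ via the nonlinear stability bounds \eqref{JZ2} and bounding the remaining weighted $L^1$ factor by $\zeta(s)$. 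The corollary is then obtained by showing that $\zeta$ is globally bounded by a constant multiple of $E_0$.

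To run the continuous induction, I would first verify that $\zeta$ is continuous and small at $s=0$. Because the kernel $e(x,t;y)$ in \eqref{definition of psi} carries the cutoff $\chi(t)$ with $\chi(0)=0$, formula \eqref{derivative of psi} gives $\psi(x,0)=\psi_t(x,0)=\psi_x(x,0)=\psi_{xx}(x,0)=0$, so $\zeta(0)=|xv_0|_{L^1}<E_0$ is indeed sufficiently small under the hypotheses. Arguing exactly as in the proof of Corollary \ref{continuous induction}, the quadratic inequality $\zeta(t)\leq C(E_0+\zeta^2(t))$ together with continuity then propagates to the uniform bound $\zeta(t)\leq 2CE_0$ for all $t\geq 0$, provided $E_0<1/(4C^2)$ and, without loss of generality, $C\geq 1$.

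Substituting $\zeta(s)\leq 2CE_0$ into the intermediate estimate gives
\begin{equation*}
|(y-as)(Q+T+R+S)(y,s)|_{L^1(y)} \leq 2C^2 E_0^2 (1+s)^{-1} \leq CE_0(1+s)^{-1},
\end{equation*}
where the last inequality absorbs one power of $E_0$ into the constant using that $E_0$ is bounded (in fact small). Since the argument is essentially a packaging of the closed estimate from the preceding lemma, no genuine obstacle arises; the only subtlety worth flagging is that the lemma's bound is conditional (``as long as $\zeta(t)$ is sufficiently small''), so one must rely on continuity of $\zeta$ and the smallness at $t=0$ to push the bound globally in time, which is the standard continuous induction described above.
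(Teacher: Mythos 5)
Your proposal is correct and follows essentially the same approach as the paper's (implicit) proof: continuous induction on the quadratic inequality $\zeta(t)\leq C(E_0+\zeta^2(t))$ yields the uniform bound $\zeta(t)\leq 2CE_0$, which is then substituted back into the intermediate weighted estimate from the preceding lemma's proof to produce \eqref{|(y-as)(Q+T+R+S)(y,s)|}. This mirrors the pattern of Corollary \ref{continuous induction} exactly.
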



\begin{lemma}
Recalling \eqref{definition of E} and \eqref{setting N}, we have
\be\label{0.1}
\begin{split}
& \Big| \int_{-\infty}^\infty E(x,t-s;y)N(y,s)dy -\bar U(s)\bar u'(x) \bar k(x-as,t-s)\Big|_{L^p(x)} \\
& \qquad \leq C(1+t-s)^{-\frac{1}{2}(1-\frac{1}{p})-\frac{1}{2}}(1+s)^{-1},
\end{split}
\ee
where $\bar U(s)=\ds \int_{-\infty}^\infty N(y,s)\tilde q(y,0)dy$.
\end{lemma}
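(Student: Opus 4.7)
The plan is to mimic the Mean-Value-Theorem template used for the analogous scalar estimates \eqref{duhamel part for HE} and \eqref{pwb of u(x,t)-U(s) k 2 }, with the heat kernel $k$ replaced by the Gaussian low-frequency piece $E$. Factoring out the bounded $1$-periodic function $\bar u'(x)$, observe that $\bar k(x-as,t-s) = (4\pi b(t-s))^{-1/2}\exp(-|x-at|^2/(4b(t-s)))$ is exactly the spatial Gaussian factor of $e(x,t-s;y)$ after replacing $y$ by the convected center $as$, so that freezing both $\tilde q(y,0)$ and the Gaussian at $y=as$ pulls the constant $\int N(y,s)\tilde q(y,0)\,dy=\bar U(s)$ outside. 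Thus, up to the common prefactor $\chi(t-s)/(2\pi)$, the quantity to be estimated takes the form
\be
\bar u'(x)\int_{-\infty}^\infty\bigl[h(x-y,t-s)-h(x-as,t-s)\bigr]\tilde q(y,0)\,N(y,s)\,dy,
\notag
\ee
where $h(z,\tau):=(4\pi b\tau)^{-1/2}\exp(-|z-a\tau|^2/(4b\tau))$. The Mean Value Theorem then yields $h(x-y,t-s)-h(x-as,t-s)=-(y-as)\int_0^1\partial_z h(x-as-w(y-as),t-s)\,dw$.

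Taking the $L^p(x)$-norm, applying Minkowski's inequality together with the translation-invariant heat-kernel scaling $|\partial_z h(\cdot,t-s)|_{L^p(x)} \leq C(1+(t-s))^{-\frac{1}{2}(1-1/p)-\frac{1}{2}}$ (the cutoff $\chi(t-s)$ removes the short-time singularity at $t=s$), and using the uniform bounds $|\bar u'|_{L^\infty},|\tilde q(\cdot,0)|_{L^\infty}<\infty$ afforded by periodicity, one reduces the desired estimate to the weighted $L^1(y)$-bound $|(y-as)N(y,s)|_{L^1(y)}\leq CE_0(1+s)^{-1}$. Since $N=Q+R_x-(\partial_x^2+\partial_t)S+T$, the spatial-derivative pieces $R_x$ and $\partial_x^2 S$ are transferred by integration by parts in $y$ onto the kernel (producing at most two extra spatial derivatives of $h$ that still obey the same $L^p$-scaling after extracting a compensating factor of $(y-as)$ via a further MVT); the time-derivative piece $\partial_t S=v_t\psi_x+v\psi_{xt}$ is estimated directly using the $H^K$ bounds of \eqref{JZ2}. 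Once the derivatives are cleared, the weight $(y-as)$ acts on the undifferentiated combination $Q+R+S+T$, and \eqref{|(y-as)(Q+T+R+S)(y,s)|} delivers the required $(1+s)^{-1}$ decay.

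The principal obstacle is the bookkeeping for the $\partial_t S$ term: unlike the spatial derivatives, $\partial_t$ cannot be absorbed onto $h$ at fixed $s$, so one must expand $\partial_t S=v_t\psi_x+v\psi_{xt}$ and estimate each summand separately using the temporal and spatial regularity provided by \eqref{JZ2} together with the periodicity of $\tilde q(\cdot,0)$ and $\bar u'$. All remaining steps---the MVT reduction, the heat-kernel $L^p$-bound, and the final application of \eqref{|(y-as)(Q+T+R+S)(y,s)|}---proceed along the scalar template already worked out in Section \ref{behavior of heat equation}.
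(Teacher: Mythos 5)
Your overall framework — factoring out $\bar u'(x)$, applying the Mean Value Theorem to the kernel difference, Minkowski, the translation-invariant $L^p$ heat-kernel scaling, and closing with the weighted bound $|(y-as)(Q+R+S+T)|_{L^1(y)}\le CE_0(1+s)^{-1}$ — matches the paper's proof, as does the integration by parts in $y$ used to shift the derivatives from $R_x$ and $\partial_x^2 S$ onto $E$ (splitting between $\tilde q_y$ and $\partial_y\bar k$).

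The place where your argument diverges, and where it develops a real gap, is the $\partial_t S$ term. You propose to expand $\partial_t S=v_t\psi_x+v\psi_{xt}$ and estimate each factor "directly using the $H^K$ bounds of \eqref{JZ2}." But \eqref{JZ2} controls $|v|_{H^K}$ and $|(\psi_t,\psi_x)|_{H^K}$; it gives no bound on $v_t$. The factor $v_t\psi_x$ therefore has no control from the stated iteration estimates, and deriving a $v_t$ bound from the perturbation equation is circular (the right-hand side contains $N$, hence $\partial_t S$ itself) and would anyhow require $v\in H^{K}$ for $K\ge 3$ to land $v_{xx}$ in $L^\infty$, which is not what \eqref{JZ2} asserts. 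The paper does something structurally different: following the same integration-by-parts device it uses to derive \eqref{integral representation of v}--\eqref{derivative of psi}, it moves \emph{both} $\partial_y^2$ and $\partial_s$ from $S$ onto the kernel — the $\partial_s$ by integrating by parts in the outer Duhamel variable $s$, not at fixed $s$ — and then exploits the parabolic cancellation $E_{yy}+E_t\sim E_y$, so that only the \emph{undifferentiated} $S$ appears, against a first-order derivative of the kernel with the same $L^p(x)$-scaling $(1+t-s)^{-\frac12(1-\frac1p)-\frac12}$ as the $R_x$ term. That is why the paper's final bound involves only $|(y-as)(Q+T+R+S)|_{L^1}$ and $|(Q+T+R+S)|_{L^1}$, with no $v_t$ appearing. (Strictly the paper's own fixed-$s$ statement inherits the same minor imprecision, since moving $\partial_s$ onto $E$ only makes sense under the $\int_0^t\,ds$; but the mechanism is robust, whereas your direct estimate of $v_t\psi_x$ has no supporting bound.) To repair your proof, replace the "direct estimation" of $\partial_t S$ by the integration-by-parts-in-$s$ plus parabolic-cancellation step used in the derivation of \eqref{integral representation of v}, treating $(\partial_y^2+\partial_s)S$ exactly as $R_x$ once it lands on $E$.
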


\begin{proof}
By integration by parts, the Mean Value Theorem and 
\eqref{ |(Q+T+R+S)(y,s)|}--\eqref{|(y-as)(Q+T+R+S)(y,s)|}, we have
\be
\begin{split}
& \Big| \int_{-\infty}^\infty E(x,t-s;y)N(y,s)dy -\bar U(s)\bar u'(x)\bar k(x-as,t-s)\Big|_{L^p(x)} \\
& \leq \int_{-\infty}^\infty |\bar u'(x)\tilde q(y,0)||\bar k(x-y,t-s)-\bar k(x-as,t-s)|_{L^p(x)}|(Q+T)(y,s)|dy \\
& \quad + \Big| \int_{-\infty}^\infty \bar u'(x) \tilde q_y(y,0)  \left( \bar k(x-y,t-s)-\bar k(x-as,t-s) \right)(R+S)(y,s)dy \Big|_{L^p(x)} \\
& \quad + \Big| \int_{-\infty}^\infty \bar u'(x) \tilde q(y,0) \partial_y \left( \bar k(x-y,t-s)-\bar k(x-as,t-s) \right)(R+S)(y,s)dy \Big|_{L^p(x)} \\
& \leq CE_0(1+t-s)^{-\frac{1}{2}(1-\frac{1}{p})-\frac{1}{2}}(|(y-as)(Q+T+R+S)(y,s)|_{L^1(y)} \\
& \qquad + CE_0(1+t-s)^{-\frac{1}{2}(1-\frac{1}{p})-\frac{1}{2}}|(Q+T+R+S)(y,s)|_{L^1(y)}\\
& \leq + CE_0(1+t-s)^{-\frac{1}{2}(1-\frac{1}{p})-\frac{1}{2}}(1+s)^{-1}.
\end{split}
\notag
\ee

\end{proof}


\begin{theorem}[Behavior]
Suppose $u(x,t)$ satisfies $u_t=Lu+O(|u|^2)$  and $|u_0|_{L^1 \cap H^1}, |xu_0|_{L^1} < E_0$, with $E_0>0$ sufficiently small.
Set
\be
\bar U_*=\int_0^\infty \bar U(s)ds +\bar U_0,
\notag
\ee
Then $|\bar U_*| < \infty$ and
\be \label{0.2}
|u(x,t)-\bar U_* \bar u'(x)\bar k(x,t)|_{L^p(x)} \leq CE_0(1+t)^{-\frac{1}{2}(1-\frac{1}{p})-\frac{1}{2}}(1+\ln (1+t)).
\ee
\end{theorem}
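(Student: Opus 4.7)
The strategy parallels the scalar heat model of Section~6, now implemented for \eqref{nonhomo} through the nonlinear iteration scheme summarized in Section~7.1. First I would translate the statement from $u$ back to $v$ and $\psi$: from $\tilde u(x+\psi,t)=\bar u(x)+v(x,t)$ together with the Taylor expansion $\bar u(x+\psi)-\bar u(x)=\bar u'(x)\psi+O(\psi^2)$, inversion of the near-identity change of variable gives
\[
u(x,t)=v(x-\psi(x,t),t)-\bar u'(x-\psi)\,\psi(x-\psi,t)+O(\psi^2).
\]
Using the Mean Value Theorem, the boundedness of $\bar u'$, $\bar u''$, and the $L^p/L^\infty$ bounds \eqref{JZ2} on $v,\psi,\psi_x$, the composition corrections $v(x-\psi,t)-v(x,t)$ and $\bar u'(x-\psi)\psi(x-\psi,t)-\bar u'(x)\psi(x,t)$ together with the $O(\psi^2)$ remainder are all controlled in $L^p$ by $CE_0(1+t)^{-\frac{1}{2}(1-\frac{1}{p})-\frac{1}{2}}$. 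Combined with $|v|_{L^p}\leq CE_0(1+t)^{-\frac{1}{2}(1-\frac{1}{p})-\frac{1}{2}}$ from \eqref{JZ2}, this reduces the target \eqref{0.2} to proving
\[
\bigl|\bar u'(x)\,[\psi(x,t)+\bar U_*\bar k(x,t)]\bigr|_{L^p(x)}\leq CE_0(1+t)^{-\frac{1}{2}(1-\frac{1}{p})-\frac{1}{2}}(1+\ln(1+t)).
\]

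To prove this, I would substitute the definition \eqref{definition of psi} of $\psi$ and write $\bar U_*=\bar U_0+\int_0^t\bar U(s)\,ds+\int_t^\infty\bar U(s)\,ds$, producing a four-part decomposition of $\bar u'(\psi+\bar U_*\bar k)$ exactly analogous to \eqref{4 parts}: (I)~$-\int E(x,t;y)u_0(y)\,dy+\bar U_0\bar u'(x)\bar k(x,t)$; (II)~$\bar u'(x)\bar k(x,t)\int_t^\infty\bar U(s)\,ds$; (III)~$-\int_0^t\bigl[\int E(x,t-s;y)N(y,s)\,dy-\bar U(s)\bar u'(x)\bar k(x-as,t-s)\bigr]ds$; and (IV)~$\int_0^t\bar U(s)\bar u'(x)\bigl[\bar k(x,t)-\bar k(x-as,t-s)\bigr]ds$. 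Part~(I) is the content of \eqref{0}. Part~(III) follows from \eqref{0.1} combined with the convolution estimate $\int_0^t(1+t-s)^{-\frac{1}{2}(1-\frac{1}{p})-\frac{1}{2}}(1+s)^{-1}\,ds\leq C(1+t)^{-\frac{1}{2}(1-\frac{1}{p})-\frac{1}{2}}(1+\ln(1+t))$, which is precisely where the logarithmic loss arises. Part~(IV) is handled by the Mean Value Theorem on $\bar k$ in its two arguments, as in \eqref{6.2.6}. Part~(II) reduces to the integrability of $|\bar U(s)|$.

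The key auxiliary ingredient, used in (II) and for the finiteness of $\bar U_*$, is the bound $|\bar U(s)|\leq CE_0^2(1+s)^{-3/2}$. To obtain it I would move the spatial derivatives in $N=Q+R_x-(\partial_x^2+\partial_t)S+T$ onto the smooth periodic weight $\tilde q(y,0)$ by integration by parts in $y$, and control the $\partial_t S=v_t\psi_x+v\psi_{xt}$ piece using the $H^1$ decay $|(v,\psi_t,\psi_x)|_{H^1}(s)\leq CE_0(1+s)^{-3/4}$ from \eqref{JZ2}. The resulting products, such as $|v|_{L^2}^2$, $|v|_{L^2}|\psi_x|_{L^2}$, $|\psi_t\psi_x|_{L^1}$, and $|v_t\psi_x|_{L^1}$, each decay like $(1+s)^{-3/2}$. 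The companion weighted first-moment bound $|(y-as)N|_{L^1(y)}\leq CE_0(1+s)^{-1}$ is already recorded in \eqref{|(y-as)(Q+T+R+S)(y,s)|} and feeds directly into \eqref{0.1}.

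The main technical obstacle is precisely this quantitative control of $\bar U(s)$ and of the weighted first moment of $N$. Unlike the model problem of Section~6, where the nonlinearity $u^q$ with $q\geq 4$ leaves a comfortable decay margin, here the reaction term is only quadratic, so one must exploit the derivative structure of $N$ and the sharp $H^1$ decay rates of $(v,\psi_t,\psi_x,\psi_{xx})$ in tandem. A logarithmic loss in (III) appears to be unavoidable at this level of generality, and accounts for the $1+\ln(1+t)$ factor in \eqref{0.2}. Once the ingredients above are in place, summing the four contributions with the reduction step yields \eqref{0.2}.
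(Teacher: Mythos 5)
Your proposal follows essentially the same route as the paper's proof: the same reduction $u - \bar U_*\bar u'\bar k = v + O(|v_x||\psi| + |\psi|^2) + \bar u'(\psi + \bar U_*\bar k)$, the same four-part splitting of the last term into the contributions analogous to \eqref{4 parts}, the same appeal to the linear comparison estimate \eqref{0} for (I), the nonlinear Duhamel comparison \eqref{0.1} for (III) (which is where the log appears), the mean-value estimate for (IV), and the bound $|\bar U(s)|\leq CE_0(1+s)^{-3/2}$ obtained via integration by parts onto $\tilde q(y,0)$ and the $H^1$ decay rates of \eqref{JZ2} for (II) and for $|\bar U_*|<\infty$. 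The only cosmetic difference is that you invert the coordinate change $x\mapsto x+\psi$ up front, whereas the paper decomposes $\tilde u(x,t) = \tilde u(x+\psi,t) + [\tilde u(x,t)-\tilde u(x+\psi,t)]$ directly; both lead to the same reduction.
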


\begin{proof}
Noting first, by integration by part and \eqref{JZ2}, that
\be
|\bar U(s)| \leq C|(Q, R, S, T)(y,s)|_{L^1(y)} \leq |(v, \psi_t, \psi_x)|^2_{H^1} \leq CE_0(1+s)^{-\frac{3}{2}},
\ee
we have $|\bar U_*| \leq C\ds\int_0^\infty (1+s)^{-\frac{3}{2}}ds+CE_0|u_0|_{L^1} < \infty$.

Now we break $|u(x,t)-\bar U_* \bar u'(x)\bar k(x,t)|_{L^p(x)}$ into three parts.
\be \label{3 parts}
\begin{split}
& |u(x,t)-\bar U_* \bar u'(x)\bar k(x,t)|_{L^p(x)} \\
& =|\tilde u(x,t)-\bar u(x) - \bar U_* \bar u'(x)\bar k(x,t)|_{L^p(x)} \\
& = |\tilde u(x+\psi, t)-\bar u(x) + \tilde u(x,t)-\tilde u(x+\psi, t) - \bar U_* \bar u'(x)\bar k(x,t)|_{L^p(x)} \\
& \leq |v(x,t)|_{L^p(x)}+|\tilde u(x,t)-\tilde u(x+\psi, t) - \bar U_* \bar u'(x)\bar k(x,t)|_{L^p(x)} \\
& = |v(x,t)|_{L^p(x)}+ |\tilde u_x(x+\psi, t)(1+\psi_x)\psi+O(|\psi|^2)-\bar U_* \bar u'(x)\bar k(x,t)|_{L^p(x)} \\
& = |v(x,t)|_{L^p(x)}+ |(\bar u'(x)+v_x)\psi+O(|\psi|^2)-\bar U_* \bar u'(x)\bar k(x,t)|_{L^p(x)} \\
& \leq  |v(x,t)|_{L^p(x)}+ (|v_x||\psi|+O(|\psi|^2))_{L^p(x)}+|\bar u'(x) \psi-\bar U_* \bar u'(x)\bar k(x,t)|_{L^p(x)}.
\end{split}
\ee
By \eqref{JZ2}, we easily see first two terms
\be \label{0.3}
\begin{split}
& |v(x,t)|_{L^p(x)}+ (|v_x||\psi|+|\psi|^2)_{L^p(x)} \\
& \leq C(1+t)^{-\frac{1}{2}(1-\frac{1}{p})-\frac{1}{2}}+|v_x|_{L^\infty}|\psi|_{L^p}+|\psi|_{L^{2p}}^2\\
& \leq C(1+t)^{-\frac{1}{2}(1-\frac{1}{p})-\frac{1}{2}} +C(1+t)^{-\frac{3}{4}}(1+t)^{-\frac{1}{2}(1-\frac{1}{p})}+C(1+t)^{-(1-\frac{1}{2p})} \\
& \leq C(1+t)^{-\frac{1}{2}(1-\frac{1}{p})-\frac{1}{2}}.
\end{split}
\ee
Now we estimate the last term
\be \label{4 parts in L}
\begin{split}
& |\bar u'(x) \psi-\bar U_* \bar u'(x)\bar k(x,t)|_{L^p(x)}  \\
& = \Big| \int_{-\infty}^\infty E(x,t;y)u_0(y)dy+\int_0^t\int_{-\infty}^\infty E(x,t-s;y)N(y,s)dyds - \bar U_* \bar u'(x)\bar k(x,t)\Big|_{L^p(x)}  \\
& \leq \Big| \int_{-\infty}^\infty E(x,t;y)u_0(y)dy - \bar U_0\bar u'(x)\bar k(x,t)\Big|_{L^p(x)}  \\
& \qquad +  \int_t^\infty\Big| \bar u'(x)\bar k(x,t)\bar U(s) \Big|_{L^p(x)}  ds  \\
& \qquad + \int_0^t \Big| \int_{-\infty}^\infty E(x,t-s;y)N(y,s)dy - \bar U(s) \bar u'(x) \bar k(x-as,t-s) \Big|_{L^p(x)}  ds \\
& \qquad + \int_0^t |\bar U(s) \bar u'(x)|| \bar k(x-as,t-s)- \bar k(x,t)|_{L^p(x)} ds \\
& = I+II+III+IV.
\end{split}
\ee
Since $|\bar U(s)| \leq CE_0(1+s)^{-\frac{3}{2}}$,
\be \label{0.4}
\begin{split}
II
\leq C(1+t)^{-\frac{1}{2}(1-\frac{1}{p})} \int_t^\infty (1+s)^{-\frac{3}{2}}ds \leq C(1+t)^{-\frac{1}{2}(1-\frac{1}{p})-\frac{1}{2}}.
\end{split}
\ee
By \eqref{0.1}, we have
\be \label{0.5}
III
\leq C\int_0^t (1+t-s)^{-\frac{1}{2}(1-\frac{1}{p})-\frac{1}{2}}(1+s)^{-1}ds
\leq C(1+t)^{-\frac{1}{2}(1-\frac{1}{p})-\frac{1}{2}}(1+\ln (1+t)).
\ee
By the Mean Value Theorem, for some $s^*\in (0,t/2)$, we have
\be \label{0.6}
\begin{split}
IV
& \leq C\int_{t/2}^t (1+s)^{-\frac{3}{2}}| \bar k(x-as,t-s)- \bar k(x,t)|_{L^p(x)}ds \\
& \qquad +C\int_0^{t/2} (1+s)^{-\frac{3}{2}}s|\bar k_t(x-as,t-s^*)|_{L^p(x)} ds\\
& \leq C(1+t)^{-\frac{1}{2}(1-\frac{1}{p})} \int_{t/2}^t (1+s)^{-\frac{3}{2}}ds +C(1+t)^{-\frac{1}{2}(1-\frac{1}{p})-1}\int_0^{t/2}(1+s)^{-\frac{1}{2}}ds \\
&  \leq C(1+t)^{-\frac{1}{2}(1-\frac{1}{p})-\frac{1}{2}}.
\end{split}
\ee
By \eqref{0} and \eqref{0.3}--\eqref{0.6}, we obtain the result \eqref{0.2}.

\end{proof}

\begin{remark}
Untangling coordinate changes, we see that $\bar U_*\bar k(x,t)$ is an estimate for $\psi(x,t)$; that is, $|\bar u(x) - \bar u(x-\Bar U_*\bar k(x,t))| \sim |\bar U_*\bar u' \bar k|$.
This makes a connection between the analyses of \cite{JZ2} (where $v$ and $\psi$ but not $\bar U_*\bar k(x,t)$ appear)
and  \cite{S1,S2} (where the equivalent of $\bar U_*\bar k(x,t)$ appears, but not $v$ or $\psi$). 
\end{remark}


\subsection{Behavior for  initial perturbation $|u_0(x)| \leq E_0e^{-\frac{|x|^2}{M}}$}


To show behavior of $u$, we first consider pointwise bounds of $v$, $\psi_t$, $\psi_x$ and $\psi_{xx}$ like previous one. 
In this section, we take 
$E_0>0$ sufficiently small and $M>1$ sufficiently large.

\begin{lemma}
Suppose $|v_0(x)| \leq E_0e^{-\frac{|x|^2}{M}}$,
for $E_0>0$ sufficiently small and $M>1$ sufficiently large.
For $v$, $\psi_t$, $\psi_x$ and $\psi_{xx}$ defined in \eqref{integral representation of v} and \eqref{derivative of psi}, define
\be \label{definition of eta}
\zeta(t):=\sup_{ 0\leq s\leq t, x\in \RR} |(v, \psi_t, \psi_x, \psi_{xx})|(1+s)e^{\frac{|x-as|^2}{M(1+s)}}.
\ee
Then, for all $t\geq 0$ for which $\zeta(t)$ defined in \eqref{definition of eta} is finite,
\be \label{0s7}
\zeta(t) \leq C(E_0+\zeta(t)^2)
\ee
for some constant $C>0$.
\end{lemma}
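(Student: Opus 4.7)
The proof follows the continuous-induction paradigm of Lemmas 6.7 and 6.10 (equations \eqref{6.2.1} and \eqref{6.3.1}), adapted to the linearized reaction-diffusion semigroup via the Green-function splitting $G=E+\tilde G$ of Theorem \ref{main1}. The inputs are the Gaussian kernel bounds
\[
|\tilde G(x,t;y)|\le C(1+t)^{-1/2}t^{-1/2}e^{-\frac{|x-y-at|^2}{Mt}},\qquad |\tilde G_y(x,t;y)|\le Ct^{-1}e^{-\frac{|x-y-at|^2}{Mt}},
\]
together with analogous bounds on $e(x,t;y)$ and its derivatives ($e\sim t^{-1/2}$, $\partial_x e\sim t^{-1}$, and $\partial_x^2 e,\ \partial_t e\sim t^{-3/2}$, all sharing the same Gaussian envelope and vanishing for $t\le 1$ by the cutoff $\chi$), plus the Gaussian-convolution estimate \eqref{semigroup}. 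The ansatz defining $\zeta(t)$ yields $|(v,\psi_t,\psi_x,\psi_{xx})(y,s)|\le\zeta(t)(1+s)^{-1}e^{-|y-as|^2/(M(1+s))}$, and since $Q,R,S,T$ are quadratic in these quantities,
\[
|(Q,R,S,T)(y,s)|\le C\zeta(t)^2(1+s)^{-2}e^{-|y-as|^2/(M(1+s))}.
\]
The spatial derivatives $R_x$ and $\partial_x^2 S$ inside $N$ are transferred onto the kernel using the identity $\partial_x e=-\partial_y e$ (exact because $e$ depends only on $x-y$) and the $|\tilde G_y|$ bound; the $\partial_t S$ term is removed by one integration by parts in time, with the $s=0$ boundary term vanishing because $\psi(\cdot,0)\equiv 0$ forces $S(\cdot,0)=0$.

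Next, for each of the four components of $\zeta$, I split into linear plus nonlinear contributions using the integral representations \eqref{integral representation of v} and \eqref{derivative of psi}. The linear part pairs $|v_0|\le E_0 e^{-|y|^2/M}$ with the appropriate kernel; after the translation $y\mapsto y-at$ and an application of \eqref{semigroup} this gives $\le CE_0(1+t)^{-1}e^{-|x-at|^2/(M(1+t))}$, with the extra $(1+t)^{-1/2}$ beyond the bare Gaussian coming either from the $(1+t)^{-1/2}$ in $\tilde G$ (for $v$) or from differentiating $e$ (for $\psi_t,\psi_x,\psi_{xx}$). For the nonlinear part, performing the $y$-convolution via \eqref{semigroup} reduces matters to
\[
\zeta(t)^2(1+t)^{-1/2}e^{-\frac{|x-at|^2}{M(1+t)}}\int_0^t(1+t-s)^{-\alpha}(1+s)^{-3/2}\,ds,
\]
with $\alpha\in\{1/2,1\}$ depending on the component, and a routine split at $s=t/2$ bounds the time integral by $C(1+t)^{-\alpha}$, yielding overall $\le C\zeta(t)^2(1+t)^{-1}e^{-|x-at|^2/(M(1+t))}$. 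Summing linear and nonlinear contributions, multiplying by $(1+t)e^{|x-at|^2/(M(1+t))}$, and taking the supremum delivers \eqref{0s7}.

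The chief technical obstacle is the $\partial_t S$ contribution to $N$: bounding $\int_0^t\!\int\tilde G\,\partial_s S\,dy\,ds$ directly is not possible because $\partial_s S=v_s\psi_x+v\psi_{xs}$ involves time derivatives not present in the ansatz, and a naive integration by parts in $s$ produces a potentially singular boundary term at $s=t$. The resolution, as in the scheme of \cite{JZ2}, is to integrate by parts in $s$ so that $\partial_s$ hits the kernel, trading $\partial_s\tilde G$ for the action of the dual linearized operator $L^\ast$ in $y$; the resulting spatial derivatives of $\tilde G$ are absorbed via the $|\tilde G_y|$ bound and one further integration by parts, producing only kernels already controlled above. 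A secondary point is the self-consistency of the common $(1+s)^{-1}$ weight across all four components: each quadratic product of ansatz factors produces precisely the $(1+s)^{-2}$ needed to balance the $(1+t-s)^{-1/2}$ from the $y$-convolution and the extra $(1+t-s)^{-1/2}$ present in $\tilde G$ (resp.\ in $\partial_x e$), so the four coupled estimates close simultaneously.
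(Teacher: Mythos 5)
Your proposal takes essentially the same route as the paper: the splitting $G=E+\tilde G$, the pointwise bounds on $\tilde G,\tilde G_y$ and on $e,e_x,e_{xx},e_t$, the ansatz-driven quadratic bound on $(Q,R,S,T)$, the Gaussian-convolution estimate \eqref{semigroup}, and integration by parts in $y$ and $s$ to shift the derivatives in $N$ onto the kernel. This matches the paper's argument and the resulting time integrals are handled in the same way.

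One point is underspecified and worth stating explicitly: you assert that $Q,R,S,T$ are quadratic in the ansatz quantities $(v,\psi_t,\psi_x,\psi_{xx})$, but $R$ contains the factor $(\bar u_x+v_x)\frac{\psi_x^2}{1+\psi_x}$, and $v_x$ is \emph{not} one of the quantities controlled by $\zeta(t)$. The paper closes this by invoking the pre-established bound $|v_x|_{L^\infty}\le|v|_{H^1}\le CE_0(1+t)^{-3/4}\le C$ from the Johnson--Zumbrun estimate \eqref{JZ2} (and implicitly that $|\psi_x|$ stays small so $1+\psi_x$ is bounded away from zero). Without that auxiliary input the reduction $|(Q,R,S,T)|\le C\zeta(t)^2(1+s)^{-2}e^{-|x-as|^2/(M(1+s))}$ does not follow from the ansatz alone. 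Incorporating that one line makes your argument complete and in line with the paper. (A very minor nit: in your time integral you wrote $(1+t-s)^{-\alpha}$ where the $v$-estimate actually produces $(t-s)^{-1/2}$ after factoring $\tilde G_y\sim(t-s)^{-1}$; the bound $\int_0^t(t-s)^{-1/2}(1+s)^{-3/2}\,ds\le C(1+t)^{-1/2}$ still holds, so the conclusion is unaffected.)
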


\begin{proof}
Note first that by \eqref{JZ2}, we have $|v_x|_{\infty} \leq |v|_{H^1} \leq CE_0(1+t)^{-\frac{3}{4}} \leq C$  and so by$ \eqref{Q}--\eqref{T}$ and \eqref{definition of eta} we have
\be
 |(Q, R, S, T)(x,t)| \leq |(v, \psi_t, \psi_x, \psi_{xx})(x,t)|^2 \leq \zeta(t)^2(1+t)^{-2}e^{-\frac{|x-at|^2}{M(1+t)}}.
\notag
\ee
Thus, from \eqref{integral representation of v}, we have
\be\label{1}
\begin{split}
|v(x,t)|
& \leq \int_{-\infty}^\infty |\tilde G(x,t;y)| |v_0(y)|dy +\int_0^t \int_{-\infty}^\infty |\tilde G_y(x,t-s;y)||(Q, R, S, T)(y,s)|dsdy\\
& \leq CE_0\int_{-\infty}^\infty  (1+t)^{-\frac{1}{2}}t^{-\frac{1}{2}}e^{-\frac{|x-y-at|^2}{Mt}} e^{-\frac{|y|^2}{M}}dy \\
& \qquad + C\zeta^2(t)\int_0^t \int_{-\infty}^\infty(t-s)^{-1}e^{-\frac{|x-y-a(t-s)|^2}{M(t-s)}}(1+s)^{-2}e^{-\frac{|y-as|^2}{M(1+s)}}dyds \\
& \leq CE_0t^{-1}e^{-\frac{|x-at|^2}{M(1+t)}}+C\zeta^2(t)(1+t)^{-\frac{1}{2}}e^{-\frac{|x-at|^2}{M(1+t)}}\int_0^t (t-s)^{-\frac{1}{2}}(1+s)^{-\frac{3}{2}}ds \\
& \leq C(E_0+\zeta^2(t))(1+t)^{-1}e^{-\frac{|x-at|^2}{M(1+t)}},
\end{split}
\ee
here we use the integration by parts to exchange the $\partial_y$ and $(\partial_y^2+\partial_s)$ derivatives on $R$ and $S$ respectively for $-\partial_y$ and $(\partial_y^2-\partial_s)$ derivatives on $\tilde G$ and recall $|\tilde G_{yy}+ \tilde G_t| \sim |\tilde G_y| \leq Ct^{-\frac{1}{2}}e^{-\frac{|x-at|^2}{M(1+t)}}$.

Recalling $e(x,t;y)=0$ for $0 < t \leq 1$  and from \eqref{derivative of psi},  we have
\be \label{2}
\begin{split}
& \quad |(\psi_t,\psi_x, \psi_{xx})(x,t)| \\
& \leq \int_{-\infty}^\infty |e_x(x,t;y)| |v_0(y)|dy +\int_0^t \int_{-\infty}^\infty |e_x(x,t-s;y)||(Q, R, S, T)(y,s)|dsdy\\
& \leq E_0\int_{-\infty}^\infty  (1+t)^{-1}e^{-\frac{|x-y-at|^2}{Mt}} e^{-\frac{|y|^2}{M}}dy \\
& \qquad +\zeta^2(t)\int_0^t \int_{-\infty}^\infty (1+t-s)^{-1}e^{-\frac{|x-y-a(t-s)|^2}{M(t-s)}} (1+s)^{-2}e^{-\frac{|y-as|^2}{M(1+s)}}dyds \\
& \leq CE_0(1+t)^{-1}e^{-\frac{|x-at|^2}{M(1+t)}}+C\zeta^2(t) (1+t)^{-\frac{1}{2}}e^{-\frac{|x-at|^2}{M(1+t)}}\int_0^t (1+t-s)^{-\frac{1}{2}}(1+s)^{-\frac{3}{2}}ds \\
& \leq C(E_0+\zeta^2(t))(1+t)^{-1}e^{-\frac{|x-at|^2}{M(1+t)}}.
\end{split}
\ee

The \eqref{1} and \eqref{2} implies \eqref{0s7}.

\end{proof}


\begin{corollary}
For $v$ defined in \eqref{definition of v} with $|v_0(x)|\leq E_0e^{-\frac{|x|^2}{M}}$,
$E_0>0$ sufficiently small and $M>1$ sufficiently large,
\be \label{2.1}
|v(x,t)| \leq CE_0(1+t)^{-1}e^{-\frac{|x-at|^2}{M(1+t)}}.
\ee
\end{corollary}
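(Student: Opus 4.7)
The plan is to run the same continuous induction (bootstrap) argument used for Corollary \ref{continuous induction} and its analogues earlier in the paper. The quadratic inequality $\zeta(t) \leq C(E_0 + \zeta(t)^2)$ established in the preceding lemma is precisely of the form that closes under this scheme, and the desired pointwise bound on $v$ follows immediately from the definition of $\zeta$.

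More concretely, I would first note that $\zeta(t)$ is well defined and continuous as a function of $t$ as long as it remains finite, since $v$, $\psi_t$, $\psi_x$, $\psi_{xx}$ are continuous and the Gaussian weight $e^{|x-as|^2/(M(1+s))}(1+s)$ is continuous in $(x,s)$. At $t = 0$ we have $(\psi_t,\psi_x,\psi_{xx})(x,0) = 0$ (by the cutoff $\chi$ in the definition of $e$), so $\zeta(0) = \sup_x |v_0(x)| e^{|x|^2/M} \leq E_0$ by hypothesis, which is strictly smaller than $2CE_0$ (taking $C \geq 1$ without loss of generality).

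Now, suppose for contradiction that $\zeta(t_*) = 2CE_0$ at some first time $t_*>0$; by continuity such a $t_*$ would exist if $\zeta$ ever exceeded $2CE_0$. Plugging into the bound from the previous lemma at $t=t_*$ gives
\be
2CE_0 = \zeta(t_*) \leq C(E_0 + \zeta(t_*)^2) = CE_0 + 4C^3E_0^2,
\ee
which rearranges to $CE_0 \leq 4C^3 E_0^2$, i.e.\ $1 \leq 4C^2 E_0$. Choosing $E_0 < 1/(4C^2)$ (which is consistent with our smallness assumption) yields a contradiction. Hence $\zeta(t) \leq 2CE_0$ for all $t \geq 0$ on which the nonlinear solution exists, and global existence follows by the usual continuation argument combined with the a priori bounds \eqref{JZ2}.

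Finally, unpacking the definition \eqref{definition of eta}, $\zeta(t) \leq 2CE_0$ gives in particular
\be
|v(x,t)| \leq 2CE_0 (1+t)^{-1} e^{-|x-at|^2/(M(1+t))},
\ee
which is \eqref{2.1} after absorbing the $2$ into $C$. The main (mildly nontrivial) point is just verifying continuity of $\zeta$ at $t=0$ so that the continuous induction actually starts; once that is in place, the rest is the same bootstrap scheme already used three times in Section \ref{behavior of heat equation}.
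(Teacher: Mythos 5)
Your proposal is correct and takes essentially the same approach as the paper: the paper's proof simply reads ``Same proof as Corollary~\ref{continuous induction},'' i.e.\ the standard continuous-induction / bootstrap argument on the quadratic inequality $\zeta(t)\le C(E_0+\zeta(t)^2)$ established in the preceding lemma, which you have simply spelled out in full detail (continuity and smallness of $\zeta$ at $t=0$, the first-exit contradiction, and unpacking the definition of $\zeta$ to extract the bound on $v$).
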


\begin{proof}
Same proof as Corollary \ref{continuous induction}.

\end{proof}


\begin{lemma}
Let $E$ be defined as in \eqref{definition of E} and $|u_0(x)|\leq E_0e^{-\frac{|x|^2}{M}}$,
for $E_0>0$ sufficiently small and $M>1$ sufficiently large.
Then, for some sufficiently large $M'>M$,
\be \label{3}
\Big| \int_{-\infty}^\infty E(x,t;y)u_0(y)dy - \bar U_0\bar u'(x)\bar k(x,t) \Big| \leq  CE_0 (1+t)^{-1}e^{-\frac{|x-at|^2}{M'(1+t)}},
\ee
where $\bar U_0=\ds \int_{-\infty}^\infty u_0(y)\tilde q(y,0)dy$ and $\bar k(x,t) = \ds\frac{1}{\sqrt{4\pi bt}}e^{-\frac{|x-at|^2}{(4bt)}}$.
\end{lemma}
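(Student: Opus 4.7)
The approach parallels the scalar heat-kernel computation of \eqref{pwb of u(x,t)-U_0k 1}: apply the Mean Value Theorem in $y$ and absorb the resulting $|y|$-weight against the Gaussian decay of $u_0$. Restricting first to $t\ge 2$, where $\chi(t)=1$ and $e(x,t;y)=\frac{1}{2\pi}\bar k(x-y,t)\tilde q(y,0)$, I would subtract and insert to write
\be
\int E(x,t;y)u_0(y)\,dy-\bar U_0\bar u'(x)\bar k(x,t)=\frac{\bar u'(x)}{2\pi}\int\bigl[\bar k(x-y,t)-\bar k(x,t)\bigr]\tilde q(y,0)u_0(y)\,dy.
\notag
\ee
For $0\le t\le 1$ the cutoff $\chi$ built into $e$ handles the small-time regime, and the transition on $t\in[1,2]$ is routine; all the work is at $t\ge 2$.

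Next, the Mean Value Theorem gives $\bar k(x-y,t)-\bar k(x,t)=-y\int_0^1\bar k_x(x-wy,t)\,dw$, and a direct computation on the Gaussian $\bar k$ yields $|\bar k_x(x,t)|\le Ct^{-1}e^{-|x-at|^2/Mt}$ (using the elementary inequality $|z|e^{-z^2/c}\le C'e^{-z^2/c'}$ for any $c'>c$). Feeding in $|u_0(y)|\le E_0 e^{-|y|^2/M}$, using boundedness of $\bar u'$ and $\tilde q(\cdot,0)$, and absorbing $|y|e^{-|y|^2/M}\le Ce^{-|y|^2/M_1}$ for some $M_1>M$, reduces the estimate to controlling
\be
CE_0\,t^{-1}\int_0^1\!\int_{-\infty}^{\infty}e^{-|x-wy-at|^2/Mt}\,e^{-|y|^2/M_1}\,dy\,dw.
\notag
\ee

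The main computation is the inner Gaussian integral, which I would evaluate by completing the square in $y$ exactly as in \eqref{semigroup} and \eqref{|x-wy|}. Setting $X=x-at$, this yields
\be
\int_{-\infty}^{\infty}e^{-|X-wy|^2/Mt}\,e^{-|y|^2/M_1}\,dy=\sqrt{\pi M_1Mt/(M_1w^2+Mt)}\,e^{-|X|^2/(M_1w^2+Mt)}\le C\,e^{-|X|^2/M'(1+t)},
\notag
\ee
uniformly in $w\in[0,1]$, where $M'$ is chosen large enough that $M_1+Mt\le M'(1+t)$ for all $t\ge 0$; the prefactor stays bounded thanks to $M_1Mt/(M_1w^2+Mt)\le M_1$. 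Combining the three ingredients gives $CE_0\,t^{-1}e^{-|x-at|^2/M'(1+t)}$, and since $t\ge 2$ we have $t^{-1}\le C(1+t)^{-1}$, delivering the claim.

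The only delicate point is the bookkeeping of successive Gaussian widths $M\to M_1\to M'$ during the absorption steps; conceptually nothing is deeper than what was already done in the heat-equation warm-up of Section \ref{behavior of heat equation}. The two new features relative to that warm-up---the convection factor $at$ inside $\bar k$ and the weight $\tilde q(y,0)$ inside $e$---are handled trivially, the former because translation by $at$ commutes with all Gaussian estimates, the latter because $\tilde q(\cdot,0)$ is the bounded periodic Floquet eigenfunction at $\xi=0$ and thus contributes only a harmless constant factor.
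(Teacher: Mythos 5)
Your proof takes essentially the same route as the paper's: apply the Mean Value Theorem to write $\bar k(x-y,t)-\bar k(x,t)=-y\int_0^1\bar k_x(x-wy,t)\,dw$, absorb the $|y|$ weight into the Gaussian decay of $u_0$, and evaluate the resulting Gaussian convolution (the argument underlying \eqref{semigroup}). Your writeup is somewhat more careful than the paper's: you explicitly note that $\tilde q(\cdot,0)$ and $\bar u'$ are bounded and contribute only constant factors, you track the chain of Gaussian widths $M\to M_1\to M'$, and you flag the small-$t$ regime where the cutoff $\chi$ intervenes. These are details the paper elides but which do not change the substance of the argument.
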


\begin{proof}
By the Mean Value Theorem,
\be
\begin{split}
& \Big| \int_{-\infty}^\infty E(x,t;y)u_0(y)dy - \bar U_0\bar u'(x)\bar k(x,t) \Big| \\
& \leq C\int_{-\infty}^\infty |\bar k(x-y,t)-\bar k(x,t)||u_0(y)|dy \\
& \leq CE_0\int_{-\infty}^\infty \int_0^1 (1+t)^{-1}e^{-\frac{|x-wy-at|^2}{(1+t)}}e^{-\frac{|y|^2}{M}} dwdy \\
& \leq CE_0(1+t)^{-1}e^{-\frac{|x-at|^2}{M'(1+t)}}.
\end{split}
\notag
\ee

\end{proof}


\begin{lemma}
Recalling \eqref{definition of E} and \eqref{setting N}, we have for some sufficiently large $M''> M'>M$,
\be\label{4}
\begin{split}
& \quad \Big| \int_{-\infty}^\infty E(x,t-s;y)N(y,s)dy -\bar U(s)\bar u'(x) \bar k(x-as,t-s)\Big| \\
&\leq  CE_0 (1+t-s)^{-1}(1+s)^{-1}e^{-\frac{|x-at|^2}{M''(1+t)}},
\end{split}
\ee
where $\bar U(s)=\ds \int_{-\infty}^\infty N(y,s)\tilde q(y,0)dy$.
\end{lemma}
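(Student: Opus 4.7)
The plan is to adapt the argument for the corresponding $L^1\cap H^1$ estimate \eqref{0.1} to the pointwise setting, replacing $L^p(x)$ norms of kernels by their pointwise evaluation and using the Gaussian pointwise bounds \eqref{2.1} on $v$ (together with the analogous bounds \eqref{0s7} for $\psi_t,\psi_x,\psi_{xx}$) in place of integrated bounds. First I would rewrite the difference as
\[
\bar u'(x)\int_{-\infty}^\infty\!\bigl[e(x,t-s;y)-\tfrac{\chi(t-s)}{2\pi}\tilde q(y,0)\bar k(x-as,t-s)\bigr]N(y,s)\,dy,
\]
which, since $\chi(t-s)\in[0,1]$, reduces the main task to estimating
\[
\int_{-\infty}^\infty\bigl[\bar k(x-y,t-s)-\bar k(x-as,t-s)\bigr]\tilde q(y,0)\,N(y,s)\,dy.
\]

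For the $Q+T$ portion of $N=Q+R_x-(\partial_x^2+\partial_t)S+T$ no integration by parts is needed; for the $R_x$ and $(\partial_x^2+\partial_t)S$ portions I would integrate by parts in $y$ for the spatial derivatives and (when this lemma is applied inside the Duhamel integral \eqref{integral representation of v}) in $s$ for the $\partial_t$ derivative, using the advection-diffusion identity $\partial_t\bar k+a\partial_x\bar k=b\partial_x^2\bar k$ to convert time derivatives into spatial ones. The net effect is to replace each derivative on $N$ by a derivative on $\bar k$ or on $\tilde q(y,0)$, producing terms exactly parallel to those in \eqref{0.1}: a term with $(Q+T)$ against the bare kernel difference, and terms with $(R+S)$ against either that difference paired with $\tilde q_y(y,0)$ or against $\partial_y$ of the kernel difference.

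Next I would apply the Mean Value Theorem to each kernel difference,
\[
|\bar k(x-y,t-s)-\bar k(x-as,t-s)|\le C|y-as|(1+t-s)^{-1}e^{-\frac{|x-at-w(y-as)|^2}{M(1+t-s)}},
\]
for some $w\in(0,1)$, and similarly for the analogous differences of $\bar k_y$. Combined with \eqref{2.1} and \eqref{0s7} these yield
\[
|(Q,R,S,T)(y,s)|\le CE_0^2(1+s)^{-2}e^{-\frac{|y-as|^2}{M(1+s)}},\qquad |(y-as)(Q,R,S,T)(y,s)|\le CE_0^2(1+s)^{-3/2}e^{-\frac{|y-as|^2}{M'(1+s)}}
\]
after absorbing the factor $|y-as|$ into a slightly widened Gaussian. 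A semigroup-type convolution estimate in the spirit of \eqref{semigroup},
\[
\int_{-\infty}^\infty(1+t-s)^{-1}e^{-\frac{|x-at-w(y-as)|^2}{M(1+t-s)}}(1+s)^{-3/2}e^{-\frac{|y-as|^2}{M'(1+s)}}\,dy\le C(1+t-s)^{-1}(1+s)^{-1}e^{-\frac{|x-at|^2}{M''(1+t)}},
\]
uniform in $w\in(0,1)$, then yields the stated bound; the parallel estimate with $\partial_y\bar k$ in place of $\bar k$ is handled in the same way, and the cutoff $\chi(t-s)$ restricts the relevant regime to $t-s\ge1$ so the $(1+t-s)^{-1}$ prefactor is harmless.

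The principal obstacle is the bookkeeping of the $\partial_tS$ term: since the lemma is stated at fixed $s$, one must regard it as an intermediate estimate that is ultimately invoked inside the Duhamel integral, where integration by parts in $s$ converts $\partial_sS$ into $-\partial_sE=\partial_tE$, and the identity $\partial_t\bar k+a\partial_x\bar k=b\partial_x^2\bar k$ then reduces it to a spatial contribution of exactly the same form as the $\partial_x^2S$ term. Once this accounting is made, the remaining estimates are a direct pointwise emulation of those of \eqref{0.1}, and absorbing one factor of $E_0$ into the constant $C$ produces the stated bound linear in $E_0$.
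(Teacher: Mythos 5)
Your proposal is correct and takes essentially the same approach as the paper: rewrite the difference as a convolution against the kernel increment $\bar k(x-y,t-s)-\bar k(x-as,t-s)$, integrate by parts to shift derivatives off $R$ and $S$ onto the kernel and onto $\tilde q(y,0)$, apply the mean value theorem to the kernel increment, invoke the pointwise Gaussian bounds from \eqref{2.1} on $(v,\psi_t,\psi_x,\psi_{xx})$, and conclude with a semigroup-type Gaussian convolution in the spirit of \eqref{semigroup}. Your explicit bookkeeping for the $\partial_t S$ contribution via the heat identity $\partial_t\bar k+a\partial_x\bar k=b\partial_x^2\bar k$ (reading the lemma as an intermediate estimate ultimately invoked inside the Duhamel integral) is a useful clarification of a step the paper treats tersely by simply writing $R+S$ after the integration by parts.
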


\begin{proof}
Noting first that $|(Q, R, S, T)| \leq CE_0(1+t)^{-2}e^{-\frac{|x-at|^2}{M(1+t)}}$, we have
\be
\begin{split}
& \Big| \int_{-\infty}^\infty E(x,t-s;y)N(y,s)dy -\bar U(s)\bar u'(x)\bar k(x,t-s)\Big| \\
& = \Big| \int_{-\infty}^\infty \bar k(x-y,t-s) \tilde q(y,0)\bar u'(x)N(y,s)dy -\int_{-\infty}^\infty N(y,s)\tilde q(y,0) \bar u'(x) \bar k(x,t-s) dy\Big| \\
& = \Big| \int_{-\infty}^\infty \bar u'(x)N(y,s)\tilde q(y,0) \left( \bar k(x-y,t-s)- \bar k(x,t-s) \right)dy \Big| \\
& \leq \int_{-\infty}^\infty |\bar u'(x)\tilde q(y,0)||\bar k(x-y,t-s)-\bar k(x-as,t-s)||(Q+T)(y,s)|dy \\
& \qquad + \Big| \int_{-\infty}^\infty \bar u'(x) \tilde q_y(y,0)  \left( \bar k(x-y,t-s)-\bar k(x-as,t-s) \right)(R+S)(y,s)dy \Big| \\
& \qquad + \Big| \int_{-\infty}^\infty \bar u'(x) \tilde q(y,0) \partial_y \left( \bar k(x-y,t-s)-\bar k(x-as,t-s) \right)(R+S)(y,s)dy \Big| \\
& \leq CE_0\int_{-\infty}^\infty \int_0^1 (1+t-s)^{-1}e^{-\frac{|x-w(y-as)-at|^2}{(t-s)}}|y-as|(1+s)^{-2}e^{-\frac{|y-as|^2}{M(1+s)}} dwdy  \\
& \qquad + CE_0\int_{-\infty}^\infty (1+t-s)^{-1}e^{-\frac{|x-y-a(t-s)|^2}{(t-s)}}(1+s)^{-2}e^{-\frac{|y-as|^2}{M(1+s)}}dy \\
& \leq CE_0\int_{-\infty}^\infty \int_0^1 (1+t-s)^{-1}e^{-\frac{|x-w(y-as)-at|^2}{(t-s)}}(1+s)^{-\frac{3}{2}}e^{-\frac{|y-as|^2}{M'(1+s)}} dwdy \\
& \leq CE_0(1+t-s)^{-1}(1+s)^{-1}e^{-\frac{|x-at|^2}{M''(1+t)}}.
\end{split}
\notag
\ee

\end{proof}


\begin{theorem}[Behavior] \label{behavior3}
Suppose $u(x,t)$ satisfies $u_t=Lu+O(|u|^2)$ and $|u_0| \leq E_0e^{-\frac{|x|^2}{M}}$,
for $E_0>0$ sufficiently small and $M>1$ sufficiently large.
Set
\be
\bar U_*=\int_0^\infty \bar U(s)ds +\bar U_0,
\notag
\ee
Then $|\bar U_*| < \infty$ and for some sufficiently large $M''> M'>M$,
\be \label{5}
|u(x,t)-\bar U_* \bar u'(x)\bar k(x,t)| \leq C(1+t)^{-1}e^{-\frac{|x-at|^2}{M''(1+t)}}(1+\ln (1+t)).
\ee
\end{theorem}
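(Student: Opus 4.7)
The plan is to mirror the structure of the proof of the preceding behavior theorem (for $L^1\cap H^1$ data), replacing $L^p$ bounds throughout by the pointwise Gaussian-weighted estimates developed in this subsection. Starting from the decomposition \eqref{3 parts},
\[
\bigl|u-\bar U_*\bar u'\bar k\bigr|\le |v|+|v_x||\psi|+O(|\psi|^2)+\bigl|\bar u'\psi - \bar U_*\bar u'\bar k\bigr|,
\]
the first summand is already controlled by \eqref{2.1}, while the last summand is the main object of study; after invoking the implicit definition \eqref{definition of psi} it equals (up to sign) $|\int E(x,t;y)u_0(y)\,dy+\int_0^t\!\int E(x,t-s;y)N(y,s)\,dy\,ds-\bar U_*\bar u'\bar k|$.

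Before splitting this further, I verify $|\bar U_*|<\infty$ by showing $|\bar U(s)|\le CE_0(1+s)^{-3/2}$. Integration by parts transferring the $\partial_y$ in $R_x$ and the $\partial_y^2+\partial_s$ in $(\partial_y^2+\partial_t)S$ onto $\tilde q(y,0)$ reduces $\bar U(s)$ to a bounded multiple of $|(Q,R,S,T)|_{L^1(y)}$. The pointwise bound $|(v,\psi_t,\psi_x,\psi_{xx})|\le CE_0(1+s)^{-1}e^{-|x-as|^2/(M(1+s))}$ established earlier in the subsection gives $|(Q,R,S,T)(y,s)|\le CE_0^2(1+s)^{-2}e^{-|y-as|^2/(M(1+s))}$, whose integral in $y$ is $O(E_0^2(1+s)^{-3/2})$. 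Hence $|\bar U_*|\le CE_0\int_0^\infty(1+s)^{-3/2}ds+CE_0|u_0|_{L^1}<\infty$.

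Next, following \eqref{4 parts in L}, split $|\bar u'\psi-\bar U_*\bar u'\bar k|\le I+II+III+IV$ and estimate each pointwise. Term $I$ is handled directly by \eqref{3}, giving $CE_0(1+t)^{-1}e^{-|x-at|^2/(M'(1+t))}$. Term $II\le |\bar u'\bar k(x,t)|\int_t^\infty|\bar U(s)|\,ds$ is bounded by $CE_0(1+t)^{-1}e^{-|x-at|^2/(M''(1+t))}$ using the $(1+t)^{-1/2}$ decay of the tail integral against the Gaussian factor. Term $III$ uses \eqref{4} pointwise together with $\int_0^t(1+t-s)^{-1}(1+s)^{-1}\,ds\le C(1+t)^{-1}\ln(1+t)$ (split at $s=t/2$), which is the sole source of the $\ln(1+t)$ factor in the final bound. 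Term $IV$ is treated by applying the Mean Value Theorem to $\bar k(x-as,t-s)-\bar k(x,t)$ exactly as in \eqref{6.2.6}, yielding $CE_0(1+t)^{-1}e^{-|x-at|^2/(M''(1+t))}$ after a further $[0,t/2]\cup[t/2,t]$ split.

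The main obstacle is the quadratic remainder $|v_x||\psi|+O(|\psi|^2)$, since the pointwise framework of this subsection does not directly supply control of $v_x$. The $\psi$-factor is immediate: inserting the Gaussian data and the pointwise bound on $N$ into \eqref{definition of psi} and invoking the semigroup lemma \eqref{semigroup} yields $|\psi|\le CE_0\, e^{-|x-at|^2/(M'(1+t))}$, so $|\psi|^2$ is comfortably subsumed by the target. For $v_x$, one either differentiates \eqref{integral representation of v} in $x$ (using pointwise bounds on $\tilde G_x$, with care near $s=t$ where a $t^{-1}$ singularity must be tamed by a short-time parametrix split of the kind alluded to in Remark \ref{alias}) or transfers the derivative onto the data and source by integration by parts; either route yields $|v_x|\le CE_0(1+t)^{-1}e^{-|x-at|^2/(M'(1+t))}$, whence $|v_x||\psi|$ decays strictly faster than the target. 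Combining the $I$--$IV$ bounds with these absorbed quadratic remainders then gives \eqref{5}.
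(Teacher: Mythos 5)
Your structure matches the paper's proof: the same three-part decomposition from \eqref{3 parts}, then the same four-part split as \eqref{4 parts in L}, with $I$--$IV$ handled by \eqref{3}, \eqref{4}, the tail estimate on $\bar U(s)$, and the Mean Value Theorem argument paralleling \eqref{6.2.6}. However, there are two substantive problems in the way you handle the quadratic remainder $|v_x||\psi|+O(|\psi|^2)$.

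First, your stated bound $|\psi|\le CE_0\,e^{-|x-at|^2/(M'(1+t))}$ is missing a factor of $(1+t)^{-1/2}$, and this omission matters. The semigroup lemma \eqref{semigroup} applied to $\int e(x,t;y)v_0(y)\,dy$ with $|v_0|\le E_0 e^{-|y|^2/M}$ gives $CE_0(1+t)^{-1/2}e^{-|x-at|^2/(M'(1+t))}$, and the Duhamel piece gives the same after integrating $(1+s)^{-3/2}$. Without the $(1+t)^{-1/2}$, $|\psi|^2$ is merely $O(E_0^2 e^{-2|x-at|^2/(M'(1+t))})$, which has \emph{no} time decay and therefore is not subsumed by the target $C(1+t)^{-1}(1+\ln(1+t))e^{-|x-at|^2/(M''(1+t))}$. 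With the correct $(1+t)^{-1/2}$ factor, $|\psi|^2 = O(E_0^2(1+t)^{-1}e^{-|x-at|^2/(M'(1+t))})$, which does fit.

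Second, your detour to obtain a pointwise bound on $v_x$ by differentiating \eqref{integral representation of v} is unnecessary and, as you yourself note, runs into a genuine short-time singularity near $s=t$ that the present framework does not resolve. The term $|v_x||\psi|$ does not require a pointwise bound on $v_x$ at all: the estimate $|v_x|_{L^\infty}(t)\le CE_0(1+t)^{-3/4}$ is already available from \eqref{JZ2}, and the Gaussian localization needed in the product is supplied entirely by the pointwise bound on $\psi$. Together these give $|v_x||\psi|\le CE_0^2(1+t)^{-5/4}e^{-|x-at|^2/(M'(1+t))}$, which is stronger than needed. This is the route the paper implicitly takes when it writes \eqref{6}; your proposed route, beyond being harder, has an unaddressed gap. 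The remainder of your argument coincides with the paper's and is sound.
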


\begin{proof}
Recalling $|\bar U(s)| \leq  CE_0(1+s)^{-\frac{3}{2}}$,  we have $|\bar U_*| < \infty$. We first break   $|u(x,t)-\bar U_* \bar u'(x)\bar k(x,t)|$ into three parts exactly the same as \eqref{3 parts}.
By \eqref{2.1} and \eqref{definition of psi}, we easily see first two terms
\be \label{6}
|v(x,t)|+ O(|v_x||\psi|+|\psi|^2) \leq C(1+t)^{-1}e^{-\frac{|x-at|^2}{M(1+t)}}.
\ee
Now we break the last term into four parts exactly the same as \eqref{4 parts in L}.
Then
\be \label{7}
\begin{split}
II
& \leq CE_0(1+t)^{-\frac{1}{2}}e^{-\frac{|x-at|^2}{M(1+t)}}\int_t^\infty (1+s)^{-\frac{3}{2}}ds \leq CE_0(1+t)^{-1}e^{-\frac{|x-at|^2}{M(1+t)}}.
\end{split}
\ee
By \eqref{4}, we have
\be \label{8}
III
 \leq CE_0\int_0^t (1+t-s)^{-1}(1+s)^{-1}e^{-\frac{|x-at|^2}{M(1+t)}}ds
 \leq CE_0(1+t)^{-1}e^{-\frac{|x-at|^2}{M(1+t)}}\ln (1+t).
\ee
By the Mean Value Theorem, for some $s^*\in (0,t/2)$, we have
\be \label{9}
\begin{split}
IV
& \leq CE_0\Big[\int_{t/2}^t (1+s)^{-\frac{3}{2}}| \bar k(x-as,t-s)- \bar k(x,t)|ds
+\int_0^{t/2} (1+s)^{-\frac{3}{2}}s|\bar k_t(x-as,t-s^*)|ds \Big]\\
& \leq CE_0(1+t)^{-\frac{1}{2}} e^{-\frac{|x-at|^2}{M(1+t)}}\int_{t/s}^t (1+s)^{-\frac{3}{2}}ds + CE_0e^{-\frac{|x-at|^2}{M(1+t)}}\int_0^{t/2}(1+s)^{-\frac{1}{2}}(1+t-s)^{-\frac{3}{2}}ds \\
&  \leq CE_0(1+t)^{-1} e^{-\frac{|x-at|^2}{M(1+t)}}.
\end{split}
\ee
By \eqref{3} and \eqref{6}--\eqref{9}, we obtain the result \eqref{5}.

\end{proof}


\subsection{Behavior for initial perturbation $|v_0(x)|\leq E_0(1+|x|)^{-r}$}


The proof of following lemma and corollary are exactly the same as \eqref{Linear 3} and \eqref{Linear 4}  replacing $|x|$ by $|x-at|$. In this section,
we take $E_0>0$ sufficiently small and $M>1$ sufficiently large.

\begin{lemma}
For all $t\geq0$ and $ r >1 $, and any $x \in \RR$,
\be
\int_{-\infty}^\infty t^{-\frac{1}{2}}e^{-\frac{|x-y-at|^2}{t}}(1+|y|)^{-r}dy \leq C\Big[ t^{-\frac{1}{2}}\wedge (1+|x-at|)^{-r} + (1+\sqrt t)^{-1}e^{-\frac{|x-at|^2}{Mt}}\Big],
\notag
\ee
for some sufficiently large $M>0$ and $C>0$.
\end{lemma}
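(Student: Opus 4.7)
The plan is to reduce this estimate to the already-established inequality \eqref{Linear 3} by a simple translation in the spatial variable. Set $\tilde x := x - at$. Then the integrand involves $e^{-|x-y-at|^2/t} = e^{-|\tilde x - y|^2/t}$, so the left-hand side equals
\be
\int_{-\infty}^\infty t^{-1/2}\, e^{-|\tilde x - y|^2/t}\,(1+|y|)^{-r}\,dy,
\notag
\ee
which is precisely the integral appearing on the left-hand side of \eqref{Linear 3} (with $\tilde x$ playing the role of $x$). Note that the integration variable $y$ and the weight $(1+|y|)^{-r}$ are unchanged, so no corresponding shift in $y$ is needed; the translation lives entirely in the Gaussian factor.

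The next step is to invoke \eqref{Linear 3} directly, which yields the bound
\be
C\Big[ t^{-1/2}\wedge (1+|\tilde x|)^{-r} + (1+\sqrt t)^{-1}e^{-|\tilde x|^2/(Mt)}\Big].
\notag
\ee
Substituting back $\tilde x = x - at$ immediately produces the desired right-hand side. In particular, the constants $C$ and $M$ can be taken to be the same as those in \eqref{Linear 3}, so no new choices are required.

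Because the reduction is a pure translation, there is no real obstacle to the argument — the entire content is contained in \eqref{Linear 3}, and the present lemma just records the convected ($a$-dependent) version that will be needed in the forthcoming nonlinear analysis of perturbations of \eqref{reaction diffusion}. One could equally well state and prove the analogous convected version of the corollary \eqref{Linear 4} by the same change of variable, replacing $(1+|x|+\sqrt t)^{-r}$ with $(1+|x-at|+\sqrt t)^{-r}$ and $e^{-|x|^2/(Mt)}$ with $e^{-|x-at|^2/(Mt)}$; this is indeed what will be used in the subsequent pointwise estimates for $v$, $\psi_t$, $\psi_x$, and $\psi_{xx}$ in the initial-data class $|u_0(x)| \leq E_0(1+|x|)^{-r}$.
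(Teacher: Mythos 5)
Your proof is correct and matches the paper's own treatment: the paper explicitly states that the proof is ``exactly the same as \eqref{Linear 3} and \eqref{Linear 4} replacing $|x|$ by $|x-at|$,'' which is precisely the change-of-variables $\tilde x = x - at$ you carry out. The observation that no shift in $y$ is required, and the remark about the analogous convected form of \eqref{Linear 4}, are both accurate and consistent with how the paper proceeds.
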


\begin{corollary}
For all $t\geq0$ and $r>1$, and  any $x \in \RR$,
\be \label{10}
\int_{-\infty}^\infty t^{-\frac{1}{2}}e^{-\frac{|x-y-at|^2}{t}}(1+|y|)^{-r}dy \leq C\Big[ (1+|x-at|+\sqrt t)^{-r} + (1+\sqrt t)^{-1}e^{-\frac{|x-at|^2}{Mt}}\Big],
\ee
for some $M>0$ sufficiently large and $C>0$.
\end{corollary}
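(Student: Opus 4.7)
The plan is to imitate the proof of the earlier Corollary (eq.~\eqref{Linear 4}) word-for-word, with the formal substitution $|x| \mapsto |x-at|$. Since the immediately preceding lemma already gives the analog of \eqref{Linear 3} with $|x|$ replaced by $|x-at|$, namely
\be
\int_{-\infty}^\infty t^{-\frac{1}{2}}e^{-\frac{|x-y-at|^2}{t}}(1+|y|)^{-r}dy \leq C\Big[ t^{-\frac{1}{2}}\wedge (1+|x-at|)^{-r} + (1+\sqrt t)^{-1}e^{-\frac{|x-at|^2}{Mt}}\Big],
\notag
\ee
the corollary reduces to the elementary pointwise inequality
\be
t^{-\frac{1}{2}}\wedge (1+|x-at|)^{-r} \leq C\Big[(1+|x-at|+\sqrt t)^{-r} + (1+\sqrt t)^{-1}e^{-\frac{|x-at|^2}{Mt}}\Big],
\notag
\ee
which holds for any real number in place of $|x-at|$. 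Hence it suffices to establish, for all $z\geq 0$, $t\geq 0$ and $r>1$, the bound
\be
t^{-\frac{1}{2}}\wedge (1+z)^{-r} \leq C\Big[(1+z+\sqrt t)^{-r} + (1+\sqrt t)^{-1}e^{-\frac{z^2}{Mt}}\Big],
\notag
\ee
and then apply it with $z=|x-at|$.

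To prove this reduced inequality I would split into three cases, just as in the previous proof. For $t\leq 1$, $\sqrt t \leq 1$ so $(1+z)^{-r}\leq C(1+z+\sqrt t)^{-r}$ and the left side equals $(1+z)^{-r}$, covering this regime. For $t>1$ with $z\leq \sqrt t$, one has $e^{-z^2/(Mt)}\geq e^{-1/M}$ bounded below, so $t^{-1/2}\leq C(1+\sqrt t)^{-1} e^{-z^2/(Mt)}$ and the minimum is absorbed into the Gaussian term. Finally, for $t>1$ with $z\geq \sqrt t$, one has $(1+z)^{-r}\leq (1+z+z)^{-r}\leq C(1+z+\sqrt t)^{-r}$, so the minimum is bounded by the algebraic term.

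No step presents a real obstacle: the lemma just established does the work of handling the convolution with the Gaussian kernel, and the residual inequality is a purely arithmetic case analysis. The only point requiring care is making sure the constants $C$ and $M$ are chosen uniformly across the three cases (which is immediate, since $M$ may be taken as large as the worst case demands), and observing that the whole argument is translation-invariant in $x$ about the point $at$, which is exactly why the $|x|\mapsto|x-at|$ substitution in the earlier proof is legitimate.
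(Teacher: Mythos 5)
Your proof matches the paper's approach exactly: the paper itself disposes of this corollary by stating that the argument is identical to that of \eqref{Linear 4} with $|x|$ replaced by $|x-at|$, which is precisely the translation you carry out. Your case analysis for the reduced pointwise inequality ($t\leq 1$; $t>1$, $z\leq\sqrt t$; $t>1$, $z\geq\sqrt t$) reproduces the paper's three cases correctly, so the proposal is sound.
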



\begin{lemma}
Suppose $|v_0(x)|\leq E_0(1+|x|)^{-r}$, $r>1$,
for $E_0>0$ sufficiently small and $M>1$ sufficiently large.
For $v,\psi_t,\psi_x$ and $\psi_{xx}$ defined in \eqref{integral representation of v} and \eqref{derivative of psi}, define
\be \label{zeta}
\zeta(t):=\sup_{0\leq s \leq t, x \in \RR}|(v,\psi_t,\psi_x,\psi_{xx})|(1+s)^{\frac{1}{2}}\big[(1+|x-as|+\sqrt s)^{-r}+(1+\sqrt s)^{-1}e^{\frac{-|x-as|^2}{M(1+s)}}\big]^{-1}.
\ee
Then, for all $t \geq 0$ for which $\zeta(t)$ is finite, we have
\be \label{11}
\zeta(t) \leq C(E_0+\zeta(t)^2)
\ee
for some constant $C>0$.
\end{lemma}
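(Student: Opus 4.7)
The plan is to follow exactly the pattern of the analogous heat-equation estimate \eqref{6.3.1}, carried out now in the traveling-wave frame using the Green function decomposition $G=E+\tilde G$ from Theorem \ref{main1}. The novelty compared to the heat case is the additional $(1+t)^{-1/2}$ decay of $\tilde G$ that is not present in the standard heat kernel, which accounts for the extra weight $(1+s)^{1/2}$ built into \eqref{zeta}.

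First I would apply the integral representations \eqref{integral representation of v} for $v$ and \eqref{derivative of psi} for $(\psi_t,\psi_x,\psi_{xx})$, integrating by parts in $y$ (and once in $s$, for $S$) to shift the derivatives from the source terms $R_x$ and $(\partial_x^2+\partial_t)S$ onto $\tilde G$ and $e$; the resulting kernels satisfy the same pointwise bounds as $\tilde G_y$ and $e_x$, namely $\lesssim t^{-1}e^{-|x-y-at|^2/Mt}$. Combining \eqref{Q}--\eqref{T} with the definition \eqref{zeta} then yields
\[
|(Q,R,S,T)(y,s)| \leq C\zeta(t)^{2}(1+s)^{-1}\!\left[(1+|y-as|+\sqrt s)^{-r}+(1+\sqrt s)^{-1}e^{-|y-as|^2/M(1+s)}\right]^{\!2}.
\]
Expanding the square, the cross term is controlled by the sum of the two diagonal contributions, so it suffices to estimate the convolutions of $|\tilde G|$, $|\tilde G_y|$, and $|e_x|$ against $(1+|y-as|+\sqrt s)^{-2r}$ and $(1+\sqrt s)^{-2}e^{-2|y-as|^2/M(1+s)}$ separately.

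For the linear-data contribution I would apply \eqref{10} in the moving frame $x\mapsto x-at$ to bound $\int|\tilde G(x,t;y)|(1+|y|)^{-r}dy$ by $C(1+t)^{-1/2}$ times the template appearing in \eqref{zeta}; the corresponding estimate for the $e_x$ kernel has identical form (with a further gain of $(1+t)^{-1/2}$ instead of $t^{-1/2}$). For the nonlinear terms, after the change of variables $y\mapsto y-as$ the problem reduces to convolutions of exactly the type appearing in the proof of \eqref{6.3.1}, with $r$ replaced by $2r$ in the algebraic piece; these are handled by shifted versions of \eqref{|x-wy|}, \eqref{|x-wy| for nonlinear}, and \eqref{semigroup}. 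The time integrals $\int_0^{t}(1+t-s)^{-1}(1+s)^{-1}[\cdots]\,ds$ are then split as $\int_0^{t/2}+\int_{t/2}^{t}$ and estimated exactly as in Section \ref{behavior of heat equation}: since $2r>2$, the spatial integrability of the algebraic tail is strong enough to reproduce the template, while the kernel factor $(1+t-s)^{-1}$ supplies the required $(1+t)^{-1/2}$ gain. Rearrangement of the resulting pointwise bounds on $|v|$ and $|(\psi_t,\psi_x,\psi_{xx})|$ then yields \eqref{11}.

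The main obstacle will be the careful bookkeeping of the two competing spatial scales — the algebraic tail $(1+|x-at|+\sqrt t)^{-r}$ and the Gaussian tail $(1+\sqrt t)^{-1}e^{-|x-at|^2/M(1+t)}$ — through the moving-frame convolution with $\tilde G$ and $e_x$, together with the integration-by-parts manipulations used to handle $R$ and $S$. This is essentially the same bookkeeping carried out in the proof of \eqref{6.3.1} in Section \ref{behavior of heat equation}, with the single substantive difference being the extra $(1+t)^{-1/2}$ inherited from $\tilde G$ and matched by the extra $(1+s)^{1/2}$ in \eqref{zeta}.
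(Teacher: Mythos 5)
Your proposal tracks the paper's proof essentially step by step: bound $|(Q,R,S,T)|$ by $\zeta(t)^2(1+s)^{-1}$ times the squared template (using $|v_x|_{L^\infty}\lesssim 1$ from \eqref{JZ2}), integrate by parts to shift the $R_x$ and $(\partial_x^2+\partial_t)S$ derivatives onto $\tilde G$ and $e$, drop the cross term (dominated by the two diagonal pieces), estimate the linear-data term via the moving-frame version \eqref{10}, the Gaussian-squared piece via \eqref{semigroup}, and the algebraic-squared piece via \eqref{10} again, then repeat with the $e_x$ kernel for the $\psi$-derivatives. This is the same decomposition and the same key lemmas the paper uses, and the bookkeeping you flag as the main obstacle is precisely what the paper carries out.
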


\begin{proof}
Note first that by \eqref{JZ2}, we have $|v_x|_{\infty} \leq |v|_{H^1} \leq CE_0(1+t)^{-\frac{3}{4}} \leq C$  and so by$ \eqref{Q}--\eqref{T}$ and \eqref{zeta} we have
\be
\begin{split}
 |(Q, R, S, T)(x,t)|
& \leq |(v, \psi_t, \psi_x, \psi_{xx})(x,t)|^2 \\
& \leq \zeta(t)^2 (1+s)^{-1}\big[(1+|x-as|+\sqrt s)^{-r}+(1+\sqrt s)^{-1}e^{\frac{-|x-as|^2}{M(1+s)}}\big]^2\\
\end{split}
\notag
\ee
Then, from \eqref{integral representation of v}, we have
\be
\begin{split}
\quad |v(x,t)|
& \leq \int_{-\infty}^\infty |\tilde G(x,t;y)| |v_0(y)|dy +\int_0^t \int_{-\infty}^\infty |\tilde G_y(x,t-s;y)||(Q, R, S, T)(y,s)|dsdy\\
& \leq CE_0\int_{-\infty}^\infty  (1+t)^{-\frac{1}{2}}t^{-\frac{1}{2}}e^{-\frac{|x-y-at|^2}{Mt}} (1+|y|)^{-r}dy \\
& \quad + C\zeta^2(t)\int_0^t \int_{-\infty}^\infty (1+s)^{-1}(t-s)^{-1}e^{-\frac{|x-y-a(t-s)|^2}{M(t-s)}}(1+|y-as|+\sqrt s)^{-2r}dyds \\
& \quad + C\zeta^2(t)\int_0^t \int_{-\infty}^\infty (1+s)^{-1}(t-s)^{-1}e^{-\frac{|x-y-a(t-s)|^2}{M(t-s)}}(1+\sqrt s)^{-2}e^{-\frac{|y-as|^2}{M(1+s)}}dyds \\
& = I +II+III.
\end{split}
\notag
\ee
By \eqref{10}, we have
\be \label{12}
I \leq CE_0(1+t)^{-\frac{1}{2}}\Big[ (1+|x-at|+\sqrt t)^{-r} + (1+\sqrt t)^{-1}e^{-\frac{|x-at|^2}{Mt}}\Big],
\notag
\ee
For $III$, we have
\be \label{13}
\begin{split}
III
& = \zeta^2(t)\int_0^t (1+s)^{-2} \int_{-\infty}^\infty (t-s)^{-1}e^{-\frac{|x-y-a(t-s)|^2}{M(t-s)}}e^{-\frac{|y-as|^2}{M(1+s)}}dyds \\
& = \zeta^2(t)(1+t)^{-\frac{1}{2}}e^{-\frac{|x-at|^2}{M(1+t)}}\int_0^t (t-s)^{-\frac{1}{2}}(1+s)^{-\frac{3}{2}}ds \\
& \leq \zeta^2(t)(1+t)^{-1}e^{-\frac{|x-at|^2}{M(1+t)}}.
\end{split}
\notag
\ee
For $II$, by \eqref{10}, we estimate
\be \label{14}
\begin{split}
II
& \leq C\zeta^2(t)\int_0^t (1+s)^{-(1+\frac{r}{2})}(t-s)^{-\frac{1}{2}}\int_{-\infty}^\infty (t-s)^{-\frac{1}{2}}e^{-\frac{|x-(y-as)-at|^2}{M(t-s)}}(1+|y-as|)^{-r}dyds \\
& \leq C\zeta^2(t)\int_0^t (1+s)^{-\frac{3}{2}}(t-s)^{-\frac{1}{2}} (1+|x-at|+\sqrt{t-s})^{-r} ds  \\
& \qquad \indent + C\zeta^2(t)\int_0^t (1+s)^{-(1+\frac{r}{2})}(t-s)^{-\frac{1}{2}}(1+\sqrt {t-s})^{-1}e^{-\frac{|x-at|^2}{M(1+t)}}\big] ds \\
& \leq C\zeta^2(t)(1+t)^{-\frac{1}{2}}\big[(1+|x-at|+\sqrt t)^{-r}+(1+\sqrt t)^{-1}e^{-\frac{|x-at|^2}{M(1+t)}}\big].
\end{split}
\notag
\ee
\\
Now we consider $|(\psi_t,\psi_x, \psi_{xx})|$. Recalling $e(x,t;y)=0$ for $0 < t \leq 1$, similarly we have
\be \label{16}
\begin{split}
& \quad |(\psi_t,\psi_x, \psi_{xx})(x,t)| \\
& \leq \int_{-\infty}^\infty |e_x(x,t;y)| |v_0(y)|dy +\int_0^t \int_{-\infty}^\infty |e_x(x,t-s;y)||(Q, R, S, T)(y,s)|dsdy\\
& \leq E_0\int_{-\infty}^\infty  (1+t)^{-1}e^{-\frac{|x-y-at|^2}{Mt}} (1+|y|)^{-r}dy \\
& \quad + C\zeta^2(t)\int_0^t \int_{-\infty}^\infty (1+s)^{-1}(1+t-s)^{-1}e^{-\frac{|x-y-a(t-s)|^2}{M(t-s)}}(1+|y-as|+\sqrt s)^{-2r}dyds \\
& \quad + C\zeta^2(t)\int_0^t \int_{-\infty}^\infty (1+s)^{-1}(1+t-s)^{-1}e^{-\frac{|x-y-a(t-s)|^2}{M(t-s)}}(1+\sqrt s)^{-2}e^{-\frac{|y-as|^2}{M(1+s)}}dyds \\
& \leq C\zeta^2(t)(1+t)^{-\frac{1}{2}}\big[(1+|x-at|+\sqrt t)^{-r}+(1+\sqrt t)^{-1}e^{-\frac{|x-at|^2}{M(1+t)}}\big].
\end{split}
\notag
\ee

\end{proof}


\begin{corollary}
For $v$ defined in \eqref{definition of v} with $|v_0(x)|\leq E_0(1+|x|)^{-r}$, $r>1$,
$E_0>0$ sufficiently small and $M>1$ sufficiently large,
\be \label{17}
|v(x,t)| \leq CE_0(1+t)^{-\frac{1}{2}}\big[(1+|x-at|+\sqrt t)^{-r}+(1+\sqrt t)^{-1}e^{-\frac{|x-at|^2}{M(1+t)}}\big]
\ee
\end{corollary}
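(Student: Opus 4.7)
The plan is to deduce the stated pointwise bound directly from the inequality \eqref{11} of the preceding lemma by a standard continuous-induction argument, following the template of Corollary \ref{continuous induction}. Since $\zeta(t)$ as defined in \eqref{zeta} is a supremum of pointwise values of $(v,\psi_t,\psi_x,\psi_{xx})$, weighted by a strictly positive, continuous function of $(x,s)$, the quantity $\zeta(t)$ is continuous in $t$ on every interval on which it is finite. This continuity follows from local-in-time existence and regularity for the integral system \eqref{integral representation of v}--\eqref{derivative of psi}, combined with the global bounds \eqref{JZ2} already available from \cite{JZ2}, which ensure that $(v,\psi_t,\psi_x,\psi_{xx})$ depend continuously on $t$ in the relevant pointwise sense.

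Given continuity, I would treat \eqref{11}, namely $\zeta(t)\le C(E_0+\zeta(t)^2)$, as a quadratic inequality in $\zeta(t)$. Fixing $C\ge 1$ without loss of generality and taking $E_0>0$ sufficiently small so that $E_0<\tfrac{1}{4C^2}$, the inequality $2CE_0\le C(E_0+(2CE_0)^2)$ fails unless $\zeta(t)<2CE_0$; more precisely, the set $\{\zeta(t)\le 2CE_0\}$ is both open and closed in $\{t\ge 0:\zeta(t)\text{ finite}\}$, so by continuous induction starting from $\zeta(0)\le CE_0$ (which holds by the decay assumption on $v_0$) one obtains $\zeta(t)\le 2CE_0$ for all $t\ge 0$.

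Unpacking the definition of $\zeta$ via its $v$-component then gives
\[
|v(x,t)|\le 2CE_0\,(1+t)^{-\frac{1}{2}}\bigl[(1+|x-at|+\sqrt{t})^{-r}+(1+\sqrt{t})^{-1}e^{-|x-at|^2/(M(1+t))}\bigr],
\]
which is exactly \eqref{17} (after renaming $2C$ as $C$).

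The only potentially delicate point is the verification that $\zeta(0)$ is indeed bounded by $CE_0$ and that $\zeta$ remains continuous as $t\to 0^+$; this is handled by noting that on $0\le t\le 1$ the cutoff $\chi(t)$ makes $e(x,t;y)$ vanish, so $\psi$ and its derivatives vanish identically, while $v(x,0)=v_0$ already satisfies the target bound by assumption, and the short-time evolution by $\tilde G$ respects the weighted envelope by standard heat-kernel convolution estimates analogous to \eqref{10}. Thus no genuine obstacle is expected: the corollary is a direct bootstrap consequence of the preceding lemma, identical in structure to the other ``same proof as Corollary \ref{continuous induction}'' reductions that appear throughout the paper.
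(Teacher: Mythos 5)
Your proof is correct and matches the paper's approach: the paper simply cites ``Same proof as Corollary \ref{continuous induction},'' which is exactly the continuous-induction bootstrap from the quadratic inequality $\zeta(t)\le C(E_0+\zeta(t)^2)$ that you spell out. The only minor caveat is that the openness step is cleanest when phrased as: if $\zeta(t)\le 2CE_0$ and $E_0<1/(4C^2)$, then the lemma gives $\zeta(t)\le CE_0(1+4C^2E_0)<2CE_0$ strictly, so the sublevel set is open as well as closed; your wording conveys the same idea.
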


\begin{proof}
Same proof as Corollary \ref{continuous induction}.
\end{proof}


The proofs of \eqref{17.1} and \eqref{17.2} in the following
lemmas are the same as those for \eqref{|x-wy|} and \eqref{|x-wy| for nonlinear}, respectively.

\begin{lemma}
For all $t>0$, $x\in \RR$, $r>2$ and all $0<w<1$,
\be \label{17.1}
\int_{-\infty}^\infty (1+t)^{-\frac{1}{2}}e^{-\frac{|x-wy-at|^2}{M(1+t)}}(1+|y|)^{-r}dy \leq CE_0\Big[(1+|x-at|+\sqrt t)^{-r}+(1+t)^{-\frac{1}{2}}e^{-\frac{|x-at|^2}{M'(1+t)}}\Big].
\ee
for some sufficiently large $M'>M$.

\end{lemma}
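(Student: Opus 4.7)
The plan is to reduce to the proof of the earlier lemma \eqref{|x-wy|} by translation, since the only change is the replacement of $x$ by $x-at$ in both the exponent in the integrand and in the bound. First I would split into two cases depending on the size of $|x-at|$ relative to $\sqrt{1+t}$.

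In the regime $|x-at|\le \sqrt{1+t}$, the claimed exponential factor $e^{-|x-at|^2/M'(1+t)}$ on the right-hand side is bounded below by the positive constant $e^{-1/M'}$, so it suffices to dominate the left-hand side by $C(1+t)^{-1/2}$. Since $r>2>1$, we have $\int_{-\infty}^{\infty}(1+|y|)^{-r}\,dy<\infty$, and simply dropping the Gaussian in the integrand gives precisely this bound.

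In the regime $|x-at|>\sqrt{1+t}$, I would perform the substitution $y\mapsto y/w$ to rewrite
\be
\int_{-\infty}^\infty (1+t)^{-\tfrac12}e^{-\frac{|x-wy-at|^2}{M(1+t)}}(1+|y|)^{-r}dy
=\int_{-\infty}^\infty (1+t)^{-\tfrac12}e^{-\frac{|x-y-at|^2}{M(1+t)}}\Big(1+\tfrac{|y|}{w}\Big)^{-r}\tfrac{1}{w}\,dy,
\notag
\ee
and then (by symmetry under $y\mapsto -y$) split the integration as $\int_{0}^{|x-at|/2}+\int_{|x-at|/2}^{\infty}=I+II$. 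For $I$, use $|x-y-at|\ge |x-at|/2$ to extract $e^{-|x-at|^2/4M(1+t)}$ from the Gaussian and integrate the remaining polynomial factor, obtaining a bound of the form $C(1+t)^{-1/2}e^{-|x-at|^2/M'(1+t)}$ (enlarging $M$ to $M'$). For $II$, use $|y|\ge |x-at|/2$ to pull the polynomial factor out as $(1+|x-at|/(2w))^{-r}(1/w)$ and integrate the Gaussian to absorb the remaining $(1+t)^{-1/2}$.

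The main (minor) obstacle is handling the $1/w$ blow-up in $II$ uniformly over $w\in(0,1)$. This is where the case assumption $|x-at|>\sqrt{1+t}\ge 1$ is essential: it guarantees that the auxiliary function $f(w):=(1+|x-at|/((r-1)w))^{-r}(1/w)$ is monotone increasing on $(0,1]$, as can be verified by a direct computation of $f'(w)$ (the condition $|x-at|>1$ makes the bracketed derivative expression positive). Hence $f(w)\le f(1)\le C(1+|x-at|)^{-r}$ uniformly in $w\in(0,1)$, which closes the estimate and combines with $I$ to give the stated bound. This is exactly the argument carried out for \eqref{|x-wy|}; the presence of the drift $at$ does not affect any step.
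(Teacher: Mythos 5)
Your proof is correct and takes the same route as the paper, which simply states that the proof of \eqref{17.1} is identical to that of the earlier lemma \eqref{|x-wy|} with $x$ replaced by $x-at$ throughout. Your reconstruction faithfully reproduces that argument: the same dichotomy $|x-at|\lessgtr\sqrt{1+t}$, the substitution $y\mapsto y/w$, the split at $|x-at|/2$, and the monotonicity of the auxiliary function $f(w)$ to control the $1/w$ factor uniformly.
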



\begin{lemma}
For all $t>s>0$, $x\in \RR$, $r>2$ and all $0<w<1$,
\be \label{17.2}
\begin{split}
& \int_{-\infty}^\infty (1+t-s)^{-\frac{1}{2}}e^{-\frac{|x-wy-at|^2}{M(1+t-s)}}(1+|y|+\sqrt s)^{-r}dy \\
& \qquad \leq CE_0\Big[(1+|x-at|+\sqrt{t-s}+\sqrt t)^{-r}+(1+t-s)^{-\frac{1}{2}}(1+s)^{-\frac{r}{2}}e^{-\frac{|x-at|^2}{M'(1+t)}}\Big].
\end{split}
\ee
for some sufficiently large $M'>M$.

\end{lemma}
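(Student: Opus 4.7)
The plan is to mirror closely the proof of \eqref{|x-wy| for nonlinear}, exploiting the fact that the translation by $at$ in the Gaussian exponent does not affect the structure of the argument. Set $\tilde x := x - at$; then the integrand becomes
\[
(1+t-s)^{-1/2}\, e^{-|\tilde x - wy|^2/M(1+t-s)}(1+|y|+\sqrt s)^{-r},
\]
which has exactly the form treated in the earlier lemma with $|x|$ replaced by $|\tilde x|$. The aim is then to recover the stated bound with $|x-at|$ in place of $|x|$ and with the slight sharpening of the exponents visible in \eqref{17.2}.

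I would split into two regimes according to the size of $|\tilde x|$ relative to $\sqrt{1+t}$. If $|\tilde x| \leq \sqrt{1+t}$, the Gaussian factor $e^{-|\tilde x|^2/M'(1+t)}$ is bounded below by a positive constant, so I pull it out of the right-hand side and estimate the integral crudely by $(1+t-s)^{-1/2}\int (1+|y|+\sqrt s)^{-r}\,dy = O((1+t-s)^{-1/2}(1+\sqrt s)^{-(r-1)})$, which fits into the second term after enlarging $M'$. If $|\tilde x| > \sqrt{1+t}$, I substitute $z = wy$ to get
\[
\int (1+t-s)^{-1/2} e^{-|\tilde x-z|^2/M(1+t-s)}\bigl(1+\tfrac{|z|}{w}+\sqrt s\bigr)^{-r}\tfrac{1}{w}\,dz,
\]
and split at $|z| \leq |\tilde x|/2$ versus $|z| > |\tilde x|/2$. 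On the near piece, the Gaussian supplies $e^{-|\tilde x|^2/(4M(1+t-s))} \leq Ce^{-|\tilde x|^2/M'(1+t)}$ (using $t-s \leq t$), which produces the second term of the claimed bound. On the far piece, one replaces the weight by its infimum on the range of integration and integrates the Gaussian to obtain a bound of the form $C(1+|\tilde x|/(2w)+\sqrt s)^{-r}(1/w)$.

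To eliminate the $1/w$ factor on the far piece, following the prior lemma I would define
\[
f(w) := \Bigl(1 + \tfrac{2(|\tilde x|+\sqrt{t-s})}{(r-1)w} + \sqrt s\Bigr)^{-r}\tfrac{1}{w},
\]
compute
\[
f'(w) = \Bigl(1+\tfrac{2(|\tilde x|+\sqrt{t-s})}{(r-1)w}+\sqrt s\Bigr)^{-r-1}\tfrac{1}{w^3}\bigl[2(|\tilde x|+\sqrt{t-s}) - w(1+\sqrt s)\bigr],
\]
and verify $f'(w) > 0$ for $0 < w \leq 1$ using $|\tilde x| > \sqrt{1+t} > 1$ and $|\tilde x| > \sqrt s$; then $f(w) \leq f(1) \leq C(1+|\tilde x|+\sqrt{t-s}+\sqrt s)^{-r}$.

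The main obstacle is reconciling the statement of \eqref{17.2} with the direct analog of \eqref{|x-wy| for nonlinear}: the claimed first term contains $\sqrt t$ rather than $\sqrt s$, and the Gaussian term has the improved weight $(1+s)^{-r/2}$ rather than $(1+s)^{-(r-1)/2}$. The passage from $\sqrt s$ to $\sqrt t$ in the first term must exploit $\sqrt t \leq \sqrt{t-s}+\sqrt s$ together with the $(1+\cdot)^{-r}$ shape (absorbing constants into $C$), and the sharpening of the Gaussian-term exponent seems to require refining the case $|\tilde x| \leq \sqrt{1+t}$ by further localizing $y$ near $\tilde x/w$ via the Gaussian, rather than bounding the Gaussian factor trivially. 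Apart from carefully tracking these constants and the interplay between $t$, $s$, and $t-s$, the argument is a routine adaptation of the previous lemma.
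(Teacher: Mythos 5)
Your main argument is exactly the paper's: set $\tilde x := x-at$, observe that both sides of \eqref{17.2} are word-for-word the two sides of \eqref{|x-wy| for nonlinear} with $x$ replaced by $\tilde x$, and repeat that proof (case split at $|\tilde x|\lessgtr\sqrt{1+t}$; on the outer case substitute $z=wy$, split at $|z|\lessgtr|\tilde x|/2$, use the Gaussian on the near piece and the monotonicity of $f(w)$ on the far piece). Your passage from $\sqrt s$ to $\sqrt t$ via $\sqrt t\le\sqrt{t-s}+\sqrt s$ (so $1+|\tilde x|+\sqrt{t-s}+\sqrt t\le 2(1+|\tilde x|+\sqrt{t-s}+\sqrt s)$, absorbing $2^r$ into $C$) is also fine.

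But you were right to hesitate over the second term, and the ``refinement'' you float is a dead end: the inequality as printed (with $(1+s)^{-r/2}$, and also with the spurious $E_0$) is in fact false, and the exponent should read $(1+s)^{-(r-1)/2}$, exactly as in \eqref{|x-wy| for nonlinear}. To see this, take $x=at$ (so $\tilde x=0$), $t-s=1$, $s=N$ large, and $w=N^{-1/2}$. Restricting to $|y|\le\sqrt N$, where $e^{-y^{2}/(2MN)}\ge e^{-1/(2M)}$, gives
\be
2^{-1/2}\int_{-\infty}^\infty e^{-y^2/(2MN)}(1+|y|+\sqrt N)^{-r}\,dy \ \ge\ 2^{-1/2}e^{-1/(2M)}\cdot 2\sqrt N\cdot(1+2\sqrt N)^{-r}\ \gtrsim\ N^{(1-r)/2},
\notag
\ee
while the claimed right side is
\be
\big(2+\sqrt{N+1}\big)^{-r}+2^{-1/2}(1+N)^{-r/2}\ \sim\ N^{-r/2}.
\notag
\ee
The ratio is $\sim N^{1/2}\to\infty$, so no fixed $C$, $M'$ can make \eqref{17.2} hold as printed. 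The obstruction is exactly what your counterexample-free intuition missed: for $w$ small the Gaussian is too broad to localize $y$ on a scale $\ll\sqrt s$, so integrating $(1+|y|+\sqrt s)^{-r}$ genuinely costs a factor $(1+\sqrt s)^{-(r-1)}$, not $(1+\sqrt s)^{-r}$. Your original, ``crude'' bound $(1+t-s)^{-1/2}(1+\sqrt s)^{-(r-1)}$ in the inner region is therefore already optimal; the exponent in the statement is a typo, and once it is corrected to $(1+s)^{-(r-1)/2}$ your proof, which is the paper's, is complete.
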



\begin{lemma}
Suppose $u(x,t)$ satisfies $u_t=Lu$ and $|u_0(x)|\leq E_0(1+|x|)^{-r}$, $r>2$, 
for $E_0>0$ sufficiently small and $M>1$ sufficiently large.
Then for some sufficiently large $M'>M$,
\be \label{18}
\begin{split}
& \Big| \int_{-\infty}^\infty E(x,t;y)u_0(y)dy - \bar U_0\bar u'(x)\bar k(x,t) \Big|\\
& \quad \leq  CE_0 \Big[(1+t)^{-\frac{1}{2}}(1+|x-at|+\sqrt t)^{-r+1}+(1+t)^{-1}e^{-|x-at|^2/M'(1+t)} \Big],
\end{split}
\ee
where $\bar U_0=\ds \int_{-\infty}^\infty u_0(y)\tilde q(y,0)dy$ and $\bar k(x,t) = \ds\frac{1}{\sqrt{4\pi bt}}e^{-\frac{|x-at|^2}{(4bt)}}$.
\end{lemma}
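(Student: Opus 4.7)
The plan is to recognize this as the direct analogue of Lemma \eqref{pwb of |u(x,t)-U_0k| 2} from Section~6.3 (the pure-heat case), and to adapt that proof almost verbatim, with $|x|$ replaced by $|x-at|$ and with the extra weights $\bar u'(x)$ and $\tilde q(y,0)$ carried along as bounded ($1$-periodic, hence $L^\infty$) multipliers. Concretely, since for $t\ge 2$ the cutoff $\chi(t)\equiv 1$, we have
\be
\int_{-\infty}^\infty E(x,t;y)u_0(y)\,dy - \bar U_0\bar u'(x)\bar k(x,t)
= \bar u'(x)\int_{-\infty}^\infty \bigl[\bar k(x-y,t)-\bar k(x,t)\bigr]\tilde q(y,0)\,u_0(y)\,dy,
\ee
up to harmless normalizing constants. (For $t\in(0,2)$ we are in a bounded time regime and the bound is trivial after enlarging $C$ and $M'$, provided the statement is read with the convention $\bar k \chi$; alternatively the lemma is stated for large $t$ only, in which case this issue does not arise.)

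Next I would apply the Mean Value Theorem to the kernel difference, writing
\be
\bar k(x-y,t)-\bar k(x,t) = -y\int_0^1 \bar k_x(x-wy,t)\,dw,
\ee
and use the elementary Gaussian estimate
\be
|\bar k_x(x,t)| \;\le\; C(1+t)^{-1}e^{-|x-at|^2/(M(1+t))},
\ee
obtained by absorbing the polynomial factor $|x-at|$ into a wider Gaussian. Combining with the bound $|y|(1+|y|)^{-r}\le (1+|y|)^{-(r-1)}$, the boundedness of $\bar u'$ and $\tilde q$, and pulling out one factor of $(1+t)^{-1/2}$, the integral is dominated by
\be
CE_0\,(1+t)^{-\frac12}\int_0^1\!\!\int_{-\infty}^\infty (1+t)^{-\frac12}e^{-|x-wy-at|^2/(M(1+t))}(1+|y|)^{-(r-1)}\,dy\,dw.
\ee

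The final step is to invoke the previously-established estimate \eqref{17.1} with exponent $r-1$ (legitimate because $r>2$ gives $r-1>1$, which, as the author notes, is sufficient for \eqref{17.1} despite the apparent $r>2$ in its statement, since it is proved identically to \eqref{|x-wy|}). This yields directly
\be
CE_0\Bigl[(1+t)^{-\frac12}(1+|x-at|+\sqrt t)^{-(r-1)} + (1+t)^{-1}e^{-|x-at|^2/(M'(1+t))}\Bigr],
\ee
which is exactly \eqref{18}. The main obstacle is not conceptual but bookkeeping: one must be careful that the Gaussian width $M$ in $\bar k_x$ is strictly larger than the original width in $\bar k$, so that its subsequent enlargement through \eqref{17.1} to $M'$ still yields a finite constant; this is why the lemma is stated "for some sufficiently large $M'>M$". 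Everything else is a near-verbatim transcription of the Section~6.3 argument in the moving frame $x\mapsto x-at$.
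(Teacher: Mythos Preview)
Your proposal is correct and follows essentially the same approach as the paper: apply the Mean Value Theorem to the kernel difference $\bar k(x-y,t)-\bar k(x,t)$, absorb the resulting $|y|$ into $(1+|y|)^{-r+1}$, and invoke the convolution estimate \eqref{17.1} (the moving-frame version of \eqref{|x-wy|}) with exponent $r-1$. Your write-up is in fact more explicit than the paper's, which compresses the argument into a single displayed chain citing \eqref{10} and \eqref{|x-wy|}.
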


\begin{proof}
By \eqref{10} and \eqref{|x-wy|}, we have
\be
\begin{split}
& \Big| \int_{-\infty}^\infty E(x,t;y)u_0(y)dy - \bar U_0\bar u'(x)\bar k(x,t) \Big| \\
& \leq CE_0\int_{-\infty}^\infty \int_0^1 (1+t)^{-1}e^{-\frac{|x-wy-at|^2}{(1+t)}}(1+|y|)^{-r+1}dwdy \\
& \leq CE_0 \Big[(1+t)^{-\frac{1}{2}}(1+|x-at|+\sqrt t)^{-r+1}+(1+t)^{-1}e^{-\frac{|x-at|^2}{M'(1+t)}} \big]
\end{split}
\notag
\ee

\end{proof}


\begin{lemma}
Recalling \eqref{definition of E} and \eqref{setting N}, we have for some sufficiently large $M'>M$
\be \label{19}
\begin{split}
& \Big| \int_{-\infty}^\infty E(x,t-s;y)N(y,s)dy -\bar U(s)\bar u'(x) \bar k(x-as,t-s)\Big| \\
& \leq   CE_0(1+s)^{-1}\Big[(1+t-s)^{-\frac{1}{2}}(1+|x-at|+\sqrt{t-s}+\sqrt s)^{-2r+1}+(1+t-s)^{-1}e^{-\frac{|x-at|^2}{M(1+t)}}\Big].
\end{split}
\ee
where $\bar U(s)=\ds \int_{-\infty}^\infty N(y,s)\tilde q(y,0)dy$
\end{lemma}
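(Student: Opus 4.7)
The plan is to follow the template of the analogous lemmas for equations \eqref{0.1} (from the $L^1 \cap H^1$ case) and \eqref{4} (from the Gaussian case), adapting the argument to the algebraically decaying pointwise regime of this subsection. Recalling that $e(x,t;y)=\tfrac{1}{2\pi}\bar k(x-y,t)\tilde q(y,0)\chi(t)$ and the definition of $\bar U(s)$, I first rewrite the quantity to be estimated as
\be
C\,\bar u'(x)\int_{-\infty}^\infty \tilde q(y,0)\bigl[\bar k(x-y,t-s)-\bar k(x-as,t-s)\bigr]N(y,s)\,dy, \notag
\ee
obtained by inserting $\bar k(x-as,t-s)$ into the defining integral of $\bar U(s)$ and pulling $\bar u'(x)$ out.

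Next I decompose $N = Q + T + R_x - (\partial_x^2+\partial_t)S$. The $Q+T$ piece (no derivatives, size $O(|v|^2)$) is estimated directly. For $R_x$ and $(\partial_x^2+\partial_t)S$ I integrate by parts in the $y$ and $s$ variables (inside the outer Duhamel integral in $s$, with the cutoff $\chi$ killing boundary terms) so as to transfer the derivatives from $R,S$ onto the kernel, converting $\partial_y$ to $-\partial_y$ on $\bar k$ and $\partial_s$ to $-\partial_s$. This expresses every contribution as a difference of $\bar k$ or its first/second $y$-derivative and its $t$-derivative, evaluated at $y$ versus $as$, paired with $|R|,|S|$ pointwise. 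I then apply the Mean Value Theorem to each such difference, extracting a factor $|y-as|$ multiplied by $\int_0^1 |\partial^\alpha \bar k(x-as-w(y-as),t-s)|\,dw$ for the appropriate multi-index $\alpha$.

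The essential input is the pointwise bound \eqref{17} on $|v|$ together with the analogous bound on $|(\psi_t,\psi_x,\psi_{xx})|$ coming from the $\zeta(s)$ control defined in \eqref{zeta}. Squaring and absorbing the MVT factor then yields
\be
|(y-as)(Q+T+R+S)(y,s)|\leq CE_0^2(1+s)^{-1}\bigl[(1+|y-as|+\sqrt s)^{-2r+1}+(1+\sqrt s)^{-1}e^{-|y-as|^2/M'(1+s)}\bigr], \notag
\ee
where the Gaussian part uses $|y-as|e^{-|y-as|^2/M(1+s)}\leq C(1+\sqrt s)e^{-|y-as|^2/M'(1+s)}$, and the algebraic part absorbs the $|y-as|$ by lowering the exponent from $-2r$ to $-2r+1$. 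Substituting into the kernel integral and invoking \eqref{17.2} for the algebraic piece and \eqref{semigroup} for the Gaussian piece produces exactly the two terms on the right of \eqref{19}.

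The main obstacle will be the $(\partial_x^2+\partial_t)S$ term, which after two integrations by parts in $y$ yields contributions involving $\bar k_{yy}$ and $\bar k_t$, each scaling like $(1+t-s)^{-3/2}$ times the Gaussian, a half-power worse than $\bar k_y$. To close the estimate one must exploit that $|S|=|v\psi_x|$ carries two factors of $(1+s)^{-1/2}$ rather than one, and that the MVT factor $|y-as|$ converts into an extra temporal gain via the Gaussian absorption identity used above. The bookkeeping is delicate: all polynomial loss must be placed on the $|y-as|$ factor rather than onto the kernel, so that the two competing decay rates in \eqref{19}, namely $(1+t-s)^{-1/2}(1+|x-at|+\sqrt{t-s}+\sqrt s)^{-2r+1}$ and $(1+t-s)^{-1}e^{-|x-at|^2/M(1+t)}$, both emerge with the correct prefactor $(1+s)^{-1}$.
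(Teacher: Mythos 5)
Your proposal follows essentially the same route as the paper's proof: rewrite the target quantity as a single kernel-difference integral $C\bar u'(x)\int\tilde q(y,0)\bigl[\bar k(x-y,t-s)-\bar k(x-as,t-s)\bigr]N(y,s)\,dy$, apply the Mean Value Theorem to pull out a factor $|y-as|$, absorb it into the pointwise bound $|(Q,R,S,T)(y,s)|\leq CE_0(1+s)^{-1}\bigl[(1+|y-as|+\sqrt s)^{-2r}+(1+s)^{-1}e^{-|y-as|^2/M(1+s)}\bigr]$ obtained from \eqref{17} (lowering $-2r$ to $-2r+1$ on the algebraic piece, converting $|y-as|e^{-|y-as|^2/M(1+s)}$ to $C(1+\sqrt s)e^{-|y-as|^2/M'(1+s)}$ on the Gaussian piece), and finish with the convolution estimates \eqref{17.2} and \eqref{semigroup}. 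Your flagged concern about the $(\partial_x^2+\partial_t)S$ term costing an extra half-power of $(t-s)^{-1/2}$ is legitimate, but the paper's own proof elides it in exactly the same way --- the displayed chain of inequalities simply pairs the undifferentiated kernel difference with $|(Q,R,S,T)|$ and invokes the parabolic heuristic $|\tilde G_{yy}+\tilde G_t|\sim|\tilde G_y|$ from the proof of \eqref{11} --- so your level of rigor matches the source rather than falling short of it.
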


\begin{proof}
Noting first that
\be
|(Q, R, S, T)| \leq CE_0(1+t)^{-1}\big[(1+|x-at|+\sqrt t)^{-2r}+(1+t)^{-1}e^{-\frac{|x-at|^2}{M(1+t)}} \big],
\notag
\ee
we have, from \eqref{17.2} and by the Mean Value Theorem,
\be
\begin{split}
& \Big| \int_{-\infty}^\infty E(x,t-s;y)N(y,s)dy -\bar U(s)\bar u'(x)\bar k(x,t-s)\Big| \\
& \leq C\int_{-\infty}^\infty |\bar k(x-y,t-s)-\bar k(x,t-s)||(Q, R, S, T)(y,s)|dy \\
&\leq CE_0\int_{-\infty}^\infty \int_0^1 (1+t-s)^{-1}e^{-\frac{|x-w(y-as)-at|^2}{(t-s)}}|y-as|(1+s)^{-1}(1+|y-as|+\sqrt s)^{-2r} dwdy\\
& \quad + CE_0\int_{-\infty}^\infty \int_0^1 (1+t-s)^{-1}e^{-\frac{|x-w(y-as)-at|^2}{(t-s)}}|y-as|(1+s)^{-2}e^{-\frac{|y-as|^2}{M(1+s)}} dwdy  \\
&\leq CE_0\int_{-\infty}^\infty \int_0^1 (1+t-s)^{-1}e^{-\frac{|x-w(y-as)-at|^2}{(t-s)}}(1+s)^{-1}(1+|y-as|+\sqrt s)^{-2r+1} dwdy\\
& \quad + CE_0\int_{-\infty}^\infty \int_0^1 (1+t-s)^{-1}e^{-\frac{|x-w(y-as)-at|^2}{(t-s)}}(1+s)^{-\frac{3}{2}}e^{-\frac{|y-as|^2}{M'(1+s)}} dwdy  \\
& \leq CE_0(1+t-s)^{-\frac{1}{2}}(1+s)^{-1}(1+|x-at|+\sqrt{t-s}+\sqrt t)^{-2r+1} \\
& \quad+CE_0(1+t-s)^{-1}(1+s)^{-r}e^{-\frac{|x-at|^2}{M(1+t)}}+  CE_0(1+t-s)^{-1}(1+s)^{-1}e^{-\frac{|x-at|^2}{M''(1+t)}} \\
& \leq  CE_0(1+s)^{-1}\Big[ (1+t-s)^{-\frac{1}{2}}(1+|x-at|+\sqrt{t-s}+\sqrt t)^{-2r+1} + (1+t-s)^{-1}e^{-\frac{|x-at|^2}{M''(1+t)}} \Big].
\end{split}
\notag
\ee

\end{proof}


\begin{theorem}[Behavior]
Suppose $u(x,t)$ satisfies $u_t=Lu+O(|u|^2)$ and $|u_0| \leq E_0(1+|x|)^{-r}$, $r>2$,
for $E_0>0$ sufficiently small and $M>1$ sufficiently large.
Set
\be
\bar U_*=\int_0^\infty \bar U(s)ds +\bar U_0,
\notag
\ee
Then $|\bar U_*| < \infty$ and for some sufficiently large $M''>M'>M$,
\be \label{20}
\begin{split}
& |u(x,t)-\bar U_* \bar u'(x)\bar k(x,t)| \\
& \qquad \leq CE_0\Big[(1+t)^{-\frac{1}{2}}(1+|x-at|+\sqrt t)^{-r+1}+(1+t)^{-1}e^{-\frac{|x-at|^2}{M''(1+t)}}(1+\ln(1+t))\Big].
\end{split}
\ee
\end{theorem}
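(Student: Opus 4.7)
The plan is to follow the three-stage/four-part scheme already used in the previous behavior theorems (the $L^1$ and Gaussian cases for the perturbation equation). First I would verify $|\bar U_*|<\infty$: since $|\bar U(s)|\le C|(Q,R,S,T)(\cdot,s)|_{L^1}\le C|(v,\psi_t,\psi_x)|_{H^1}^2\le CE_0(1+s)^{-3/2}$ by \eqref{JZ2}, and $|u_0|_{L^1}<\infty$ because $r>2$, the formula defining $\bar U_*$ converges. Next decompose $|u-\bar U_*\bar u'\bar k|$ exactly as in \eqref{3 parts} into $|v|$, the cross term $O(|v_x||\psi|+|\psi|^2)$, and $|\bar u'\psi-\bar U_*\bar u'\bar k|$. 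The first piece is controlled by the pointwise estimate \eqref{17}; the cross term is controlled by combining \eqref{17} with the $H^1$-bound on $v_x$ from \eqref{JZ2} together with a pointwise bound on $\psi$ itself, obtained from \eqref{definition of psi} by running the same continuous-induction argument that produced \eqref{zeta} (the kernel $e$ carries the same Gaussian profile as $\bar k$). Both fit comfortably inside the envelope in \eqref{20}.

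The main work is the third summand $|\bar u'\psi-\bar U_*\bar u'\bar k|$, which I decompose into four terms $I$--$IV$ exactly as in \eqref{4 parts in L}. Term $I$ is the linear-data estimate \eqref{18}. Term $II$ is immediate from $|\bar U(s)|\le CE_0(1+s)^{-3/2}$ and $|\bar k(x,t)|\le C(1+t)^{-1/2}e^{-|x-at|^2/M(1+t)}$: pull $\bar k$ out of the $s$-integral and integrate on $[t,\infty)$ to gain a factor of $(1+t)^{-1/2}$, yielding $CE_0(1+t)^{-1}e^{-|x-at|^2/M(1+t)}$. Term $IV$ is handled exactly as in \eqref{0.6} and \eqref{9}: split the $s$-integral at $t/2$, use $|\bar U(s)|\le CE_0(1+s)^{-3/2}$ and $\partial_t\bar k=O((1+t)^{-1}\bar k)$ together with the Mean Value Theorem, delivering again a clean Gaussian bound with no logarithm.

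Term $III$ is the decisive one. Plugging \eqref{19} in produces two $s$-integrands, an algebraic piece $(1+t-s)^{-1/2}(1+s)^{-1}(1+|x-at|+\sqrt{t-s}+\sqrt s)^{-2r+1}$ and a Gaussian piece $(1+t-s)^{-1}(1+s)^{-1}e^{-|x-at|^2/M''(1+t)}$. For the algebraic part, use $\sqrt{t-s}+\sqrt s\ge\sqrt t$ to bound $(1+|x-at|+\sqrt{t-s}+\sqrt s)^{-2r+1}\le(1+|x-at|+\sqrt t)^{-2r+1}$, pull this prefactor out of the $s$-integral, and apply $\int_0^t(1+t-s)^{-1/2}(1+s)^{-1}ds\le C(1+t)^{-1/2}(1+\ln(1+t))$; the spare decay $(1+|x-at|+\sqrt t)^{-r}\le(1+t)^{-r/2}$ absorbs the logarithm, producing the clean contribution $CE_0(1+t)^{-1/2}(1+|x-at|+\sqrt t)^{-r+1}$ of \eqref{20}, exactly as in the scalar-model calculation \eqref{6.3.4}. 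For the Gaussian piece, $\int_0^t(1+t-s)^{-1}(1+s)^{-1}ds\le C(1+t)^{-1}(1+\ln(1+t))$ delivers the $(1+\ln(1+t))$ factor in \eqref{20}. The hard part is the bookkeeping in term $III$: one must prevent the two decay rates from mixing so that the logarithm appears only in the Gaussian envelope while the sharp algebraic rate $-r+1$ is preserved; the hypothesis $r>2$ is what provides both the $L^1$-integrability needed to make $\bar U_0$ finite and the slack in the algebraic tail required to absorb the logarithm cleanly.
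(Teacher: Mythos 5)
Your proof follows the paper's scheme exactly: the $|\bar U_*|<\infty$ bound, the three-part split of \eqref{3 parts}, handling the first two parts via \eqref{17} and the $\psi$-bounds, then the four-part split of \eqref{4 parts in L} with $I$ from \eqref{18}, $II$ and $IV$ by the usual $|\bar U(s)|\le CE_0(1+s)^{-3/2}$ arguments, and $III$ from \eqref{19}. The only cosmetic deviation is in term $III$: you invoke the sharp bound $\int_0^t(1+t-s)^{-1/2}(1+s)^{-1}ds\le C(1+t)^{-1/2}(1+\ln(1+t))$ and let the spare tail $(1+|x-at|+\sqrt t)^{-r}$ soak up the logarithm, whereas the paper uses the cruder $\int_0^t(1+t-s)^{-1/2}(1+s)^{-1}ds\le C(1+t)^{1/2}$ and then relies on $r>2$ so that $(1+\sqrt t)^{-r}(1+t)^{1/2}\le C(1+t)^{-1/2}$; both routes land on the same clean contribution $CE_0(1+t)^{-1/2}(1+|x-at|+\sqrt t)^{-r+1}$, so this is a bookkeeping variant rather than a different argument.
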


\begin{proof}

Recalling $|\bar U(s)| = |N(y,s)|_{L^1(y)} \leq CE_0(1+s)^{-\frac{3}{2}}$, we have $|\bar U_*| < \infty.$ Now we break $|u(x,t)-\bar U_* \bar u'(x)\bar k(x,t)|$ into three parts exactly the same as \eqref{3 parts}.
By \eqref{17} and \eqref{definition of psi}, first two terms are trivial.
\be \label{21}
|v(x,t)|+ O(|v_x||\psi|+|\psi|^2) \leq CE_0(1+t)^{-\frac{1}{2}}\big[(1+|x-at|+\sqrt t)^{-r}+(1+\sqrt t)^{-1}e^{-\frac{|x-at|^2}{M(1+t)}}\big].
\ee
Like \eqref{4 parts in L}, we break the last term into four parts.
Since $|\bar U(s)| \leq CE_0(1+s)^{-\frac{3}{2}}$,
\be  \label{22}
II \leq CE_0(1+t)^{-\frac{1}{2}}e^{-\frac{|x-at|^2}{M(1+t)}}\int_t^\infty (1+s)^{-\frac{3}{2}}ds \leq CE_0(1+t)^{-1}e^{-\frac{|x-at|^2}{M(1+t)}}.
\ee
By \eqref{19}, we have
\be  \label{23}
\begin{split}
III
& \leq CE_0\int_0^t  (1+t-s)^{-\frac{1}{2}}(1+s)^{-1}(1+|x-at|+\sqrt{t-s}+\sqrt s)^{-2r+1} ds \\
& \qquad +CE_0 \int_0^t (1+t-s)^{-1}(1+s)^{-1}e^{-\frac{|x-at|^2}{M(1+t)}}ds\\
& \leq CE_0(1+|x-at|+\sqrt t)^{-2r+1}\int_0^t(1+t-s)^{-\frac{1}{2}}(1+s)^{-1}ds \\
& \qquad +CE_0(1+t)^{-1}e^{-\frac{|x-at|^2}{M(1+t)}}\Big[ \int_0^{t/2}(1+s)^{-1}ds+\int_{t/2}^t(1+t-s)^{-1}ds\Big]\\
& \leq CE_0(1+|x-at|+\sqrt t)^{-r+1}(1+\sqrt t)^{-r}(1+t)^{\frac{1}{2}} + CE_0(1+t)^{-1}e^{-\frac{|x-at|^2}{M(1+t)}}\ln(1+t) \\
& \leq CE_0\Big[(1+t)^{-\frac{1}{2}}(1+|x-at|+\sqrt t)^{-r+1}+(1+t)^{-1}e^{-\frac{|x-at|^2}{M(1+t)}}\ln(1+t)\Big].
\end{split}
\ee
Since $|\bar U(s)| \leq CE_0(1+s)^{-\frac{3}{2}}$, the estimate of $IV$ is exactly the same as \eqref{9} which is
\be \label{24}
IV
 \leq CE_0(1+t)^{-1} e^{-\frac{|x-at|^2}{M(1+t)}}.
\ee
By  \eqref{18} and  \eqref{21}--\eqref{24}, we obtain the result \eqref{20}.

\end{proof}



\medskip

{\bf Acknowledgement.}
This project was completed while studying within the PhD program
at Indiana University, Bloomington.
Thanks to my thesis advisor Kevin Zumbrun for suggesting
the problem and for helpful discussions.

\end{document}